	\renewcommand*{\bm}[1]{#1}%
\numberwithin{equation}{section}
\newtheorem{theorem}{Theorem}[section]
\newtheorem{lemma}[theorem]{Lemma}
\newtheorem{proposition}[theorem]{Proposition}
\newtheorem{definition}[theorem]{Definition}
\newtheorem{rmk}{Remark}
\def\XXint#1#2#3{{\setbox0=\hbox{$#1{#2#3}{\int}$}
		\vcenter{\hbox{$#2#3$}}\kern-.5\wd0}}
\def\XXint#1#2#3{{\setbox0=\hbox{$#1{#2#3}{\int}$ }
		\vcenter{\hbox{$#2#3$ }}\kern-.6\wd0}}
\begin{document}
	\title[Blow-up analysis: Affine Toda System]{Affine Toda system of $\mathbf{A}$ and $\mathbf{C}^t$ type: compactness and affine Weyl group}
	
	\author[L.L. Cui]{Leilei Cui}
	\address{Leilei Cui, School of Mathematics and Statistics, Central China Normal University, Wuhan, 430079, P. R. China}
	\email{leileicui@mails.ccnu.edu.cn}
	
	\author[Z.H. Nie]{Zhaohu Nie}
	\address{Zhaohu Nie, Department of Mathematics and Statistics, Utah State University, Logan, UT 84322-3900, USA}
	\email{zhaohu.nie@usu.edu}
	
	\author[W. Yang]{Wen Yang}
	\address{Wen Yang, Wuhan Institute of Physics and Mathematics, Innovation Academy for Precision Measurement Science and Technology, Chinese Academy of Sciences, Wuhan 430071, P. R. China}
	\email{wyang@wipm.ac.cn}
	
	\thanks{The third author is partially supported by National Key R\&D Program of China 2022YFA1006800, NSFC No. 12171456 and NSFC No. 12271369.}
	
	\begin{abstract}
		The local mass is a fundamental quantized information that characterizes the blow-up solution to the Toda system and has a profound relationship with its underlying algebraic structure. In \cite{Lin-Yang-Zhong-2020}, it was observed that the associated Weyl group can be employed to represent this information for the $\mathbf{A}_n$, $\mathbf{B}_n$, $\mathbf{C}_n$ and $\mathbf{G}_2$ type Toda systems. The relationship between the local mass of blow-up solution and the corresponding affine Weyl group is further explored for some affine $\mathbf{B}$ type Toda systems in \cite{Cui-Wei-Yang-Zhang-2022}, where the possible local masses are explicitly expressed in terms of $8$ types. The current work presents a comprehensive study of the general affine $\mathbf{A}$ and $\mathbf{C}^t$ type Toda systems with arbitrary rank. At each stage of the blow-up process (via scaling), we can employ certain elements (known as "set chains") in the corresponding affine Weyl group to measure the variation of local mass. Consequently, we obtain the a priori estimate of the affine $\mathbf{A}$ and $\mathbf{C}^t$ type Toda systems with arbitrary number of singularities.
	\end{abstract}
	\maketitle
	{\bf Keywords:} {\em Affine Toda system, Affine Weyl group, Pohozaev identity, Blow-up analysis.}
	
	\vspace{0.25cm}
	
	{\bf AMS subject classification:} 35B44, 35J61, 35R01.
	
	\section{Introduction and main results}\label{Affine-Toda-Section-1}
	\setcounter{equation}{0}
	The Toda theory has been a significant subject of study in both mathematics and physics, particularly in the fields of field theory and partial differential equations, see \cite{Leznov-Saveliev-1992} for a survey of related works. One particular area of interest is the affine Toda system
	\begin{equation}\label{Affine-Toda-Section-1-Eq-4}
		\Delta u_i+\sum\limits_{j\in I}k_{ij}e^{u_j}=0, \ \text{for} \ i\in I,
	\end{equation}
	where the index set $I=\{1,2,\cdots,n+1\}$ with $n\geq 2$ and $(k_{ij})_{(n+1)\times(n+1)}$ stands for the generalised Cartan matrix associated with an affine Lie algebra. For example, the generalised Cartan matrix of the {\em affine Weyl group} of type $\mathbf{A}_{n}^{(1)}$ is given by (with a little abuse of notations)
	\begin{equation}\label{Affine-A_n^1-Cartan-matrix}
		\mathbf{A}_{n}^{(1)}:=(k_{ij})_{(n+1)\times(n+1)}=
		\begin{pmatrix}
			2      &  -1    &    0   &  \cdots  &  0     &    0    &    -1       \\
			-1     &  2     &    -1  &  \cdots  &  0     &    0    &    0        \\
			0      &  -1    &    2   &  \ddots  &  0     &    0    &    0        \\
			\vdots & \vdots & \ddots &  \ddots  & \ddots & \vdots  &  \vdots     \\
			0      &   0    &    0   &  \ddots  &    2   &   -1    &    0         \\
			0      &   0    &    0   &  \cdots  &    -1  &   2     &    -1        \\
			-1     &   0    &    0   &  \cdots  &    0   &   -1    &    2         \\
		\end{pmatrix}_{(n+1)\times(n+1)}
		.
	\end{equation}
	Equation \eqref{Affine-Toda-Section-1-Eq-4} is a classical integrable system that arises in various branches of mathematics and physics, including two-dimensional statistical mechanics, conformal field theory and mathematical theory. The equation of this type was first introduced by Toda \cite{Toda-1967-1,Toda-1967-2} more than $50$ years ago in the context of lattice vibrations, and has since attracted considerable interests from both pure and applied mathematicians. For a detailed account of the original context, we recommend referring to \cite{Adler-Moerbeke-1990,Braden-Corrigan-Dorey-Sasaki-1990,Corrigan-1999,Fordy-1990,konstant-1979,Toda-2012-3}.
	
	We can express the affine Toda equations for a compact simple Lie group $\mathbf{G}$ of rank $n$ in the following form, see \cite{Bolton-Woodward-1994,konstant-1979},
	\begin{equation}\label{Affine-Toda-Section-1-Eq-1}
		2\frac{\partial^2 U}{\partial z\partial \bar z}+\sum_{j=0}^nc_je^{2\alpha_j(U)}\alpha_j^\sharp=0,
	\end{equation}
	where $U(z,\bar z)$ is a function defined on an open subset of the complex numbers taking values in $i{\bf g}$, (${\bf g}$ stands for the Cartan subalgebra of $\mathbf{G}$), $\{\alpha_j \mid \alpha_{j}\in i{\bf g}^*, 1\leq j\leq n\}$ is a set of positive simple roots of $\mathbf{G}$, $c_0=1$ and $-\alpha_0=\sum_{i=1}^nc_i\alpha_i$ is the highest root. Additionally, for each $j\in\{0,1,\ldots,n\}$, $\alpha_j^\sharp \in i\bf{g}$ represents the dual of $\alpha_j$ under the Killing form $\langle\cdot,\cdot\rangle$, which is defined as follows:
	\begin{equation*}
		\langle \alpha_j^\sharp, v\rangle=\alpha_j(v), \ v\in\mathbf{g}.
	\end{equation*}
	In the special case $\mathbf{G}=\mathrm{SU}(n+1)$ and we write $U=(w_1,\cdots,w_{n+1})$ with $(w_1,\cdots,w_{n+1})\in\mathbb{R}^{n+1}$, equation \eqref{Affine-Toda-Section-1-Eq-1} can be rewritten as
	\begin{equation}\label{Affine-Toda-Section-1-Eq-2}
		2\frac{\partial^2w_i}{\partial z\partial \bar z}+e^{2(w_i-w_{i-1})}-e^{2(w_{i+1}-w_i)}=0, \ \forall \ i\in I, \footnote{Here we set $w_{i}=w_{n+1+i}$ for any $i\in\mathbb{N}\cup\{0\}$.}
	\end{equation}
	where
	\begin{equation}\label{Affine-Toda-Section-1-Eq-5}
		w_{n+1+i}=w_i, \ i\in I, \ \text{satisfying that} \ \sum_{i\in I}w_i=0.
	\end{equation}
	Naturally, we may write \eqref{Affine-Toda-Section-1-Eq-2} as
	\begin{equation}\label{Affine-Toda-Section-1-Eq-6}
		2\frac{\partial^2\alpha_i}{\partial z\partial \bar z}-e^{2\alpha_{i-1}}+2e^{2\alpha_i}-e^{2\alpha_{i+1}}=0, \ \forall \ i=0,1,\cdots,n,
	\end{equation}
	where $\alpha_i(U)=w_{i+1}-w_{i}$. Therefore, \eqref{Affine-Toda-Section-1-Eq-5} holds whenever $\alpha_1,\cdots,\alpha_n$ are the positive simple roots of $\mathrm{SU}(n+1)$ and $\alpha_0=-\sum_{i=1}^n\alpha_i$ is the highest root, then we derive the form as in \eqref{Affine-Toda-Section-1-Eq-4} up to some constant term ($u_i=2\alpha_{i-1}+\log4$, $i\in I$). In fact, equation \eqref{Affine-Toda-Section-1-Eq-4} can be also observed from the theory of super-conformal harmonic maps, see \cite{Bolton-Woodward-1992,Wood-1994}. Indeed, let $\phi: \Omega\rightarrow \mathbb{CP}^n$ be a harmonic map. Then the {\em harmonic sequence} $\phi_n$ associated to $\phi$ consists of harmonic maps $\phi_i: \Omega\rightarrow \mathbb{CP}^n$ with $\phi_0=\phi$, where $\phi_i=[f_i]$ with $f_i$ a locally defined $\mathbb{C}^{n+1}$-valued function verifying that
	\begin{equation}\label{Affine-Toda-Section-1-Eq-7}
		\frac{\partial f_i}{\partial z}=f_{i+1}+\frac{\partial }{\partial z}\log|f_i|^2f_i, \ \frac{\partial f_i}{\partial \bar z}=-\frac{|f_i|^2}{|f_{i-1}|^2}f_{i-1}, \ \langle f_{i+1},f_i \rangle=0.
	\end{equation}
	For a given local coordinate $z$ on $S$, the sequence $\{f_i\}$ can be characterized up to multiplication by a meromorphic function using \eqref{Affine-Toda-Section-1-Eq-7} and $\phi=[f_0]$. The maps $\phi_i$ can be used to determine a complex line sub-bundle $L_i$ of the trivial bundle $S\times \mathbb{C}^{n+1}$, where $L_i$ is defined as the set
	\begin{equation*}
		L_i=\{(x,v)\in \Omega\times\mathbb{C}^{n+1}\mid v\in\phi_i(x)\}.
	\end{equation*}
	Each $L_i$ inherits a holomorphic vector bundle structure from its inclusion in $\Omega\times\mathbb{C}^{n+1}$, and $f_i$ is a local meromorphic section of $L_i$. The integrability condition together with \eqref{Affine-Toda-Section-1-Eq-7} then imply that
	\begin{equation}\label{Affine-Toda-Section-1-Eq-8}
		\frac{\partial^2}{\partial z\partial\bar z}\log|f_i|^2=\frac{|f_{i+1}|^2}{|f_i|^2}-\frac{|f_i|^2}{|f_{i-1}|^2}.
	\end{equation}
	Writing $v_i=\log\frac{|f_i|^2}{|f_{i-1}|^2}$, we see that \eqref{Affine-Toda-Section-1-Eq-8} is equivalent to \eqref{Affine-Toda-Section-1-Eq-6} or \eqref{Affine-Toda-Section-1-Eq-4}. Particularly, when $n=1$ equation \eqref{Affine-Toda-Section-1-Eq-4} can be reduced to the classical sinh-Gordon equation, which arises geometrically in the study of surfaces of constant mean curvature, see \cite{Chern-1981,Wente-1980,Wente-1986}.
	
	The main area of our concern is the singularity case. We may write \eqref{Affine-Toda-Section-1-Eq-4} in a local form as
	\begin{equation}\label{Affine-Toda-system-A_n^1-form}
		\begin{cases}
			\Delta u_i+\sum\limits_{j\in I}k_{ij}e^{u_j}=4\pi\alpha_i\delta_0 \ \text{in} \ B_1(0)\subseteq\mathbb{R}^2, \ \ \forall \ i\in I,\\
			\\
			\sum\limits_{i\in I}u_i\equiv C, \ C \ \text{is a fixed constant},
		\end{cases}
	\end{equation}
	where the index set $I=\{1,2,\cdots,n+1\}$ with $n\geq 2$, $\alpha_i>-1$ for $i\in I$ and $\delta_0$ is the Dirac measure at $0$. The counterpart equation of \eqref{Affine-Toda-system-A_n^1-form} in the classical case reads as
	\begin{equation}\label{Affine-Toda-system-Classical-A_n-form}
		\Delta u_i+\sum\limits_{j=1}^{n}a_{ij}e^{u_j}=4\pi\alpha_i\delta_0 \ \text{in} \ B_1(0)\subseteq\mathbb{R}^2, \ \ \forall \ i=1,\cdots,n,
	\end{equation}
	where $\alpha_{i}>-1$, $i=1,\cdots,n$ and the coefficient matrix $\mathbf{A}_n:=(a_{ij})$ stands for the Cartan matrix associated with the simple Lie algebra $\mathbf{A}_{n}$, i.e.,
	\begin{equation}\label{A_n-Cartan-matrix}
		(a_{ij})_{n\times n}=
		\begin{pmatrix}
			2      &  -1    &    0   &  \cdots  &  0     &    0    &    0        \\
			-1     &  2     &    -1  &  \cdots  &  0     &    0    &    0        \\
			0      &  -1    &    2   &  \ddots  &  0     &    0    &    0        \\
			\vdots & \vdots & \ddots &  \ddots  & \ddots & \vdots  &  \vdots     \\
			0      &   0    &    0   &  \ddots  &    2   &   -1    &    0         \\
			0      &   0    &    0   &  \cdots  &    -1  &   2     &    -1        \\
			0      &   0    &    0   &  \cdots  &    0   &   -1    &    2         \\
		\end{pmatrix}_{n\times n}.
	\end{equation}
	For convenience, we always denote $\mathbf{A}_{\ell}:=(a_{ij})$ by the Cartan matrix of the simple Lie algebra $\mathbf{A}_{\ell}$ with $\ell\geq 1$. \footnote{We call \eqref{Affine-Toda-system-Classical-A_n-form} with $(a_{ij})=\mathbf{A}_{\ell}$ the $\mathbf{A}_{\ell}$ Toda system. Similarly, $\mathbf{C}_{\ell}$ Toda system refers to \eqref{Affine-Toda-system-Classical-A_n-form} with $(a_{ij})=\mathbf{C}_{\ell}$, $\ell\geq 2$, where $\mathbf{C}_{\ell}$ stands for the $\mathbf{C}$ type Cartan matrix of rank $\ell$.}
	
	The Toda system given by equation \eqref{Affine-Toda-system-Classical-A_n-form} is commonly referred to as the $\mathbf{A}_n$ ($\mathrm{SU}(n+1)$) Toda system. The structures of equations \eqref{Affine-Toda-system-A_n^1-form} and \eqref{Affine-Toda-system-Classical-A_n-form} are quite similar, while the structure of the sinh-Gordon equation is closely related to the single Liouville equation. It is well-known that these types of equations exhibit a loss of compactness, which makes studying them a fascinating and natural question. There have been numerous works concerning the blow-up phenomenon in classical single Liouville equation and Toda systems. The pioneering work of Brezis-Merle \cite{Brezis-Merle-1991}, Li-Shafrir \cite{Li-Shafrir-1994}, Li \cite{Li-1999}, Bartolucci-Tarantello \cite{Bartolucci-Tarantello-2002}, Kuo-Lin \cite{Kuo-Lin-2016} and Wei-Zhang \cite{Wei-Zhang-2021,Wei-Zhang-2022,Wei-Zhang-2022-1} has fully understood the blow-up phenomena for the single Liouville equation with singularity. However, when dealing with a system of rank two, i.e., a two-component system, the problem becomes more challenging. In \cite{Jost-Lin-Wang-2006}, the compactness issue was settled when equation \eqref{Affine-Toda-system-Classical-A_n-form} has no singular source and it has been widely studied through a series of works by Lin-Wei-Zhang \cite{Lin-Wei-Zhang-2015}, Lin-Zhang \cite{Lin-Zhang-2016}, Lin-Wei-Yang-Zhang \cite{Lin-Wei-Yang-Zhang-2018} and Lin-Yang \cite{Lin-Yang-2021}. In the aforementioned works, the most essential step is to derive the so-called local mass of the blow-up solution $\mathbf{u}^k=(u^k_1,u^k_2)$. The local mass for $\mathbf{u}^k$ is defined as follows:
	\begin{equation}\label{Affine-Toda-Section-1-Eq-9}
		\sigma_i=\frac{1}{2\pi}\lim_{r\rightarrow 0}\lim_{k\rightarrow+\infty}\int_{B_r(p)}e^{u_i^k(x)}\mathrm{d}x, \ \text{for} \ i=1,2, \ \text{where} \ p \ \text{is the blow-up point}.
	\end{equation}
	For the case with rank $n\geq 3$, the structure of the associated Lie algebra plays a crucial role in determining the quantized information of the blow-up solution. Specifically, in \cite{Lin-Yang-Zhong-2020}, the authors demonstrated that the local mass has a deep connection with the Weyl group of the corresponding Lie algebra and it can be represented implicitly by the permutation map:
	\begin{equation*}
		\begin{small}
			\begin{aligned}
				\sigma_i=2\sum_{\ell=0}^{i-1}\Big(\sum_{j=1}^{f(\ell)}\mu_j-\sum_{j=1}^\ell \mu_j\Big)+2m_i, \ \text{for some} \ m_i\in\mathbb{Z}, \ i=1,\cdots,n,
			\end{aligned}
		\end{small}
	\end{equation*}
	where $f$ is a permutation map from $\{0,1,\cdots,n\}$ to itself. It is known that the group of such permutation maps is isomorphic to the associated {\em Weyl group} of the $\mathbf{A}_n$ type Lie algebra. The essential point of obtaining  the above formula is that for the Lie algebras of types $\mathbf{A}_n$, $\mathbf{B}_n$, $\mathbf{C}_n$ and $\mathbf{G}_2$, the solution is connected to a complex ODE whose coefficients are the $\mathcal{W}$-invariants of the Toda system, as described in \cite{Lin-Wei-Ye-2012}. This naturally raises the question of whether a similar conclusion can be drawn for the affine case. In a recent study, the authors of \cite{Cui-Wei-Yang-Zhang-2022} addressed this question and achieved this objective for the $\mathbf{B}_2^{(1)}$ Toda system
	\begin{equation}\label{Affine-Toda-Section-1-Eq-10}
		\left\{
		\begin{aligned}
			\Delta u_i+\sum\limits_{j=1}^3k_{ij}e^{u_j}&=4\pi\alpha_i\delta_0 \ \text{in} \ B_1(0)\subseteq\mathbb{R}^2, \ \ \forall \ i=1,2,3,\\
			u_1+u_2+2u_3&\equiv0,
		\end{aligned}
		\right.
	\end{equation}
	where $\alpha_i>-1$, $i=1,2,3$ and the coefficient matrix is given by
	\begin{equation}\label{Affine-Toda-Section-1-Eq-11}
		(k_{ij})_{3\times 3}=\left(\begin{matrix}
			1 & 0 &-1\\
			0 & 1 &-1\\
			-\frac12 & -\frac12 & 1
		\end{matrix}\right).
	\end{equation}
	Rather than using the {\em Weyl group}, the corresponding algebraic structure is the {\em affine Weyl group}. Prior to this work, the computation of the local mass of the blow-up solution to \eqref{Affine-Toda-Section-1-Eq-10} without singularity had been carried out in \cite{Liu-Wang-2021}.
	
	The aim of this article is to pursue the same goal for the affine Toda system \eqref{Affine-Toda-system-A_n^1-form} but with arbitrary rank. To state our result, we first introduce the {\em affine Weyl group} for affine Lie algebra $\mathbf{A}_n^{(1)}$, see \cite[Chapter 4]{Shi-1986}. Let $\mathbf{G}_{\mathbf{A}}$ be a group having the following representation
	\begin{equation}\label{Affine-A_n^1-Group-Definition}
		\begin{aligned}
			\mathbf{G}_{\mathbf{A}}=\big\langle \ \mathfrak{R}_1,\mathfrak{R}_2,\cdots,\mathfrak{R}_{n+1} \mid \ & \mathfrak{R}_i^2=e, \ \forall \ i\in I,\\
			& (\mathfrak{R}_i\mathfrak{R}_{j})^{3}=e, \ \text{for} \ |i-j|=1 \ \text{or} \ n, \ \forall \ i,j\in I,\\
			& (\mathfrak{R}_i\mathfrak{R}_{j})^{2}=e, \ \text{for} \ 1<|i-j|<n, \ \forall \ i,j\in I,\\
			& \mathfrak{R}_i\mathfrak{R}_{j}\mathfrak{R}_i=\mathfrak{R}_{j}\mathfrak{R}_{i}\mathfrak{R}_{j}, \ \text{for} \ |i-j|=1 \ \text{or} \ n, \ \forall \ i,j\in I
			\big\rangle.
		\end{aligned}
	\end{equation}
	Here $e$ stands for the identity and $|\mathfrak{R}|$ represents the order of the element $\mathfrak{R}$, i.e., $\mathfrak{R}^{|\mathfrak{R}|}=e$. The group $\mathbf{G}_{\mathbf{A}}$ is one of the infinite irreducible Coxeter groups, called the {\em affine Weyl group} of type $\mathbf{A}_{n}^{(1)}$ (or type $\mathbf{A}$).
	
	Let $\mathbf{u}^k=(u_1^k,\cdots,u_{n+1}^k)$ be a sequence of blow-up solutions to system \eqref{Affine-Toda-system-A_n^1-form} satisfying the conditions
	\begin{equation}\label{Affine-Toda-3Conditions}
		\begin{cases}
			(i): &0 \ \text{is the only blow-up point of }{\bf u}^k \ \text{in} \ B_1(0), \ \text{i.e.},\\ &\max\limits_{i\in I}\sup\limits_{x\in B_1(0)}\{u_i^k(x)-2\alpha_i\log|x|\}\rightarrow+\infty \ \text{and} \ \max\limits_{i\in I}\sup\limits_{K\subseteq B_1(0)\setminus\{0\}, \ K \ \text{is compact}} u_i^k\leq C(K),\\
			(ii): &|u_i^k(x)-u_i^k(y)|\leq C, \ \forall \ x,y \ \text{on} \ \partial B_1(0), \ i\in I,\\
			(iii): &\int_{B_1(0)}e^{u_i^k(x)}\mathrm{d}x\leq C, \ \forall \ i\in I.
		\end{cases}
	\end{equation}
	For this sequence of blow-up solutions, we define the local mass by $\bm{\sigma}=(\sigma_1,\cdots,\sigma_{n+1})$, where
	\begin{equation}\label{Affine-Toda-sigma-Definition}
		\sigma_i=\frac{1}{2\pi}\lim_{r\rightarrow0}\lim_{k\rightarrow+\infty}\int_{B_r(0)}e^{u_i^k(x)}\mathrm{d}x, \ \text{for} \ i\in I.
	\end{equation}
	
	We define the set $\Gamma_{\mathbf{A}}(\bm{\mu})$ for a given $\bm{\mu}=(\mu_1,\cdots,\mu_{n+1})$ and the possible local mass quantity $\bm{\sigma}$ as follows:
	\begin{enumerate}[(1)]
		\item ${\bf0}\in\Gamma_{\mathbf{A}}(\bm{\mu})$.
		
		\item If $\bm{\sigma}=(\sigma_1,\cdots,\sigma_{n+1})\in\Gamma_{\mathbf{A}}(\bm{\mu})$, then $\mathfrak{R}_i\bm{\sigma}\in\Gamma_{\mathbf{A}}(\bm{\mu})$ for any $\mathfrak{R}_i\in \mathbf{G}_{\mathbf{A}}$ satisfying \eqref{Affine-A_n^1-Group-Definition}, where each generator $\mathfrak{R}_i$, $i\in I$ sends $\bm{\sigma}$ to $\mathfrak{R}_i\bm{\sigma}$ with
		\begin{equation*}
			(\mathfrak{R}_i\bm{\sigma})_j
			=\begin{cases}
				2\mu_i-\sum\limits_{t\in I}k_{it}\sigma_t+\sigma_i,\quad &\text{if} \ j=i,\\	
				\sigma_j,\quad &\text{if} \ j\neq i.	
			\end{cases}
		\end{equation*}
	\end{enumerate}
	Here $(k_{ij})$ is defined as in \eqref{Affine-A_n^1-Cartan-matrix}. For any $\bm{\sigma}=(\sigma_1,\cdots,\sigma_{n+1})\in\Gamma_{\mathbf{A}}(\bm{\mu})$, $\sigma_i$ is a degree one polynomial of $\mu_i$ for $i\in I$. Next, we present the first result of this article.
	
	\begin{theorem}\label{Affine-Toda-Section-1-Theorem-1}
		Suppose that $\bm{\sigma}=(\sigma_1,\cdots,\sigma_{n+1})$, defined as in \eqref{Affine-Toda-sigma-Definition}, is the local mass of a sequence of blow-up solutions $\mathbf{u}^{k}=(u^{k}_{1},\cdots,u^{k}_{n+1})$ of system \eqref{Affine-Toda-system-A_n^1-form} satisfying conditions \eqref{Affine-Toda-3Conditions}. Then there exists $\hat{\bm{\sigma}}=(\hat{\sigma}_1,\cdots,\hat{\sigma}_{n+1})\in\Gamma_{\mathbf{A}}(\bm{\mu})$ such that
		\begin{equation*}
			{\sigma}_i=\hat{\sigma}_i+2m_{i}, \ m_i\in\mathbb{Z}, \ \forall \ i\in I.
		\end{equation*}
	\end{theorem}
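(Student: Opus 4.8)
The plan is to realize $\bm\sigma$ as the end product of a finite chain of rescalings and to show that crossing each scale changes the accumulated mass by one of the generators $\mathfrak R_i$ of $\mathbf G_{\mathbf A}$, so that after all scales are accounted for the limit lands in the orbit $\Gamma_{\mathbf A}(\bm\mu)$ modulo $2\mathbb Z^{n+1}$, with $\mu_i=1+\alpha_i$. The analytic backbone is the Pohozaev identity. Setting $\sigma_i(r)=\frac{1}{2\pi}\int_{B_r(0)}e^{u_i^k}$ and applying the Pohozaev identity on $B_r(0)$ to \eqref{Affine-Toda-system-A_n^1-form} yields a quadratic relation among the $\sigma_i(r)$ and the singular weights $\alpha_i$, whose linearized content is exactly the reflection rule defining $\Gamma_{\mathbf A}(\bm\mu)$. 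Concretely, writing the Kostant-type quantity $N_i=\sum_{t\in I}k_{it}\sigma_t$, a direct computation shows $\mathfrak R_i$ flips the sign of $N_i-2\mu_i$; this is precisely the constraint the Pohozaev identity imposes on the mass across a scale at which the $i$-th component concentrates.

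\textbf{Selection of scales and limiting sub-systems.} Starting from $r=1$ I would run the standard selection process: rescale at the origin by the blow-up rate $\delta_k\to0$, setting $v_i^k(x)=u_i^k(\delta_k x)+2\log\delta_k$, which preserves \eqref{Affine-Toda-system-A_n^1-form} (the Dirac term is scale-covariant, so the weights $\alpha_i$ are unchanged). By \eqref{Affine-Toda-3Conditions}, along a subsequence the rescaled sequence converges away from a finite set to a limiting configuration. The subset $S\subseteq I$ of components that genuinely blow up corresponds to a connected sub-diagram of the affine Dynkin diagram, hence to a finite $\mathbf A_m$ sub-system, whose entire solutions are classified via the integrable/ODE structure recalled in the introduction (following \cite{Lin-Wei-Ye-2012} and \cite{Lin-Yang-Zhong-2020}). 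Thus the mass captured at this scale is a prescribed element obtained by applying the product of simple reflections indexed by $S$ to the incoming mass; recording these subsets $S$ across the scales is exactly the \emph{set chain} of the abstract.

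\textbf{Iteration and the integer correction.} After peeling off the bubble(s) at the current scale I would re-enter the selection process on the residual solution of the next annulus, which again solves \eqref{Affine-Toda-system-A_n^1-form} with the same fixed $\bm\mu$. Let $\bm\tau^{(\ell)}$ denote the mass inside an intermediate radius separating scale $\ell$ from scale $\ell+1$; then $\bm\tau^{(0)}=\mathbf 0$ (below the innermost bubble) while $\bm\tau^{(L)}=\bm\sigma$ (the fixed outer radius captures all concentration). Because the total masses are uniformly bounded by \eqref{Affine-Toda-3Conditions}(iii) and each genuine concentration consumes a definite quantum of mass, the process terminates after finitely many scales, and the transition $\bm\tau^{(\ell)}\mapsto\bm\tau^{(\ell+1)}$ is the reflection read off from the $\ell$-th set chain. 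Composing these reflections starting from $\mathbf 0$ produces an element $g\in\mathbf G_{\mathbf A}$ with $g\cdot\mathbf 0=\hat{\bm\sigma}\in\Gamma_{\mathbf A}(\bm\mu)$, while the residual contributions coming from components whose rescaled limits are \emph{regular} (non-singular) entire $\mathbf A_m$ solutions, whose masses are even integers by the finite-type classification, assemble into the correction $2m_i$. This yields $\sigma_i=\hat\sigma_i+2m_i$.

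\textbf{Main obstacle.} The hard part is the partial, degenerate blow-up in which only some components concentrate while interacting across neighbouring nodes of the diagram. One must establish that no mass leaks into the intermediate annuli (a non-concentration and Harnack estimate for the fast-decaying components, so that the Pohozaev accounting is clean); that each limiting sub-system is genuinely of finite $\mathbf A_m$ type with the correct effective behaviour, so its masses are quantized and expressible through simple reflections; and that the reflection picked up across each scale matches $\mathfrak R_i$ exactly, \emph{including} the affine node, despite the degeneracy of the affine Cartan matrix \eqref{Affine-A_n^1-Cartan-matrix} and the closure constraint $\sum_{i\in I}u_i\equiv C$. Treating the affine node uniformly with the finite nodes, and proving termination together with the precise reflection bookkeeping, is where the bulk of the technical work lies.
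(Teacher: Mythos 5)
Your scaffolding coincides with the paper's: selection of bubbling disks, the Pohozaev identity as the quadratic constraint, quantized mass transitions scale by scale, and termination by finiteness of energy. However, the central algebraic claim in your proposal --- that the mass captured when a sub-system indexed by a consecutive set $J$ fully blows up is obtained by ``applying the product of simple reflections indexed by $S$'' --- is false as stated, and proving the correct replacement is precisely the heart of the paper. For $J=\{j,j+1\}$, the product $\mathfrak{R}_{j}\mathfrak{R}_{j+1}$ adds $2(\mu_j^{*}+\mu_{j+1}^{*})$ to the $j$-th component but only $2\mu_{j+1}^{*}$ to the $(j+1)$-th, whereas the $\mathbf{A}_2$ bubble adds $2(\mu_j^{*}+\mu_{j+1}^{*})$ to \emph{both}; the correct element is the $J$-chain $\mathfrak{R}_{j}\mathfrak{R}_{j+1}\mathfrak{R}_{j}$, and in general a word of length $|J|(|J|+1)/2$ built by a genuinely nontrivial induction (Lemma \ref{Affine-Toda-Section-2-Lemma-10} and Definition \ref{Affine-Toda-Section-2-Definition-11}). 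Since $\mathbf{G}_{\mathbf{A}}$ is infinite, there is no a priori reason that \emph{any} word in the generators realizes the Lin--Wei--Ye mass formula for the bubble; exhibiting one is the new content of the theorem, and your proposal assumes it. Relatedly, your statement that the residual solves the system ``with the same fixed $\bm{\mu}$'' is imprecise: after peeling, the effective weights shift to $\mu_i^{*}=\mu_i-\frac12\sum_{t\in I}k_{it}\sigma_t$, and it is exactly the compatibility of this shift with the group action that the chain lemma must verify.

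Two further configurations that your outline does not reach are where the remaining technical work sits. First, on the affine diagram a blow-up block can wrap through both ends, $J_0=\{r_2,\dots,n+1,1,\dots,r_1\}$ (Alternative \textbf{(II)} of Proposition \ref{Affine-Toda-Section-2-Proposition-2}), so the concentrating set need not be a sub-diagram in your sense; the paper handles this with the $r^{\pm}$-permutation machinery (Lemma \ref{Affine-Toda-Section-2-Lemma-13} and Theorem \ref{Affine-Toda-Section-2-Theorem-14}-(b)), i.e.\ invariance of $\Gamma_{\mathbf{A}}(\bm{\mu})$ under cyclic relabeling, which has no finite-type analogue and is not supplied by ``treating the affine node uniformly with the finite nodes.'' Second, when several groups of bubbling disks merge (the paper's \textbf{Step 5}), the limiting sub-system carries \emph{several} singular points, and its local masses are given by an arbitrary permutation map in the Weyl group of the sub-system modulo even integers --- not by a single set chain; closing the argument requires Lemma \ref{Affine-Toda-Section-3-Lemma-1}, which factors the permutation into simple transpositions each realized by a generator, together with the identification $\Gamma_{N}^{\mathbf{A}}(\bm{\mu})=\Gamma_{\mathbf{A}}(\bm{\mu})$ of Proposition \ref{Affine-Toda-Section-2-Proposition-9}. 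Your ``main obstacle'' paragraph correctly names these dangers, but the proposal offers no mechanism for them, so as it stands it is a faithful strategy sketch with the decisive algebraic steps missing.
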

	
	\begin{rmk}
		The constant appearing in the $(n+2)$-th equation of \eqref{Affine-Toda-system-A_n^1-form} can take any real value without affecting the conclusion of Theorem \ref{Affine-Toda-Section-1-Theorem-1}. Therefore, we may assume, without loss of generality, that the constant $C$ is equal to zero. Using this equation, we observe that the coefficients $\alpha_i$, $i\in I$ must satisfy the relation $\sum_{i\in I}\alpha_i=0$.
	\end{rmk}
	
	As previously mentioned, the group structure of the corresponding Lie algebra is known to play a crucial role in determining the local mass of blow-up solutions. Our findings in the current work and the result in \cite{Cui-Wei-Yang-Zhang-2022} reveal that a similar conclusion can be drawn for affine Toda systems, such as the sinh-Gordon equation, the $\mathbf{B}_2^{(1)}$ Toda system and affine $\mathbf{A}$ (and $\mathbf{C}^t$) type Toda system with arbitrary rank. However, there are significant differences between the classical Toda system and the affine case. In the affine case, the counterpart to the Weyl group is the {\em affine Weyl group}, which contains an infinite number of elements, making it impossible to express the local mass in a concrete way. Instead, we must rely on an abstract representation through the use of generators. Additionally, the process of obtaining a clear transformation in each blow-up combination differs between the classical and affine Toda systems. In the classical case, the permutation map is used, while exhaustively exploring all possibilities is sufficient for the affine case with low rank. Nevertheless, for the affine case with arbitrary rank, neither of these methods seems to work. Instead, we must represent each transformation of the local mass by the composition of generators. The issue lies in expressing the $\bm{\sigma}$ (where the $i$-th element $\sigma_i=2\sum_{\ell=0}^{i-1}\left(\sum_{j=1}^{f(\ell)}\mu_j-\sum_{j=1}^\ell \mu_j\right)$ and $f$ denotes the reverse map $f(i)=n-i$, $0\leq i\leq n$) in an equivalent form through the composition of generators acting on the starting point ${\bf0}$. We should remark that there may be multiple ways to express $\bm{\sigma}$ using the composition of generators, obtaining a specific solution is not a straightforward task. To tackle this issue, we propose an indirect expression using induction, which is further explained in Section \ref{Affine-Toda-Section-2+} and Section \ref{Affine-Toda-Section-4}.
	
	It is well known that all the simple Lie algebras are $\mathbf{A}_n$, $\mathbf{B}_n$, $\mathbf{C}_n$, $\mathbf{D}_n$, $\mathbf{F}_4$, $\mathbf{E}_6$, $\mathbf{E}_7$, $\mathbf{E}_8$ and $\mathbf{G}_2$. Furthermore, each of these algebras has some corresponding affine Lie algebras, see \cite{Carter-2005}. The second author of the current article in \cite{Nie-2014,Nie-2016} has demonstrated that the Toda systems of $\mathbf{B}$ and $\mathbf{C}$ type can be inserted into $\mathbf{A}$ type with a higher rank under certain symmetries. The corresponding Cartan matrices for these two types are represented respectively by
	\begin{equation}\label{cartan-m}
		\begin{aligned}
			\mathbf{B}_n=\left(\begin{matrix}
				2 & -1 & 0 & \cdots & 0 \\
				-1 & 2 & -1 & \cdots & 0 \\
				\vdots & \vdots &\vdots  & \ddots & \vdots\\
				0 & \cdots & -1 & 2 & -2 \\
				0 & \cdots &  0 & -1 & 2
			\end{matrix}\right), \ \
			\mathbf{C}_n=\left(\begin{matrix}
				2 & -1 & 0 & \cdots & 0 \\
				-1 & 2 & -1 & \cdots & 0 \\
				\vdots & \vdots &\vdots  & \ddots & \vdots\\
				0 & \cdots & -1 & 2 & -1 \\
				0 & \cdots &  0 & -2 & 2
			\end{matrix}\right).
		\end{aligned}
	\end{equation}
	Similar to the classical case, it can be observed that the affine $\mathbf{C}^t$ type Toda system can be obtained from the affine $\mathbf{A}$ type. Specifically, the affine $\mathbf{C}_n^{(1),t}$ (or $\mathbf{C}^t$) type Toda system is given as follows:
	\begin{equation}\label{Affine-Toda-system-C_n^1-form}
		\begin{cases}
			\Delta u_i+\sum\limits_{j\in I}k_{ij}^{\mathbf{c}^t}e^{u_j}=4\pi\alpha_i\delta_0 \ \text{in} \ B_1(0)\subseteq\mathbb{R}^2, \ \ \forall \ i\in I,\\
			\\
			u_1+2\sum\limits_{i\in I\setminus\{1,n+1\}}u_i+u_{n+1}\equiv0,
		\end{cases}
	\end{equation}
	where $\alpha_i>-1$ for $i\in I$ and $(k_{ij}^{\mathbf{c}^{t}})_{(n+1)\times(n+1)}$ refers to the Cartan matrix of the affine $\mathbf{C}^t$ type Lie algebra, i.e.,
	\begin{equation}\label{Affine-C_n^1-Cartan-matrix}
		(k_{ij}^{\mathbf{c}^{t}})_{(n+1)\times(n+1)}:=
		\begin{pmatrix}
			2      &  -2    &    0   &  \cdots  &  0     &    0    &    0        \\
			-1     &  2     &    -1  &  \cdots  &  0     &    0    &    0        \\
			0      &  -1    &    2   &  \ddots  &  0     &    0    &    0        \\
			\vdots & \vdots & \ddots &  \ddots  & \ddots & \vdots  &  \vdots     \\
			0      &   0    &    0   &  \ddots  &    2   &   -1    &    0         \\
			0      &   0    &    0   &  \cdots  &    -1  &   2     &    -1        \\
			0      &   0    &    0   &  \cdots  &    0   &   -2    &    2         \\
		\end{pmatrix}_{(n+1)\times(n+1)}.
	\end{equation}
	To ensure the compatibility of system \eqref{Affine-Toda-system-C_n^1-form}, it must hold that
	\begin{equation*}
		\alpha_1+2\sum\limits_{i\in I\setminus\{1,n+1\}}\alpha_i+\alpha_{n+1}=0.
	\end{equation*}
	We can deduce \eqref{Affine-Toda-system-C_n^1-form} from an $\mathbf{A}_{2n-1}^{(1)}$ Toda system by imposing the following symmetry
	\begin{equation*}
		\tilde{u}_{i}=\tilde{u}_{2n+2-i}=u_{i}, \ \tilde{\alpha}_{i}=\tilde{\alpha}_{2n+2-i}=\alpha_i, \ \text{for} \ i\in I.
	\end{equation*}
	
	Now we shall define the {\em affine Weyl group} for the $\mathbf{C}^{t}$ type affine Lie algebra. It is noteworthy that this group is isomorphic to the one associated with the affine Lie algebra of $\mathbf{C}$ type. This is due to the fact that the corresponding Cartan matrices are transpose of each other, see \cite{Carter-2005}. Let group $\mathbf{G}_{\mathbf{C}^t}$ be defined by (see \cite{Albar-Al-Hamed-2000})
	\begin{equation}\label{Affine-C_n^1-Group-Definition}
		\begin{aligned}
			\mathbf{G}_{\mathbf{C}^t}=\big\langle \ \mathfrak{R}_1,\mathfrak{R}_2,\cdots,\mathfrak{R}_{n+1} \mid \ &\mathfrak{R}_i^2=e, \ \forall \ i\in I,\\
			& \mathfrak{R}_i\mathfrak{R}_{j}=\mathfrak{R}_{j}\mathfrak{R}_{i}, \ \text{for} \ 1<|i-j|\leq n, \ \forall \ i,j\in I,\\
			& \mathfrak{R}_i\mathfrak{R}_{j}\mathfrak{R}_i=\mathfrak{R}_{j}\mathfrak{R}_{i}\mathfrak{R}_{j}, \ \text{for} \ |i-j|=1, \ \forall \ 2\leq i,j\leq n,\\
			&(\mathfrak{R}_{2}\mathfrak{R}_{1})^{4}=(\mathfrak{R}_{n}\mathfrak{R}_{n+1})^{4}=e
			\big\rangle.
		\end{aligned}
	\end{equation}
	Group $\mathbf{G}_{\mathbf{C}^t}$ is called the {\em affine Weyl group} of type $\mathbf{C}_{n}^{(1),t}$ (or type $\mathbf{C}^t$). Define the set $\Gamma_{\mathbf{C}^t}(\bm{\mu})$ for a given $\bm{\mu}=(\mu_1,\cdots,\mu_{n+1})$ and the possible local mass quantity $\bm{\sigma}$ as follows:
	\begin{enumerate}[(1)]
		\item ${\bf0}\in\Gamma_{\mathbf{C}^t}(\bm{\mu})$.
		
		\item If $\bm{\sigma}=(\sigma_1,\cdots,\sigma_{n+1})\in\Gamma_{\mathbf{C}^t}(\bm{\mu})$, then $\mathfrak{R}_i\bm{\sigma}\in\Gamma_{\mathbf{C}^t}(\bm{\mu})$ for any $\mathfrak{R}_{i}\in\mathbf{G}_{\mathbf{C}^t}$ satisfying \eqref{Affine-C_n^1-Group-Definition}, where each generator $\mathfrak{R}_i$, $i\in I$ sends $\bm{\sigma}$ to $\mathfrak{R}_i\bm{\sigma}$ with
		\begin{equation*}
			(\mathfrak{R}_i\bm{\sigma})_j
			=\begin{cases}
				2\mu_i-\sum\limits_{t\in I}k_{it}\sigma_t+\sigma_i,\quad &\text{if} \ j=i,\\	
				\sigma_j,\quad &\text{if} \ j\neq i.	
			\end{cases}
		\end{equation*}
	\end{enumerate}
	Here $(k_{ij}^{\mathbf{c}^{t}})_{(n+1)\times(n+1)}$ is defined as in \eqref{Affine-C_n^1-Cartan-matrix}. For any $\bm{\sigma}=(\sigma_1,\cdots,\sigma_{n+1})\in\Gamma_{\mathbf{C}^t}(\bm{\mu})$, $\sigma_i$ is a degree one polynomial of $\mu_i$ for $i\in I$.
\medskip

	The counterpart result for system \eqref{Affine-Toda-system-C_n^1-form} is presented as follows.
	
	\begin{theorem}\label{Affine-Toda-Section-1-Theorem-2}
		Suppose that $\bm{\sigma}=(\sigma_1,\cdots,\sigma_{n+1})$, defined as in \eqref{Affine-Toda-sigma-Definition}, is the local mass of a sequence of blow-up solutions $\mathbf{u}^{k}=(u^{k}_{1},\cdots,u^{k}_{n+1})$ of system \eqref{Affine-Toda-system-C_n^1-form} satisfying conditions \eqref{Affine-Toda-3Conditions}. Then there exists $\hat{\bm{\sigma}}=(\hat{\sigma}_1,\cdots,\hat{\sigma}_{n+1})\in\Gamma_{\mathbf{C}^t}(\bm{\mu})$ such that
		\begin{equation*}
			{\sigma}_i=\hat{\sigma}_i+2m_{i}, \ m_i\in\mathbb{Z}, \ \forall \ i\in I.
		\end{equation*}
	\end{theorem}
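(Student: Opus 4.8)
\emph{Strategy via symmetric embedding.} The plan is to reduce Theorem \ref{Affine-Toda-Section-1-Theorem-2} to the already-established Theorem \ref{Affine-Toda-Section-1-Theorem-1} by exploiting the folding of the $\mathbf{A}_{2n-1}^{(1)}$ diagram onto the $\mathbf{C}_n^{(1),t}$ diagram. Given a sequence $\mathbf{u}^k=(u_1^k,\dots,u_{n+1}^k)$ of blow-up solutions of \eqref{Affine-Toda-system-C_n^1-form} satisfying \eqref{Affine-Toda-3Conditions}, I would first form the symmetric extension $\tilde{\mathbf{u}}^k=(\tilde u_1^k,\dots,\tilde u_{2n}^k)$ on the index set $\{1,\dots,2n\}$ by setting $\tilde u_i^k=\tilde u_{2n+2-i}^k=u_i^k$ and $\tilde\alpha_i=\tilde\alpha_{2n+2-i}=\alpha_i$ for $i\in I$ (indices read modulo $2n$, so that $1$ and $n+1$ are the two fixed nodes). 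Substituting into \eqref{Affine-A_n^1-Cartan-matrix} and using that the entries $k_{1,2}^{\mathbf{c}^t}=-2$ and $k_{n+1,n}^{\mathbf{c}^t}=-2$ are produced by the two equal contributions $k_{1,2}^{\mathbf{A}}+k_{1,2n}^{\mathbf{A}}=-1-1$ of a folded pair, one checks that $\tilde{\mathbf{u}}^k$ solves the $\mathbf{A}_{2n-1}^{(1)}$ system \eqref{Affine-Toda-system-A_n^1-form}; the constraint $\sum_{i=1}^{2n}\tilde u_i^k=u_1^k+2\sum_{i\in I\setminus\{1,n+1\}}u_i^k+u_{n+1}^k=0$ is exactly the $\mathbf{C}^t$ constraint. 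Since conditions \eqref{Affine-Toda-3Conditions} are invariant under the involution $i\mapsto 2n+2-i$, the extended sequence again satisfies them, and its local masses obey $\tilde\sigma_i=\tilde\sigma_{2n+2-i}$ with $\tilde\sigma_i=\sigma_i$ for $i\in I$.

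\emph{Folding the affine Weyl group.} The second ingredient is the identification of $\mathbf{G}_{\mathbf{C}^t}$ with the subgroup of $\mathbf{G}_{\mathbf{A}}$ fixed by the diagram involution $\tau:\mathfrak R_i\mapsto\mathfrak R_{2n+2-i}$. Concretely I would match generators by $\mathfrak R_1^{\mathbf{C}^t}\leftrightarrow\mathfrak R_1^{\mathbf{A}}$, $\mathfrak R_{n+1}^{\mathbf{C}^t}\leftrightarrow\mathfrak R_{n+1}^{\mathbf{A}}$ (the fixed nodes), and $\mathfrak R_j^{\mathbf{C}^t}\leftrightarrow\mathfrak R_j^{\mathbf{A}}\mathfrak R_{2n+2-j}^{\mathbf{A}}$ for $2\le j\le n$ (the paired nodes, whose two factors commute because $j$ and $2n+2-j$ are non-adjacent on the $2n$-cycle), then verify that the defining relations \eqref{Affine-C_n^1-Group-Definition} are the images under this folding of the relations \eqref{Affine-A_n^1-Group-Definition}; in particular the order-four relations $(\mathfrak R_2\mathfrak R_1)^4=(\mathfrak R_n\mathfrak R_{n+1})^4=e$ are the standard consequence of collapsing the two braid relations that a fixed node shares with a commuting pair. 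The crucial compatibility is that, on a symmetric $\bm\sigma$, the folded generator reproduces the $\Gamma_{\mathbf{C}^t}$ action: for node $1$ one has $\sum_t k_{1t}^{\mathbf{A}}\tilde\sigma_t=2\sigma_1-\sigma_2-\sigma_{2n}=2\sigma_1-2\sigma_2=\sum_t k_{1t}^{\mathbf{c}^t}\sigma_t$, and likewise at $n+1$, while for an interior paired node the composite $\mathfrak R_j^{\mathbf{A}}\mathfrak R_{2n+2-j}^{\mathbf{A}}$ changes $\sigma_j$ by $2\tilde\mu_j-\sum_t k_{jt}^{\mathbf{A}}\tilde\sigma_t+\sigma_j$, matching the $(-1,2,-1)$ rows of $k^{\mathbf{c}^t}$. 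This is precisely where the \emph{transpose} structure emerges: the fixed node sees both members of a pair (entries $-2$), whereas each paired node sees only the single fixed node (entries $-1$).

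\emph{Transporting the result.} With these identifications in place I would carry the symmetry through the blow-up analysis underlying Theorem \ref{Affine-Toda-Section-1-Theorem-1} applied to $\tilde{\mathbf{u}}^k$: at each scaling step the relevant set-chain element of $\mathbf{G}_{\mathbf{A}}$ commutes with $\tau$, hence lies in the folded subgroup and projects to an element of $\mathbf{G}_{\mathbf{C}^t}$. Consequently the $\hat{\bm\sigma}^{\mathbf{A}}\in\Gamma_{\mathbf{A}}(\tilde{\bm\mu})$ supplied by Theorem \ref{Affine-Toda-Section-1-Theorem-1} may be taken symmetric, its restriction $\hat{\bm\sigma}=(\hat\sigma_1,\dots,\hat\sigma_{n+1})$ lies in $\Gamma_{\mathbf{C}^t}(\bm\mu)$, and the remainders satisfy $\tilde m_i=\tilde m_{2n+2-i}$; restricting to $i\in I$ yields $\sigma_i=\hat\sigma_i+2m_i$ with $m_i\in\mathbb Z$, as claimed.

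\emph{Main obstacle.} The delicate point is not the algebra of the folding but the claim that symmetry survives at \emph{every} intermediate scale, so that the set-chain element arising in the inductive step is genuinely $\tau$-invariant rather than only symmetric after the final shift. I expect the hard part to be proving that whenever the rescaled problem bubbles, the two $\tau$-conjugate bubbles are either simultaneously present (forcing a paired generator $\mathfrak R_j^{\mathbf{A}}\mathfrak R_{2n+2-j}^{\mathbf{A}}$) or the bubble is centered on a fixed node (forcing $\mathfrak R_1^{\mathbf{A}}$ or $\mathfrak R_{n+1}^{\mathbf{A}}$); any asymmetric bubbling would break the reduction. Handling the borderline configurations where a bubble straddles the fixed axis, and confirming that the associated Pohozaev identities close up to the $\mathbf{C}^t$ Cartan data rather than the $\mathbf{A}$ data, is where the argument will demand the most care.
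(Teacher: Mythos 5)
Your folding strategy is genuinely close in spirit to what the paper does, but the paper uses the symmetric extension $\tilde u_i=\tilde u_{2n+2-i}=u_i$ only \emph{locally}: to derive the Pohozaev identity (Proposition \ref{Affine-Toda-Section-4-Proposition-2}) and to compute the masses of $\mathbf{C}_{l+1}$-type bubbles by embedding them into $\mathbf{A}_{2l+1}$ systems (Step 1 of the proof of Lemma \ref{Affine-Toda-Section-4-Lemma-6}, and Lemmas \ref{Affine-Toda-Section-4-Lemma-11}--\ref{Affine-Toda-Section-4-Lemma-13} via the symmetric permutation group $\mathbb{S}_{\mathbf{C}_{l+1}}$). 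The global argument is instead run directly on the $\mathbf{C}^t$ system: a separate selection result (Proposition \ref{Affine-Toda-Section-4-Proposition-1}, with four alternatives in which arcs touching the end nodes $1$ or $n+1$ produce genuine $\mathbf{C}$-type limit systems), a new definition of $J$-chains in $\mathbf{G}_{\mathbf{C}^t}$ with $\mathfrak{R}_J=(\mathfrak{R}_{j+l}\cdots\mathfrak{R}_j)^{l+1}$ at the boundary (Definition \ref{Affine-Toda-Section-4-Definition-7}), and a direct inductive verification that these chains reproduce the mass jumps (Lemma \ref{Affine-Toda-Section-4-Lemma-6}, Theorem \ref{Affine-Toda-Section-4-Theorem-10}, Lemma \ref{Affine-Toda-Section-4-Lemma-14}). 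The genuine gap in your reduction is the black-box appeal to Theorem \ref{Affine-Toda-Section-1-Theorem-1}: its statement only supplies \emph{some} $\hat{\bm\sigma}\in\Gamma_{\mathbf{A}}(\tilde{\bm\mu})$ with $\tilde\sigma_i=\hat\sigma_i+2m_i$, and nothing lets you ``take it symmetric.'' A symmetric local-mass vector can be shadowed by a non-symmetric element of $\Gamma_{\mathbf{A}}(\tilde{\bm\mu})$ modulo $2\mathbb{Z}^{2n}$, and a non-symmetric $\hat{\bm\sigma}$ does not restrict to $\Gamma_{\mathbf{C}^t}(\bm\mu)$. To repair this you must, as you half-concede, re-run the entire induction of Sections \ref{Affine-Toda-Section-2+}--\ref{Affine-Toda-Section-3} while tracking $\tau$-invariance of every set chain, and additionally prove the chain-level identities (not just the generator-level matching you sketch): that the folded word arising from a symmetric arc through node $1$ or $n+1$ equals, in its action on the symmetric slice, a word in the folded generators whose restricted action is the $\mathbf{G}_{\mathbf{C}^t}$ chain. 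That identification is exactly the computational content of Lemma \ref{Affine-Toda-Section-4-Lemma-6}; without it your argument does not close, and with it you have essentially reproduced the paper's proof in folded notation.

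One further point: your ``main obstacle'' is partially misdiagnosed. The symmetric extension duplicates \emph{components}; the involution $i\mapsto 2n+2-i$ acts on indices, not on the plane, so $\tilde u^k_i$ and $\tilde u^k_{2n+2-i}$ are literally the same function. Any selection procedure therefore treats them identically: $\tau$-conjugate components blow up simultaneously at the same points and scales, the decomposition sets $J$, $N$ are automatically $\tau$-symmetric, and there is no spatial ``fixed axis'' that a bubble could straddle. The symmetric consecutive arcs on the $2n$-cycle are precisely those centered at the two fixed nodes $1$ and $n+1$, and these fold to the $\mathbf{C}$-type limit systems that the paper isolates in alternatives \textbf{(I)}--\textbf{(III)} of Proposition \ref{Affine-Toda-Section-4-Proposition-1}, while mirror pairs of non-symmetric arcs contribute commuting products $\mathfrak{R}_{J_p}\mathfrak{R}_{\tau(J_p)}$, which are manifestly $\tau$-fixed. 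So the analytic symmetry you worry about comes for free; the labor your proposal omits is the algebraic bookkeeping --- verifying that the induced chains are the correct elements of $\mathbf{G}_{\mathbf{C}^t}$ and that the $\bm\mu$-dependent affine actions match along the whole orbit, which is where the paper spends Section \ref{Affine-Toda-Section-4}.
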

	
	\begin{rmk}
		Unlike the classical case, we cannot arrive at a similar conclusion for the affine $\mathbf{B}$ type Toda system with rank $n\geq 3$. The primary obstacle in doing so is the possible emergence of the $\mathbf{D}$ type Toda system from a subsystem of affine $\mathbf{B}$ type. It remains a formidable challenge to investigate the compactness of $\mathbf{D}$ and Toda systems associated with other exceptional Lie algebras. This is because, in contrast to the $\mathbf{A}$ type, the Fuchsian ODE is replaced by a pseudo-differential equation for other types, which poses a hurdle for proving that the local monodromy is a unitary matrix.
	\end{rmk}
	
	Theorems \ref{Affine-Toda-Section-1-Theorem-1} and \ref{Affine-Toda-Section-1-Theorem-2} find practical applications in establishing the compactness results for the associated normalized equations on manifold. Let $\Delta_g$ be the Beltrami operator ($-\Delta_g\geq 0$) on a closed $2$-dimensional Riemann surface $(M,g)$ and denote $\mathbf{u}^k=(u^k_1,\cdots,u^k_{n+1})$ by a sequence of blow-up solutions to the system
	\begin{equation}\label{Affine-Toda-Section-1-Eq-12}
		\begin{cases}
			\Delta_{g} u^k_{i}+\sum\limits_{j\in I}k_{ij}\rho^k_{j}\left(\frac{h^k_{j}e^{u^k_j}}{\int_{M}h^k_{j}e^{u^k_j}\mathrm{d}V_{g}}
			-\dfrac{1}{|M|}\right)
			=4\pi{\sum\limits_{p\in \mathcal{S}}\alpha_{p,i}\left(\delta_{p}-\dfrac{1}{|M|}\right)} \ \text{in} \ M, \ \ \forall \ i\in I,\\
			\sum\limits_{i\in I}u^k_i\equiv C_k, \ \ \text{where} \ \sup\limits_{k\geq 1}|C_k|\leq C \ \text{for some constant} \ C>0,
		\end{cases}
	\end{equation}
	where $(k_{ij})_{(n+1)\times(n+1)}$ is defined as in \eqref{Affine-A_n^1-Cartan-matrix}, $h^k_i$ ($i\in I$) are positive and smooth functions in $M$ satisfying that $\sup\limits_{k\geq 1}\max\limits_{i\in I}\|h^k_i\|_{C^3(M)}\leq C$, $\mathcal{S}$ is a finite set of $M$, $\alpha_{p,i}>-1$ and $\bm{\rho}^k=(\rho^k_1,\cdots,\rho^k_{n+1})$ is a sequence of constant vectors with nonnegative components such that $\lim\limits_{k\rightarrow+\infty}\bm{\rho}^k=(\rho_1,\cdots,\rho_{n+1})$. Since \eqref{Affine-Toda-Section-1-Eq-12} is invariant under adding any constants, we may consider \eqref{Affine-Toda-Section-1-Eq-12} in the space $\mathbf{H}=(\mathring{H}^1(M))^{n+1}$, where
	\begin{equation*}
		\mathring{H}^1(M)=\left\{u\in H^{1}(M) \big| \int_{M}u\mathrm{d}V_{g}=0\right\}.
	\end{equation*}
	Without loss of generality, one may assume that $C_k=0$ and from which we could get that $\sum_{i\in I}\alpha_{p,i}=0$ for any $p\in \mathcal{S}$.
	
	We decompose $u^k_i(x)=\tilde{u}^k_i(x)-4\pi\sum_{p\in\mathcal{S}}\alpha_{p,i}G(x,p)$, where $G(x,p)$ is the Green function satisfying that
	\begin{equation*}
		\Delta_{g}G(x,p)+\left(\delta_p-\frac{1}{|M|}\right)=0, \ \ \int_MG(x,p)\mathrm{d}V_g=0.
	\end{equation*}
	Then we can rewrite \eqref{Affine-Toda-Section-1-Eq-12} as
	\begin{equation}\label{Affine-Toda-Section-1-Eq-13}
		\begin{cases}
			\Delta_g\tilde{u}_i^k+\sum\limits_{j\in I}k_{ij}\rho^k_{j}\left(\frac{\tilde h^k_{j}e^{\tilde{u}^k_j}}{\int_{M}\tilde{h}^k_{j}e^{\tilde{u}^k_j}\mathrm{d}V_{g}}-\dfrac{1}{|M|}\right)=0 \ \text{in} \ M, \ \ \forall \ i\in I,\\
			\sum\limits_{i\in I}\tilde{u}^k_i\equiv0,
		\end{cases}
	\end{equation}
	where $\tilde{h}_i^k(x)=h^k_i(x)e^{-4\pi\sum_{p\in\mathcal{S}}\alpha_{p,i}G(x,p)}$ for $i\in I$. Let $\mu_{p,i}=1+\alpha_{p,i}$ for $i\in I$ and denote by
	\begin{equation*}
		\Gamma^{\mathbf{A}}_{i}=\left\{2\pi\sum\limits_{p\in R}\sigma_{p,i}+4\pi m_i \ \big| \ (\sigma_{p,1},\cdots,\sigma_{p,n+1})\in\Gamma_{\mathbf{A}}(\mu_{p,1},\cdots,\mu_{p,n+1}), R\subseteq S, m_i\in\mathbb{Z}\right\}.
	\end{equation*}
	Then the compactness result for \eqref{Affine-Toda-Section-1-Eq-13} is established as below
	
	\begin{theorem}\label{Affine-Toda-Section-1-Theorem-3}
		Suppose that $\rho_i\notin\Gamma^{\mathbf{A}}_i$ for each $i\in I$. Then there exists a constant $C>0$ depending on  $\bm{\rho}$ such that for any solution $\tilde{\mathbf{u}}=(\tilde u_1,\cdots,\tilde u_{n+1})\in \mathbf{H}$ of \eqref{Affine-Toda-Section-1-Eq-13},
		\begin{equation*}
			|\tilde{u}_i(x)|\leq C, \ \forall \ x\in {M}, \ i\in I.
		\end{equation*}
	\end{theorem}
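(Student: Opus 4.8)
The plan is to argue by contradiction via a blow-up analysis, feeding each local blow-up picture into Theorem \ref{Affine-Toda-Section-1-Theorem-1}. Suppose the asserted bound fails; then there is a sequence $\tilde{\mathbf{u}}^k\in\mathbf{H}$ of solutions to \eqref{Affine-Toda-Section-1-Eq-13} with $\max_{i\in I}\|\tilde u_i^k\|_{L^\infty(M)}\to+\infty$. Setting $\lambda_j^k=\int_M\tilde h_j^k e^{\tilde u_j^k}\mathrm{d}V_g$ and $V_i^k=\tilde u_i^k-\log\lambda_i^k$, the system \eqref{Affine-Toda-Section-1-Eq-13} turns into $\Delta_g V_i^k+\sum_{j\in I}k_{ij}\rho_j^k\tilde h_j^k e^{V_j^k}=\mathrm{const}$, in which component $j$ now carries the fixed total mass $\int_M\rho_j^k\tilde h_j^k e^{V_j^k}\mathrm{d}V_g=\rho_j^k\to\rho_j$; moreover $\int_M\tilde u_i^k\mathrm{d}V_g=0$ forces $\log\lambda_i^k=-\tfrac{1}{|M|}\int_M V_i^k\mathrm{d}V_g$, so it suffices to bound $V_i^k$ in $L^\infty(M)$. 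First I would invoke a Brezis--Merle/concentration-compactness alternative for this fixed-mass Toda system to show that the blow-up set $\Sigma\subseteq M$ is finite and that, away from $\Sigma$, each $V_i^k$ is either locally uniformly bounded or tends to $-\infty$ locally uniformly.

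At each $p\in\Sigma$ I would localize in normal coordinates. Absorbing the smooth positive weight $\rho_j^k\tilde h_j^k$ and using the Green's function decomposition $u^k_i=\tilde u^k_i-4\pi\sum_{q\in\mathcal{S}}\alpha_{q,i}G(\cdot,q)$ (which converts the factor $e^{-4\pi\alpha_{p,i}G(\cdot,p)}\sim|x-p|^{2\alpha_{p,i}}$ into the Dirac source), the rescaled functions solve a local system of the form \eqref{Affine-Toda-system-A_n^1-form} on a small ball with weights $\alpha_{p,i}$ (so $\mu_{p,i}=1+\alpha_{p,i}$, with $\alpha_{p,i}=0$ and $\mu_{p,i}=1$ when $p\notin\mathcal{S}$). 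The task is to verify the hypotheses \eqref{Affine-Toda-3Conditions}: (iii) is the available uniform energy bound; (i), the isolation of a single blow-up point, follows from finiteness of $\Sigma$ together with a standard selection/rescaling; and (ii), the boundary oscillation estimate on small spheres, follows from a Harnack-type inequality valid off the discrete set $\Sigma$. Theorem \ref{Affine-Toda-Section-1-Theorem-1} then yields that the local mass $\bm{\sigma}_p=(\sigma_{p,1},\cdots,\sigma_{p,n+1})$ obeys $\sigma_{p,i}=\hat\sigma_{p,i}+2m_{p,i}$ with $\hat{\bm{\sigma}}_p\in\Gamma_{\mathbf{A}}(\mu_{p,1},\cdots,\mu_{p,n+1})$ and $m_{p,i}\in\mathbb{Z}$.

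Finally I would perform the global mass accounting. Passing to weak-$*$ limits, $\rho_i^k\tilde h_i^k e^{V_i^k}\mathrm{d}V_g\rightharpoonup 2\pi\sum_{p\in\Sigma}\sigma_{p,i}\,\delta_p+g_i\,\mathrm{d}V_g$ for some nonnegative $g_i$; on each component along which $V_i^k\to-\infty$ off $\Sigma$ the regular density $g_i$ vanishes, giving $\rho_i=2\pi\sum_{p\in\Sigma}\sigma_{p,i}$. Splitting $\Sigma=(\Sigma\cap\mathcal{S})\cup(\Sigma\setminus\mathcal{S})$ and using that every element of $\Gamma_{\mathbf{A}}(1,\cdots,1)$ has even-integer entries (an easy induction on the generators, since $k_{ij}\in\mathbb{Z}$ and $\mu_{p,i}=1$ at regular points), the contributions $2\pi\sigma_{p,i}$ with $p\notin\mathcal{S}$ lie in $4\pi\mathbb{Z}$ and, together with the even shifts $2m_{p,i}$, collapse into a single term $4\pi m_i$, $m_i\in\mathbb{Z}$. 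Hence $\rho_i=2\pi\sum_{p\in\Sigma\cap\mathcal{S}}\hat\sigma_{p,i}+4\pi m_i\in\Gamma_i^{\mathbf{A}}$, contradicting $\rho_i\notin\Gamma_i^{\mathbf{A}}$ and establishing the bound. I expect the main obstacle to be precisely this last step: ruling out a surviving continuous part $g_i$ on the blown-up components so that the mass is purely atomic and quantized. Establishing this full-concentration property---equivalently, that on a blown-up component $V_i^k\to-\infty$ locally uniformly off $\Sigma$ rather than converging to a nontrivial background---requires the coupling through $\sum_{i\in I}\tilde u_i^k\equiv0$, the Harnack dichotomy above, and, if a background persists, an induction on the total mass to reduce to a strictly smaller configuration.
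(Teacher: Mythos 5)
Your architecture is the right one, and in fact the one the paper itself intends: the paper gives no proof of Theorem \ref{Affine-Toda-Section-1-Theorem-3}, deferring in a remark to \cite[Theorem 1.3]{Cui-Wei-Yang-Zhang-2022}, whose argument is exactly your scheme --- contradiction, finite blow-up set $\Sigma$, verification of \eqref{Affine-Toda-3Conditions} around each $p\in\Sigma$, quantization of local masses via Theorem \ref{Affine-Toda-Section-1-Theorem-1}, and global bookkeeping in which regular blow-up points contribute elements of $\Gamma_{\mathbf{A}}(1,\cdots,1)\subseteq(2\mathbb{N}\cup\{0\})^{n+1}$ (Proposition \ref{Affine-Toda-Section-2-Proposition-8}) that merge with the shifts $2m_{p,i}$ into the $4\pi m_i$ term of $\Gamma^{\mathbf{A}}_i$, while $R=\Sigma\cap\mathcal{S}$ accounts for the subset structure. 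Your normalization $V_i^k=\tilde u_i^k-\log\lambda_i^k$ and the identities $\int_M\rho_j^k\tilde h_j^ke^{V_j^k}\mathrm{d}V_g=\rho_j^k$ and $\log\lambda_i^k=-\frac{1}{|M|}\int_MV_i^k\mathrm{d}V_g$ are correct. One imprecision worth fixing: the weight $\rho_j^k\tilde h_j^k$ is not constant, so ``absorbing'' it does not literally reduce to \eqref{Affine-Toda-system-A_n^1-form}; the localized system carries smooth positive $C^3$ coefficients and a sum constraint that holds only up to a bounded correction, and Theorem \ref{Affine-Toda-Section-1-Theorem-1} is stated for the exact system. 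This is precisely why \cite{Cui-Wei-Yang-Zhang-2022} formulates its local quantization with $h_j$-coefficients; the extension is routine but must be invoked.

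The genuine gap is the step you flag and then wave at: full concentration. Your proposed remedy (``induction on the total mass'') is not the mechanism used, and as stated the argument does not close. Per component one has a dichotomy on $M\setminus\Sigma$: either $V_i^k\to-\infty$ locally uniformly, or $V_i^k\to w_i$ in $C^2_{\mathrm{loc}}(M\setminus\Sigma)$ with $g_i=\rho_i\tilde h_ie^{w_i}\not\equiv0$; residual components can genuinely occur (this is why the hypothesis assumes $\rho_i\notin\Gamma^{\mathbf{A}}_i$ for \emph{every} $i$, and why the contradiction needs only one fully concentrating component), so your identity $\rho_i=2\pi\sum_{p\in\Sigma}\sigma_{p,i}$ is unjustified until you produce at least one index $i$ in the first alternative. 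The mechanism that produces it is quantitative, not inductive: if $V_i^k\to w_i$, Green's representation gives $w_i(x)=-\bigl(\sum_{j\in I}k_{ij}\sigma_{p,j}\bigr)\log|x-p|+O(1)$ near each $p\in\Sigma$, so $\tilde h_ie^{w_i}\sim|x-p|^{\,2\alpha_{p,i}-\sum_jk_{ij}\sigma_{p,j}}$, and Fatou together with $\int_M\tilde h_ie^{V_i^k}\mathrm{d}V_g=1$ forces $\mu^{*}_{p,i}:=\mu_{p,i}-\frac12\sum_jk_{ij}\sigma_{p,j}>0$ at every $p$. Residual components must therefore be ``screened'' at all blow-up points --- which is possible, e.g.\ $\sigma_{p,i}=0$ with blown-up neighbors makes $\sum_jk_{ij}\sigma_{p,j}<0$. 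The missing lemma, supplied in the cited argument by the terminal fast-decay conclusion of the selection process (your Step-2/Step-4 mechanism, which stops only when every component decays fast at the outermost scale) combined with the Pohozaev identity of Proposition \ref{Affine-Toda-Section-2-Proposition-5}, is that the terminal local-mass vector at a blow-up point has at least one unscreened component, $\mu^{*}_{p,i}\le0$; for that $i$ the integrability contradiction above rules out the residual alternative, yielding $g_i\equiv0$, $\rho_i=2\pi\sum_{p\in\Sigma}\sigma_{p,i}\in\Gamma^{\mathbf{A}}_i$, and the desired contradiction. Without this lemma the final step of your plan does not go through.
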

	
	Similar conclusion also holds for $\mathbf{C}^{t}$ type affine Toda system. Suppose that $\hat{\mathbf{u}}^k=(\hat{u}_1^k,\cdots,\hat{u}_{n+1}^k)$ is a sequence of blow-up solutions of
	\begin{equation}\label{Affine-Toda-Section-1-Eq-14}
		\begin{cases}
			\Delta_g\hat{u}_i^k+\sum\limits_{j\in I}k_{ij}^{\mathbf{c}^{t}}\rho^k_{j}\left(\frac{\hat h^k_{j}e^{\hat{u}^k_j}}{\int_{M}\hat{h}^k_{j}e^{\hat{u}^k_j}\mathrm{d}V_{g}}-\dfrac{1}{|M|}\right)=0 \ \text{in} \ M, \ \ \forall \ i\in I,\\
			\hat{u}_1^k+2\sum\limits_{i\in I\setminus\{1,n+1\}}\hat{u}_i^k+\hat{u}_{n+1}^k\equiv0,
		\end{cases}
	\end{equation}
	where $\hat{h}_i^k(x)=h^k_i(x)e^{-4\pi\sum_{p\in\mathcal{S}}\alpha_{p,i,c}G(x,p)}$ for $i\in I$ and
	\begin{equation*}
		\alpha_{p,1,c}+2\sum_{i\in I\setminus\{1,n+1\}}\alpha_{p,i,c}+\alpha_{p,n+1,c}=0, \ \forall \ p\in\mathcal{S}.
	\end{equation*}
	We set $\mu_{p,i,c}=1+\alpha_{p,i,c}$ for any $p\in\mathcal{S}$, $i\in I$  and denote by
	\begin{equation*}
		\Gamma^{\mathbf{C}^t}_{i}=\left\{2\pi\sum\limits_{p\in Q}\sigma_{p,i}+4\pi m_i \ \big| \ (\sigma_{p,1},\cdots,\sigma_{p,n+1})\in\Gamma_{\mathbf{C}^t}(\mu_{p,1,c},\cdots,\mu_{p,n+1,c}), Q\subseteq S, m_i\in\mathbb{Z}\right\}.
	\end{equation*}
	Then the following compactness result holds.
	
	\begin{theorem}\label{Affine-Toda-Section-1-Theorem-4}
		Suppose that $\rho_i\notin\Gamma^{\mathbf{C}^t}_i$ for each $i\in I$. Then there exists a constant $C>0$ depending on $\bm{\rho}$ such that for any solution $\hat{\mathbf{u}}=(\hat u_1,\cdots,\hat u_{n+1})\in \mathbf{H}$ of \eqref{Affine-Toda-Section-1-Eq-14},
		\begin{equation*}
			|\hat{u}_i(x)|\leq C, \ \forall \ x\in M, \ i\in I.
		\end{equation*}
	\end{theorem}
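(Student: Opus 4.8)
The plan is to argue by contradiction and to reduce the global problem on $M$ to the local analysis already encoded in Theorem \ref{Affine-Toda-Section-1-Theorem-2}, paralleling the scheme behind Theorem \ref{Affine-Toda-Section-1-Theorem-3} but with the pair $(\mathbf{G}_{\mathbf{A}},\Gamma_{\mathbf{A}})$ replaced by $(\mathbf{G}_{\mathbf{C}^t},\Gamma_{\mathbf{C}^t})$ and with the local mass classification taken from Theorem \ref{Affine-Toda-Section-1-Theorem-2} rather than Theorem \ref{Affine-Toda-Section-1-Theorem-1}. Suppose the assertion fails. Then there is a sequence $\hat{\mathbf{u}}^k=(\hat u_1^k,\dots,\hat u_{n+1}^k)\in\mathbf{H}$ of solutions of \eqref{Affine-Toda-Section-1-Eq-14} with parameters $\bm{\rho}^k\to\bm{\rho}$ and with $\max_{i\in I}\|\hat u_i^k\|_{L^\infty(M)}\to+\infty$. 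Since each density $\hat h_j^k e^{\hat u_j^k}/\int_M \hat h_j^k e^{\hat u_j^k}\,\mathrm{d}V_g$ is a probability measure, the component parameter $\rho_j^k$ is exactly the total mass of the $j$-th nonlinearity, so the sequence $\{\bm{\rho}^k\}$ controls the energy. A Brezis--Merle type alternative for the coupled system, together with the uniform $C^3$ bound on $h_i^k$ and a concentration-compactness dichotomy, then shows that the blow-up set
\[
\Sigma=\Big\{p\in M:\ \max_{i\in I}\hat u_i^k(x_k)\to+\infty\ \text{for some}\ x_k\to p\Big\}
\]
is finite and that on every compact subset of $M\setminus\Sigma$ the sequence $\hat{\mathbf{u}}^k$ is uniformly bounded.

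The second step is to localize at each $p\in\Sigma$. I would choose $r>0$ so small that $\overline{B_r(p)}\cap\Sigma=\{p\}$ and $\overline{B_r(p)}$ lies in a single coordinate chart. Using the Green representation to peel off the smooth background $-4\pi\sum_{q\in\mathcal{S}}\alpha_{q,i,c}G(\cdot,q)$ and absorbing the regular part of the equation into the coefficient, the restriction of $\hat{\mathbf{u}}^k$ to $B_r(p)$ solves, after the usual conformal change to the flat metric, a system of the local form \eqref{Affine-Toda-system-C_n^1-form} with singular weight $\alpha_{p,i,c}$ (which equals $0$ when $p\notin\mathcal{S}$) and with the same constraint $u_1+2\sum_{i\in I\setminus\{1,n+1\}}u_i+u_{n+1}\equiv C$. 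One then verifies the three hypotheses \eqref{Affine-Toda-3Conditions}: $(i)$ holds because $p$ is the only blow-up point in $B_r(p)$; $(iii)$ is the global energy bound inherited from $\rho_i^k$; and $(ii)$, the boundary oscillation bound on $\partial B_r(p)$, follows from a Harnack-type inequality in the blow-up-free annulus $B_r(p)\setminus B_{r/2}(p)$. Theorem \ref{Affine-Toda-Section-1-Theorem-2} then applies and yields, at each $p\in\Sigma$, a local mass $\bm{\sigma}_p=(\sigma_{p,1},\dots,\sigma_{p,n+1})$ with
\[
\sigma_{p,i}=\hat\sigma_{p,i}+2m_{p,i},\qquad m_{p,i}\in\mathbb{Z},\quad \hat{\bm{\sigma}}_p\in\Gamma_{\mathbf{C}^t}(\bm{\mu}_{p,c}),
\]
where $\bm{\mu}_{p,c}=(\mu_{p,1,c},\dots,\mu_{p,n+1,c})$ (and $\bm{\mu}_{p,c}=\mathbf{1}$ at a regular blow-up point $p\notin\mathcal{S}$).

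The third step is to pass to the limit in the measures. The concentration-compactness analysis gives
\[
\rho_i^k\,\frac{\hat h_i^k e^{\hat u_i^k}}{\int_M \hat h_i^k e^{\hat u_i^k}\,\mathrm{d}V_g}\ \rightharpoonup\ 2\pi\sum_{p\in\Sigma}\sigma_{p,i}\,\delta_p+\hat r_i\,\mathrm{d}V_g ,
\]
with $\hat r_i\in L^1(M)$, so that $\rho_i=2\pi\sum_{p\in\Sigma}\sigma_{p,i}+\int_M\hat r_i\,\mathrm{d}V_g$. Splitting $\Sigma=(\Sigma\cap\mathcal{S})\cup(\Sigma\setminus\mathcal{S})$, the contributions of the singular blow-up points are $\Gamma_{\mathbf{C}^t}(\bm{\mu}_{p,c})$-admissible with $Q=\Sigma\cap\mathcal{S}$, while the masses at the regular blow-up points $p\notin\mathcal{S}$, governed by $\Gamma_{\mathbf{C}^t}(\mathbf{1})$, are even integers and, together with the residual $\int_M\hat r_i\,\mathrm{d}V_g$, fall into the free term $4\pi m_i$ in the definition of $\Gamma_i^{\mathbf{C}^t}$. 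Hence $\rho_i\in\Gamma_i^{\mathbf{C}^t}$ for every $i\in I$, contradicting the hypothesis $\rho_i\notin\Gamma_i^{\mathbf{C}^t}$, which proves the a priori bound.

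The hard part will be the localization of the second step: showing that, after the Green's function decomposition and conformal flattening, the restricted sequence genuinely satisfies \eqref{Affine-Toda-3Conditions}, in particular establishing the boundary control $(ii)$ uniformly in $k$ for the \emph{coupled} system (the Harnack inequality must survive the off-diagonal coupling of the Cartan matrix \eqref{Affine-C_n^1-Cartan-matrix}) and checking that the constraint equation descends to the local system without destroying its $\mathbf{C}^t$ structure. A secondary but genuine difficulty is the accounting of the residual regular mass $\int_M\hat r_i\,\mathrm{d}V_g$: one must rule out a non-quantized leftover, and this is precisely where the flexibility built into $\Gamma_i^{\mathbf{C}^t}$ (arbitrary $Q\subseteq\mathcal{S}$ and arbitrary integer $m_i$) is essential.
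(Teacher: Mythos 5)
Your overall strategy is exactly the one the paper intends: the paper does not write out a proof of Theorem \ref{Affine-Toda-Section-1-Theorem-4} at all, but states (in the remark following it) that the argument is almost identical to \cite[Theorem 1.3]{Cui-Wei-Yang-Zhang-2022}, and that argument is precisely your scheme --- contradiction, Brezis--Merle alternative giving a finite blow-up set $\Sigma$ with local boundedness on $M\setminus\Sigma$, localization at each $p\in\Sigma$ (Green's function decomposition, conformal flattening, verification of \eqref{Affine-Toda-3Conditions}, where indeed (ii) comes from the Harnack-type oscillation estimate in the blow-up-free annulus), application of Theorem \ref{Affine-Toda-Section-1-Theorem-2} to quantize the local masses, and passage to the limit in the measures. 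Your bookkeeping of the quantized pieces is also correct: $Q=\Sigma\cap\mathcal{S}$, the even-integer corrections $2m_{p,i}$ and the masses at regular blow-up points (which lie in $\Gamma_{\mathbf{C}^t}(1,\cdots,1)$ and are even integers) are all absorbed into the free term $4\pi m_i$.

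There is, however, a genuine gap in your final step, and it is located exactly where you placed your ``secondary difficulty''. The residual $\int_M\hat r_i\,\mathrm{d}V_g$ is a priori an arbitrary nonnegative real number; the flexibility built into $\Gamma^{\mathbf{C}^t}_i$ cannot absorb it, since $4\pi m_i$ only accounts for integer multiples of $4\pi$. Consequently your conclusion ``$\rho_i\in\Gamma^{\mathbf{C}^t}_i$ for \emph{every} $i\in I$'' does not follow from the weak convergence you wrote down, and it is in general false: components that do not blow up, or blow up only partially, retain a nontrivial absolutely continuous part in the limit measure. Note moreover that the constraint $\hat u_1+2\sum_{i\in I\setminus\{1,n+1\}}\hat u_i+\hat u_{n+1}\equiv 0$ forbids \emph{all} components from diverging to $-\infty$ away from $\Sigma$, so full concentration of every component is impossible. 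The missing ingredient --- which is the actual content of the omitted argument in \cite{Cui-Wei-Yang-Zhang-2022} --- is a vanishing-residual statement for at least one component: using the fast-decay property of $\mathbf{u}^k$ on the boundary of the final group scale around each blow-up point, together with the Green representation on $M\setminus\Sigma$, one shows that every component carrying nonzero concentrated mass tends to $-\infty$ uniformly on compact subsets of $M\setminus\Sigma$, hence $\hat r_{i_0}\equiv 0$ for such an index $i_0$, and blow-up guarantees at least one exists. For that single index one then gets $\rho_{i_0}=2\pi\sum_{p\in\Sigma}\sigma_{p,i_0}+4\pi m_{i_0}\in\Gamma^{\mathbf{C}^t}_{i_0}$, which already contradicts the hypothesis (assumed for each $i\in I$, so one index suffices). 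You should therefore replace the ``absorb the residual into $4\pi m_i$'' step by this concentration lemma for blow-up components, and weaken the target of the contradiction from ``all $i$'' to ``some $i$''.
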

	
	\begin{rmk}
		The proof of both Theorem \ref{Affine-Toda-Section-1-Theorem-3} and Theorem \ref{Affine-Toda-Section-1-Theorem-4} is almost identical to that of \cite[Theorem 1.3]{Cui-Wei-Yang-Zhang-2022}. Therefore we omit it in this article to avoid redundancy.
	\end{rmk}
	
	This article is organized as follows. In Section \ref{Affine-Toda-Section-2}, we provide all the necessary preliminaries from analytic side and establish some type of Pohozaev identity for affine $\mathbf{A}$ type Toda system. In Section \ref{Affine-Toda-Section-2+}, we focus on the associated algebraic structure and present all the relevant results from an algebraic viewpoint. In Section \ref{Affine-Toda-Section-3}, we sketch out the proof of Theorem \ref{Affine-Toda-Section-1-Theorem-1}. This part has been explained in full detail in prior works such as \cite{Cui-Wei-Yang-Zhang-2022, Lin-Wei-Yang-Zhang-2018, Lin-Yang-Zhong-2020}. In Section \ref{Affine-Toda-Section-4}, we exhibit the counterpart results for the $\mathbf{C}^{t}$ type affine Toda system.
	
	Notations in this article are standard. For any subset $J\subseteq I$, $|J|$ stands for the cardinality of $J$. Boldface type are customarily employed to express sequences and tuples, such as $\bm{\tau}=\{\tau_k\}$, $\mathbf{s}=\{s_k\}$, $\bm{\mu}=(\mu_1,\cdots,\mu_{n+1})$ and $\bm{\sigma}=(\sigma_1,\cdots,\sigma_{n+1})$. Throughout this article, we do not distinguish sequence and subsequence.
	
	\section{Bubbling analysis and Pohozaev identity}\label{Affine-Toda-Section-2}
	\setcounter{equation}{0}
	In this section, we first analyze the bubbling areas by the standard selection procedure and establish some type of Pohozaev identity for local mass, i.e.,
	\begin{equation}\label{Affine-Toda-Section-2-Eq-1}
		\sum_{i=1}^{n+1}\sigma_i^2-\sum_{i=1}^{n+1}\sigma_i\sigma_{i+1}=2\sum_{i=1}^{n+1}\mu_i\sigma_i. \footnote{Conventionally, we set $\sigma_{\ell}=\sigma_i$ if $\ell\equiv i \ (\mathrm{mod} \ n+1)$. Therefore, $\sigma_{n+2}=\sigma_1$.}
	\end{equation}
	In the following we shall always encounter the system
	\begin{equation}\label{Affine-Toda-Section-2-Eq-37}
		\Delta u_s+\sum\limits_{t\in J}k_{st}^{\prime}e^{u_t}=4\pi\alpha_{s}\delta_{0} \ \text{in} \ \mathbb{R}^2, \ \ \int_{\mathbb{R}^2}e^{u_s(y)}\mathrm{d}y<+\infty, \ \ \forall \ s\in J\subseteq I,
	\end{equation}
	where $(k_{st}^{\prime})_{|J|\times|J|}$ is a submatrix of the generalised Cartan matrix of type $\mathbf{A}$ (the entry $(k_{st}^{\prime})=(k_{st})$) unless otherwise specified.
	
	\begin{definition}[\cite{Lin-Yang-Zhong-2020}]\label{Affine-Toda-Section-2-Definition-1}
		We say that $J\subseteq I$ consists of consecutive indices if $J=\{j,j+1,\cdots,j+l\}$ for some $j\in I$ and $l\in\mathbb{N}\cup\{0\}$.
	\end{definition}
	
	\begin{proposition}\label{Affine-Toda-Section-2-Proposition-2}
		Let $\mathbf{u}^{k}$ be a sequence of solutions of system \eqref{Affine-Toda-system-A_n^1-form} satisfying \eqref{Affine-Toda-3Conditions}. Then there exists a sequence of finite points $\Sigma_{k}:=\{0,x^{k}_{1},\cdots,x^{k}_{m}\}$ (if $0$ is not a singular point, then $0$ can be deleted from $\Sigma_k$) and a sequence of positive numbers $l^{k}_{1},\cdots,l^{k}_{m}$ such that\\
		(1) There exists a constant $C>0$ independent of $k$ such that the Harnack-type inequality holds:
		\begin{equation}\label{Affine-Toda-Section-2-Eq-3}
			\max\limits_{i\in I}\left\{u^k_i(x)+2\log\mathrm{dist}(x,\Sigma_k)\right\}\leq C, \ \forall \ x\in B_{1}(0).
		\end{equation}
		(2) $x^{k}_{j}\rightarrow 0$ and $l^{k}_{j}\rightarrow 0$ as $k\rightarrow +\infty$, $l^{k}_{j}\leq\tau^k_j:=\frac{1}{2}\mathrm{dist}(x^{k}_{j},\Sigma_{k}\setminus\{x^{k}_{j}\})$, $j=1,\cdots,m$. Furthermore, $B(x^{k}_{i},l_{i}^{k})\cap B(x^{k}_{j},l_{j}^{k})=\emptyset$  for $1\leq i,j\leq m$, $i\neq j$.\\
		(3) At each $x_j^k$, $\max\limits_{i\in I}u^k_{i}(x^{k}_{j})=\max\limits_{i\in I}\max\limits_{x\in B(x^k_{j},l^k_j)}u^k_i(x)\rightarrow+\infty$ as $k\rightarrow+\infty$, $j=1,\cdots,m$. Denote by
		\begin{equation*}
			\varepsilon_{j}^{k}:=\exp\left(-\frac{1}{2}\max\limits_{i\in I}u^{k}_{i}(x^{k}_{j})\right), \ j=1,\cdots,m.
		\end{equation*}
		Then $R^k_j:={l^{k}_{j}}/{\varepsilon_{j}^{k}}\rightarrow+\infty$ as $k\rightarrow+\infty$, $j=1,\cdots,m$.\\
		(4) In each $B(x^{k}_{j},l_{j}^{k})$, denote by $v^k_{i}(y):=u^k_i(x^k_j+\varepsilon_{j}^{k}y)+2\log\varepsilon_{j}^{k}$ for $i\in I$. Then one of the following alternatives holds:
		
		\noindent\textbf{(I).} The index set $I$ is decomposed into the form $I=J\cup N$ and $J=J_1\cup\cdots\cup J_{\vartheta}$ for some $\vartheta\in\mathbb{N}$ satisfying that
		\begin{enumerate}[(\text{I}-1).]
			\item $J_1,\cdots,J_{\vartheta}$ and $N$ are pairwise disjoint, $N\neq\emptyset$, $\{1,n+1\}\cap N\neq\emptyset$ and each $J_p$, $p=1,\cdots,\vartheta$, denoted by $J_p=\{i_p,\cdots,i_p+l_p\}$ for some $i_p\in I$ and $l_p\in\mathbb{N}\cup\{0\}$, consists of the maximal consecutive indices;
			\item $v^k_s(y)\rightarrow -\infty$ in $L^{\infty}_{\mathrm{loc}}(\mathbb{R}^2)$ for $s\in N$;
			\item $v^k_s(y)\rightarrow v_s(y)$ in $C_{\mathrm{loc}}^2(\mathbb{R}^2)$ for $s\in J_p$, $p=1,\cdots,\vartheta$, where $v_s(y)$ satisfies the $\mathbf{A}_{l_p+1}$ Toda system \eqref{Affine-Toda-Section-2-Eq-37} with $(k_{st}^{\prime})=(k_{st})$ and $\alpha_{s}=0$, $s,t\in J_p$.
		\end{enumerate}
		\noindent\textbf{(II).} The index set $I$ is decomposed into the form $I=J\cup N$ and $J=J_{0}\cup J_1\cup\cdots\cup J_{\vartheta}$ for some $\vartheta\in\mathbb{N}\cup\{0\}$ satisfying that
		\begin{enumerate}[(\text{II}-1).]
			\item $J_{0},J_1,\cdots,J_{\vartheta}$ and $N$ are pairwise disjoint, $N\neq\emptyset$, $\{1,n+1\}\cap N=\emptyset$, each $J_p$, $p=1,\cdots,\vartheta$, denoted by $J_p=\{i_p,\cdots,i_p+l_p\}$ for some $i_p\in I$ and $l_p\in\mathbb{N}\cup\{0\}$, consists of the maximal consecutive indices and $J_{0}=\{r_2,r_2+1,\cdots,n+1,1,2,\cdots,r_1\}$ with $1\leq r_1<r_2-1\leq n$, which can be rewritten as $J_{0}=\{s_1,s_{2},\cdots,s_{|J_0|}\}$ with
			\begin{equation}\label{Affine-Toda-Section-2-Eq-2}
				\left\{
				\begin{aligned}
					&s_1=r_2\\
					&s_2=r_2+1\\
					&\quad\quad\vdots\\
					&s_{n-r_2+2}=n+1
				\end{aligned}
				\right.
				\quad \text{and} \quad
				\left\{
				\begin{aligned}
					&s_{n-r_2+3}=1\\
					&s_{n-r_2+4}=2\\
					&\qquad\vdots\\
					&s_{|J_0|}=r_1
				\end{aligned}
				\right.
				\ \ ;
			\end{equation}
			\item $v^k_s(y)\rightarrow -\infty$ in $L^{\infty}_{\mathrm{loc}}(\mathbb{R}^2)$ for $s\in N$;
			\item $v^k_s(y)\rightarrow v_s(y)$ in $C_{\mathrm{loc}}^2(\mathbb{R}^2)$ for $s\in J_p$, $p=1,\cdots,\vartheta$, where $v_s(y)$ satisfies the $\mathbf{A}_{l_p+1}$ Toda system \eqref{Affine-Toda-Section-2-Eq-37} with $(k_{st}^{\prime})=(k_{st})$ and $\alpha_{s}=0$, $s,t\in J_p$;
			\item $v^k_{s_i}(y)\rightarrow v_{s_i}(y)$ in $C_{\mathrm{loc}}^2(\mathbb{R}^2)$ for $i=1,2,\cdots,|J_0|$, where $v_{s_i}(y)$ satisfies the $\mathbf{A}_{|J_0|}$ Toda system \eqref{Affine-Toda-Section-2-Eq-37} with $(k_{ij}^{\prime})=(k_{s_i s_j})$ and $\alpha_{s_i}=0$, $i,j\in\{1,\cdots,|J_0|\}$.
		\end{enumerate}
	\end{proposition}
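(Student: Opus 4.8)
The plan is to run the standard multi-bubble selection procedure, as developed in \cite{Lin-Yang-Zhong-2020, Lin-Wei-Zhang-2015} and adapted to the affine setting in \cite{Cui-Wei-Yang-Zhang-2022}, and then to read off the limiting profiles using the trace constraint that is special to the affine system. First I would invoke the selection lemma: starting from a point where $\max_{i\in I} u_i^k$ is large, one extracts finitely many concentration centers $x_1^k,\dots,x_m^k$ together with scales $l_j^k$ so that on each $B(x_j^k,l_j^k)$ the rescaled sequence is locally proper. Finiteness of $m$ is forced by the uniform mass bound \eqref{Affine-Toda-3Conditions}(iii) combined with the fact that every genuine bubble of an $\mathbf{A}$-type Toda system carries at least a fixed amount of mass; the separation and disjointness in item (2), along with $R_j^k\to+\infty$ in item (3), are built into the construction. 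The Harnack-type inequality \eqref{Affine-Toda-Section-2-Eq-3} then follows from the Harnack estimate for the linear part applied on the annuli between consecutive concentration scales together with the local $L^1$ bound on $e^{u_i^k}$, with condition \eqref{Affine-Toda-3Conditions}(ii) controlling the harmonic part near $\partial B_1(0)$.

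For item (4) I would set $v_i^k(y)=u_i^k(x_j^k+\varepsilon_j^k y)+2\log\varepsilon_j^k$. By the choice of $x_j^k$ and $\varepsilon_j^k$ one has $\max_{i\in I}v_i^k\le 0$ on $B_{R_j^k}(0)$ with equality at the origin, and each $v_i^k$ solves a Toda-type system on the expanding ball with a rescaled singular source at $-x_j^k/\varepsilon_j^k$, which escapes to infinity for the interior bubbles $j\ge 1$; hence $\alpha_s=0$ in the limit. The decisive observation, and the feature that separates the affine case from the classical one, is the trace constraint $\sum_{i\in I}u_i^k\equiv C$: rescaling gives $\sum_{i\in I}v_i^k(y)=C+2(n+1)\log\varepsilon_j^k\to-\infty$ locally uniformly, since $\varepsilon_j^k\to 0$. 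As the largest component is pinned at $0$ at the origin and there are only finitely many components, at least one must diverge to $-\infty$; passing to a subsequence fixes such an index, so $N\neq\emptyset$, and this already rules out the full affine $\mathbf{A}_n^{(1)}$ system ever occurring as a blow-up limit.

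With $N\neq\emptyset$ in hand I would pass to a subsequence and apply the Brezis--Merle alternative \cite{Brezis-Merle-1991} component by component: by the upper bounds from \eqref{Affine-Toda-Section-2-Eq-3} and standard elliptic estimates, each $v_i^k$ either converges in $C^2_{\mathrm{loc}}(\mathbb{R}^2)$ or tends to $-\infty$ in $L^\infty_{\mathrm{loc}}(\mathbb{R}^2)$, the latter set of indices being exactly $N$. For $s\in N$ one has $e^{v_s^k}\to 0$, so every coupling across a node of $N$ drops out of the limiting equations for the surviving indices. The surviving indices therefore decouple into blocks indexed by the connected components obtained by deleting the vertices of $N$ from the Dynkin diagram of $\mathbf{A}_n^{(1)}$, which is a cycle on $n+1$ vertices; each component is a path, the restricted submatrix is precisely the Cartan matrix of $\mathbf{A}_{|J_p|}$, and finiteness of the limiting mass follows by Fatou from \eqref{Affine-Toda-3Conditions}(iii). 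The dichotomy between (I) and (II) is then purely combinatorial: if $\{1,n+1\}\cap N\neq\emptyset$ the affine (seam) edge $\{n+1,1\}$ is broken and every component is an ordinary interval of consecutive indices, giving alternative (I); if $\{1,n+1\}\cap N=\emptyset$ the seam survives and the component through it wraps around, producing the cyclically consecutive block $J_0$ of \eqref{Affine-Toda-Section-2-Eq-2}, giving alternative (II).

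I expect the main obstacle to be the rigorous passage to the limit in the coupled system while simultaneously certifying the block decomposition: one must verify that no partial coupling survives across a node of $N$ and that distinct surviving blocks do not continue to interact through the intermediate scales $l_j^k$, so that each limit $v_s$ with $s\in J_p$ (or $s\in J_0$) solves \emph{exactly} the asserted subsystem with $\alpha_s=0$ and finite integral. This requires showing that the decay of $e^{v_s^k}$ for $s\in N$ is uniform on compact sets, strong enough to annihilate the off-diagonal Cartan contributions in the limit, and that the rescaled singular sources genuinely escape to infinity. Although this selection-and-decoupling step is by now standard in the classical $\mathbf{A}$-type setting, tracking it through the cyclic affine diagram and the wrap-around block $J_0$ is the technically delicate heart of the argument.
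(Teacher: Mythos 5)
Your proposal is correct and follows essentially the same route as the paper, which itself gives no details and simply cites the standard selection procedure of \cite{Cui-Wei-Yang-Zhang-2022} and \cite{Lin-Yang-Zhong-2020}: selection of bubbling disks with the Harnack-type estimate, rescaling with the singular source escaping to infinity, the Brezis--Merle dichotomy, and decoupling along the cyclic Dynkin diagram. In particular you correctly identify the affine-specific ingredient — the constraint $\sum_{i\in I}u_i^k\equiv C$ forcing $\sum_{i\in I}v_i^k\to-\infty$ and hence $N\neq\emptyset$, so that the surviving blocks are paths in the cycle and the dichotomy between alternatives \textbf{(I)} and \textbf{(II)} reduces to whether the seam edge $\{n+1,1\}$ is broken — which is exactly the mechanism in the cited works.
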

	\begin{proof}
		The selection procedure is quite standard now, one can see the details in \cite{Cui-Wei-Yang-Zhang-2022} or \cite{Lin-Yang-Zhong-2020}.
	\end{proof}
	
	\begin{proposition}\label{Affine-Toda-Section-2-Proposition-3}
		Suppose that $\mathbf{u}^{k}$ is a sequence of blow-up solutions to system \eqref{Affine-Toda-system-A_n^1-form} satisfying \eqref{Affine-Toda-3Conditions}. Denote the set of "bad points" by $\Sigma_{k}=\{0,x^{k}_{1},\cdots,x^{k}_{m}\}$ (if $0$ is not a singular point, then $0$ can be deleted from $\Sigma_k$). Then for any $x_{0}\in B_{1}(0)\setminus\Sigma_{k}$, there exists a constant $C_{0}$ independent of $x_{0}$ and $k$ such that
		\begin{equation*}
			|u^{k}_{i}(x_{1})-u^{k}_{i}(x_{2})|\leq C_{0}, \ \forall \ x_{1},x_{2}\in B(x_{0},\mathrm{dist}(x_{0},\Sigma_{k})/2), \ \forall \ i\in I.
		\end{equation*}
	\end{proposition}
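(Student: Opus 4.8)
The plan is to rescale the problem so that the ball $B(x_0,\mathrm{dist}(x_0,\Sigma_k)/2)$ becomes a fixed ball, thereby reducing the assertion to a uniform oscillation estimate for a sequence of solutions without interior blow-up. Fix $x_0\in B_1(0)\setminus\Sigma_k$ and put $d=\mathrm{dist}(x_0,\Sigma_k)>0$. Since $B(x_0,d)\cap\Sigma_k=\emptyset$, the singular source $\delta_0$ is absent there, and I set $v_i^k(y):=u_i^k(x_0+dy)+2\log d$ for $y\in B_1(0)$. The system \eqref{Affine-Toda-system-A_n^1-form} is invariant under this scaling, so each $v_i^k$ solves $\Delta v_i^k+\sum_{j\in I}k_{ij}e^{v_j^k}=0$ in $B_1(0)$, while the mass bound $(iii)$ of \eqref{Affine-Toda-3Conditions} rescales to $\int_{B_1(0)}e^{v_i^k}\,dy\le C$. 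Because oscillation is unchanged by adding a constant and by dilation, one has $\mathrm{osc}_{B(x_0,d/2)}u_i^k=\mathrm{osc}_{B_{1/2}(0)}v_i^k$, so it suffices to bound $\mathrm{osc}_{B_{1/2}(0)}v_i^k$ by a constant independent of $x_0$ and $k$.

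Next I would extract from the Harnack-type upper bound \eqref{Affine-Toda-Section-2-Eq-3} of Proposition \ref{Affine-Toda-Section-2-Proposition-2} that the rescaled sequence carries no blow-up point inside $B_{3/4}(0)$. Indeed, for $x=x_0+dy$ one has $\mathrm{dist}(x,\Sigma_k)\ge d(1-|y|)$, so \eqref{Affine-Toda-Section-2-Eq-3} gives $v_i^k(y)\le C-2\log(1-|y|)$; in particular $v_i^k\le C$ on $B_{3/4}(0)$ uniformly in $k$. Hence the right-hand sides $\tilde g_i^k:=\sum_{j\in I}k_{ij}e^{v_j^k}$ are bounded in $L^\infty(B_{3/4}(0))$. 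Decomposing $v_i^k=\zeta_i^k+h_i^k$ on $B_{3/4}(0)$, where $\zeta_i^k$ solves $-\Delta\zeta_i^k=\tilde g_i^k$ with zero boundary data and $h_i^k$ is harmonic, interior elliptic estimates yield $\|\zeta_i^k\|_{C^{1,\gamma}(B_{1/2}(0))}\le C$, so $\mathrm{osc}_{B_{1/2}(0)}\zeta_i^k\le C$. The problem is thereby reduced to the harmonic remainders $h_i^k$.

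The heart of the matter is to bound $\mathrm{osc}_{B_{1/2}(0)}h_i^k$ uniformly, and this is exactly where the absence of interior blow-up enters through the Harnack inequality of Brezis–Merle \cite{Brezis-Merle-1991}, in the system form used in \cite{Lin-Yang-Zhong-2020,Cui-Wei-Yang-Zhang-2022}: on a ball free of blow-up points the oscillation of a solution is controlled independently of $k$. Two structural facts make this effective. First, the constraint $\sum_{i\in I}u_i^k\equiv C$ together with the vanishing of every column sum of the affine Cartan matrix \eqref{Affine-A_n^1-Cartan-matrix} forces $\sum_{i\in I}\tilde g_i^k\equiv0$, whence $\sum_{i\in I}\zeta_i^k\equiv0$ and $\sum_{i\in I}h_i^k\equiv\text{const}$; this converts the one-sided bound $h_i^k\le C$ into two-sided information across the components. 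Second, the only delicate situation is that of components with $v_i^k\to-\infty$ on $B_{1/2}(0)$, for which a naive $\sup/\inf$ comparison degenerates; I would dispose of it exactly as in the cited works, via the Brezis–Merle alternative (uniform boundedness from above implies that the sequence is either locally bounded or locally uniformly $\to-\infty$, both with controlled oscillation). Combining the bounded oscillation of $\zeta_i^k$ and of $h_i^k$ gives $\mathrm{osc}_{B_{1/2}(0)}v_i^k\le C_0$ with $C_0$ independent of $x_0$ and $k$, which is the claim. I expect this final step — ruling out unbounded oscillation for components decaying to $-\infty$ — to be the main obstacle, and it is precisely the reason the scale-invariant reformulation through the rescaling above is indispensable.
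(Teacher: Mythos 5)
Your rescaling and your treatment of the Newtonian part $\zeta_i^k$ are fine, but the control of the harmonic remainder $h_i^k$ --- which you yourself identify as the heart of the matter --- has a genuine gap, and none of the three mechanisms you invoke closes it. First, a harmonic function on $B_{3/4}$ that is merely bounded above can have arbitrarily large oscillation on $B_{1/2}$ (e.g. $h(x)=-M(x_1+1)$ is $\le 0$ on $B_1$ but has oscillation of order $M$), so Harnack applied to $C-h_i^k\ge 0$ only yields $\mathrm{osc}_{B_{1/2}}h_i^k\le (C_H-1)\bigl(C-\sup_{B_{1/2}}h_i^k\bigr)$, which degenerates exactly in the regime you flag as delicate, namely components drifting to $-\infty$. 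Second, the sum constraint does not rescue this: since $\sum_{i\in I}u_i^k\equiv C$, the rescaled sum is $\sum_{i\in I}v_i^k\equiv C+2(n+1)\log d$, so the two-sided bound it produces is $c_k-nC\le h_i^k\le C$ with $c_k=C+2(n+1)\log d+O(1)$, i.e. an oscillation bound of order $|\log d|$ rather than a uniform one --- and $d=\mathrm{dist}(x_0,\Sigma_k)\to 0$ is precisely the interesting case. Third, the Brezis--Merle alternative makes no oscillation assertion in its ``$\to-\infty$ locally uniformly'' branch (the sequence $-k+\sqrt{k}\,x_1$ tends to $-\infty$ locally uniformly with unbounded oscillation), so appealing to it is circular: bounded oscillation for the decaying components is exactly what is to be proved. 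Symptomatically, your argument never uses hypothesis (ii) of \eqref{Affine-Toda-3Conditions} (bounded oscillation of $u_i^k$ on $\partial B_1(0)$), which is essential: all your inputs are local to $B(x_0,d)$, and no purely local information can exclude a large harmonic tilt fed in through the boundary data.

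The paper disposes of the proposition by citing \cite[Lemma 2.1]{Lin-Wei-Zhang-2015} and \cite[Lemma 2.2]{Liu-Wang-2021}, and the argument there is global, which is how the gap is filled: one represents $u_i^k$ via the Green function of the \emph{original} ball $B_1(0)$. The harmonic part then has boundary values $u_i^k|_{\partial B_1(0)}$, whose oscillation is at most $C$ by (ii), hence by the maximum principle its oscillation is at most $C$ throughout $B_1(0)$. The Green potential is split at scale $d$: for $y\notin B(x_0,3d/4)$ and $x_1,x_2\in B(x_0,d/2)$ one has $\bigl|\log\bigl(|x_1-y|/|x_2-y|\bigr)\bigr|\le C$, which integrated against the total mass bound (iii) gives a uniform contribution from the far field; for $y\in B(x_0,3d/4)$ one uses the scale-invariant integrability of the log-ratio together with the pointwise bound $e^{u_j^k}\le C\,\mathrm{dist}(\cdot,\Sigma_k)^{-2}$ from \eqref{Affine-Toda-Section-2-Eq-3} --- this near-field step is the part your rescaled estimate correctly captures. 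In short: keep your near-field estimate, but replace the local decomposition on $B_{3/4}$ by the global Green representation, so that what you call $h_i^k$ is controlled by the far-field mass and hypothesis (ii) rather than by Harnack or the sum constraint.
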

	\begin{proof}
		We refer the readers to \cite[Lemma 2.1]{Lin-Wei-Zhang-2015} and \cite[Lemma 2.2]{Liu-Wang-2021} for the proof.
	\end{proof}
	
	Let $x^k_l\in\Sigma_k$ and $\tau^k_l$ be given as in Proposition \ref{Affine-Toda-Section-2-Proposition-2}, we derive from system \eqref{Affine-Toda-system-A_n^1-form} and Proposition \ref{Affine-Toda-Section-2-Proposition-3} that $u^k_i(x)=\overline{u}^k_{x^k_l,i}(r)+O(1)$, $x\in B(x^k_l,\tau^k_l)$ for $i\in I$, where $O(1)$ is independent of $r$ and $k$, $r=|x-x^k_l|$ and $\overline{u}^k_{x^k_l,i}(r)=\frac{1}{2\pi r}\int_{\partial B(x^k_l,r)}u^k_i\mathrm{d}S$.
	
	\begin{definition}[\cite{Cui-Wei-Yang-Zhang-2022,Lin-Wei-Zhang-2015,Lin-Yang-Zhong-2020,Lin-Zhang-2016}]\label{Affine-Toda-Section-2-Definition-4}
		\noindent (i) We say $u^{k}_{i}$ has fast decay on $\partial B(x_{k},r_{k})$ if
		\begin{equation*}
			u^{k}_{i}(x)+2\log|x-x_{k}|\leq-N_{k}, \ \forall \ x\in \partial B(x_{k},r_{k}),
		\end{equation*}
		for some $N_{k}\rightarrow+\infty$ as $k\rightarrow+\infty$.\\
		\noindent (ii) We say $u^{k}_{i}$ has slow decay on $\partial B(x_{k},r_{k})$ if
		\begin{equation*}
			u^{k}_{i}(x)+2\log|x-x_{k}|\geq -C, \ \forall \ x\in \partial B(x_{k},r_{k}),
		\end{equation*}
		for some constant $C>0$ which is independent of $k$.
	\end{definition}
	
	Before giving the next result, we present the exact form for the inverse of the Cartan matrix of $\mathbf{A}_n$ (given in \eqref{A_n-Cartan-matrix}), see \cite{Hyder-Wei-Yang-2020},
	\begin{equation*}
		(\mathbf{A}_n^{-1})_{ij}:=a^{ij}=\frac{\min\{i,j\}(n+1-\max\{i,j\})}{n+1}, \ 1\leq i,j\leq n.
	\end{equation*}
	By direct computation one can easily check that
	\begin{equation}\label{Affine-Toda-Section-2-Eq-5}
		\left\{
		\begin{aligned}
			&a^{i1}+a^{in}=1, \ a^{1j}+a^{nj}=1,\\
			&a^{ij}=a^{ji}, \ \text{for} \ i\neq j,
		\end{aligned}
		\right.
		\ \ \text{for} \ i,j=1,2,\cdots,n.
	\end{equation}
	In next proposition, we denote by $\sigma_{i}^{k}(r,x_{0}):=\sigma(r,x_{0};u^{k}_{i})$ and $\sigma_{i}^{k}(r):=\sigma(r,0;u^{k}_{i})$ for $i\in I$, where
	\begin{equation}\label{Affine-Toda-Section-2-Eq-39}
		\sigma(r,x_{0};u):=\frac{1}{2\pi}\int_{B_{r}(x_{0})}e^{u(x)}\mathrm{d}x.
	\end{equation}
	Now we are able to establish the Pohozaev identity for local mass of the system \eqref{Affine-Toda-system-A_n^1-form}.
	
	\begin{proposition}\label{Affine-Toda-Section-2-Proposition-5}
		Let $\mathbf{u}^{k}=(u^{k}_{1},\cdots,u^{k}_{n+1})$ be a sequence of solutions of system \eqref{Affine-Toda-system-A_n^1-form} satisfying \eqref{Affine-Toda-3Conditions}. Assume that $\Sigma_{k}^{\prime}\subseteq\Sigma_{k}$ is a subset of $\Sigma_{k}$ with $0\in\Sigma_{k}^{\prime}\subseteq B(x_{k},r_{k})\subseteq B_{1}(0)$, and there holds
		\begin{equation*}
			\mathrm{dist}(\Sigma_{k}^{\prime},\partial B(x_{k},r_{k}))=o(1)\mathrm{dist}(\Sigma_{k}\setminus\Sigma_{k}^{\prime},\partial B(x_{k},r_{k})).
		\end{equation*}
		If $\mathbf{u}^{k}$ has fast decay on $\partial B(x_{k},r_{k})$, then
		\begin{equation}\label{Affine-Toda-Section-2-Eq-6-Pohozaev}
			\sum\limits_{i\in I}\sigma_{i}\Big(\sum\limits_{j\in I}k_{ij}\sigma_{j}-2\mu_{i}\Big)=2\sum\limits_{i\in I}\mu_{i}\sigma_{i},
		\end{equation}
		where $(k_{ij})$ is given in \eqref{Affine-A_n^1-Cartan-matrix}, $\mu_i=1+\alpha_i$ and $\sigma_i=\lim\limits_{k\rightarrow+\infty}\sigma^k_i(r_k,x_k)$ for $i\in I$.
	\end{proposition}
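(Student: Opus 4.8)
The plan is to prove \eqref{Affine-Toda-Section-2-Eq-6-Pohozaev} by a Pohozaev (Rellich) identity: multiply each equation of \eqref{Affine-Toda-system-A_n^1-form} by a suitable first-order multiplier, integrate over $B(x_k,r_k)$, and let $k\to+\infty$, discarding the boundary integrals on $\partial B(x_k,r_k)$ by means of the fast-decay hypothesis (Definition \ref{Affine-Toda-Section-2-Definition-4}). The feature distinguishing the affine case from the classical $\mathbf{A}_n$ one is that the Cartan matrix $(k_{ij})$ in \eqref{Affine-A_n^1-Cartan-matrix} is \emph{singular}, with $\ker(k_{ij})=\mathrm{span}\{(1,\dots,1)\}$ and $\mathrm{Im}(k_{ij})=\{v\in\mathbb{R}^{n+1}:\sum_{i\in I}v_i=0\}$, so one cannot simply contract the system against $(k_{ij})^{-1}$.

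The key observation is that the constraint $\sum_{i\in I}u_i^k\equiv0$ together with $\sum_{i\in I}\alpha_i=0$ places $(u_1^k,\dots,u_{n+1}^k)$ in $\mathrm{Im}(k_{ij})$; hence one can solve $\sum_{i\in I}k_{ij}U_i^k=u_j^k$ for $U^k=(U_1^k,\dots,U_{n+1}^k)$ (uniquely modulo $\ker(k_{ij})$, so that $\nabla U_i^k$ is well defined), and take $(x-x_k)\cdot\nabla U_i^k$ as the multiplier for the $i$-th equation. Summing over $i\in I$ and using the symmetry $k_{ij}=k_{ji}$, the nonlinear term collapses to $\sum_{j\in I}(x-x_k)\cdot\nabla e^{u_j^k}$, whose integral over $B(x_k,r_k)$ equals $\sum_{j}\big(r_k\!\int_{\partial B(x_k,r_k)}e^{u_j^k}\,\mathrm{d}S-4\pi\sigma_j^k(r_k)\big)$; the boundary piece is $O(e^{-N_k})$ by fast decay, leaving $-4\pi\sum_{j\in I}\sigma_j$ in the limit.

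For the linear part I would run the standard Rellich computation with the paired quadratic form $Q=\sum_{i,l\in I}b^{il}\nabla u_i^k\cdot\nabla u_l^k$, where $(b^{il})$ inverts $(k_{ij})$ on $\mathrm{Im}(k_{ij})$; after excising a small disk $B_\epsilon(0)$ about the singular source the interior integral reduces to boundary integrals on $\partial B(x_k,r_k)$ and $\partial B_\epsilon(0)$. On the outer circle, Proposition \ref{Affine-Toda-Section-2-Proposition-3} together with the averaged relation $r\frac{d}{dr}\overline{u}^k_{x_k,i}(r)=-\sum_{j}k_{ij}\sigma_j^k(r)+2\alpha_i$ let me replace the gradients by their radial parts, with radial flux $\beta_i:=\sum_{j}k_{ij}\sigma_j-2\alpha_i$, the tangential derivatives being of lower order. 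One checks $\sum_{i}\beta_i=0$ (again from the vanishing column sums of $(k_{ij})$ and $\sum_i\alpha_i=0$), so $\beta\in\mathrm{Im}(k_{ij})$ and $\sum_{i,l}b^{il}\beta_i\beta_l$ is well defined; the outer terms contribute $\pi\sum_{i,l}b^{il}\beta_i\beta_l$. Letting $\epsilon\to0$, the inner terms contribute $4\pi\sum_{i,l}b^{il}\alpha_i\alpha_l$, the cross terms proportional to $x_k\cdot x$ vanishing upon angular integration since $\int_{\partial B_\epsilon(0)}x\,\mathrm{d}S=0$.

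Assembling the three contributions gives
\[
\pi\sum_{i,l\in I}b^{il}\beta_i\beta_l-4\pi\sum_{j\in I}\sigma_j=4\pi\sum_{i,l\in I}b^{il}\alpha_i\alpha_l .
\]
Expanding $\beta_i=\sum_{j}k_{ij}\sigma_j-2\alpha_i$ yields $\sum_{i,l}b^{il}\beta_i\beta_l=\sum_{i,j}k_{ij}\sigma_i\sigma_j-4\sum_i\alpha_i\sigma_i+4\sum_{i,l}b^{il}\alpha_i\alpha_l$; the self-interaction terms cancel, and, using $\mu_i=1+\alpha_i$, what remains is exactly $\sum_{i\in I}\sigma_i\big(\sum_{j}k_{ij}\sigma_j-2\mu_i\big)=2\sum_{i\in I}\mu_i\sigma_i$, which is \eqref{Affine-Toda-Section-2-Eq-6-Pohozaev}. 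I expect the principal obstacle to be the rigorous justification that the non-radial (tangential) parts of the boundary integrals on $\partial B(x_k,r_k)$ are negligible in the limit, and the careful $\epsilon\to0$ analysis of the singular source when the Pohozaev ball is centered at $x_k\neq0$; both rest on the fast-decay bound together with Proposition \ref{Affine-Toda-Section-2-Proposition-3}, exactly as in \cite{Lin-Wei-Zhang-2015,Cui-Wei-Yang-Zhang-2022,Lin-Yang-Zhong-2020}.
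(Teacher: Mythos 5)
Your proposal is correct and reaches exactly the paper's identity (both reduce to $\bm{\sigma}^{T}K\bm{\sigma}=4\sum_{i\in I}\mu_i\sigma_i$ with $K=(k_{ij})$), and it shares the paper's analytic skeleton: an annulus Pohozaev identity, fast decay killing the outer $e^{u_i^k}$ boundary terms, the gradient estimate $\nabla u_i^k=-\frac{x}{|x|^2}\big(\sum_{t\in I}k_{it}\sigma^k_t(r_k)-2\alpha_i\big)+o(1)/|x|$ on the enlarged circle, and $\nabla u^k_i=-2\alpha_i x/|x|^2+o(1)$ at the singular source. But your algebraic packaging is genuinely different. The paper never touches the singular affine matrix: it contracts only the first $n$ equations with the inverse $(a^{ij})$ of the classical $\mathbf{A}_n$ Cartan matrix, absorbs the $(n+1)$-st exponential through the relations $a^{i1}+a^{in}=1$ and the constraint $\sum_{i\in I}u^k_i\equiv0$ (which converts $-\sum_{i=1}^n\int e^{u^k_{n+1}}(x\cdot\nabla u^k_i)$ into $+\int x\cdot\nabla e^{u^k_{n+1}}$), and then needs a full page of explicit computation (the terms $\mathcal{Q}_1$--$\mathcal{Q}_4$, using the closed formula $a^{ij}=\min\{i,j\}(n+1-\max\{i,j\})/(n+1)$) to arrive at $\sum_{i}(\sigma_i-\sigma_{i+1})^2=4\sum_i\mu_i\sigma_i$. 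You instead keep all $n+1$ equations, use that the constraint and $\sum_i\alpha_i=0$ place $\mathbf{u}^k$, $\bm{\alpha}$ and $\bm{\beta}$ in $\mathrm{Im}(K)$, and let the pseudo-inverse multiplier telescope the nonlinear term to $x\cdot\nabla\sum_j e^{u^k_j}$, so the endgame is three lines of projection identities ($KB=P$, $P\bm{\alpha}=\bm{\alpha}$, $PK=K$) with manifest cyclic symmetry, at the cost of pseudo-inverse bookkeeping that the paper's nonsingular-block route avoids. Two points you should tighten: (i) solving $\sum_i k_{ij}U^k_i=u^k_j$ pointwise determines $U^k(x)$ only up to $c(x)(1,\dots,1)$ with $c$ an \emph{arbitrary function}, so $\nabla U^k_i$ is not automatically well defined; fix a linear choice, e.g.\ $U^k=K^{\dagger}\mathbf{u}^k$ for a fixed symmetric pseudo-inverse, equivalently impose $\sum_i U^k_i\equiv0$; (ii) fast decay on $\partial B(x_k,r_k)$ alone controls only the exponential boundary terms, not the Dirichlet ones, so, as in the paper, you should invoke the selection of $R^k\rightarrow+\infty$ with fast decay persisting and $\sigma^k_i(R^kr_k)=\sigma^k_i(r_k)+o(1)$, and evaluate the quadratic boundary terms at radius $\sqrt{R^k}\,r_k$ via \cite[Lemma 4.1]{Lin-Zhang-2010}; with these standard repairs your argument is complete, and your treatment of the off-center singularity (keeping the center $x_k$ and killing the $x_k$-cross terms by angular symmetry) is in fact more explicit than the paper's ``without loss of generality $x_k=0$''.
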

	\begin{proof}
		Multiplying the first $n$ equations of \eqref{Affine-Toda-system-A_n^1-form} by $\mathbf{A}^{-1}_{n}$ and using \eqref{Affine-Toda-Section-2-Eq-5}, we get that
		\begin{equation}\label{Affine-Toda-Section-2-Eq-7}
			\sum\limits_{j=1}^{n}a^{ij}\Delta u^k_j+e^{u^k_i}-e^{u^k_{n+1}}=4\pi\sum\limits_{j=1}^{n}a^{ij}\alpha_{j}\delta_0, \ \text{for} \  i=1,\cdots,n.
		\end{equation}
		Without loss of generality, we assume $x_k=0$. For fixed $\theta\in(0,1)$, let $\Omega=B_{\theta}(0)\setminus B_{\varepsilon}(0)$, where $\varepsilon>0$ is small enough. Then multiplying \eqref{Affine-Toda-Section-2-Eq-7} by $\big(x\cdot\nabla u^{k}_{i}\big)$ and applying the standard Pohozaev identity, we have
		\begin{equation*}
			\begin{aligned}
				&\sum\limits_{j\neq i}a^{ij}\int_{\Omega}\Delta u^k_j\big(x\cdot\nabla u^{k}_{i}\big)\mathrm{d}x+a^{ii}\left(-\frac{1}{2}\int_{\partial\Omega}|\nabla u^{k}_{i}|^2(x\cdot\nu)\mathrm{d}S+\int_{\partial \Omega}\big(x\cdot\nabla u^k_i\big)\big(\nu\cdot\nabla u^k_i\big)\mathrm{d}S\right)\\
				&-2\int_{\Omega}e^{u^k_i}\mathrm{d}x+\int_{\partial\Omega}e^{u^k_i}(x\cdot\nu)\mathrm{d}S-\int_{\Omega}e^{u^k_{n+1}}\big(x\cdot\nabla u^{k}_{i}\big)\mathrm{d}x=0, \ \text{for} \ i=1,\cdots,n,
			\end{aligned}
		\end{equation*}
		where $\nu$ denotes by the unit outer normal vector. Summing the above equations from $i=1$ to $i=n$, we deduce that
		\begin{equation}\label{Affine-Toda-Section-2-Eq-8}
			\begin{aligned}
				\mathcal{P}_1+\mathcal{P}_2+\mathcal{P}_3:=&\sum\limits_{i=1}^{n}\sum\limits_{j\neq i}a^{ij}\int_{\Omega}\Delta u^k_j\big(x\cdot\nabla u^{k}_{i}\big)\mathrm{d}x\\
				&+\sum\limits_{i=1}^{n}a^{ii}\left(-\frac{1}{2}\int_{\partial\Omega}|\nabla u^{k}_{i}|^2(x\cdot\nu)\mathrm{d}S+\int_{\partial \Omega}\big(x\cdot\nabla u^k_i\big)\big(\nu\cdot\nabla u^k_i\big)\mathrm{d}S\right)\\
				&+\sum\limits_{i=1}^{n}\left(-2\int_{\Omega}e^{u^k_i}\mathrm{d}x+\int_{\partial\Omega}e^{u^k_i}(x\cdot\nu)\mathrm{d}S
				-\int_{\Omega}e^{u^k_{n+1}}\big(x\cdot\nabla u^{k}_{i}\big)\mathrm{d}x\right)=0.
			\end{aligned}
		\end{equation}
		We calculate $\mathcal{P}_i$, $i=1,2,3$ term by term. At first, by (iii) of \eqref{Affine-Toda-3Conditions}, one can easily check that, up to a subsequence of $\varepsilon$,
		\begin{equation*}
			\begin{aligned}
				\mathcal{P}_3&=\sum\limits_{i=1}^{n}\left(-2\int_{\Omega}e^{u^k_i}\mathrm{d}x+\int_{\partial\Omega}e^{u^k_i}(x\cdot\nu)\mathrm{d}S
				-\int_{\Omega}e^{u^k_{n+1}}\big(x\cdot\nabla u^{k}_{i}\big)\mathrm{d}x\right)\\
				&=-4\pi\sum\limits_{i\in I}\sigma^k_{i}(\theta)+\sum\limits_{i\in I}\theta\int_{\partial B_{\theta}(0)}e^{u^k_i}\mathrm{d}S+o(1).
			\end{aligned}
		\end{equation*}
		For the second term $\mathcal{P}_2$, direct computation shows that
		\begin{equation*}
			\begin{aligned}
				\mathcal{P}_2=&\sum\limits_{i=1}^{n}a^{ii}\left(-\frac{1}{2}\int_{\partial\Omega}|\nabla u^{k}_{i}|^2(x\cdot\nu)\mathrm{d}S+\int_{\partial \Omega}\big(x\cdot\nabla u^k_i\big)\big(\nu\cdot\nabla u^k_i\big)\mathrm{d}S\right)\\
				=&\sum\limits_{i=1}^{n}a^{ii}\left(-\frac{1}{2}\int_{\partial B_{\theta}(0)}|\nabla u^{k}_{i}|^2(x\cdot\nu)\mathrm{d}S+\int_{\partial B_{\theta}(0)}\big(x\cdot\nabla u^k_i\big)\big(\nu\cdot\nabla u^k_i\big)\mathrm{d}S\right)\\
				&+\sum\limits_{i=1}^{n}a^{ii}\left(-\frac{1}{2}\int_{\partial B_{\varepsilon}(0)}|\nabla u^{k}_{i}|^2(x\cdot\mathbf{n})\mathrm{d}S+\int_{\partial B_{\varepsilon}(0)}\big(x\cdot\nabla u^k_i\big)\big(\mathbf{n}\cdot\nabla u^k_i\big)\mathrm{d}S\right),
			\end{aligned}
		\end{equation*}
		where $\mathbf{n}$ denotes by the unit outer normal vector on $\partial B_{\varepsilon}(0)$. Using integration by parts, we infer that
		\begin{equation*}
			\begin{aligned}
				\mathcal{P}_1=&\sum\limits_{i=1}^{n}\sum\limits_{j\neq i}a^{ij}\int_{\Omega}\Delta u^k_j\big(x\cdot\nabla u^{k}_{i}\big)\mathrm{d}x
				=\sum\limits_{1\leq i\neq j\leq n}a^{ij}\int_{\Omega}\Delta u^k_j\big(x\cdot\nabla u^{k}_{i}\big)\mathrm{d}x\\
				=&\sum\limits_{1\leq i< j\leq n}a^{ij}\int_{\partial B_{\theta}(0)}\left(\big(x\cdot\nabla u^k_i\big)\big(\nu\cdot\nabla u^k_j\big)+\big(x\cdot\nabla u^k_j\big)\big(\nu\cdot\nabla u^k_i\big)-(x\cdot\nu)\big(\nabla u^k_i\cdot\nabla u^k_j\big)\right)\mathrm{d}S\\
				&+\sum\limits_{1\leq i< j\leq n}a^{ij}\int_{\partial B_{\varepsilon}(0)}\left(\big(x\cdot\nabla u^k_i\big)\big(\mathbf{n}\cdot\nabla u^k_j\big)+\big(x\cdot\nabla u^k_j\big)\big(\mathbf{n}\cdot\nabla u^k_i\big)-(x\cdot\mathbf{n})\big(\nabla u^k_i\cdot\nabla u^k_j\big)\right)\mathrm{d}S.
			\end{aligned}
		\end{equation*}
		Applying the same argument of \cite[Lemma 4.1]{Lin-Zhang-2010}, we obtain the following estimate
		\begin{equation}\label{Affine-Toda-Section-2-Eq-9}
			\nabla u^k_i(x)=-2\alpha_ix/|x|^2+o(1) \ \text{near the origin}.
		\end{equation}
		Thus it follows from \eqref{Affine-Toda-Section-2-Eq-9} that
		\begin{equation}\label{Affine-Toda-Section-2-Eq-10}
			\begin{aligned}
				&\sum\limits_{1\leq i< j\leq n}a^{ij}\int_{\partial B_{\varepsilon}(0)}\left(\big(x\cdot\nabla u^k_i\big)\big(\mathbf{n}\cdot\nabla u^k_j\big)+\big(x\cdot\nabla u^k_j\big)\big(\mathbf{n}\cdot\nabla u^k_i\big)-(x\cdot\mathbf{n})\big(\nabla u^k_i\cdot\nabla u^k_j\big)\right)\mathrm{d}S\\
				&\quad=-8\pi\sum\limits_{1\leq i< j\leq n}a^{ij}\alpha_i\alpha_j+o(1)
			\end{aligned}
		\end{equation}
		and
		\begin{equation}\label{Affine-Toda-Section-2-Eq-11}
			\begin{aligned}
				\sum\limits_{i=1}^{n}a^{ii}\left(-\frac{1}{2}\int_{\partial B_{\varepsilon}(0)}|\nabla u^{k}_{i}|^2(x\cdot\mathbf{n})\mathrm{d}S+\int_{\partial B_{\varepsilon}(0)}\big(x\cdot\nabla u^k_i\big)\big(\mathbf{n}\cdot\nabla u^k_i\big)\mathrm{d}S\right)=-4\pi\sum\limits_{i=1}^{n}a^{ii}\alpha_{i}^{2}+o(1).
			\end{aligned}
		\end{equation}
		
		Since the elements of $\mathbf{u}^k$ have fast decay on $\partial B(0,{r_k})$, by the same arguments of the proof of \cite[Proposition 2.4]{Cui-Wei-Yang-Zhang-2022}, we can choose $R^k\rightarrow+\infty$ such that $\mathbf{u}^k$ has fast decay on $\partial B(0,R^kr_k)$ and
		\begin{equation*}
			\sigma^k_i(R^k r_k)=\sigma^k_i(r_k)+o(1), \ \text{for} \ i\in I.
		\end{equation*}
		Letting $\theta=\sqrt{R^k}r_k$, then we have
		\begin{equation*}
			-4\pi\sum\limits_{i\in I}\sigma^k_{i}(\theta)=-4\pi\sum\limits_{i\in I}\sigma^k_{i}(r_k)+o(1) \ \ \text{and} \ \ \sum\limits_{i\in I}\theta\int_{\partial B_{\theta}(0)}e^{u^k_i}\mathrm{d}S=o(1).
		\end{equation*}
		Hence it holds that
		\begin{equation}\label{Affine-Toda-Section-2-Eq-12}
			\mathcal{P}_3=-4\pi\sum\limits_{i\in I}\sigma^k_{i}(r_k)+o(1).
		\end{equation}
		
		Similar to the estimate \eqref{Affine-Toda-Section-2-Eq-9} (by \cite[Lemma 4.1]{Lin-Zhang-2010} and a scaling argument), for $i=1,\cdots,n$, we have
		\begin{equation*}
			\nabla u^k_i(x)=-\frac{x}{|x|^2}\Big(\sum\limits_{t\in I}k_{it}\sigma^k_t(r_k)-2\alpha_i\Big)+\frac{o(1)}{|x|},~x\in\partial B_{\sqrt{R^k}r_k}(0).
		\end{equation*}
		Then  \eqref{Affine-Toda-Section-2-Eq-10} and \eqref{Affine-Toda-Section-2-Eq-11} imply that
		\begin{equation}\label{Affine-Toda-Section-2-Eq-13}
			\mathcal{P}_1=2\pi\sum\limits_{1\leq i< j\leq n}a^{ij}\Big(\sum\limits_{t\in I}k_{it}\sigma^k_t(r_k)-2\alpha_i\Big)
			\Big(\sum\limits_{t\in I}k_{jt}\sigma^k_t(r_k)-2\alpha_{j}\Big)-8\pi\sum\limits_{1\leq i< j\leq n}a^{ij}\alpha_i\alpha_j+o(1)
		\end{equation}
		and
		\begin{equation}\label{Affine-Toda-Section-2-Eq-14}
			\mathcal{P}_2=\pi\sum\limits_{i=1}^{n}a^{ii}\Big(\sum\limits_{t\in I}k_{it}\sigma^k_t(r_k)-2\alpha_i\Big)^2
			-4\pi\sum\limits_{i=1}^{n}a^{ii}\alpha_{i}^{2}+o(1).
		\end{equation}
		Substituting \eqref{Affine-Toda-Section-2-Eq-12}, \eqref{Affine-Toda-Section-2-Eq-13} and \eqref{Affine-Toda-Section-2-Eq-14} into \eqref{Affine-Toda-Section-2-Eq-8}, we conclude that
		\begin{equation*}
			\begin{aligned}
				&2\sum\limits_{1\leq i< j\leq n}a^{ij}\Big(\sum\limits_{t\in I}k_{it}\sigma^k_t(r_k)-2\alpha_i\Big)
				\Big(\sum\limits_{t\in I}k_{jt}\sigma^k_t(r_k)-2\alpha_{j}\Big)-8\sum\limits_{1\leq i< j\leq n}a^{ij}\alpha_i\alpha_j\\
				&+\sum\limits_{i=1}^{n}a^{ii}\Big(\sum\limits_{t\in I}k_{it}\sigma^k_t(r_k)-2\alpha_i\Big)^2
				-4\sum\limits_{i=1}^{n}a^{ii}\alpha_{i}^{2}-4\sum\limits_{i\in I}\sigma^k_{i}(r_k)=o(1).
			\end{aligned}
		\end{equation*}
		As a consequence, by taking $k\rightarrow+\infty$, we have
		\begin{equation}\label{Affine-Toda-Section-6-Eq-1}
			\begin{aligned}
				&\sum\limits_{1\leq i,j\leq n}a^{ij}\Big(\sum\limits_{t\in I}k_{it}\sigma_{t}-2\alpha_i\Big)
				\Big(\sum\limits_{s\in I}k_{js}\sigma_s-2\alpha_{j}\Big)-4\sum\limits_{1\leq i,j\leq n}a^{ij}\alpha_i\alpha_j-4\sum\limits_{i\in I}\sigma_{i}=0.
			\end{aligned}
		\end{equation}
		
		It remains to simplify \eqref{Affine-Toda-Section-6-Eq-1}. We split the left hand side of \eqref{Affine-Toda-Section-6-Eq-1} into four parts, i.e.,
		\begin{equation}\label{Affine-Toda-Section-6-Eq-3}
			\begin{aligned}
				0
				&=\sum\limits_{1\leq i,j\leq n}a^{ij}\sum\limits_{t\in I}k_{it}\sigma_{t}\sum\limits_{s\in I}k_{js}\sigma_s
				-2\sum\limits_{1\leq i,j\leq n}a^{ij}\sum\limits_{t\in I}k_{it}\sigma_{t}\alpha_{j}
				-2\sum\limits_{1\leq i,j\leq n}a^{ij}\sum\limits_{s\in I}k_{js}\sigma_s\alpha_{i}
				-4\sum\limits_{i\in I}\sigma_{i}\\
				&=:\mathcal{Q}_1-\mathcal{Q}_2-\mathcal{Q}_3-\mathcal{Q}_4.
			\end{aligned}
		\end{equation}
		We shall compute $\mathcal{Q}_i$, $i=1,2,3$ term by term. Direct computation shows that
		\begin{equation*}
			\begin{aligned}
				\mathcal{Q}_1
				&=\sum\limits_{i,j=1}^{n}a^{ij}\sum\limits_{t,s=1}^{n}a_{it}a_{js}\sigma_{t}\sigma_s
				+\sum\limits_{i,j=1}^{n}a^{ij}\sum\limits_{t=1}^{n}a_{it}k_{j,{n+1}}\sigma_{t}\sigma_{n+1}\\
				&\quad+\sum\limits_{i,j=1}^{n}a^{ij}\sum\limits_{s=1}^{n}a_{js}k_{i,{n+1}}\sigma_s\sigma_{n+1}
				+\sum\limits_{i,j=1}^{n}a^{ij}k_{i,{n+1}}k_{j,{n+1}}\sigma_{n+1}^2\\
				&=:\mathcal{Q}_{11}+\mathcal{Q}_{12}+\mathcal{Q}_{13}+\mathcal{Q}_{14}.
			\end{aligned}
		\end{equation*}
		By \eqref{Affine-Toda-Section-2-Eq-5}, we see that
		\begin{equation*}
			\begin{aligned}
				\mathcal{Q}_{11}
				&=\sum\limits_{j,t,s=1}^{n}a_{js}\sigma_{t}\sigma_s\delta_{jt}=\sum\limits_{j,s=1}^{n}a_{js}\sigma_{j}\sigma_{s}
				=2\sum\limits_{i=1}^{n}\sigma_i^2-2\sum\limits_{i=1}^{n-1}\sigma_{i}\sigma_{i+1},\\
				\mathcal{Q}_{12}
				&=\sigma_{n+1}\sum\limits_{i,t=1}^{n}a_{it}\sigma_{t}\big(-a^{i1}-a^{in}\big)
				=-\sigma_{n+1}\sum\limits_{i,t=1}^{n}a_{it}\sigma_{t}=-\sigma_{n+1}(\sigma_1+\sigma_n),\\
				\mathcal{Q}_{13}
				&=\sigma_{n+1}\sum\limits_{j,s=1}^{n}a_{js}\sigma_{s}\big(-a^{1j}-a^{nj}\big)
				=-\sigma_{n+1}\sum\limits_{j,s=1}^{n}a_{js}\sigma_{s}=-\sigma_{n+1}(\sigma_1+\sigma_n),\\
				\mathcal{Q}_{14}
				&=\big(a^{11}+a^{1n}+a^{n1}+a^{nn}\big)\sigma_{n+1}^{2}=2\sigma_{n+1}^{2}.
			\end{aligned}
		\end{equation*}
		Hence
		\begin{equation}\label{Affine-Toda-Section-6-Eq-4}
			\begin{aligned}
				\mathcal{Q}_1
				&=2\sum\limits_{i\in I}\sigma_i^2-2\sum\limits_{i=1}^{n-1}\sigma_{i}\sigma_{i+1}-2\sigma_{n}\sigma_{n+1}-2\sigma_{n+1}\sigma_{1}\\
				&=(\sigma_1-\sigma_2)^{2}+(\sigma_2-\sigma_3)^{2}+\cdots+(\sigma_n-\sigma_{n+1})^{2}+(\sigma_{n+1}-\sigma_1)^{2}.
			\end{aligned}
		\end{equation}
		Furthermore, using \eqref{Affine-Toda-Section-2-Eq-5} again we find that
		\begin{equation}\label{Affine-Toda-Section-6-Eq-5}
			\begin{aligned}
				\mathcal{Q}_2
				&=2\sum\limits_{i,j=1}^{n}a^{ij}\sum\limits_{t=1}^{n}a_{it}\sigma_{t}\alpha_{j}+2\sum\limits_{i,j=1}^{n}a^{ij}k_{i,{n+1}}\sigma_{n+1}\alpha_{j}\\
				&=2\sum\limits_{j,t=1}^{n}\sigma_{t}\alpha_{j}\delta_{jt}+2\sigma_{n+1}\sum\limits_{j=1}^{n}\alpha_{j}\big(-a^{1j}-a^{nj}\big)
				=2\sum\limits_{j=1}^{n}\alpha_{j}\sigma_{j}-2\sigma_{n+1}\sum\limits_{j=1}^{n}\alpha_{j}=2\sum\limits_{j\in I}\alpha_{j}\sigma_{j}
			\end{aligned}
		\end{equation}
		and
		\begin{equation}\label{Affine-Toda-Section-6-Eq-6}
			\begin{aligned}
				\mathcal{Q}_3
				&=2\sum\limits_{i,j=1}^{n}a^{ij}\sum\limits_{s=1}^{n}a_{js}\sigma_{s}\alpha_{i}+2\sum\limits_{i,j=1}^{n}a^{ij}k_{j,{n+1}}\sigma_{n+1}\alpha_{i}\\
				&=2\sum\limits_{i,s=1}^{n}\sigma_{s}\alpha_{i}\delta_{is}+2\sigma_{n+1}\sum\limits_{i=1}^{n}\alpha_{i}\big(-a^{i1}-a^{in}\big)=2\sum\limits_{i=1}^{n}\alpha_{i}\sigma_{i}-2\sigma_{n+1}\sum\limits_{i=1}^{n}\alpha_{i}=2\sum\limits_{i\in I}\alpha_{i}\sigma_{i}.
			\end{aligned}
		\end{equation}
		Therefore, substituting \eqref{Affine-Toda-Section-6-Eq-4}, \eqref{Affine-Toda-Section-6-Eq-5} and \eqref{Affine-Toda-Section-6-Eq-6} into \eqref{Affine-Toda-Section-6-Eq-3}, we deduce that
		\begin{equation*}
			(\sigma_1-\sigma_2)^{2}+(\sigma_2-\sigma_3)^{2}+\cdots+(\sigma_n-\sigma_{n+1})^{2}+(\sigma_{n+1}-\sigma_1)^{2}
			=4\sum\limits_{i\in I}\mu_i\sigma_i,
		\end{equation*}
		which is equivalent to \eqref{Affine-Toda-Section-2-Eq-6-Pohozaev}. One can directly check that \eqref{Affine-Toda-Section-2-Eq-1} is equivalent to \eqref{Affine-Toda-Section-2-Eq-6-Pohozaev}. This completes the proof of Proposition \ref{Affine-Toda-Section-2-Proposition-5}.
	\end{proof}
	
	\section{Properties for the set $\Gamma_{\mathbf{A}}(\bm{\mu})$ and group representation}\label{Affine-Toda-Section-2+}
	\setcounter{equation}{0}
	The focus of this section is on examining the set $\Gamma_{\mathbf{A}}(\bm{\mu})$ and demonstrating its equivalence to $\Gamma_N^{\mathbf{A}}(\bm{\mu})$, as introduced in Proposition \ref{Affine-Toda-Section-2-Proposition-9}. The main aim of this section is to derive the group representation for the {\em affine Weyl group} $\mathbf{A}_n^{(1)}$.
	
	\subsection{Properties for the set $\Gamma_{\mathbf{A}}(\bm{\mu})$}
	\begin{proposition}\label{Affine-Toda-Section-2-Proposition-6}
		For each element $\bm{\sigma}\in\Gamma_{\mathbf{A}}(\bm{\mu})$, $\bm{\sigma}$ satisfies the Pohozaev identity \eqref{Affine-Toda-Section-2-Eq-1}.
	\end{proposition}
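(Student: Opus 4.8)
The plan is to argue by induction on the inductive construction of $\Gamma_{\mathbf{A}}(\bm{\mu})$. Writing the Pohozaev quantity as the quadratic-linear form
\begin{equation*}
	P(\bm{\sigma}):=\sum_{i=1}^{n+1}\sigma_i^2-\sum_{i=1}^{n+1}\sigma_i\sigma_{i+1}-2\sum_{i=1}^{n+1}\mu_i\sigma_i,
\end{equation*}
the assertion \eqref{Affine-Toda-Section-2-Eq-1} is precisely $P(\bm{\sigma})=0$. The base case is immediate: for $\bm{\sigma}=\mathbf{0}$ every term vanishes, so $P(\mathbf{0})=0$. It then suffices to show that each generator $\mathfrak{R}_i$ preserves $P$, namely $P(\mathfrak{R}_i\bm{\sigma})=P(\bm{\sigma})$ for every $i\in I$; the full statement follows because every element of $\Gamma_{\mathbf{A}}(\bm{\mu})$ is produced from $\mathbf{0}$ by a finite sequence of such reflections.

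For the inductive step I would exploit that $\mathfrak{R}_i$ alters only the $i$-th coordinate. Using the $\mathbf{A}_n^{(1)}$ Cartan matrix \eqref{Affine-A_n^1-Cartan-matrix} one has $\sum_{t\in I}k_{it}\sigma_t=2\sigma_i-\sigma_{i-1}-\sigma_{i+1}$ (indices cyclic), so with $\ell_i:=\sum_{t\in I}k_{it}\sigma_t-2\mu_i=2\sigma_i-\sigma_{i-1}-\sigma_{i+1}-2\mu_i$ the reflection reads $(\mathfrak{R}_i\bm{\sigma})_i=\sigma_i-\ell_i$ with all other entries fixed; set $\delta:=-\ell_i$. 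Collecting only the terms of $P$ that involve $\sigma_i$, namely $\sigma_i^2-\sigma_{i-1}\sigma_i-\sigma_i\sigma_{i+1}-2\mu_i\sigma_i$, a direct expansion yields
\begin{equation*}
	P(\mathfrak{R}_i\bm{\sigma})-P(\bm{\sigma})=\delta\big(2\sigma_i-\sigma_{i-1}-\sigma_{i+1}-2\mu_i+\delta\big)=\delta\big(\ell_i+\delta\big)=0,
\end{equation*}
since $\delta=-\ell_i$. This closes the induction and proves the proposition.

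Conceptually this is the affine analogue of the familiar fact that a Weyl reflection preserves the underlying bilinear form: the quadratic part $\sum\sigma_i^2-\sum\sigma_i\sigma_{i+1}$ is the invariant form attached to the generalized Cartan matrix, while the linear correction $-2\sum\mu_i\sigma_i$ exactly compensates the affine shift $2\mu_i$ in the definition of $\mathfrak{R}_i$, so that $\mathfrak{R}_i$ acts as a reflection across an affine hyperplane rather than a linear one. I do not anticipate a genuine obstacle here; the only point requiring care is the cyclic bookkeeping (the wrap-around entries $k_{1,n+1}=k_{n+1,1}=-1$ together with the convention $\sigma_{n+2}=\sigma_1$, so that $\sigma_0=\sigma_{n+1}$), which must be handled uniformly to ensure that the isolation of the $\sigma_i$-dependent terms of $P$ remains valid for the boundary indices $i=1$ and $i=n+1$ as well.
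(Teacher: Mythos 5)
Your proof is correct and takes essentially the same approach as the paper: the paper's one-line proof (by Vieta's theorem, following \cite{Lin-Yang-Zhong-2020}) is exactly the observation that $\sigma_i$ and $(\mathfrak{R}_i\bm{\sigma})_i$ are the two roots of the Pohozaev identity regarded as a quadratic in the $i$-th coordinate, which is the same root-swapping fact your expansion $P(\mathfrak{R}_i\bm{\sigma})-P(\bm{\sigma})=\delta(\ell_i+\delta)=0$ verifies explicitly. Phrasing it as unconditional invariance of $P$ under each generator, plus induction from $P(\bm{0})=0$, is just that computation written out in full.
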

	\begin{proof}
		Similarly to \cite[Proposition 3.1]{Lin-Yang-Zhong-2020}, one can directly derive it by Vieta's theorem.
	\end{proof}
	
	\begin{definition}[\cite{Cui-Wei-Yang-Zhang-2022}]\label{Affine-Toda-Section-2-Definition-7}
		We say $\widetilde{\mathfrak{R}}=\mathfrak{R}_{i_{s}}\mathfrak{R}_{i_{s-1}}\cdots\mathfrak{R}_{i_{1}}$ ($i_{a}\in I$, $1\leq a\leq s$) is a \textbf{simplest chain} with length $s$ if $\widetilde{\mathfrak{R}}$ can not be reduced to a sub-chain $\mathfrak{R}_{j_{s^{\prime}}}\mathfrak{R}_{j_{s^{\prime}-1}}\cdots\mathfrak{R}_{j_{1}}$ for any $s^{\prime}<s$, $j_{a}\in I$, $1\leq a\leq s^{\prime}$.
	\end{definition}
	
	For any $\bm{\sigma}\in\Gamma_{\mathbf{A}}(\bm{\mu})$, $\bm{\sigma}=\mathfrak{R}_{i_{s}}\mathfrak{R}_{i_{s-1}}\cdots\mathfrak{R}_{i_{1}}{\bf 0}$ for some simplest chain $\widetilde{\mathfrak{R}}$ with length $s$. In this way, we say that $\bm{\sigma}$ is in the $s$-th level.
	
	\begin{proposition}\label{Affine-Toda-Section-2-Proposition-8}
		For any $\bm{\sigma}=(\sigma_1,\cdots,\sigma_{n+1})\in\Gamma_{\mathbf{A}}(\bm{\mu})$, $\sigma_i=2\sum_{j\in I}n_{ij}\mu_j$ for some $n_{ij}\in\mathbb{N}\cup\{0\}$.
	\end{proposition}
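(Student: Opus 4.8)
The plan is to argue by induction on the level $s$ of $\bm{\sigma}$, i.e. on the length of a simplest chain with $\bm{\sigma}=\mathfrak{R}_{i_s}\mathfrak{R}_{i_{s-1}}\cdots\mathfrak{R}_{i_1}{\bf 0}$. The case $s=0$ is trivial, since ${\bf 0}=2\sum_{j\in I}0\cdot\mu_j$. For the inductive step write $\bm{\sigma}=\mathfrak{R}_{i_s}\bm{\tau}$ with $\bm{\tau}=\mathfrak{R}_{i_{s-1}}\cdots\mathfrak{R}_{i_1}{\bf 0}$. Minimality of the chain forces $\bm{\tau}$ to sit exactly at level $s-1$, for otherwise $\bm{\sigma}$ would be reachable in fewer than $s$ steps; hence the inductive hypothesis gives $\tau_j=2\sum_{l\in I}n_{jl}\mu_l$ with $n_{jl}\in\mathbb{N}\cup\{0\}$. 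Since $(\mathfrak{R}_{i_s}\bm{\tau})_j=\tau_j$ for $j\neq i_s$, only the single coordinate $\sigma_{i_s}$ must be examined. Using that $k_{i_s i_s}=2$ and that exactly the two cyclic neighbours of $i_s$ contribute $-1$ in \eqref{Affine-A_n^1-Cartan-matrix}, one gets $\sigma_{i_s}=2\mu_{i_s}+\tau_{i_s-1}+\tau_{i_s+1}-\tau_{i_s}$, which is automatically an even integer combination of the $\mu_l$. The entire difficulty is thus the \emph{nonnegativity} of its coefficients.

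To control the sign I would introduce the auxiliary covector $\bm{\Phi}(\bm{\sigma})$ with components $\Phi_i(\bm{\sigma})=\sum_{t\in I}k_{it}\sigma_t-2\mu_i$, so that the reflection rule reads $\sigma_{i_s}=\tau_{i_s}-\Phi_{i_s}(\bm{\tau})$, and it suffices to prove $\Phi_{i_s}(\bm{\tau})\leq 0$ coefficient-wise. A direct computation from the definition of $\mathfrak{R}_j$ yields the transformation law $\Phi_i(\mathfrak{R}_j\bm{\sigma})=\Phi_i(\bm{\sigma})-k_{ij}\Phi_j(\bm{\sigma})$; that is, $\bm{\Phi}$ transforms as a \emph{weight} of the affine root system, the number $\Phi_i$ being its pairing with the simple coroot $\alpha_i^{\vee}$, while each generator $\mathfrak{R}_j$ acts through the simple reflection $s_j$ of the affine Weyl group. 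Since $\bm{\Phi}({\bf 0})=-2\bm{\mu}$ corresponds to the weight $\lambda_0=-2\sum_{i\in I}\mu_i\omega_i$, with $\omega_i$ the fundamental weights, we obtain $\bm{\Phi}(\bm{\tau})=w\,\lambda_0$ for $w=s_{i_{s-1}}\cdots s_{i_1}$, so that $\Phi_{i_s}(\bm{\tau})=\langle w\lambda_0,\alpha_{i_s}^{\vee}\rangle=\langle\lambda_0,w^{-1}(\alpha_{i_s}^{\vee})\rangle$.

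The last pairing is where the Coxeter combinatorics enters. A simplest chain is automatically a reduced word in $\mathbf{G}_{\mathbf{A}}$: were it non-reduced, replacing it by a reduced expression for the same group element would produce a strictly shorter chain with the same endpoint, contradicting minimality. Hence the prefix $w$ is reduced of length $s-1$ and $s_{i_s}w$ is reduced of length $s$, i.e. the step is length-increasing. By the standard length criterion this is equivalent to $w^{-1}(\alpha_{i_s})$ being a positive root, so $w^{-1}(\alpha_{i_s}^{\vee})=\sum_{j\in I}c_j\alpha_j^{\vee}$ with all $c_j\in\mathbb{N}\cup\{0\}$. Using $\langle\omega_i,\alpha_j^{\vee}\rangle=\delta_{ij}$ gives $\Phi_{i_s}(\bm{\tau})=\langle\lambda_0,\sum_{j}c_j\alpha_j^{\vee}\rangle=-2\sum_{j\in I}c_j\mu_j\leq 0$ coefficient-wise, whence $\sigma_{i_s}=\tau_{i_s}+2\sum_{j\in I}c_j\mu_j$ is a nonnegative even integer combination of the $\mu_l$. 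This closes the induction.

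The main obstacle is exactly the bookkeeping of the third paragraph: translating the combinatorial notion of a \emph{simplest chain} (minimality in the orbit of ${\bf 0}$) into a reduced word in the abstract Coxeter group $\mathbf{G}_{\mathbf{A}}$, and then invoking the positive/negative root dichotomy to fix the sign of $\Phi_{i_s}(\bm{\tau})$. One must also check that the weight transformation law for $\bm{\Phi}$ is compatible with the presentation \eqref{Affine-A_n^1-Group-Definition}, i.e. that the generators genuinely act by the simple reflections $s_j$. I expect the same scheme to deliver the $\mathbf{C}^t$ statement as well, the only change being that the two end rows of $(k_{ij}^{\mathbf{c}^t})$ in \eqref{Affine-C_n^1-Cartan-matrix} carry an entry $-2$, so that the non-simply-laced length criterion and the corresponding coroot expansion must be used; the sign conclusion $\Phi_{i_s}(\bm{\tau})\leq 0$, and hence the nonnegativity, is unaffected.
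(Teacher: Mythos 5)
Your proposal is correct, but it takes a genuinely different route from the paper's. The paper also inducts on the level, but its sign control is entirely elementary: since every element of $\Gamma_{\mathbf{A}}(\bm{\mu})$ satisfies the Pohozaev identity \eqref{Affine-Toda-Section-2-Eq-1} (Proposition \ref{Affine-Toda-Section-2-Proposition-6}), substituting $\sigma_i=2\sum_{j\in I}n_{ij}\mu_j$ and comparing coefficients of $\mu_j^2$ yields $\sum_{i\in I}(n_{i,j}-n_{i+1,j})^2=2n_{jj}$, i.e. \eqref{Affine-Toda-Section-2-Eq-16}; the paper then regards $n_{i_0,j}$ as one root of the resulting quadratic \eqref{Affine-Toda-Section-2-Eq-17} or \eqref{Affine-Toda-Section-2-Eq-19} and uses Vieta's formulas, together with a four-case telescoping estimate (relying on $a^2\geq a$ for integers $a$) to show the constant term \eqref{Affine-Toda-Section-2-Eq-20} is nonnegative, so that the companion root $\delta_{i_0j}-\sum_{s\in I}k_{i_0,s}n_{s,j}+n_{i_0,j}$ is nonnegative, which is exactly \eqref{Affine-Toda-Section-2-Eq-38}. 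Your replacement of this by Coxeter combinatorics is sound: the transformation law $\Phi_i(\mathfrak{R}_j\bm{\sigma})=\Phi_i(\bm{\sigma})-k_{ij}\Phi_j(\bm{\sigma})$ checks out, simplest chains are indeed reduced words, and the length criterion $\ell(s_{i_s}w)>\ell(w)\Leftrightarrow w^{-1}\alpha_{i_s}>0$ correctly forces $\Phi_{i_s}(\bm{\tau})=-2\sum_j c_j\mu_j$ with $c_j\in\mathbb{N}\cup\{0\}$ (roots and coroots coincide in the simply-laced case, so the coroot expansion is unproblematic). The one verification you flag is genuine but standard and completable: identifying $\bm{\sigma}$ with $\lambda_0+\sum_{t\in I}\sigma_t\alpha_t$ in the Kac--Moody realization, where the affine simple roots are linearly independent, shows each $\mathfrak{R}_i$ acts precisely as the simple reflection $s_i$ on this affine subspace, so the relations of \eqref{Affine-A_n^1-Group-Definition} hold on $\bm{\sigma}$-space and words representing the same group element have the same endpoint — this is needed (and suffices) for your reduced-word replacement, since the map $\bm{\sigma}\mapsto\bm{\Phi}$ alone is not injective. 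The trade-off to be aware of: your argument buys conceptual clarity, no case analysis, and uniformity across affine types (including $\mathbf{C}^t$, as you note), but it establishes nonnegativity only along the orbit via length-increasing steps, whereas the paper's Vieta argument proves a stronger one-step closure — nonnegativity is preserved by \emph{every} generator $\mathfrak{R}_{i_0}$ applied to \emph{any} $\bm{\sigma}$ with nonnegative integer coefficients satisfying \eqref{Affine-Toda-Section-2-Eq-1} — and it is exactly this stronger form that is reused as \eqref{Affine-Toda-Section-2-Eq-23} in the proof of Proposition \ref{Affine-Toda-Section-2-Proposition-9}, where $\bm{\sigma}\in\Gamma_{N}^{\mathbf{A}}(\bm{\mu})$ is not a priori in the orbit of $\bm{0}$; your chain-based induction would not directly deliver that closure, so it proves the proposition as stated but could not simply be substituted into the paper's subsequent argument.
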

	\begin{proof}
		We prove this proposition by induction. By the definitions of $\mathfrak{R}_i$'s ($i\in I$), it is easy to see that the conclusion holds for the elements in the $1$-st level. Suppose that the conclusion holds for all $\bm{\sigma}=(\sigma_1,\cdots,\sigma_{n+1})\in\Gamma_{\mathbf{A}}(\bm{\mu})$ in the $M$-th level ($M\geq 1$), it is enough to prove that it also holds for each $\mathfrak{R}_i\bm{\sigma}$, $i\in I$. Assuming that $\sigma_i=2\sum_{j\in I}n_{ij}\mu_j$, where $n_{ij}\in\mathbb{N}\cup\{0\}$ and $i,j\in I$, the objective is to demonstrate that for any $i_0\in I$, the following inequality holds true:
		\begin{equation}\label{Affine-Toda-Section-2-Eq-38}
			\delta_{{i_0}j}-\sum\limits_{s\in I}k_{i_0,s}n_{s,j}+n_{i_0,j}\geq 0, \ \forall \ j\in I.
		\end{equation}
		Here we set $n_{ij}$ and $k_{ij}$ by $n_{i,j}$ and $k_{i,j}$, respectively, for the sake of clarity.
		
		We substitute the expression of $\bm{\sigma}$ into \eqref{Affine-Toda-Section-2-Eq-1} and get the following equations by comparing the coefficients of $\mu_{j}^2$ for each $j\in I$,
		\begin{equation}\label{Affine-Toda-Section-2-Eq-16}
			\sum\limits_{i\in I}\left(n_{i,{j}}-n_{i+1,{j}}\right)^{2}=2n_{{j},{j}}. \footnote{Here $\sigma_{\ell}=\sigma_i$ if $\ell\equiv i \ (\mathrm{mod} \ n+1)$. Therefore, $\sigma_{n+2}=\sigma_1$ and $\sigma_{0}=\sigma_{n+1}$, $n_{n+2,j}=n_{1,j}$ and $n_{0,j}=n_{n+1,j}$ for any $j\in I$.}
		\end{equation}
		For \eqref{Affine-Toda-Section-2-Eq-16}, we can regard $n_{{i_0},{i_0}}$ as a solution of the quadratic equation
		\begin{equation}\label{Affine-Toda-Section-2-Eq-17}
			2x^2-2\big(1+n_{{i_0}-1,{i_0}}+n_{{i_0}+1,{i_0}}\big)x+n_{{i_0}-1,{i_0}}^2+n_{{i_0}+1,{i_0}}^2+\sum\limits_{i\in I\setminus\{{i_0}-1,{i_0}\}}\big(n_{i,{i_0}}-n_{i+1,{i_0}}\big)^{2}=0.
		\end{equation}
		Then it is easy to see the other solution of \eqref{Affine-Toda-Section-2-Eq-17} satisfies
		\begin{equation}\label{Affine-Toda-Section-2-Eq-18}
			1+n_{{i_0}-1,{i_0}}-n_{{i_0},{i_0}}+n_{{i_0}+1,{i_0}}\geq 0.
		\end{equation}
		Similarly, for \eqref{Affine-Toda-Section-2-Eq-16}, we can also regard $n_{{i_0},j}$, $j\neq i_0$ as a solution of the quadratic equation
		\begin{equation}\label{Affine-Toda-Section-2-Eq-19}
			2x^2-2\big(n_{{i_0}-1,{j}}+n_{{i_0}+1,{j}}\big)x+n_{{i_0}-1,{j}}^2+n_{{i_0}+1,{j}}^2+\sum\limits_{i\in I\setminus\{{i_0}-1,{i_0}\}}\big(n_{i,{j}}-n_{i+1,{j}}\big)^{2}-2n_{jj}=0.
		\end{equation}
		In order to show the other solution is non-negative, we claim that
		\begin{equation}\label{Affine-Toda-Section-2-Eq-20}
			n_{{i_0}-1,{j}}^2+n_{{i_0}+1,{j}}^2+\sum\limits_{i\in I\setminus\{{i_0}-1,{i_0}\}}\big(n_{i,{j}}-n_{i+1,{j}}\big)^{2}-2n_{jj}\geq 0.
		\end{equation}
		We prove \eqref{Affine-Toda-Section-2-Eq-20} by investigating the following 4 cases:
		\begin{equation*}
			\text{(a).} \ j=i_{0}-1; \ \ \text{(b).} \ j=i_{0}+1; \ \ \text{(c).} \ j<i_{0}-1; \ \ \text{(d).} \ j>i_{0}+1.
		\end{equation*}
		Here we just give the proof of cases (a) and (c) since the argument of cases (b) and (d) are quite similar.\\
		\noindent {\em Proof of case (a)}. if $j=i_{0}-1$, then
		\begin{equation*}
			\begin{aligned}
				& n_{{i_0}-1,{j}}^2+n_{{i_0}+1,{j}}^2+\sum\limits_{i\in I\setminus\{{i_0}-1,{i_0}\}}\big(n_{i,{j}}-n_{i+1,{j}}\big)^{2}-2n_{jj}\\
				&\geq n_{{i_0}-1,{i_0}-1}+n_{{i_0}+1,{i_0}-1}+\sum\limits_{i\in I\setminus\{{i_0}-1,{i_0}\}}\big(n_{i+1,{i_0}-1}-n_{i,{i_0}-1}\big)-2n_{{i_0}-1,{i_0}-1}=0.
			\end{aligned}
		\end{equation*}
		\noindent {\em Proof of case (c)}. if $j<i_{0}-1$, then
		\begin{equation*}
			\begin{aligned}
				& n_{{i_0}-1,{j}}^2+n_{{i_0}+1,{j}}^2+\sum\limits_{i\in I\setminus\{{i_0}-1,{i_0}\}}\big(n_{i,{j}}-n_{i+1,{j}}\big)^{2}-2n_{jj}\\
				&\geq\big(n_{j,j}-n_{j-1,{j}}\big)+\big(n_{j,j}-n_{j+1,{j}}\big)
				+\sum\limits_{i=j+1}^{{i_0}-2}\big(n_{i,j}-n_{i+1,{j}}\big)+n_{{i_0}-1,{j}}+n_{{i_0}+1,{j}}\\
				&\quad+\sum\limits_{i\in I\setminus\{j-1,j,\cdots,i_{0}-1,i_{0}\}}\big(n_{i+1,j}-n_{i,{j}}\big)-2n_{jj}=0.
			\end{aligned}
		\end{equation*}
		Hence the claim \eqref{Affine-Toda-Section-2-Eq-20} is proved. Therefore, the other solution of \eqref{Affine-Toda-Section-2-Eq-19} satisfies
		\begin{equation}\label{Affine-Toda-Section-2-Eq-21}
			n_{{i_0}-1,j}-n_{{i_0},j}+n_{{i_0}+1,j}\geq 0.
		\end{equation}
		By \eqref{Affine-Toda-Section-2-Eq-18} and \eqref{Affine-Toda-Section-2-Eq-21} we get \eqref{Affine-Toda-Section-2-Eq-38}. This finishes the proof of Proposition \ref{Affine-Toda-Section-2-Proposition-8}.
	\end{proof}
	
	\begin{proposition}\label{Affine-Toda-Section-2-Proposition-9}
		Let $\Gamma_{N}^{\mathbf{A}}(\bm{\mu})$ be defined as
		\begin{equation*}
			\Gamma_{N}^{\mathbf{A}}(\bm{\mu})=\left\{\bm{\sigma} \ \big| \ \bm{\sigma} \ \text{satisfies the Pohozaev identity \eqref{Affine-Toda-Section-2-Eq-6-Pohozaev}},
			\ \sigma_{i}=2\sum\limits_{j\in I}n_{ij}\mu_{j}, \ n_{ij}\in\mathbb{N}\cup\{0\}, \ i\in I\right\}.
		\end{equation*}
		Then it holds that
		\begin{equation*}
			\Gamma_{N}^{\mathbf{A}}(\bm{\mu})=\Gamma_{\mathbf{A}}(\bm{\mu}).
		\end{equation*}
	\end{proposition}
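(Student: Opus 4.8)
The plan is to prove the two inclusions separately. The inclusion $\Gamma_{\mathbf A}(\bm\mu)\subseteq\Gamma_N^{\mathbf A}(\bm\mu)$ is immediate from the two preceding propositions: every $\bm\sigma\in\Gamma_{\mathbf A}(\bm\mu)$ satisfies the Pohozaev identity \eqref{Affine-Toda-Section-2-Eq-1} by Proposition \ref{Affine-Toda-Section-2-Proposition-6}, and admits a representation $\sigma_i=2\sum_{j\in I}n_{ij}\mu_j$ with $n_{ij}\in\mathbb{N}\cup\{0\}$ by Proposition \ref{Affine-Toda-Section-2-Proposition-8}. These are exactly the two defining conditions of $\Gamma_N^{\mathbf A}(\bm\mu)$.

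For the reverse inclusion I would argue by a descent on the total mass $\ell(\bm\sigma):=\sum_{i\in I}\sigma_i$, starting from an arbitrary $\bm\sigma\in\Gamma_N^{\mathbf A}(\bm\mu)$ and driving it to ${\bf 0}$ by generators. Two structural facts make this work. First, each generator $\mathfrak{R}_i$ preserves $\Gamma_N^{\mathbf A}(\bm\mu)$. Preservation of the Pohozaev quadric follows from a direct computation: since $\mathfrak{R}_i$ alters only the $i$-th entry by $c=2\mu_i-\sum_{t}k_{it}\sigma_t$, the change of $Q(\bm\sigma):=\sum_{i,j}k_{ij}\sigma_i\sigma_j-4\sum_i\mu_i\sigma_i$ equals $2c\big(2\sigma_i+c+\sum_{b\ne i}k_{ib}\sigma_b-2\mu_i\big)=0$ (here I use that the matrix in \eqref{Affine-A_n^1-Cartan-matrix} is symmetric with $k_{ii}=2$). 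Preservation of the nonnegative integer structure is the point where Proposition \ref{Affine-Toda-Section-2-Proposition-8} is reused: the new coefficients are $n'_{ij}=\delta_{ij}-\sum_s k_{is}n_{sj}+n_{ij}$, and their nonnegativity is precisely inequality \eqref{Affine-Toda-Section-2-Eq-38}, whose derivation only invokes the quadratic relations \eqref{Affine-Toda-Section-2-Eq-16}, i.e. only that $\bm\sigma$ satisfies Pohozaev and has nonnegative integer structure. Hence that derivation applies verbatim to any $\bm\sigma\in\Gamma_N^{\mathbf A}(\bm\mu)$, not merely to elements of $\Gamma_{\mathbf A}(\bm\mu)$.

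The second fact is the existence of a strictly descending generator whenever $\bm\sigma\ne{\bf 0}$, and this is where the Pohozaev identity does the essential work. Writing \eqref{Affine-Toda-Section-2-Eq-6-Pohozaev} as $\sum_{i}\sigma_i\big(\sum_t k_{it}\sigma_t-2\mu_i\big)=2\sum_i\mu_i\sigma_i$ and using $\mu_i>0$ together with $\sigma_i\ge 0$ not all zero, the right-hand side is strictly positive; hence some summand on the left is positive, giving an index $i_0$ with $\sigma_{i_0}>0$ and $\sum_t k_{i_0 t}\sigma_t-2\mu_{i_0}>0$. For this index $(\mathfrak{R}_{i_0}\bm\sigma)_{i_0}=\sigma_{i_0}-\big(\sum_t k_{i_0 t}\sigma_t-2\mu_{i_0}\big)<\sigma_{i_0}$, so $\ell(\mathfrak{R}_{i_0}\bm\sigma)<\ell(\bm\sigma)$ while $\mathfrak{R}_{i_0}\bm\sigma$ stays in $\Gamma_N^{\mathbf A}(\bm\mu)$ by the first fact.

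Finally I would close the induction by a discreteness observation: since $\ell(\bm\sigma)=2\sum_{j}\big(\sum_i n_{ij}\big)\mu_j$ with nonnegative integer weights and $\mu_j>0$, the set of attainable values of $\ell$ meets every bounded interval of $[0,\infty)$ in only finitely many points. Thus the strictly decreasing chain $\bm\sigma,\mathfrak{R}_{i_0}\bm\sigma,\dots$ must terminate after finitely many steps, necessarily at an element admitting no descending generator, which by the previous paragraph can only be ${\bf 0}$. Reading this chain backwards and using $\mathfrak{R}_i^2=e$ exhibits $\bm\sigma$ as a finite product of generators applied to ${\bf 0}$, so $\bm\sigma\in\Gamma_{\mathbf A}(\bm\mu)$. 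The main obstacle is the second fact together with the first: pinning down a reflection that strictly decreases the mass yet keeps all coefficients nonnegative. Both halves ultimately rely on recognizing that the inequality \eqref{Affine-Toda-Section-2-Eq-38} and the Pohozaev positivity are intrinsic properties of $\Gamma_N^{\mathbf A}(\bm\mu)$ rather than artifacts of the orbit description.
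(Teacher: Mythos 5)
Your proposal is correct and takes essentially the same route as the paper: both directions are handled identically, with $\Gamma_{\mathbf{A}}(\bm{\mu})\subseteq\Gamma_{N}^{\mathbf{A}}(\bm{\mu})$ read off from Propositions \ref{Affine-Toda-Section-2-Proposition-6} and \ref{Affine-Toda-Section-2-Proposition-8}, and the reverse inclusion obtained by observing that each generator $\mathfrak{R}_i$ preserves $\Gamma_{N}^{\mathbf{A}}(\bm{\mu})$ (the paper's claim \eqref{Affine-Toda-Section-2-Eq-23}, proved by re-running the quadratic-equation argument of Proposition \ref{Affine-Toda-Section-2-Proposition-8}, exactly as you note) followed by a descent of an arbitrary $\bm{\sigma}\in\Gamma_{N}^{\mathbf{A}}(\bm{\mu})$ to $\bm{0}$. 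The only difference is presentational: your explicit total-mass descent, with the Pohozaev-positivity selection of a strictly decreasing generator and the discreteness/termination argument, spells out the step the paper compresses into its partial-order and orbit-dichotomy discussion with a citation to \cite[Proposition 3.4]{Lin-Yang-Zhong-2020}.
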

	\begin{proof}
		By Proposition \ref{Affine-Toda-Section-2-Proposition-6} and Proposition \ref{Affine-Toda-Section-2-Proposition-8}, we find that $\Gamma_{\mathbf{A}}(\bm{\mu})\subseteq\Gamma_{N}^{\mathbf{A}}(\bm{\mu})$. Thus, it remains to show $\Gamma_{N}^{\mathbf{A}}(\bm{\mu})\subseteq\Gamma_{\mathbf{A}}(\bm{\mu})$. For any $\bm{\sigma}=(\sigma_1,\cdots,\sigma_{n+1})\in\Gamma_{N}^{\mathbf{A}}(\bm{\mu})$, by Proposition \ref{Affine-Toda-Section-2-Proposition-6}, each $\mathfrak{R}_i\bm{\sigma}$, $i\in I,$ satisfies the Pohozaev identity \eqref{Affine-Toda-Section-2-Eq-6-Pohozaev}. We claim that
		\begin{equation}\label{Affine-Toda-Section-2-Eq-23}
			\text{if} \ \bm{\sigma}\in\Gamma_{N}^{\mathbf{A}}(\bm{\mu}),\ \text{then} \ \mathfrak{R}_i\bm{\sigma}\in\Gamma_{N}^{\mathbf{A}}(\bm{\mu}),\ \text{for each} \ i\in I.
		\end{equation}
		In fact, for any $\bm{\sigma}\in\Gamma_{N}^{\mathbf{A}}(\bm{\mu})$, we suppose that ${\sigma}_i=2\sum_{j\in I}n_{ij}\mu_{j}$, where $n_{ij}\in\mathbb{N}\cup\{0\}$, $i,j\in I$. By the same arguments of Proposition \ref{Affine-Toda-Section-2-Proposition-8}, we obtain that
		\begin{equation*}
			\delta_{{i}j}-\sum\limits_{s\in I}k_{is}n_{sj}+n_{ij}\geq 0, \ \forall \ i,j\in I,
		\end{equation*}
		Thus \eqref{Affine-Toda-Section-2-Eq-23} is proved.
		
		Then by the same arguments of \cite[Proposition 3.4]{Lin-Yang-Zhong-2020}, we define a partial order $\preceq$ in $\Gamma_{N}^{\mathbf{A}}(\bm{\mu})$, i.e., $\bm{\sigma}_{1}\preceq\bm{\sigma}_{2}$ provided that $(\bm{\sigma}_{1})_{i}\leq(\bm{\sigma}_{2})_{i}$ for each $i\in I$. For any $\bm{\sigma}\in\Gamma_{N}^{\mathbf{A}}(\bm{\mu})$, let
		\begin{equation*}
			\Gamma_{\bm{\sigma}}^{\mathbf{A}}:=\left\{\mathfrak{R}_{i_{1}}\mathfrak{R}_{i_{2}}\cdots\mathfrak{R}_{i_{s}}\bm{\sigma}\mid s\in\mathbb{N}\cup\{0\}, \ i_{a}\in I, \ 1\leq a\leq s\right\}.
		\end{equation*}
		Thus, for any $\bm{\sigma}_{1},\bm{\sigma}_{2}\in\Gamma_{N}^{\mathbf{A}}(\bm{\mu})$, we have that
		\begin{equation}\label{Affine-Toda-Section-2-Eq-24}
			\text{either} \ \Gamma_{\bm{\sigma}_{1}}^{\mathbf{A}}=\Gamma_{\bm{\sigma}_{2}}^{\mathbf{A}} \ \text{or} \ \Gamma_{\bm{\sigma}_{1}}^{\mathbf{A}}\cap\Gamma_{\bm{\sigma}_{2}}^{\mathbf{A}}=\emptyset.
		\end{equation}
		One can easily deduce that $\bm{0}\in\Gamma_{\bm{\sigma}}^{\mathbf{A}}$ for any $\bm{\sigma}\in\Gamma_{N}^{\mathbf{A}}(\bm{\mu})$. By \eqref{Affine-Toda-Section-2-Eq-24}, we conclude that $\Gamma_{\bm{\sigma}}^{\mathbf{A}}=\Gamma_{\bm{0}}^{\mathbf{A}}$ for any $\bm{\sigma}\in\Gamma_{N}^{\mathbf{A}}(\bm{\mu})$. Consequently, $\Gamma_{N}^{\mathbf{A}}(\bm{\mu})=\Gamma_{\mathbf{A}}(\bm{\mu})$. This finishes the proof of Proposition \ref{Affine-Toda-Section-2-Proposition-9}.
	\end{proof}
	
	\subsection{The group representation}
	\begin{lemma}\label{Affine-Toda-Section-2-Lemma-10}
		For any $\bm{\sigma}=(\sigma_1,\cdots,\sigma_{n+1})\in\Gamma_{\mathbf{A}}(\bm{\mu})$, denote by $\mu_{s}^{*}=\mu_{s}-\frac{1}{2}\sum_{t\in I}k_{st}\sigma_{t}$ for $s\in I$. Suppose that $J=\{j,j+1,\cdots,j+l\}\subsetneqq I$ consists of consecutive indices for some $j\in I$ and $l\in\mathbb{N}\cup\{0\}$. Let $\bm{\sigma}^{*}=\left(\sigma^{*}_1,\cdots,\sigma^{*}_{n+1}\right)$ be defined as
		\begin{equation*}
			\left\{
			\begin{aligned}
				\sigma_{s}^{*}&=\sigma_{s}, \ &&\text{for} \ s\in I\setminus J,\\
				\\
				\sigma_{s}^{*}&=\sigma_{s}+2\sum\limits_{t\in J}k^{st}\big(\mu_t^{*}+\mu_{t^{*}}^{*}\big), \ &&\text{for} \ s\in J,
			\end{aligned}
			\right.
		\end{equation*}
		where $t^{*}=2j+|J|-t-1$ and $(k^{st})_{|J|\times|J|}$ stands for the inverse of $(k_{st})_{|J|\times|J|}$, $s,t\in J$. Then $\bm{\sigma}^{*}\in\Gamma_{\mathbf{A}}(\bm{\mu})$.
	\end{lemma}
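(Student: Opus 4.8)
The plan is to realize the target vector $\bm{\sigma}^{*}$ as the image of $\bm{\sigma}$ under the longest element of the finite sub-Weyl group generated by $\{\mathfrak{R}_s : s\in J\}$, and then to conclude by the closure of $\Gamma_{\mathbf{A}}(\bm{\mu})$ under the generators. Since $\Gamma_{\mathbf{A}}(\bm{\mu})$ is closed under each $\mathfrak{R}_i$ (property (2) of its definition), it is closed under any finite product of generators; thus once $\bm{\sigma}^{*}$ is identified with such a product applied to $\bm{\sigma}$, membership $\bm{\sigma}^{*}\in\Gamma_{\mathbf{A}}(\bm{\mu})$ follows for free. The whole content is therefore the explicit identification.

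First I would record the action of a generator on the auxiliary vector $\bm{\mu}^{*}=(\mu_1^{*},\dots,\mu_{n+1}^{*})$. Since $(\mathfrak{R}_i\bm{\sigma})_i-\sigma_i=2\mu_i^{*}$ and $\mathfrak{R}_i$ fixes the other coordinates of $\bm{\sigma}$, substituting into $\mu_s^{*}=\mu_s-\tfrac12\sum_{t\in I}k_{st}\sigma_t$ gives the one-line formula $(\mathfrak{R}_i\bm{\mu}^{*})_s=\mu_s^{*}-k_{si}\mu_i^{*}$. Writing $v=\sum_{t}\mu_t^{*}\omega_t$ with $\{\omega_t\}$ the fundamental weights dual to the simple coroots, this is precisely the simple reflection $s_i(v)=v-\langle v,\alpha_i^{\vee}\rangle\alpha_i$ read off in the fundamental-weight basis; hence any word in the $\mathfrak{R}_i$ acts on $\bm{\mu}^{*}$ through the corresponding Weyl group element. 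Next, because $J=\{j,\dots,j+l\}\subsetneq I$ is a proper consecutive set, deleting a node from the cycle-shaped Dynkin diagram of $\mathbf{A}_n^{(1)}$ leaves a path, so $K_J:=(k_{st})_{s,t\in J}$ is the Cartan matrix of type $\mathbf{A}_{|J|}$ and $W_J:=\langle\mathfrak{R}_s:s\in J\rangle\cong S_{|J|+1}$ is finite. Let $w_0^{J}\in W_J$ be its longest element; for type $\mathbf{A}$ it acts by $\omega_t\mapsto-\omega_{t^{*}}$, where $t^{*}=2j+|J|-t-1$ is the reversal on $J$. This is the candidate product, and it keeps us in $\Gamma_{\mathbf{A}}(\bm{\mu})$ by the closure remark above.

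The identification is then carried out purely at the level of total displacements, which is the clean point. Since the generators used have indices in $J$ and each such $\mathfrak{R}_s$ fixes $\sigma_t$ for $t\neq s$, we get $\sigma_s^{*}=\sigma_s$ for $s\notin J$, so $\Delta\bm{\sigma}:=w_0^{J}\bm{\sigma}-\bm{\sigma}$ is supported on $J$. Subtracting the exact relation $\mu_s^{*}=\mu_s-\tfrac12(K\sigma)_s$ in the final and initial states — which makes the computation path-independent — yields, for $s\in J$,
\[
\Delta\mu_s^{*}=-\tfrac12\sum_{t\in J}k_{st}\,\Delta\sigma_t
\quad\Longleftrightarrow\quad
\Delta\sigma_s=-2\sum_{t\in J}k^{st}\,\Delta\mu_t^{*},
\]
using invertibility of $K_J$ with inverse $(k^{st})$. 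Finally, the second paragraph shows $\Delta\bm{\mu}^{*}$ is the displacement produced by $w_0^{J}$ acting as a Weyl element; since $w_0^{J}v=-\sum_t\mu_t^{*}\omega_{t^{*}}$, the $t$-th coordinate of $w_0^{J}v$ is $-\mu_{t^{*}}^{*}$, whence $\Delta\mu_t^{*}=-(\mu_t^{*}+\mu_{t^{*}}^{*})$. Substituting gives $\Delta\sigma_s=2\sum_{t\in J}k^{st}(\mu_t^{*}+\mu_{t^{*}}^{*})$, which is exactly the claimed formula, so $\bm{\sigma}^{*}=w_0^{J}\bm{\sigma}\in\Gamma_{\mathbf{A}}(\bm{\mu})$.

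The two points needing care are: (i) checking that the $\bm{\mu}^{*}$-dynamics restricted to the coordinates in $J$ is \emph{autonomous}, i.e.\ that applying $\mathfrak{R}_s$ with $s\in J$ updates $\mu_t^{*}$ ($t\in J$) only through entries of $K_J$, so the chosen word genuinely realizes the type-$\mathbf{A}_{|J|}$ element $w_0^{J}$ on $\bm{\mu}^{*}|_J$ even though coordinates outside $J$ are disturbed and never feed back; and (ii) the standard structural fact that the longest element of type $\mathbf{A}_{|J|}$ sends $\omega_t\mapsto-\omega_{t^{*}}$. I expect (i) to be the main obstacle to phrase cleanly, while everything else is linear algebra forced by the path-independent identity $\bm{\mu}^{*}=\bm{\mu}-\tfrac12 K\bm{\sigma}$.
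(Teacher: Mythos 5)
Your proof is correct, and it takes a genuinely different route from the paper's. The paper proves the lemma \emph{constructively}: it defines the explicit recursive $J$-chain $\mathfrak{R}_J$ of Definition \ref{Affine-Toda-Section-2-Definition-11} and verifies $\mathfrak{R}_J\bm{\sigma}=\bm{\sigma}^*$ by induction on $l$, computing $\bigl(\mathfrak{R}_{j+1}\cdots\mathfrak{R}_{j+l}\mathfrak{R}_{j+l-2}\cdots\mathfrak{R}_{j}\bigr)^2\bm{\sigma}$ in closed form, rewriting $\sigma_s^*$ via the $\mathbf{A}$-type mass formula imported from \cite{Lin-Yang-Zhong-2020}, and matching the two expressions coordinate by coordinate, with the inner set $J^\star$ absorbed by the inductive hypothesis. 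You instead characterize the transformation abstractly as the longest element of the finite parabolic on $J$. Your argument is complete: the update law $(\mathfrak{R}_i\bm{\mu}^*)_s=\mu_s^*-k_{si}\mu_i^*$ is verified directly, and your worry (i) about autonomy is in fact a one-line check, since for $s\in J$ the matrix of $\mathfrak{R}_s$ acting on $\bm{\mu}^*$ modifies only column $s$, so its $J\times(I\setminus J)$ block vanishes and the $J$-block dynamics closes up on itself, realizing the geometric representation of $W(\mathbf{A}_{|J|})$ in fundamental-weight coordinates (the weight-basis language is only needed on the $J$-block, where $K_J$ is nondegenerate, so the looseness of that phrasing for the singular affine Cartan matrix is cosmetic). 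Then $w_0(\omega_t)=-\omega_{t^*}$ gives $\Delta\mu_t^*=-(\mu_t^*+\mu_{t^*}^*)$, and the path-independent relation $\bm{\mu}^*=\bm{\mu}-\tfrac12K\bm{\sigma}$, together with $\Delta\bm{\sigma}$ being supported on $J$ and the invertibility of $K_J$, forces $\Delta\sigma_s=2\sum_{t\in J}k^{st}\bigl(\mu_t^*+\mu_{t^*}^*\bigr)$; membership in $\Gamma_{\mathbf{A}}(\bm{\mu})$ is free by closure under generators. What each approach buys: yours is shorter, insensitive to the labeling of $J$ (it covers wrap-around consecutive sets directly, without the $r^\pm$-permutation conjugation the paper uses in Theorem \ref{Affine-Toda-Section-2-Theorem-14}-(b)), and it explains structurally both why the reversal $t^*$ appears ($-w_0$ is the diagram flip in type $\mathbf{A}$) and why the paper's chains have exactly $(l+1)(l+2)/2$ letters (this is $\ell(w_0)$ in $S_{|J|+1}$, so the paper's $J$-chains are reduced words for $w_0$, a fact your proof predicts rather than observes). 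The paper's computation, by contrast, is self-contained (no Lie-theoretic facts about longest elements are invoked) and produces one concrete chain, which is what its tables and remarks display; but for every later use of the lemma only the existence of \emph{some} word in the generators is needed, and your proof supplies that.
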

	
	In order to prove Lemma \ref{Affine-Toda-Section-2-Lemma-10}, we shall identify an element $\mathfrak{R}_{J}\in\mathbf{G}_{\mathbf{A}}$ such that $\bm{\sigma}^{*}=\mathfrak{R}_{J}\bm{\sigma}\in\Gamma_{\mathbf{A}}(\bm{\mu})$. To achieve this, we introduce the concept of a \textbf{\em set chain}.
	
	\begin{definition}\label{Affine-Toda-Section-2-Definition-11}
		Suppose that $J\subsetneqq I$ consists of consecutive indices, denoted by $J=\{j,j+1,\cdots,j+l\}$ for some $j\in I$ and $l\in\mathbb{N}\cup\{0\}$. We define the \textbf{$J$-chain}, denoted by $\mathfrak{R}_{J}$, as follows:
		\begin{enumerate}[(a).]
			\item if $l=0$, $|J|=1$, then $\mathfrak{R}_{J}=\mathfrak{R}_{j}$;
			\item if $l=1$, $|J|=2$, then $\mathfrak{R}_{J}=\mathfrak{R}_{j}\mathfrak{R}_{j+1}\mathfrak{R}_{j}$;
			\item if $l=2$, $|J|=3$, then $\mathfrak{R}_{J}=\left(\mathfrak{R}_{j+1}\mathfrak{R}_{j+2}\mathfrak{R}_{j}\right)^{2}$;
			\item if $l=3$, $|J|=4$, then $\mathfrak{R}_{J}=\left(\mathfrak{R}_{j+1}\mathfrak{R}_{j+2}\mathfrak{R}_{j+3}\mathfrak{R}_{j+1}\mathfrak{R}_{j}\right)^{2}$;
			\item if $l\geq 4$, $|J|=l+1\geq 5$, then
			\begin{equation*}
				\mathfrak{R}_{J}=\mathfrak{R}_{J^{\star}}\left(\mathfrak{R}_{j+1}\mathfrak{R}_{j+2}\cdots\mathfrak{R}_{j+l-1}\mathfrak{R}_{j+l}
				\mathfrak{R}_{j+l-2}\mathfrak{R}_{j+l-3}\cdots\mathfrak{R}_{j+1}\mathfrak{R}_{j}\right)^2,
			\end{equation*}
			where $J^{\star}=\{j+2,\cdots,j+l-2\}$ with $|J^{\star}|=l-3$.
		\end{enumerate}
	\end{definition}
	
	\begin{rmk}
		If $J=\{i_1,i_2,\cdots,i_{l+1}\}\subseteq I$ is not a consecutive-index set for some $l\in\mathbb{N}$, we can also give the corresponding $J$-chain as follows. Defining a bijective map
		\begin{equation*}
			g:J=\{i_1,i_2,\cdots,i_{l+1}\}\rightarrow\{1,2,\cdots,l+1\}=:\widetilde{J}, \ i_{t}\mapsto t, \ t=1,2,\cdots,l+1.
		\end{equation*}
		Assume that the $\widetilde{J}$-chain is given by $\mathfrak{R}_{\widetilde{J}}=\prod_{s\in\Lambda}\mathfrak{R}_{s}$, where $\prod$ stands for the composition of generators $\mathfrak{R}_s$'s and $\Lambda$ is a tuple of length ${(l+1)(l+2)}/{2}$ satisfying that $s\in \widetilde{J}$ for each $s\in\Lambda$. Then, the $J$-chain is defined as $\mathfrak{R}_{J}=\prod_{s\in\Lambda}\mathfrak{R}_{g^{-1}(s)}$.
	\end{rmk}
	
	\begin{rmk}
		In the blow-up process (refer to Section \ref{Affine-Toda-Section-3}), the local mass of any given blow-up sequence undergoes a change at each step. We shall measure the amount of change by giving a representation in terms of the {\bf \em set chain}. To be precise, if $\bm{\sigma}$ stands for the local mass in present step, then the new local mass (after a further blow-up analysis) can be expressed as $\bm{\sigma}^{*}=\mathfrak{R}_{J}\bm{\sigma}$ for some $J\subsetneqq I$.
	\end{rmk}
	
	\begin{rmk}
		\begin{enumerate}[(a)]
			\item By Proposition \ref{Affine-Toda-Section-2-Proposition-6}, for any consecutive-index set $J\subsetneqq I$, $\mathfrak{R}_{J}\bm{\sigma}\in\Gamma_{\mathbf{A}}(\bm{\mu})$ provided that $\bm{\sigma}\in\Gamma_{\mathbf{A}}(\bm{\mu})$.
			\item We give some straightforward results for readers: see Table 1.
			\setlength{\tabcolsep}{-0.3mm}
			\setlength{\intextsep}{5pt plus 2pt minus 2pt}
			\begin{table}[!htbp]
				\centering
				\begin{tabular}{|c|c|c|c|}
					\hline
					\multicolumn{4}{|c|}{$J$-\text{chain for affine} $\mathbf{A}$ \text{type}}\\
					\hline
					$l$ & $|J|$ & $\mathfrak{R}_{J}$ &~ \text{number of elements in} $\mathfrak{R}_{J}~$\\
					\hline
					$0$ &  $1$  & $\mathfrak{R}_{j}$ &  {$1$}\\
					\hline
					$1$ &  $2$  & $\mathfrak{R}_{j}\mathfrak{R}_{j+1}\mathfrak{R}_{j}$ &  {$3$} \\
					\hline
					$2$ &  $3$  & $\left(\mathfrak{R}_{j+1}\mathfrak{R}_{j+2}\mathfrak{R}_{j}\right)^{2}$ & $6$\\
					\hline
					$3$ &  $4$  & $\mathfrak{R}_{J}=\left(\mathfrak{R}_{j+1}\mathfrak{R}_{j+2}\mathfrak{R}_{j+3}\mathfrak{R}_{j+1}\mathfrak{R}_{j}\right)^{2}$ & $10=5\times2+0$\\
					\hline
					$4$ &  $5$  & $\mathfrak{R}_{J}={\mathfrak{R}_{j+2}}\left(\mathfrak{R}_{j+1}\mathfrak{R}_{j+2}\mathfrak{R}_{j+3}\mathfrak{R}_{j+4}
					\mathfrak{R}_{j+2}\mathfrak{R}_{j+1}\mathfrak{R}_{j}\right)^{2}$  & $15=7\times2+ {1}$\\
					\hline
					$5$ & $6$ & ~$~\mathfrak{R}_{J}= {\left(\mathfrak{R}_{j+2}\mathfrak{R}_{j+3}\mathfrak{R}_{j+2}\right)}\left(\mathfrak{R}_{j+1}\mathfrak{R}_{j+2}
					\mathfrak{R}_{j+3}\mathfrak{R}_{j+4}\mathfrak{R}_{j+5}\mathfrak{R}_{j+3}\mathfrak{R}_{j+2}\mathfrak{R}_{j+1}\mathfrak{R}_{j}\right)^{2}~$ & $21=9\times2+ {3}$ \\
					\hline
					$\cdots$ & $\cdots$ & $\cdots$ & $\cdots$
					\\
					\hline
					$l$ & $~l+1~$ & $\mathfrak{R}_{J}=\mathfrak{R}_{J^{\star}}\left(\mathfrak{R}_{j+1}\mathfrak{R}_{j+2}\cdots\mathfrak{R}_{j+l-1}\mathfrak{R}_{j+l}
					\mathfrak{R}_{j+l-2}\mathfrak{R}_{j+l-3}\cdots\mathfrak{R}_{j+1}\mathfrak{R}_{j}\right)^2$  & $\frac{(l+1)(l+2)}{2}$
					\\
					\hline
				\end{tabular}
				\caption{$J$-chain for affine $\mathbf{A}$ type}
			\end{table}
			\vspace{-0.5cm}
		\end{enumerate}
	\end{rmk}
	
	\begin{proof}[Proof of Lemma \ref{Affine-Toda-Section-2-Lemma-10}]
		We claim that
		\begin{equation}\label{Affine-Toda-Section-2-Eq-25}
			\bm{\sigma}^{*}=\left(\sigma^{*}_1,\cdots,\sigma^{*}_{n+1}\right)=\mathfrak{R}_{J}\bm{\sigma}\in\Gamma_{\mathbf{A}}(\bm{\mu}).
		\end{equation}
		We prove claim \eqref{Affine-Toda-Section-2-Eq-25} by induction on index $l$. For the cases of $l\leq 4$, we can show that $\bm{\sigma}^{*}=\mathfrak{R}_{J}\bm{\sigma}$ by direct computation and it proves \eqref{Affine-Toda-Section-2-Eq-25}. So, without loss of generality, we start with $l=4$. Suppose that \eqref{Affine-Toda-Section-2-Eq-25} holds for $l\geq 4$, then it suffices to check that it is also true for $l+1$. We split the proof into several steps.
		
		\noindent\textbf{Step 1.} For $l\geq 5$, $|J|=l+1\geq 6$. We calculate $\mathfrak{R}_{J}\bm{\sigma}$. Inductively, one can easily check that
		\begin{equation*}
			\begin{aligned}
				&\left(\mathfrak{R}_{j+1}\mathfrak{R}_{j+2}\cdots\mathfrak{R}_{j+l-1}\mathfrak{R}_{j+l}
				\mathfrak{R}_{j+l-2}\mathfrak{R}_{j+l-3}\cdots\mathfrak{R}_{j+1}\mathfrak{R}_{j}\bm{\sigma}\right)_{s}\\
				&=
				\left\{
				\begin{aligned}
					&\sigma_{s},              \ && \text{for} \ 1\leq s\leq j-1 \ \text{and} \ j+l+1\leq s\leq n+1,\\
					&2\mu_{s}^{*}+\sigma_{s}, \ && \text{for} \ s=j \ \text{and} \ s=j+l,\\
					&2\sum\limits_{k=j}^{j+l}\mu_{k}^{*}+\sigma_{s}, \ && \text{for} \ j+1\leq s\leq j+l-1.
				\end{aligned}
				\right.
			\end{aligned}
		\end{equation*}
		Direct computation shows that $2\sum_{k=j}^{j+l}\mu_{k}^{*}=2\sum_{k=j}^{j+l}\mu_{k}+\sigma_{j-1}-\sigma_{j}-\sigma_{j+l}+\sigma_{j+l+1}$. Hence we have
		\begin{equation}\label{Affine-Toda-Section-2-Eq-26}
			\begin{aligned}
				&\left(\left(\mathfrak{R}_{j+1}\mathfrak{R}_{j+2}\cdots\mathfrak{R}_{j+l-1}\mathfrak{R}_{j+l}
				\mathfrak{R}_{j+l-2}\mathfrak{R}_{j+l-3}\cdots\mathfrak{R}_{j+1}\mathfrak{R}_{j}\right)^2\bm{\sigma}\right)_{s}\\
				&=\left\{
				\begin{aligned}
					&\sigma_{s}, \ && \text{for} \ s\in I\setminus J,\\
					&2\sum\limits_{k=j}^{j+l}\mu_{k}^{*}+\sigma_{s}=2\sum\limits_{k=j}^{j+l}\mu_{k}+\sigma_{j-1}-\sigma_{j+l}+\sigma_{j+l+1},  \ && \text{for} \ s=j,       \\
					&2\sum\limits_{k=j+1}^{j+l-1}\mu_{k}+2\sum\limits_{k=j}^{j+l}\mu_{k}+\sigma_{j-1}-\sigma_{j+1}-\sigma_{j+l-1}+\sigma_{j+l+1}+\sigma_{s},  \ && \text{for} \ j+1\leq s\leq j+l-1,\\
					&2\sum\limits_{k=j}^{j+l}\mu_{k}^{*}+\sigma_{s}=2\sum\limits_{k=j}^{j+l}\mu_{k}+\sigma_{j-1}-\sigma_{j}+\sigma_{j+l+1},    \ && \text{for} \ s=j+l.
				\end{aligned}
				\right.
			\end{aligned}
		\end{equation}
		
		\noindent\textbf{Step 2.} By what follows from \cite{Lin-Yang-Zhong-2020}, we obtain that
		\begin{equation}\label{Affine-Toda-Section-2-Eq-27}
			\sigma_{s}^{*}=\sigma_{s}+2\sum\limits_{t\in J}k^{st}\big(\mu_t^{*}+\mu_{t^{*}}^{*}\big)=\sigma_{s}+2\sum\limits_{q=j-1}^{s-1}
			\left(\sum\limits_{k=j}^{2j+|J|-2-q}\mu^{*}_{k}-\sum\limits_{k=j}^{q}\mu^{*}_k\right), \ \text{for} \ s\in J.
		\end{equation}
		Then, direct computation shows that
		\begin{small}
			\begin{equation*}
				\begin{aligned}
					\sigma_{s}^{*}&=\sigma_{s}+2\sum\limits_{q=j-1}^{s-1}
					\left(\sum\limits_{k=j}^{2j+|J|-2-q}\mu^{*}_{k}-\sum\limits_{k=j}^{q}\mu^{*}_k\right)
					=\sigma_{s}+\sum\limits_{q=j-1}^{s-1}2\sum\limits_{k=j}^{2j+|J|-2-q}\mu^{*}_{k}-\sum\limits_{q=j-1}^{s-1}2\sum\limits_{k=j}^{q}\mu^{*}_k\\
					&=\sigma_{s}+\sum\limits_{q=j-1}^{s-1}\left(2\sum\limits_{k=j}^{2j+|J|-2-q}\mu_{k}+\sigma_{j-1}-\sigma_{j}-\sigma_{2j+|J|-2-q}+\sigma_{2j+|J|-1-q}\right)\\
					&\quad-\sum\limits_{q=j-1}^{s-1}\left(2\sum\limits_{k=j}^{q}\mu_{k}+\sigma_{j-1}-\sigma_{j}-\sigma_{q}+\sigma_{q+1}\right)\\
					&=\sigma_{s}+2\sum\limits_{q=j-1}^{s-1}\sum\limits_{k=j}^{2j+|J|-2-q}\mu_{k}-\sum\limits_{q=j-1}^{s-1}\big(\sigma_{2j+|J|-2-q}-\sigma_{2j+|J|-1-q}\big)
					-2\sum\limits_{q=j-1}^{s-1}\sum\limits_{k=j}^{q}\mu_{k}+\sum\limits_{q=j-1}^{s-1}\big(\sigma_{q}-\sigma_{q+1}\big)\\
					&=2\sum\limits_{q=j-1}^{s-1}\sum\limits_{k=j}^{2j+|J|-2-q}\mu_{k}-2\sum\limits_{q=j-1}^{s-1}\sum\limits_{k=j}^{q}\mu_{k}
					-\big(\sigma_{2j+|J|-s-1}-\sigma_{j+|J|}\big)+\sigma_{j-1}.
				\end{aligned}
			\end{equation*}
		\end{small}
		Since $|J|=l+1$, we deduce that
		\begin{equation}\label{Affine-Toda-Section-2-Eq-28}
			\sigma_{s}^{*}=2\sum\limits_{q=j-1}^{s-1}\sum\limits_{k=j}^{2j+l-q-1}\mu_{k}-2\sum\limits_{q=j-1}^{s-1}\sum\limits_{k=j}^{q}\mu_{k}
			-\big(\sigma_{2j+l-s}-\sigma_{j+l+1}\big)+\sigma_{j-1}, \ \text{for} \ s\in J.
		\end{equation}
		
		\noindent\textbf{Step 3.} Comparing \eqref{Affine-Toda-Section-2-Eq-26} and \eqref{Affine-Toda-Section-2-Eq-27}, one can easily check that, for $s\in J\setminus J^{\star}$, 
		\begin{equation}\label{Affine-Toda-Section-2-Eq-29}
			\left(\left(\mathfrak{R}_{j+1}\mathfrak{R}_{j+2}\cdots\mathfrak{R}_{j+l-1}\mathfrak{R}_{j+l}
			\mathfrak{R}_{j+l-2}\mathfrak{R}_{j+l-3}\cdots\mathfrak{R}_{j+1}\mathfrak{R}_{j}\right)^2\bm{\sigma}\right)_{s}=\sigma^{*}_{s}.
		\end{equation}
		By the assumption, \eqref{Affine-Toda-Section-2-Eq-25} holds for $J^{\star}$, since $|J^{\star}|=l-3<l$. Thus, for any $\bm{\sigma}\in\Gamma_{\mathbf{A}}(\bm{\mu})$, applying the same arguments of \cite{Lin-Yang-Zhong-2020} and \eqref{Affine-Toda-Section-2-Eq-28} (by transforming $j$ to $j+2$ and $|J|=l+1$ to $|J^{\star}|=l-3$), we obtain that $\left(\mathfrak{R}_{J^{\star}}\bm{\sigma}\right)_{s}=\sigma_{s}$ for $s\in I\setminus J^{\star}$ and
		\begin{equation*}
			\begin{aligned}
				\left(\mathfrak{R}_{J^{\star}}\bm{\sigma}\right)_{s}=\sigma_{s}^{*}
				&=2\sum\limits_{q=j+1}^{s-1}\sum\limits_{k=j+2}^{2j+|J^{\star}|+2-q}\mu_{k}-2\sum\limits_{q=j+1}^{s-1}\sum\limits_{k=j+2}^{q}\mu_{k}
				-\big(\sigma_{2j+|J^{\star}|-s+3}-\sigma_{j+2+|J^{\star}|}\big)+\sigma_{j+1}\\
				&=2\sum\limits_{q=j+1}^{s-1}\sum\limits_{k=j+2}^{2j+l-q-1}\mu_{k}-2\sum\limits_{q=j+1}^{s-1}\sum\limits_{k=j+2}^{q}\mu_{k}
				-\big(\sigma_{2j+l-s}-\sigma_{j+l-1}\big)+\sigma_{j+1}, \ \text{for} \ s\in J^{\star}.
			\end{aligned}
		\end{equation*}
		As a consequence, combining \eqref{Affine-Toda-Section-2-Eq-26}, we deduce that for $s\in J^{\star}$, i.e., $j+2\leq s\leq j+l-2$,
		\begin{equation}\label{Affine-Toda-Section-2-Eq-30}
				\begin{aligned}
					\left(\mathfrak{R}_{J}\bm{\sigma}\right)_{s}
					&=2\sum\limits_{q=j+1}^{s-1}\sum\limits_{k=j+2}^{2j+l-q-1}\mu_{k}-2\sum\limits_{q=j+1}^{s-1}\sum\limits_{k=j+2}^{q}\mu_{k}
					-\big(\sigma_{2j+l-s}-\sigma_{j+l-1}\big)\\
					&\quad+2\sum\limits_{k=j+1}^{j+l-1}\mu_{k}+2\sum\limits_{k=j}^{j+l}\mu_{k}+\sigma_{j-1}-\sigma_{j+1}-\sigma_{j+l-1}+\sigma_{j+l+1}+\sigma_{j+1}\\
					&=2\sum\limits_{q=j+1}^{s-1}\sum\limits_{k=j+2}^{2j+l-q-1}\mu_{k}-2\sum\limits_{q=j+1}^{s-1}\sum\limits_{k=j+2}^{q}\mu_{k}
					+2\sum\limits_{k=j+1}^{j+l-1}\mu_{k}+2\sum\limits_{k=j}^{j+l}\mu_{k}\\
&\quad-\big(\sigma_{2j+l-s}-\sigma_{j+l+1}\big)+\sigma_{j-1}.
				\end{aligned}
		\end{equation}
		Comparing \eqref{Affine-Toda-Section-2-Eq-28} with \eqref{Affine-Toda-Section-2-Eq-30}, we find that
		\begin{equation}\label{Affine-Toda-Section-2-Eq-31}
			\left(\mathfrak{R}_{J}\bm{\sigma}\right)_{s}=\sigma_{s}^{*}, \ \text{for} \ j+2\leq s\leq j+l-2 \ (\text{i.e.,} \ s\in J^{\star}).
		\end{equation}
		Indeed, for $s\in J^{\star}$, it holds that
		\begin{equation*}
			\begin{aligned}
				&\quad2\sum\limits_{q=j-1}^{s-1}\sum\limits_{k=j}^{2j+l-q-1}\mu_{k}-2\sum\limits_{q=j-1}^{s-1}\sum\limits_{k=j}^{q}\mu_{k}\\
				&=2\sum\limits_{q=j+1}^{s-1}\sum\limits_{k=j}^{2j+l-q-1}\mu_{k}+2\sum\limits_{k=j}^{j+l}\mu_{k}+2\sum\limits_{k=j}^{j+l-1}\mu_{k}
				-2\sum\limits_{q=j+1}^{s-1}\sum\limits_{k=j}^{q}\mu_{k}-2\sum\limits_{k=j}^{j-1}\mu_{k}-2\sum\limits_{k=j}^{j}\mu_{k}\\
				&=2\sum\limits_{q=j+1}^{s-1}\left(\sum\limits_{k=j+2}^{2j+l-q-1}\mu_{k}+\mu_{j}+\mu_{j+1}\right)+2\sum\limits_{k=j}^{j+l}\mu_{k}
				+2\sum\limits_{k=j+1}^{j+l-1}\mu_{k}+2\mu_{j}\\
				&\quad-2\sum\limits_{q=j+1}^{s-1}\left(\sum\limits_{k=j+2}^{q}\mu_{k}+\mu_{j}+\mu_{j+1}\right)-2\mu_{j}\\
				&=2\sum\limits_{q=j+1}^{s-1}\sum\limits_{k=j+2}^{2j+l-q-1}\mu_{k}-2\sum\limits_{q=j+1}^{s-1}\sum\limits_{k=j+2}^{q}\mu_{k}
				+2\sum\limits_{k=j+1}^{j+l-1}\mu_{k}+2\sum\limits_{k=j}^{j+l}\mu_{k}.
			\end{aligned}
		\end{equation*}
		Hence we get \eqref{Affine-Toda-Section-2-Eq-25} from \eqref{Affine-Toda-Section-2-Eq-29} and \eqref{Affine-Toda-Section-2-Eq-31}. This completes the proof of Lemma \ref{Affine-Toda-Section-2-Lemma-10}.
	\end{proof}
	
	\begin{rmk}
		We can apply Lemma \ref{Affine-Toda-Section-2-Lemma-10} to address the blow-up case \textbf{(I)}, as seen in Proposition \ref{Affine-Toda-Section-2-Proposition-2}-(4). However, when dealing with the blow-up case \textbf{(II)} in Proposition \ref{Affine-Toda-Section-2-Proposition-2}-(4), we need additional techniques as outlined below.
	\end{rmk}
	
	\begin{definition}\label{Affine-Toda-Section-2-Definition-12}
		\begin{enumerate}[(a)]
			\item A permutation map $f$ on the index set (tuple) $I$ is said to be an \textbf{$r^{+}$-permutation} if there exists an element $r\in I$ with $r\geq 2$ such that
			\begin{equation*}
				f:(1,2,\cdots,r-1,r,r+1,\cdots,n+1)\rightarrow(r,r+1,\cdots,n+1,1,2,\cdots,r-1).
			\end{equation*}
			In other words, we can view $f$ as a bijective map on $I$ satisfying that
			\begin{equation}\label{Affine-Toda-Section-2-Eq-32}
				\left\{
				\begin{aligned}
					&f(1)=r\\
					&f(2)=r+1\\
					&\quad\quad\quad\vdots\\
					&f(n-r+2)=n+1
				\end{aligned}
				\right.
				\quad \text{and} \quad
				\left\{
				\begin{aligned}
					&f(n-r+3)=1\\
					&f(n-r+4)=2\\
					&\quad\quad\quad\vdots\\
					&f(n+1)=r-1
				\end{aligned}
				\right.
				\quad.
			\end{equation}
			\item A permutation map $f$ on the index set (tuple) $I$ is said to be an \textbf{$r^{-}$-permutation} if there exists an element $r\in I$ with $r\leq n$ such that
			\begin{equation*}
				f:(1,2,\cdots,r,r+1,r+2,\cdots,n+1)\rightarrow(r+1,r+2\cdots,n+1,1,2,\cdots,r).
			\end{equation*}
			We can explicitly express $f$ as
			\begin{equation*}
				\left\{
				\begin{aligned}
					&f(1) =r+1\\
					&f(2) =r+2\\
					&\quad\quad\quad\vdots\\
					&f(n-r+1)=n+1
				\end{aligned}
				\right.
				\ \ \text{and} \ \
				\left\{
				\begin{aligned}
					&f(n-r+2)=1\\
					&f(n-r+3)=2\\
					&\quad\quad\quad\vdots\\
					&f(n+1)=r
				\end{aligned}
				\right.
				\quad.
			\end{equation*}
			In other words, an $r^{-}$-permutation is an $(r+1)^{+}$-permutation for $r\in I\setminus\{n+1\}$.
		\end{enumerate}
	\end{definition}
	
	\begin{rmk}
		\begin{enumerate}[(a)]
			\item The $1^{+}$-permutation and $(n+1)^{-}$-permutation are exactly the identity.
			\item Any $r^{+}$-permutation is the $(r-1)^{-}$-permutation for $r\in I\setminus\{1\}$.
		\end{enumerate}
	\end{rmk}
	
	For the $r^{\pm}$-permutation, we provide some properties for later application.
	
	\begin{lemma}\label{Affine-Toda-Section-2-Lemma-13}
		Suppose that $\bm{\sigma}=(\sigma_1,\cdots,\sigma_{n+1})\in\Gamma_{\mathbf{A}}(\bm{\mu})$ and $f$ is an $r^{\pm}$-permutation on $I$. Then
		\begin{enumerate}[(a)]
			\item $\bm{\sigma}_{f}=(\sigma_{f(1)},\sigma_{f(2)},\cdots,\sigma_{f(n+1)})\in\Gamma_{\mathbf{A}}(\bm{\mu})$.
			\item $\bm{\sigma}_{f^{-1}}=(\sigma_{f^{-1}(1)},\sigma_{f^{-1}(2)},\cdots,\sigma_{f^{-1}(n+1)})\in\Gamma_{\mathbf{A}}(\bm{\mu})$.
		\end{enumerate}
	\end{lemma}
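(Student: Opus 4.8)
The plan is to prove both parts through the intrinsic description of $\Gamma_{\mathbf{A}}(\bm{\mu})$ given by Proposition \ref{Affine-Toda-Section-2-Proposition-9}, namely $\Gamma_{\mathbf{A}}(\bm{\mu})=\Gamma_{N}^{\mathbf{A}}(\bm{\mu})$. This identification replaces the orbit definition of $\Gamma_{\mathbf{A}}(\bm{\mu})$ by two intrinsic conditions on the full component vector: the nonnegative-integer form $\sigma_i=2\sum_{j\in I}n_{ij}\mu_j$ with $n_{ij}\in\mathbb{N}\cup\{0\}$, and the Pohozaev identity \eqref{Affine-Toda-Section-2-Eq-6-Pohozaev}. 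Since both conditions are stated for the whole vector, they are well adapted to a permutation argument, and the task becomes to check that each is preserved when the components are cyclically rearranged.

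First I would reduce to a single shift. Writing $S$ for the elementary forward cyclic shift $(S\bm{v})_i=v_{i+1}$ (indices mod $n+1$), the explicit formulas in Definition \ref{Affine-Toda-Section-2-Definition-12} show that every $r^{+}$-permutation equals $S^{\,r-1}$ and every $r^{-}$-permutation equals $S^{\,r}$, so every $r^{\pm}$-permutation is a power of $S$. In particular $f^{-1}$ is again a power of $S$, hence again an $r^{\pm}$-permutation, so part (b) is just part (a) applied to $f^{-1}$. Thus the entire lemma reduces to the single assertion that $\bm{\sigma}\in\Gamma_{\mathbf{A}}(\bm{\mu})$ implies $S\bm{\sigma}\in\Gamma_{\mathbf{A}}(\bm{\mu})$, which I would then iterate.

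Next I verify the two defining conditions for $S\bm{\sigma}$. The form condition is immediate: $(S\bm{\sigma})_i=\sigma_{i+1}=2\sum_{j\in I}n_{i+1,j}\mu_j$ is still a nonnegative-integer combination of the \emph{same} $\mu_j$, so $S\bm{\sigma}$ has the form required by $\Gamma_{N}^{\mathbf{A}}(\bm{\mu})$. For the Pohozaev identity I would exploit that the quadratic functional $Q(\bm{\sigma}):=\sum_{i\in I}\sigma_i^2-\sum_{i\in I}\sigma_i\sigma_{i+1}$, which by the cyclic structure of \eqref{Affine-A_n^1-Cartan-matrix} equals $\tfrac12\sum_{i,j}k_{ij}\sigma_i\sigma_j$, is manifestly invariant under cyclic shifts. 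Hence $Q(S\bm{\sigma})=Q(\bm{\sigma})=2\sum_{i\in I}\mu_i\sigma_i$ by the identity already valid for $\bm{\sigma}$. Since the identity \eqref{Affine-Toda-Section-2-Eq-6-Pohozaev} for $S\bm{\sigma}$ reads $Q(S\bm{\sigma})=2\sum_{i\in I}\mu_i(S\bm{\sigma})_i$, the claim for $S\bm{\sigma}$ is now equivalent to the single \emph{linear} identity $\sum_{i\in I}\mu_i(S\bm{\sigma})_i=\sum_{i\in I}\mu_i\sigma_i$.

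This linear identity is the crux and the step I expect to be the main obstacle, since the linear term $\sum_{i}\mu_i\sigma_i$ is the only ingredient of the Pohozaev identity that is not automatically cyclic-invariant. To control it I would pass to the group picture and track how the generators transform under conjugation by $S$: a direct computation using $(\mathfrak{R}_i\bm{\sigma})_i=2\mu_i+\sigma_{i-1}+\sigma_{i+1}-\sigma_i$ shows that $S\,\mathfrak{R}_i\,S^{-1}$ carries the same linear (reflection) part as $\mathfrak{R}_{i-1}$, which is exactly the statement that $S$ realizes the rotational automorphism of the affine $\mathbf{A}_n^{(1)}$ Coxeter graph \eqref{Affine-A_n^1-Group-Definition} permuting $\mathfrak{R}_1,\dots,\mathfrak{R}_{n+1}$ cyclically. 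Because $S$ fixes the base point $\mathbf{0}$, conjugating a simplest-chain expression $\bm{\sigma}=\mathfrak{R}_{i_s}\cdots\mathfrak{R}_{i_1}\mathbf{0}$ rewrites $S\bm{\sigma}$ as a word in the same generator family acting on $\mathbf{0}$; the delicate bookkeeping is to verify that the translation contributions $2\mu_i$ align so as to produce precisely the linear identity above, keeping $S\bm{\sigma}$ inside the orbit $\Gamma_{\mathbf{A}}(\bm{\mu})$. Completing this generator-conjugation accounting, together with the form and quadratic-invariance steps already in hand, establishes (a) for $S$, and by iteration and inversion yields both (a) and (b) for every $r^{\pm}$-permutation.
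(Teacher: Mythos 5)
Your reduction to powers of the elementary shift $S$, the form condition, and the shift-invariance of the quadratic part are all sound, and your treatment of (b) via inverses matches the paper's (which simply identifies $f^{-1}$ as the $(n-r+3)^{+}$-permutation and reapplies (a)). The genuine problem is precisely the step you defer as ``delicate bookkeeping'': the linear identity $\sum_{i\in I}\mu_i\sigma_{f(i)}=\sum_{i\in I}\mu_i\sigma_i$ is not merely hard to extract from a conjugation argument --- it is false in general. Take $n=2$, $\bm{\sigma}=\mathfrak{R}_1\mathbf{0}=(2\mu_1,0,0)\in\Gamma_{\mathbf{A}}(\bm{\mu})$, and let $f$ be the $2^{+}$-permutation, so that $\bm{\sigma}_f=(0,0,2\mu_1)$; then $\sum_i\mu_i(\bm{\sigma}_f)_i=2\mu_1\mu_3$ while $\sum_i\mu_i\sigma_i=2\mu_1^2$, and since the quadratic part is shift-invariant, $\bm{\sigma}_f$ violates \eqref{Affine-Toda-Section-2-Eq-1} with the \emph{fixed} weight vector $\bm{\mu}$ whenever $\mu_3\neq\mu_1$ (which the constraint $\sum_i\mu_i=n+1$ permits). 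By Proposition \ref{Affine-Toda-Section-2-Proposition-6} this rules out membership in $\Gamma_{\mathbf{A}}(\bm{\mu})$ read literally. Your own conjugation computation, pushed to the end, shows why the bookkeeping cannot close: $S\mathfrak{R}_iS^{-1}$ is the reflection in the shifted slot but it carries the translation part $2\mu_i$ of the \emph{old} slot, i.e.\ it is a generator attached to the shifted weight vector $\bm{\mu}_f$, not the generator $\mathfrak{R}_{i\mp1}$ of $\Gamma_{\mathbf{A}}(\bm{\mu})$. Since $S\mathbf{0}=\mathbf{0}$, conjugation maps the orbit $\Gamma_{\mathbf{A}}(\bm{\mu})$ onto $\Gamma_{\mathbf{A}}(\bm{\mu}_f)$, which coincides with $\Gamma_{\mathbf{A}}(\bm{\mu})$ only when the relevant $\mu_i$'s are equal.

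The paper's proof is consistent with this: in its verification for Lemma \ref{Affine-Toda-Section-2-Lemma-13}(a), the Pohozaev identity imposed on $\bm{\sigma}_f$, equation \eqref{Affine-Toda-Section-2-Eq-33}, carries $\mu_{f(i)}\sigma_{f(i)}$ on the right-hand side --- the weights are permuted in tandem with the masses --- so the whole identity is a cyclic rearrangement of \eqref{Affine-Toda-Section-2-Eq-1} for $\bm{\sigma}$ and the check is one line. In honest notation what is proved is $\bm{\sigma}_f\in\Gamma_{\mathbf{A}}(\bm{\mu}_f)$, and that is also the form in which the lemma is consumed: in Step 2 of the proof of Theorem \ref{Affine-Toda-Section-2-Theorem-14}(b) the authors work with the twisted generators $\widetilde{\mathfrak{R}}_i$ built from $\tilde{\mu}_i=\mu_{f(i)}$ (exactly the conjugated generators your computation produces), and only after undoing the permutation in Step 4 do they return to $\Gamma_{\mathbf{A}}(\bm{\mu})$ proper. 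So you took the statement at face value with $\bm{\mu}$ fixed and correctly isolated the resulting obligation, but that obligation is unprovable; the repair is not finer bookkeeping but adopting the convention that the weight vector is permuted along with the components, after which your quadratic-invariance observation already completes the argument.
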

	\begin{proof}
		Without loss of generality, we just consider the $r^{+}$-permutation. Suppose that $f$ is an $r^{+}$-permutation on $I$ satisfying \eqref{Affine-Toda-Section-2-Eq-32}.
		
		(a) By Proposition \ref{Affine-Toda-Section-2-Proposition-9}, we shall prove that $\bm{\sigma}_{f}$ satisfies the Pohozaev identity \eqref{Affine-Toda-Section-2-Eq-1}, i.e.,
		\begin{equation}\label{Affine-Toda-Section-2-Eq-33}
			(\sigma_{f(1)}-\sigma_{f(2)})^{2}+(\sigma_{f(2)}-\sigma_{f(3)})^{2}+\cdots+(\sigma_{f(n)}-\sigma_{f(n+1)})^{2}+(\sigma_{f(n+1)}-\sigma_1)^{2}
			=4\sum\limits_{i\in I}\mu_{f(i)}\sigma_{f(i)}.
		\end{equation}
		Notice that \eqref{Affine-Toda-Section-2-Eq-33} is equivalent to
		\begin{equation*}
			\begin{aligned}
				&(\sigma_{r}-\sigma_{r+1})^{2}+(\sigma_{r+1}-\sigma_{r+2})^{2}+\cdots+(\sigma_{n}-\sigma_{n+1})^{2}+(\sigma_{n+1}-\sigma_{1})^{2}\\
				&+(\sigma_{1}-\sigma_{2})^{2}+(\sigma_{2}-\sigma_{3})^{2}+\cdots+(\sigma_{r-2}-\sigma_{r-1})^{2}+(\sigma_{r-1}-\sigma_{r})^{2}=4\sum\limits_{i\in I}\mu_{i}\sigma_{i},
			\end{aligned}
		\end{equation*}
		which is automatically true since $\bm{\sigma}$ satisfies \eqref{Affine-Toda-Section-2-Eq-1}.
		
		(b) By \eqref{Affine-Toda-Section-2-Eq-32}, we find that
		\begin{equation*}
			\left\{
			\begin{aligned}
				&f^{-1}(1)=n-r+3\\
				&f^{-1}(2)=n-r+4\\
				&\quad\quad\quad\vdots\\
				&f^{-1}(r-1) =n+1
			\end{aligned}
			\right.
			\quad\text{and}\quad
			\left\{
			\begin{aligned}
				&f^{-1}(r)=1\\
				&f^{-1}(r+1)=2\\
				&\quad\quad\quad\vdots\\
				&f^{-1}(n+1)=n-r+2
			\end{aligned}
			\right.
			\quad.
		\end{equation*}
		Applying the same arguments of (a), it is not difficult to check that
		\begin{equation*}
			\bm{\sigma}_{f^{-1}}=(\sigma_{n-r+3},\sigma_{n-r+4},\cdots,\sigma_{n+1},\sigma_{1},\sigma_{2},\cdots,\sigma_{n-r+2})\in\Gamma_{\mathbf{A}}(\bm{\mu}).
		\end{equation*}
		One can also treat $f^{-1}$ as the $(n-r+3)^{+}$-permutation, and then apply (a) to get the same conclusion. This completes the proof of Lemma \ref{Affine-Toda-Section-2-Lemma-13}.
	\end{proof}
	
	\begin{theorem}\label{Affine-Toda-Section-2-Theorem-14}
		Suppose that $\bm{\sigma}=(\sigma_1,\cdots,\sigma_{n+1})\in\Gamma_{\mathbf{A}}(\bm{\mu})$ and set $\mu_{i}^{*}=\mu_{i}-\frac{1}{2}\sum_{t\in I}k_{it}\sigma_{t}$ for $i\in I$. The index set $I$ is decomposed into one form of the two alternatives \textbf{(I)} and \textbf{(II)} (see Proposition \ref{Affine-Toda-Section-2-Proposition-2}-(4)).
		\begin{enumerate}[(a)]
			\item Alternative \textbf{(I)} holds: the index set $I$ is decomposed into the form $I=J\cup N$ and $J=J_1\cup\cdots\cup J_{\vartheta}$ for some $\vartheta\in\mathbb{N}$ satisfying (I-1). Set $\sigma_{i}^{*}=\sigma_{i}$ for $i\in N$ and
			\begin{equation*}
				\sigma_{i}^{*}=\sigma_{i}+2\sum\limits_{j\in J_p}k_p^{ij}\big(\mu_j^{*}+\mu_{j_p^{*}}^{*}\big), \ \ \text{for} \ i\in J_p, \ p=1,\cdots,\vartheta,
			\end{equation*}
			where $j_p^{*}=2i_p+|J_p|-j-1$ and $(k_{p}^{ij})_{|J_p|\times|J_p|}$ stands for the inverse of $(k_{ij})_{|J_p|\times|J_p|}$, $i,j\in J_p$. Then
			\begin{equation*}
				\bm{\sigma}^{*}=(\sigma^{*}_1,\cdots,\sigma^{*}_{n+1})
				=\left(\mathfrak{R}_{J_1}\mathfrak{R}_{J_2}\cdots\mathfrak{R}_{J_\vartheta}\right)\bm{\sigma}\in\Gamma_{\mathbf{A}}(\bm{\mu}).
			\end{equation*}
			
			\item Alternative \textbf{(II)} holds: the index set $I$ is decomposed into the form $I=J\cup N$ and $J=J_{0}\cup J_1\cup\cdots\cup J_{\vartheta}$ for some $\vartheta\in\mathbb{N}\cup\{0\}$ satisfying (II-1). Set $\sigma_{i}^{*}=\sigma_{i}$ for $i\in N$ and
			\begin{equation}\label{Affine-Toda-Section-2-Eq-34}
				\left\{
				\begin{aligned}
					&\sigma_{i}^{*}=\sigma_{i}+2\sum\limits_{j\in J_p}k_p^{ij}\big(\mu_j^{*}+\mu_{j_p^{*}}^{*}\big), \ &&\text{for} \ i\in J_p, \ p=1,\cdots,\vartheta,\\
					&\sigma_{s_i}^{*}=\sigma_{s_i}+2\sum\limits_{j\in \{1,2,\cdots,|J_0|\}}k_0^{s_is_j}\big(\mu_{s_j}^{*}+\mu_{s_j^{*}}^{*}\big), \ &&\text{for} \ i\in \{1,2,\cdots,|J_0|\},
				\end{aligned}
				\right.
			\end{equation}
			where $j_p^{*}=2i_p+|J_p|-j-1$ and $(k_{p}^{ij})_{|J_p|\times|J_p|}$ stands for the inverse of $(k_{ij})_{|J_p|\times|J_p|}$, $i,j\in J_p$; $s_j^{*}=s_{1+|J_0|-j}$ and $(k_0^{s_is_j})_{|J_0|\times|J_0|}$ stands for the inverse of $(k_{s_is_j})_{|J_0|\times|J_0|}$, $i,j\in \{1,2,\cdots,|J_0|\}$. Then
			\begin{equation*}
				\bm{\sigma}^{*}=(\sigma^{*}_1,\cdots,\sigma^{*}_{n+1}) =\left(\mathfrak{R}_{J_0}\mathfrak{R}_{J_1}\cdots\mathfrak{R}_{J_\vartheta}\right)\bm{\sigma}\in\Gamma_{\mathbf{A}}(\bm{\mu}).
			\end{equation*}
		\end{enumerate}
	\end{theorem}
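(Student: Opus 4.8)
The plan is to build both parts out of the single-block result, Lemma \ref{Affine-Toda-Section-2-Lemma-10}, by exploiting two structural features of the problem. The first is that the blocks are pairwise disjoint and mutually separated by indices of $N$, so the corresponding set chains act on disjoint coordinates and their actions are independent. The second is that the affine $\mathbf{A}_n^{(1)}$ Cartan matrix is cyclically symmetric, which allows a wrap-around block to be rectified to a consecutive one by an $r^{+}$-permutation. With these two ingredients the theorem reduces to repeated application of Lemma \ref{Affine-Toda-Section-2-Lemma-10}.

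For part (a) I would first observe that, since $N\neq\emptyset$ and $\{1,n+1\}\cap N\neq\emptyset$, each maximal consecutive block $J_p$ is a genuine interval $\{i_p,\dots,i_p+l_p\}\subsetneqq I$ that does not cross the $1$--$(n+1)$ junction; hence each $J$-chain $\mathfrak{R}_{J_p}$ is well defined by Definition \ref{Affine-Toda-Section-2-Definition-11}, and by Lemma \ref{Affine-Toda-Section-2-Lemma-10} the vector $\mathfrak{R}_{J_p}\bm{\sigma}\in\Gamma_{\mathbf{A}}(\bm{\mu})$ changes only the coordinates indexed by $J_p$, precisely by the stated formula. The key step is then an independence argument: because $J_p$ is maximal, its two cyclic neighbours lie in $N$, so for every $s\in J_p$ the coordinates $\sigma_t$ with $k_{st}\neq 0$ entering $\mu_s^{*}$ belong to $J_p\cup N$ and are untouched by any $\mathfrak{R}_{J_q}$ with $q\neq p$. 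Consequently the operators $\mathfrak{R}_{J_p}$ pairwise commute and act on disjoint coordinate sets, so $\mathfrak{R}_{J_1}\cdots\mathfrak{R}_{J_\vartheta}$ applied to $\bm{\sigma}$ reproduces exactly $\bm{\sigma}^{*}$, with membership in $\Gamma_{\mathbf{A}}(\bm{\mu})$ preserved at each stage.

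For part (b) the only new ingredient is the wrap-around block $J_0=\{r_2,\dots,n+1,1,\dots,r_1\}$; the blocks $J_1,\dots,J_\vartheta$ lie in $\{r_1+1,\dots,r_2-1\}$ and are handled exactly as in part (a), being separated from $J_0$ by the indices $r_1+1$ and $r_2-1$ of $N$. To treat $J_0$ I would introduce the $r_2^{+}$-permutation $f$ of Definition \ref{Affine-Toda-Section-2-Definition-12}, whose restriction to $\{1,\dots,|J_0|\}$ is precisely the enumeration $i\mapsto s_i$ of \eqref{Affine-Toda-Section-2-Eq-2}, so that $f$ carries $J_0$ onto the consecutive interval $\{1,\dots,|J_0|\}\subsetneqq I$; by Lemma \ref{Affine-Toda-Section-2-Lemma-13}, $\bm{\sigma}_f\in\Gamma_{\mathbf{A}}(\bm{\mu})$. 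The crucial point is that an $r^{+}$-permutation is a cyclic rotation, which preserves the affine Cartan matrix, $k_{f(s)f(t)}=k_{st}$; hence applying the generator $\mathfrak{R}_{f(t)}$ to $\bm{\sigma}$ corresponds under the relabeling to applying $\mathfrak{R}_t$ to $\bm{\sigma}_f$, and the chain $\mathfrak{R}_{J_0}=\prod\mathfrak{R}_{f(\cdot)}$ (defined via the Remark after Definition \ref{Affine-Toda-Section-2-Definition-11}) acts on $\bm{\sigma}$ exactly as the standard consecutive chain $\mathfrak{R}_{\{1,\dots,|J_0|\}}$ acts on $\bm{\sigma}_f$. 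Applying Lemma \ref{Affine-Toda-Section-2-Lemma-10} in the relabeled coordinates, where the reflection index is $t^{*}=|J_0|+1-t$, i.e. $s_j^{*}=s_{1+|J_0|-j}$ after relabeling back, yields exactly \eqref{Affine-Toda-Section-2-Eq-34} on the $J_0$-coordinates, with $\mathfrak{R}_{J_0}\bm{\sigma}\in\Gamma_{\mathbf{A}}(\bm{\mu})$.

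Finally I would assemble the pieces: since $J_0,J_1,\dots,J_\vartheta$ are pairwise disjoint and mutually separated by $N$, the independence argument of part (a) again shows that the $\mu^{*}$-values relevant to each block are unaffected by the chains attached to the other blocks, so the full composition $\mathfrak{R}_{J_0}\mathfrak{R}_{J_1}\cdots\mathfrak{R}_{J_\vartheta}$ produces the claimed $\bm{\sigma}^{*}$ regardless of the order of composition. I expect the main obstacle to be the bookkeeping in part (b): verifying carefully that the abstract chain $\mathfrak{R}_{J_0}$ built from the non-consecutive relabeling really coincides with the transport of the standard chain under $f$, and that the pairing $t\mapsto t^{*}$ transforms correctly into $s_j\mapsto s_{1+|J_0|-j}$, so that Lemma \ref{Affine-Toda-Section-2-Lemma-10} can be carried intact across the cyclic relabeling.
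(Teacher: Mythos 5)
Your proposal is correct and takes essentially the same route as the paper's proof: part (a) is Lemma \ref{Affine-Toda-Section-2-Lemma-10} combined with the maximality/separation of the blocks by $N$ (so the chains act on disjoint coordinates and commute), and part (b) is exactly the paper's strategy of rectifying the wrap-around block via the $r_2^{+}$-permutation and Lemma \ref{Affine-Toda-Section-2-Lemma-13}, transporting the chain through the cyclic invariance $k_{f(s)f(t)}=k_{st}$, applying Lemma \ref{Affine-Toda-Section-2-Lemma-10} in the relabeled coordinates, and permuting back. The one detail the paper makes explicit that you leave implicit is that the generators acting on $\bm{\sigma}_{f}$ must carry the permuted weights $\tilde{\mu}_i=\mu_{f(i)}$ (the paper's twisted generators $\widetilde{\mathfrak{R}}_i$, with the identity $\widetilde{\mathfrak{R}}_t\bm{\sigma}_f=(\mathfrak{R}_{f(t)}\bm{\sigma})_f$), which is precisely the bookkeeping you flag as the remaining obstacle.
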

	\begin{proof}
		(a) By Lemma \ref{Affine-Toda-Section-2-Lemma-10}, we get that
		\begin{equation*}
			\sigma_{i}^{*}=\sigma_{i}+2\sum\limits_{j\in J_p}k_p^{ij}\big(\mu_j^{*}+\mu_{j_p^{*}}^{*}\big)=\big(\mathfrak{R}_{J_p}\bm{\sigma}\big)_{i}, \ \ \text{for} \ i\in J_{p}, \ p=1,\cdots,\vartheta,
		\end{equation*}
		where $\mathfrak{R}_{J_p}$ is the $J_p$-chain corresponding to the consecutive-index set $J_p$. Since each $J_p$ is maximal, we have
		\begin{equation*}
			\emptyset\neq\{i_{p}+l_{p}+1,i_{p}+l_{p}+2,\cdots,i_{p+1}-1\}\subseteq N, \ \text{for} \ p=1,\cdots,\vartheta-1.
		\end{equation*}
		Hence
		\begin{equation*}
			\bm{\sigma}^{*}=\left(\sigma_{1}^{*},\cdots,\sigma_{n+1}^{*}\right)
			=\left(\mathfrak{R}_{J_1}\mathfrak{R}_{J_2}\cdots\mathfrak{R}_{J_\vartheta}\right)\bm{\sigma}
			\in\Gamma_{\mathbf{A}}(\bm{\mu}).
		\end{equation*}
		
		\noindent (b) We split the arguments into several steps.
		
		\noindent\textbf{Step 1.} Suppose that $f$ is the $r_2^{+}$-permutation on $I$ satisfying \eqref{Affine-Toda-Section-2-Eq-32} with $r=r_2$. Since $\bm{\sigma}\in\Gamma_{\mathbf{A}}(\bm{\mu})$, we deduce from Lemma \ref{Affine-Toda-Section-2-Lemma-13}-(a) that $\bm{\sigma}_{f}=\left(\sigma_{f,1},\cdots,\sigma_{f,n+1}\right)\in\Gamma_{\mathbf{A}}(\bm{\mu})$, where $\sigma_{f,i}=\sigma_{f(i)}$ for $i\in I$. Denote by
		\begin{equation*}
			\overline{\mu}_{i}:=\mu_{f(i)}^{*}=\mu_{f(i)}-\frac{1}{2}\sum_{j\in I}k_{f(i),j}\sigma_{j}=\mu_{f(i)}-\frac{1}{2}\sum_{j\in I}k_{ij}\sigma_{f(j)}=\mu_{f(i)}-\frac{1}{2}\sum_{j\in I}k_{ij}\sigma_{f,j}, \ \text{for} \ i\in I
		\end{equation*}
		and
		\begin{equation}\label{Affine-Toda-Section-2-Eq-35}
			\left\{
			\begin{aligned}
				&\overline{J_p}=f^{-1}(J_p)=\{\overline{i}_{p},\overline{i}_{p}+1,\cdots,\overline{i}_{p}+l_{p}\}, \ \text{where} \ \overline{i}_{p}=i_p+(n-r_2+2), \ p=1,\cdots,\vartheta,\\
				&\overline{J_0}=f^{-1}(J_0)=\{\overline{i}_{0},\overline{i}_{0}+1,\cdots,\overline{i}_{0}+|J_0|-1\}, \ \text{where} \ \overline{i}_0=1,\\
				&\overline{N}=f^{-1}(N)=\{s+(n-r_2+2) \mid s\in N\}.
			\end{aligned}
			\right.
		\end{equation}
		Then it is easy to see that $I=\overline{J_0}\cup \overline{J_1}\cup\cdots\cup\overline{J_{\vartheta}}\cup\overline{N}$ and each $\overline{J_p}$, $p=0,1,\cdots,\vartheta$ consists of the maximal consecutive indices. We define $\left(\bm{\sigma}_{f}\right)^{*}=\left((\sigma_{f,1})^{*},(\sigma_{f,2})^{*},\cdots,(\sigma_{f,n+1})^{*}\right)$ with
		\begin{equation*}
			(\sigma_{f,i})^{*}=
			\left\{
			\begin{aligned}
				&\sigma_{f,i}, \ &&\text{for} \ i\in \overline{N},\\
				&\sigma_{f,i}+2\sum\limits_{j\in \overline{J_p}}k_p^{f(i)f(j)}\big(\overline{\mu}_{j}+\overline{\mu}_{\overline{j}_p^{*}}\big), \ &&\text{for} \ i\in \overline{J_p}, \ p=0,1,\cdots,\vartheta,
			\end{aligned}
			\right.
		\end{equation*}
		where $\overline{j}_p^{*}=2\overline{i}_p+|\overline{J_p}|-j-1$, and $(k_{p}^{f(i)f(j)})_{|\overline{J_p}|\times|\overline{J_p}|}$ is the inverse of $(k_{f(i)f(j)})_{|\overline{J_p}|\times|\overline{J_p}|}$, $i,j\in \overline{J_p}$.
		
		\noindent\textbf{Step 2.} For any $\bm{\sigma}\in\Gamma_{\mathbf{A}}(\bm{\mu})$, we denote by $\tilde{\mu}_i=\mu_{f(i)}$ for $i\in I$ and denote $\widetilde{\mathfrak{R}}_i$ by
		\begin{equation*}
			(\widetilde{\mathfrak{R}}_i\bm{\sigma})_j
			=\begin{cases}
				2\tilde{\mu}_i-\sum\limits_{t\in I}k_{it}\sigma_t+\sigma_i, \ &\text{if} \ j=i,\\	
				\sigma_j, \ &\text{if} \ j\neq i.	
			\end{cases}
		\end{equation*}
		Since each $\overline{J_p}$, $p=0,1,\cdots,\vartheta$ consists of the maximal consecutive indices, applying Lemma \ref{Affine-Toda-Section-2-Lemma-10} with $\mu_i=\tilde{\mu}_i$, we find that
		\begin{equation*}
			\sigma_{f,i}+2\sum\limits_{j\in \overline{J_p}}k_p^{f(i)f(j)}\big(\overline{\mu}_{j}+\overline{\mu}_{\overline{j}_p^{*}}\big)
			=\sigma_{f,i}+2\sum\limits_{j\in \overline{J_p}}k_p^{ij}\big(\overline{\mu}_{j}+\overline{\mu}_{\overline{j}_p^{*}}\big)
			=\big(\widetilde{\mathfrak{R}}_{\overline{J_p}}\bm{\sigma}_{f}\big)_{i}, \ \text{for} \ i\in \overline{J_p}, \ p=0,1,\cdots,\vartheta,
		\end{equation*}
		where $\widetilde{\mathfrak{R}}_{\overline{J_p}}$ is the corresponding $\overline{J_p}$-chain with respect to $\widetilde{\mathfrak{R}}_i$, $i\in\overline{J_p}$ and $(k_{p}^{ij})_{|\overline{J_p}|\times|\overline{J_p}|}$ stands for the inverse of $(k_{ij})_{|\overline{J_p}|\times|\overline{J_p}|}$, $i,j\in \overline{J_p}$, $p=0,1,\cdots,\vartheta$.
		Hence we conclude that
		\begin{equation*}
			\left(\bm{\sigma}_{f}\right)^{*}=\left(\widetilde{\mathfrak{R}}_{\overline{J_0}}\widetilde{\mathfrak{R}}_{\overline{J_1}}\cdots
			\widetilde{\mathfrak{R}}_{\overline{J_{\vartheta}}}\right)\bm{\sigma}_{f}.
		\end{equation*}
		Noticing that for each $t\in \overline{J_p}$, $p=0,1,\cdots,\vartheta$, we see that
		\begin{equation*}
			(\widetilde{\mathfrak{R}}_t\bm{\sigma}_f)_s
			=\begin{cases}
				2\mu_{f(t)}-\sum\limits_{j\in I}k_{tj}\sigma_{f,j}+\sigma_{f,t}=(\mathfrak{R}_{f(t)}\bm{\sigma})_{f(t)}, \ &\text{if} \ s=t,\\	
				\sigma_{f,s}=\sigma_{f(s)}=(\mathfrak{R}_{f(t)}\bm{\sigma})_{f(s)}, \ &\text{if} \ s\neq t,
			\end{cases}
		\end{equation*}
		which implies that $\widetilde{\mathfrak{R}}_{t}\bm{\sigma}_f=(\mathfrak{R}_{f(t)}\bm{\sigma})_{f}\in\Gamma_{\mathbf{A}}(\bm{\mu})$. Here we have used Lemma \ref{Affine-Toda-Section-2-Lemma-13}-(a). Thus, we deduce that
		\begin{equation*}
			\widetilde{\mathfrak{R}}_{\overline{J_p}}\bm{\sigma}_f=\Big(\prod\limits_{t_i\in\Lambda}\widetilde{\mathfrak{R}}_{t_i}\Big)\bm{\sigma}_f
			=\Big(\prod\limits_{t_i\in\Lambda}\mathfrak{R}_{f(t_i)}\bm{\sigma}\Big)_f=(\mathfrak{R}_{J_p}\bm{\sigma})_{f}
			\in\Gamma_{\mathbf{A}}(\bm{\mu}), \ p=0,1,\cdots,\vartheta,
		\end{equation*}
		where $\Lambda$ is a tuple of length ${|J_p|\times(|J_p|+1)}/{2}$ satisfying that $t_i\in \overline{J_p}$ for each element $t_i\in\Lambda$. Therefore, we also get that
		\begin{equation*}
			\left(\bm{\sigma}_{f}\right)^{*}=\left(\widetilde{\mathfrak{R}}_{\overline{J_0}}\widetilde{\mathfrak{R}}_{\overline{J_1}}\cdots
			\widetilde{\mathfrak{R}}_{\overline{J_{\vartheta}}}\right)\bm{\sigma}_{f}
			=\left(\mathfrak{R}_{J_0}\mathfrak{R}_{J_1}\cdots\mathfrak{R}_{J_\vartheta}\bm{\sigma}\right)_{f}\in\Gamma_{\mathbf{A}}(\bm{\mu}).
		\end{equation*}
		
		\noindent\textbf{Step 3.} Let $\bm{\sigma}^{*}=(\sigma^{*}_1,\cdots,\sigma^{*}_{n+1})$ be defined as in \eqref{Affine-Toda-Section-2-Eq-34} and denote by $\left(\bm{\sigma}^{*}\right)_{f}=(\sigma^{*}_{f(1)},\cdots,\sigma^{*}_{f(n+1)})$. First we claim that $\left(\bm{\sigma}^{*}\right)_{f}=\left(\bm{\sigma}_{f}\right)^{*}$. In fact, for any $t\in I$, we consider the following cases:
		\begin{enumerate}[(1).]
			\item if $f(t)\in N$, i.e., $t\in f^{-1}(N)=\overline{N}$, then we set $i:=f(t)$ and obtain that
			\begin{equation*}
				\sigma^{*}_{f(t)}=\sigma^{*}_{i}=\sigma_{i}.
			\end{equation*}
			\item if $f(t)\in J_p$ for some $p=1,\cdots,\vartheta$, i.e., $t\in f^{-1}(J_p)=\overline{J_p}$, then we set $i:=f(t)$ and obtain that
			\begin{equation*}
				\begin{aligned}
					\sigma^{*}_{f(t)}=\sigma^{*}_{i}
					&=\sigma_{i}+2\sum\limits_{j\in J_p}k_p^{ij}\big(\mu_j^{*}+\mu_{j_p^{*}}^{*}\big), \ \ \ j_p^{*}=2i_p+|J_p|-j-1,\\
					&=\sigma_{f(t)}+2\sum\limits_{s\in \overline{J_p}}k_p^{f(t)f(s)}\big(\mu_{f(s)}^{*}+\mu_{f(\overline{s}_p^{*})}^{*}\big), \ \ \ \overline{s}_p^{*}=2\overline{i}_p+|\overline{J_p}|-s-1,\\
					&=\sigma_{f,t}+2\sum\limits_{s\in \overline{J_p}}k_p^{f(t)f(s)}\big(\overline{\mu}_{s}+\overline{\mu}_{\overline{s}_p^{*}}\big), \ \ \ \overline{s}_p^{*}=2\overline{i}_p+|\overline{J_p}|-s-1.
				\end{aligned}
			\end{equation*}
			\item if $f(t)\in J_0$, i.e., $t\in f^{-1}(J_0)=\overline{J_0}$, then we set $s_i:=f(t)$ and obtain that
			\begin{equation*}
				\begin{aligned}
					\sigma^{*}_{f(t)}=\sigma_{s_i}^{*}&=\sigma_{s_i}+2\sum\limits_{j\in \{1,2,\cdots,|J_0|\}}k_0^{s_is_j}\left(\mu_{s_j}^{*}+\mu_{s_j^{*}}^{*}\right), \ \ \ s_j^{*}=s_{1+|J_0|-j },\\
					&=\sigma_{f(t)}+2\sum\limits_{s\in \overline{J_0}}k_0^{f(t)f(s)}\left(\mu_{f(s)}^{*}+\mu_{f(\overline{s}_0^{*})}^{*}\right), \ \ \ \overline{s}_0^{*}=1+|\overline{J_0}|-s,\\
					&=\sigma_{f,t}+2\sum\limits_{s\in \overline{J_0}}k_0^{f(t)f(s)}\left(\overline{\mu}_{s}+\overline{\mu}_{\overline{s}_0^{*}}\right), \ \ \ \overline{s}_0^{*}=1+|\overline{J_0}|-s.
				\end{aligned}
			\end{equation*}
		\end{enumerate}
		By the above discussion, we get that $\left(\bm{\sigma}^{*}\right)_{f}=\left(\bm{\sigma}_{f}\right)^{*}$. As a consequence,
		\begin{equation}\label{Affine-Toda-Section-2-Eq-36}
			\left(\bm{\sigma}^{*}\right)_{f}=\left(\bm{\sigma}_{f}\right)^{*}
			=\left(\mathfrak{R}_{J_0}\mathfrak{R}_{J_1}\cdots\mathfrak{R}_{J_\vartheta}\bm{\sigma}\right)_{f}\in\Gamma_{\mathbf{A}}(\bm{\mu}).
		\end{equation}
		
		\noindent\textbf{Step 4.} By Lemma \ref{Affine-Toda-Section-2-Lemma-13}-(b) and \eqref{Affine-Toda-Section-2-Eq-36}, we have
		\begin{equation*}
			\bm{\sigma}^{*}=\left(\bm{\sigma}^{*}\right)_{f\circ f^{-1}}=\left(\left(\bm{\sigma}^{*}\right)_{f}\right)_{f^{-1}}
			=\left(\mathfrak{R}_{J_0}\mathfrak{R}_{J_1}\cdots\mathfrak{R}_{J_\vartheta}\right)\bm{\sigma}\in\Gamma_{\mathbf{A}}(\bm{\mu}).
		\end{equation*}
		
		Thus, the Theorem \ref{Affine-Toda-Section-2-Theorem-14} is proved.
	\end{proof}
	
	\begin{rmk}[remark of Theorem \ref{Affine-Toda-Section-2-Theorem-14}]
		The proof of the (b)-th conclusion of Theorem \ref{Affine-Toda-Section-2-Theorem-14} implies that $\sigma_{s_i}^{*}=\left(\mathfrak{R}_{J_0}\bm{\sigma}\right)_{s_i}$ for $i=1,\cdots,|J_0|$, where $\mathfrak{R}_{J_0}$ refers to the $J_0$-chain associated with the nonconsecutive-index set $J_0$.
	\end{rmk}
	
	\section{Proof of Theorem \ref{Affine-Toda-Section-1-Theorem-1}}\label{Affine-Toda-Section-3}
	\setcounter{equation}{0}
	In this section, we will present the proof for Theorem \ref{Affine-Toda-Section-1-Theorem-1}, which involves utilizing a selection process to identify the "bad" points and their corresponding bubbling disks. We then compute the local mass contributions of each bubbling disk containing the "bad" points and analyze the transformations that occur at various stages. This approach has been successfully applied to the classical Toda systems, including types $\mathbf{A}_n$, $\mathbf{B}_n$, $\mathbf{C}_n$, $\mathbf{G}_2$ and $\mathbf{B}_2^{(1)}$, as detailed in \cite{Cui-Wei-Yang-Zhang-2022,Lin-Wei-Yang-Zhang-2018,Lin-Wei-Zhang-2015,Lin-Yang-Zhong-2020}. Here, we will outline the proof of the entire process. First, we identify all the bubbling disks as described in section \ref{Affine-Toda-Section-2}. Then, we begin with one bubbling disk and compute all possible contributions of local mass from each disk before it meets the nearest bubbling disk. Next, we collect the adjacent bubbling disks into a "group", a concept introduced in \cite{Lin-Wei-Zhang-2015}, and calculate the types of local mass in the group. The algebraic structure plays an essential role in determining the principles of the transformations that occur in the group.
	
	Before sketching out the proof, we make some preparations. Throughout this section, for any sequence $(\mathbf{x},\mathbf{s})=\{(x^k,s_k)\}$, we always denote by
	\begin{equation*}
		\sigma^k_i(B(x^k,s_k))=\frac{1}{2\pi}\int_{B(x^k,s_k)}e^{u^k_i(x)}\mathrm{d}x, \ \text{for} \ i\in I.
	\end{equation*}
	We define the local mass $\hat{\bm{\sigma}}(B(\mathbf{x},\mathbf{s}))=(\hat{\sigma}_1(B(\mathbf{x},\mathbf{s})),\cdots,\hat{\sigma}_{n+1}(B(\mathbf{x},\mathbf{s})))$ as
	\begin{equation*}
		\hat{\sigma}_i(B(\mathbf{x},\mathbf{s}))=
		\begin{cases}
			\lim\limits_{k\rightarrow+\infty}\sigma^k_i(B(x^k,s_k)), \ &\text{if} \ u^k_i \ \text{has fast decay on} \ \partial B(x^k,s_k),\\
			\lim\limits_{r\rightarrow 0}\lim\limits_{k\rightarrow+\infty}\sigma^k_i(B(x^k,rs_k)), \ &\text{if} \ u^k_i \ \text{has slow decay on} \ \partial B(x^k,s_k).
		\end{cases}
	\end{equation*}
	For simplicity we set $\hat{\sigma}_{i}(\mathbf{s})=\hat{\sigma}_{i}(B(\bm{0},\mathbf{s}))$ if $x^k=0$. We will calculate all possible values of $\hat{\sigma}_{i}(\mathbf{s})$ as $\mathbf{s}$ increases, such that the disk $B(\bm{0},\mathbf{s})$ includes all the "bad" points. The argument is divided into the following five steps:
	\medskip
	
	\noindent\textbf{Step 1.} In this step, we shall compute the energy of the blow-up solutions in each bubbling disk $B(x_t^k,l_t^k)$, $t=1,\cdots,m$, as introduced in Proposition \ref{Affine-Toda-Section-2-Proposition-2}. For any fixed $x^k_t\in\Sigma_k\setminus\{0\}$, $l^k_t\ll\tau^k_t=\frac{1}{2}\mathrm{dist}(x^{k}_{t},\Sigma_{k}\setminus\{x^{k}_{t}\})$. Applying Proposition \ref{Affine-Toda-Section-2-Proposition-2}-(1), the Harnack-type inequality holds
	\begin{equation*}
		u^k_i(x)+2\log|x-x^k_t|\leq C, \ |x-x^k_t|\leq\tau^k_t, \ \forall \ i\in I.
	\end{equation*}
	In $B(x^{k}_{t},l_{t}^{k})$, we perform the scaling on $\mathbf{u}^k$ as
	\begin{equation*}
		v^k_{i}(y):=u^k_i(x^k_t+\varepsilon_{t}^{k}y)+2\log\varepsilon_{t}^{k}, \ \text{for} \ i\in I.
	\end{equation*}
	Then by Proposition \ref{Affine-Toda-Section-2-Proposition-2}-(4), $\mathbf{u}^k$ has fast decay on $\partial B(x^k_t,l^k_t)$ and one of the following alternatives holds:\\
	\noindent\textbf{(I).} the index set $I$ is decomposed into the form $I=J\cup N$ and $J=J_1\cup\cdots\cup J_{\vartheta}$ for some $\vartheta\in\mathbb{N}$ satisfying (I-1)--(I-3).\\
	\noindent\textbf{(II).} the index set $I$ is decomposed into the form $I=J\cup N$ and $J=J_{0}\cup J_1\cup\cdots\cup J_{\vartheta}$ for some $\vartheta\in\mathbb{N}\cup\{0\}$ satisfying (II-1)--(II-4).\\
	In any case, by the classification result \cite[Theorem 1.1]{Lin-Wei-Ye-2012}, we obtain that there exists a sequence $R_k$ with $R_k\rightarrow+\infty$ as $k\rightarrow+\infty$ such that $l_t^k=R_k\varepsilon^k_{t}\ll\tau_t^k$ and
	\begin{equation*}
		\int_{B(0,{R_k})}e^{v_{i}(y)}\mathrm{d}y=\int_{\mathbb{R}^2}e^{v_{i}(y)}\mathrm{d}y+o(1), \ \text{for} \ i\in J.
	\end{equation*}
	One can easily conclude that $\hat{\sigma}_i(B(\mathbf{x}_t,\mathbf{l}_t))=0$ for $i\in N$ and
	\begin{equation*}
		\hat{\sigma}_i(B(\mathbf{x}_t,\mathbf{l}_t))=\frac{1}{2\pi}\int_{\mathbb{R}^2}e^{v_i(y)}\mathrm{d}y, \ \text{for} \ i\in J,
	\end{equation*}
	where $(\mathbf{x}_t,\mathbf{l}_t)$ stands for the sequence $\{(x^k_t,l^k_t)\}$. By Theorem \ref{Affine-Toda-Section-2-Theorem-14}-(a) (or Lemma \ref{Affine-Toda-Section-2-Lemma-10}), we deduce that
	\begin{equation*}
		\hat{\bm{\sigma}}(B(\mathbf{x}_t,\mathbf{l}_t))
		=\left(\hat{\sigma}_1(B(\mathbf{x}_t,\mathbf{l}_t)),\cdots,\hat{\sigma}_{n+1}(B(\mathbf{x}_t,\mathbf{l}_t))\right)\in\Gamma_{\mathbf{A}}(1,\cdots,1).
	\end{equation*}
	
	\noindent\textbf{Step 2.} Next, we analyze the variation in energy as we transition from $B(x^k_t,l^k_t)$ to $B(x^k_t,\tau^k_t)$. If
	\begin{equation*}
		\hat{\sigma}_i(B(\mathbf{x}_t,\bm{\tau}_{t}))=\hat{\sigma}_i(B(\mathbf{x}_t,\mathbf{l}_t)), \ \forall \ i\in I,
	\end{equation*}
	where $(\mathbf{x}_t,\bm{\tau}_{t})$ stands for the sequence $\{(x^k_t,\tau^k_t)\}$, then we conclude that
	\begin{equation*}
		\hat{\bm{\sigma}}(B(\mathbf{x}_t,\bm{\tau}_{t}))
		=\left(\hat{\sigma}_1(B(\mathbf{x}_t,\bm{\tau}_{t})),\cdots,\hat{\sigma}_{n+1}(B(\mathbf{x}_t,\bm{\tau}_{t}))\right)\in\Gamma_{\mathbf{A}}(1,\cdots,1),
	\end{equation*}
	as required. Otherwise, there exists some $i_0\in I$ such that
	\begin{equation*}
		\hat{\sigma}_{i_0}(B(\mathbf{x}_t,\bm{\tau}_{t}))>\hat{\sigma}_{i_0}(B(\mathbf{x}_t,\mathbf{l}_t)).
	\end{equation*}
	By the same arguments of \cite[Lemma 4.3]{Cui-Wei-Yang-Zhang-2022}, we can choose a sequence $\{s_k\}$, $l_t^k\ll s_k\ll\tau^k_t$ such that
	\begin{enumerate}[(i)]
		\item some components of $\mathbf{u}^k$ have slow decay on $\partial B(x^k_t,s_k)$,
		\item $\hat{\sigma}_i(B(\mathbf{x}_t,\mathbf{s}))=\hat{\sigma}_i(B(\mathbf{x}_t,\mathbf{l}_t))$ for $i\in I$, where $(\mathbf{x}_t,\mathbf{s})$ stands for the sequence $\{(x^k_t,s_k)\}$.
	\end{enumerate}
	We scale $\mathbf{u}^k$ by $v^k_i(y)=u^k_i(x_t^k+s_ky)+2\log{s_k}$ for $i\in I$ and denote by
	\begin{equation*}
		J=\big\{i\in I \mid u^k_i \ \text{has slow decay on} \ \partial B(x_t^k,s_k)\big\}\subsetneqq I.
	\end{equation*}
	Then, each $v^k_i$, $i\in I\setminus J$ has fast decay on $\partial B(x_t^k,s_k)$, i.e., $v^k_i(y)\rightarrow-\infty$ in $L_{\mathrm{loc}}^{\infty}(\mathbb{R}^2)$ for $i\in I\setminus J$ and $v^k_i(y)\rightarrow v_i(y)$ in $C_{\mathrm{loc}}^2(\mathbb{R}^2)$ for $i\in J$, where $v_i(y)$ satisfies the system
	\begin{equation*}
		\Delta v_i(y)+\sum\limits_{j\in J}k_{ij}e^{v_j(y)}=4\pi\alpha_i^{*}\delta_0 \ \text{in} \ \mathbb{R}^2, \ \int_{\mathbb{R}^2}e^{v_i(y)}\mathrm{d}y<+\infty, \ \ \forall \ i\in J.
	\end{equation*}
	Here, $\alpha_i^{*}=-\frac{1}{2}\sum_{j\in I}k_{ij}\hat{\sigma}_j(B(\mathbf{x}_t,\mathbf{l}_t))>-1$ for $i\in J$. For the index set $J$, we shall encounter the same two alternatives \textbf{(I)} and \textbf{(II)} as in \textbf{Step 1}. In any case, we deduce that there exists a sequence $N_k^{*}$ with $N_k^{*}\rightarrow+\infty$ as $k\rightarrow+\infty$ such that $l_t^k\ll N_k^{*}s_k\ll\tau_t^k$, $\hat{\sigma}_i(B(\mathbf{x}_t,\mathbf{N^{*}s}))=\hat{\sigma}_i(B(\mathbf{x}_t,\mathbf{l}_t))$ for $i\in N$ and
	\begin{equation*}
		\hat{\sigma}_i(B(\mathbf{x}_t,\mathbf{N^{*}s}))=\hat{\sigma}_i(B(\mathbf{x}_t,\mathbf{l}_t))
		+\frac{1}{2\pi}\int_{\mathbb{R}^2}e^{v_i(y)}\mathrm{d}y, \ \text{for} \ i\in J,
	\end{equation*}
	where $B(\mathbf{x}_t,\mathbf{N^{*}s})$ stands for the sequence $\{x_t^k,N_k^{*}s_k\}$. Thus we get from Theorem \ref{Affine-Toda-Section-2-Theorem-14} that
	\begin{equation*}
		\hat{\sigma}_i(B(\mathbf{x}_t,\mathbf{N^{*}s}))\in 2\mathbb{N}\cup\{0\}, \ \text{for} \ i\in I.
	\end{equation*}
	
	Let $s_{k,1}=N_{k}^{*}s_k$. If $\hat{\sigma}_i(B(\mathbf{x}_t,\bm{\tau}_t))=\hat{\sigma}_i(B(\mathbf{x}_t,\mathbf{s}_1))$ for $i\in I$, where $\mathbf{s}_1$ stands for the sequence $\{s_{k,1}\}$, then we finished the discussion in this step. Otherwise, we repeat the above arguments to find $s_{k,1}\ll s_{k,j}\ll s_{k,j+1}$ such that $\hat{\bm{\sigma}}(B(\mathbf{x}_t,\mathbf{s}_{j+1}))\in\Gamma_{\mathbf{A}}(1,\cdots,1)$, where $\mathbf{s}_{j+1}$ stands for the sequence $\{s_{k,j+1}\}$. Since the energy is finite and the total gain for the local masses at each step has a lower bound, the process will stop after $j_0$ (some finite number) steps. Hence $\hat{\bm{\sigma}}(B(\mathbf{x}_t,\bm{\tau}_t))=\hat{\bm{\sigma}}(B(\mathbf{x}_t,\mathbf{s}_{j_0}))$ with $\hat{\sigma}_i(B(\mathbf{x}_t,\bm{\tau}_t))\in 2\mathbb{N}\cup\{0\}$ for each $i\in I$, where $\mathbf{s}_{j_0}$ stands for the sequence $\{s_{k,j_0}\}$.
	\medskip
	
	\noindent\textbf{Step 3.} Next, we gather the adjacent disks into a "group" and calculate the local mass of this group. As we have did in \cite{Cui-Wei-Yang-Zhang-2022}, $\Sigma_k$ can be decomposed into a disjoint union of $S_j^k$:
	\begin{equation*}
		\Sigma_k=\{0\}\cup S_1^k\cup\cdots \cup S_{m_0}^k, \ m_0\leq m \ \text{and} \ m_0\in\mathbb{N}\cup\{0\}.
	\end{equation*}
	Each $S_j^k$ forms a group comprising of relatively close elements in $\Sigma_{k}$. Denote by $S_j^k=\{x_{j,1}^k,\cdots,x_{j,m_j}^k\}$ for some $1\leq m_j\leq m$. Let $\tau_{S^k_j}^k$ and $\tau_{j,l}^k$ be defined as
	\begin{equation*}
		\tau_{S^k_j}^k=\frac12\mathrm{dist}(x_{j,1}^k,\Sigma_k\setminus S_j^k) \ \ \text{and} \ \ \tau_{j,l}^k=\frac12\mathrm{dist}(x_{j,l}^k,S_j^k\setminus\{x_{j,l}^k\}), \ \text{for} \ l=1,\cdots,m_j.
	\end{equation*}
	Then $\tau_{j,l}^k\ll\tau_{S^k_j}^k$ for $l=1,\cdots,m_j$. By \textbf{Step 2}, the local mass $\hat{\sigma}_{i}(B(\mathbf{x}_{j,l},\bm{\tau}_{j,l}))=2m_{j,l,i}\in 2\mathbb{N}\cup\{0\}$ for $i\in I$, where $(\mathbf{x}_{j,l},\bm{\tau}_{j,l})$ stands for the sequence of pair $\{(x^k_{j,l},\tau^k_{j,l})\}$ for $1\leq l\leq m_j$.
	
	Let $\sigma_i=\hat{\sigma}_i(B(\mathbf{x}_{j,1},\bm{\tau}_{S^k_j}))$, one can show that $\sigma_i\in2\mathbb{N}\cup\{0\}$ for $i\in I$. In fact, by the same arguments of Proposition \cite[Proposition 5.3]{Cui-Wei-Yang-Zhang-2022}, we would come across one of the following cases:
	
	\noindent \textbf{Case (1).} $\mathbf{u}^k$ has fast decay on $\partial B(x^k_{j,1},\tau^{k}_{j,1})$ (hence on $\partial B(x^k_{j,1},\tau^{k}_{j,l})$ for any $1\leq l\leq m_j$). Let
	\begin{equation*}
		l_j^k(S^k_j)=2\max_{1\leq l\leq m_j}\mathrm{dist}(x_{j,1}^k,x_{j,l}^k).
	\end{equation*}
	Then direct computation shows that $\sigma^k_i(B(x^k_{j,1},l_j^k(S^k_j)))=2\sum_{l=1}^{m_j}m_{j,l,i}+o(1)$ for $i\in I$. If
	\begin{equation*}
		\hat{\sigma}_{i}(B(\mathbf{x}_{j,1},\bm{\tau}_{S^k_j}))=\hat{\sigma}_{i}(B(\mathbf{x}_{j,1},\mathbf{l}_j(S^k_j))), \ \forall \ i\in I,
	\end{equation*}
	where $(\mathbf{x}_{j,1},\mathbf{l}_j(S^k_j))$ and $(\mathbf{x}_{j,1},\bm{\tau}_{S^k_j})$ stand for the sequences $\{(x^k_{j,1},l_j^k(S^k_j))\}$ and $\{(x^k_{j,1},\tau^k_{S^k_j})\}$ respectively, then the proof is complete. Otherwise, there exists some $i_0\in I$ such that
	\begin{equation*}
		\hat{\sigma}_{i_0}(B(\mathbf{x}_{j,1},\bm{\tau}_{S^k_j}))>
		\hat{\sigma}_{i_0}(B(\mathbf{x}_{j,1},\mathbf{l}_j(S^k_j))).
	\end{equation*}
	Then we can apply the same arguments of \textbf{Step 2} to prove that
	\begin{equation*}
		\hat{\sigma}_{i}(B(\mathbf{x}_{j,1},\bm{\tau}_{S^k_j}))
		=\hat{\sigma}_{i}(B(\mathbf{x}_{j,1},\mathbf{s}_{\ell}))\in2\mathbb{N}\cup\{0\}, \ \forall \ i\in I,
	\end{equation*}
	for some sequence $\mathbf{s}_{\ell}=\{s^k_\ell\}$ satisfying that $l_j^k(S^k_j)\ll s^k_\ell\ll \tau^k_{S^k_j}$.
	
	\noindent\textbf{Case (2).} Some components of $\mathbf{u}^k$ have slow decay on $\partial B(x^k_{j,1},\tau^{k}_{j,1})$. We denote by
	\begin{equation*}
		J=\big\{i\in I \mid u^k_i \ \text{has slow decay on} \ \partial B(x^k_{j,1},\tau^{k}_{j,1})\big\}.
	\end{equation*}
	Since $l_j^k(S^k_j)\thicksim\tau^k_{j,l}$ for $1\leq l\leq m_j$, $u^k_i$ has slow decay on $\partial B(x^k_{j,1},l_j^k(S^k_j))$ for $i\in J$. Let
	\begin{equation*}
		v^k_i(y)=u^k_i(x^k_{j,1}+l_j^k(S^k_j)y)+2\log{l_j^k(S^k_j)}, \ \text{for} \ i\in I,
	\end{equation*}
	then $v^k_i(y)\rightarrow-\infty$ in $L^{\infty}_{\mathrm{loc}}(\mathbb{R}^2)$ for $i\in I\setminus J$ and $v^k_i(y)\rightarrow v_i(y)$ in $C_{\mathrm{loc}}^2(\mathbb{R}^2)$ for $i\in J$, where $v_{i}(y)$ satisfies the system
	\begin{equation*}
		\Delta v_{i}(y)+\sum\limits_{t\in J}k_{it}e^{v_t(y)}=4\pi\sum\limits_{l=1}^{m_j}n_{j,l,i}\delta_{q_{j,l}} \ \text{in} \ \mathbb{R}^2, \ \int_{\mathbb{R}^2}e^{v_i(y)}\mathrm{d}y<+\infty, \ \ \forall \ i\in J,
	\end{equation*}
	where $n_{j,l,i}=-\sum_{t\in I}k_{it}m_{j,l,t}\in\mathbb{N}\cup\{0\}$ for $i\in J$. In the subsequent step, we will face the same two options as discussed in \textbf{Step 1}. In any case, we have $\sigma^k_i(B(x^k_{j,1},R_kl_j^{k}(S^k_j)))=2\sum_{l=1}^{m_j}m_{j,l,i}+o(1)$ for $i\in N$ and
	\begin{equation*}
		\sigma^k_i(B(x^k_{j,1},R_k l_j^{k}(S^k_j)))=\hat{\sigma}_{i}(B(\mathbf{x}_{j,1},\mathbf{l}_j(S^k_j)))+\frac{1}{2\pi}\int_{\mathbb{R}^2}e^{v_i(y)}\mathrm{d}y
		=2\Big(\sum\limits_{l=1}^{m_j}m_{j,l,i}+\widetilde{m}_{j,i}\Big)+o(1), \ i\in J.
	\end{equation*}
	By the same arguments of \textbf{Step 2}, we could get the conclusion after finitely many steps.
	\medskip
	
	\noindent\textbf{Step 4.} We calculate the local energy of the bubbling disk centered at $0$. For the previously defined groups $S^k_j$, $1\leq j\leq m_0$, we treat them, along with $\{0\}$, as points and combine the neighboring members into a larger group. By selecting a representative $x_j^k\in S_j^k$ for each $1\leq j\leq m_0$, we can assume that the sets $S_1^k,\cdots,S_{j_0}^k$ satisfy $C^{-1}|x_1^k|\leq|x_j^k|\leq C|x_1^k|$ for $1\leq j\leq j_0$ and $|x_j^k|\gg|x_1^k|$ for $j>j_0$. Now we manage to calculate the possible blow-up local mass in the bubbling disk centered at the origin. Let $\tau_k=\frac{1}{2}$ if $m_0=0$ and $\tau_k=\frac{1}{2}|x^k_1|$ if $m_0>0$. Using arguments similar to those in {\bf Step 1} and {\bf Step 2}, along with the application of \cite[Lemma 6.1]{Cui-Wei-Yang-Zhang-2022} and Theorem \ref{Affine-Toda-Section-2-Theorem-14}, we can find a sequence ${s_k}$ with $s_k\ll\tau_k$ such that $\mathbf{u}^k$ decays rapidly on $\partial B(0,s_k)$ and $\hat{\bm{\sigma}}(\bm{\tau})=\hat{\bm{\sigma}}(\mathbf{s})\in\Gamma_{\mathbf{A}}(\bm{\mu})$.
	\medskip
	
	\noindent\textbf{Step 5.} Finally, we proceed with the process of assembling groups $\{0\}$ and $S^k_j$, $1\leq j\leq m_0$. Specifically, we encounter the following intermediate problem: for the local mass $\bm{\sigma}$ in the present step, let $v^k_i(y)=u^k_i(x^k+s_ky)+2\log{s_k}$ for $i\in I$ and denote by
	\begin{equation*}
		J=\big\{i\in I \mid u^k_i \ \text{has slow decay on} \ \partial B(x^k,s_k)\big\}\subsetneqq I.
	\end{equation*}
	Then, $v^k_i(y)\rightarrow-\infty$ for $i\in I\setminus J$ in $L^{\infty}_{\mathrm{loc}}(\mathbb{R}^2)$ and $v^k_i(y)\rightarrow v_i(y)$ in $C_{\mathrm{loc}}^2(\mathbb{R}^2)$ for $i\in J$, where $v_i(y)$ satisfies the system
	\begin{equation*}
		\Delta v_i(y)+\sum\limits_{j\in J}k_{ij}e^{v_j(y)}=4\pi\alpha_i^{*}\delta_0+4\pi\sum\limits_{l=1}^{N}m_{il}\delta_{q_l} \ \text{in} \ \mathbb{R}^2, \ \int_{\mathbb{R}^2}e^{v_i(y)}\mathrm{d}y<+\infty, \ \ \forall \ i\in J,
	\end{equation*}
	where $0\neq q_{l}\in\mathbb{R}^2$, $m_{il}\in\mathbb{N}$ and $\alpha_i^{*}=\alpha_i-\frac{1}{2}\sum_{j\in I}k_{ij}\sigma_j>-1$ for $i\in J$. Denote by $\mu^{*}_{i}=1+\alpha_{i}^{*}$ for $i\in I$. Set
	\begin{equation*}
		\sigma^{*}_{j}=
		\left\{
		\begin{aligned}
			&\sigma_j, \ && \text{if} \ j\in I\setminus J,\\
			&\sigma_j+\frac{1}{2\pi}\int_{\mathbb{R}^2} e^{v_j(y)}\mathrm{d}y, \ && \text{if} \ j\in J.
		\end{aligned}
		\right.
	\end{equation*}
	Regarding the potential value of ${\bm{\sigma}}^*$ we have the following result:
	
	\begin{lemma}\label{Affine-Toda-Section-3-Lemma-1}
		Let the index set $I$ be decomposed into one of the alternatives \textbf{(I)} and \textbf{(II)} (given in \textbf{Step 1}). Suppose that $\hat{\bm{\sigma}}\in\Gamma_{\mathbf{A}}(\bm{\mu})$ satisfying $\sigma_{i}=\hat{\sigma}_i+2n_i$ and $n_i\in\mathbb{Z}$. Then $\sigma^*_i=\hat{\sigma}^*_i+2n^*_i$ with $\hat{\bm{\sigma}}^{*}\in\Gamma_{\mathbf{A}}(\bm{\mu})$ and $n^*_i\in\mathbb{Z}$.
	\end{lemma}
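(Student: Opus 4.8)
The plan is to express $\bm{\sigma}^{*}$ as the combination of a set-chain action at the origin, provided by Theorem \ref{Affine-Toda-Section-2-Theorem-14}, together with an even-integer remainder coming from two sources: the even shift $2n_i$ already present in $\bm{\sigma}=\hat{\bm{\sigma}}+2\mathbf{n}$, and the remote Dirac sources $q_l$. The integrality statement of Proposition \ref{Affine-Toda-Section-2-Proposition-8}, namely that every mass in $\Gamma_{\mathbf{A}}(\bm{\mu})$ has the form $2\sum_j n_{ij}\mu_j$ with $n_{ij}\in\mathbb{N}\cup\{0\}$, is what forces these remainders to be even.

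First I would separate the singular data at the origin. Writing $\hat{\mu}_i^{*}=\mu_i-\frac12\sum_{j\in I}k_{ij}\hat{\sigma}_j$ and inserting $\sigma_i=\hat{\sigma}_i+2n_i$ gives $\mu_i^{*}=\hat{\mu}_i^{*}-\sum_{j\in I}k_{ij}n_j$, where $\sum_{j\in I}k_{ij}n_j\in\mathbb{Z}$ since the $k_{ij}$ are integers. Thus the origin parameter of the limiting system differs from $\hat{\mu}_i^{*}$ by an integer vector. Because the set-chain action and the resulting mass are linear in the $\mu$-parameters, and the coefficients $n_{ij}$ of Proposition \ref{Affine-Toda-Section-2-Proposition-8} are integers, any integer perturbation of the origin parameter propagates into an even-integer change of the produced mass. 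Hence for the purpose of locating $\hat{\bm{\sigma}}^{*}$ inside $\Gamma_{\mathbf{A}}(\bm{\mu})$ I may work with $\hat{\mu}_i^{*}$ and record the discrepancy in $2n_i^{*}$.

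Next I would resolve the origin. The slow-decay set $J$ falls into alternative \textbf{(I)} or \textbf{(II)} of Proposition \ref{Affine-Toda-Section-2-Proposition-2}-(4), and Theorem \ref{Affine-Toda-Section-2-Theorem-14} applied to $\hat{\bm{\sigma}}\in\Gamma_{\mathbf{A}}(\bm{\mu})$ with these data yields the candidate $\hat{\bm{\sigma}}^{*}:=\mathfrak{R}_{J_1}\cdots\mathfrak{R}_{J_\vartheta}\hat{\bm{\sigma}}$ in case \textbf{(I)}, respectively $\hat{\bm{\sigma}}^{*}:=\mathfrak{R}_{J_0}\mathfrak{R}_{J_1}\cdots\mathfrak{R}_{J_\vartheta}\hat{\bm{\sigma}}$ in case \textbf{(II)}, which lies in $\Gamma_{\mathbf{A}}(\bm{\mu})$ by that theorem. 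In case \textbf{(II)} I would first conjugate by the $r_2^{+}$-permutation via Lemma \ref{Affine-Toda-Section-2-Lemma-13}, so that the wrapping block $J_0$ becomes a consecutive-index set and the nonconsecutive chain $\mathfrak{R}_{J_0}$ is legitimate. For the remote sources, around each $q_l\neq 0$ the profile $v_i$ carries only the integer strengths $m_{il}\in\mathbb{N}$, so the local parameters there are integers, and Proposition \ref{Affine-Toda-Section-2-Proposition-8} forces every local mass generated at $q_l$ into $2(\mathbb{N}\cup\{0\})$. Re-running the grouping argument of \textbf{Step 1}--\textbf{Step 3} on the limiting plane then shows that assembling the remote bubbles augments the total mass only by even integers, leaving the $\Gamma_{\mathbf{A}}(\bm{\mu})$-component unchanged. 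Collecting the even shifts from the first paragraph and from the remote points gives $\sigma_i^{*}=\hat{\sigma}_i^{*}+2n_i^{*}$ with $n_i^{*}\in\mathbb{Z}$, as claimed.

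I expect the main obstacle to be the clean separation in the remote-source step: one must verify that the masses localized at the points $q_l$ stay even integers even after they merge with one another and with the origin bubble, i.e. that the successive applications of Theorem \ref{Affine-Toda-Section-2-Theorem-14} during the assembly never move $\hat{\bm{\sigma}}^{*}$ out of the coset $\hat{\bm{\sigma}}^{*}+2\mathbb{Z}^{n+1}$. The affine, cyclic nature of the index set makes this delicate precisely in alternative \textbf{(II)}, where the permutation conjugation of Lemma \ref{Affine-Toda-Section-2-Lemma-13} and the nonconsecutive chain $\mathfrak{R}_{J_0}$ must be carried out simultaneously with the even-integer bookkeeping.
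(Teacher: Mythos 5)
The decisive gap is in your ``resolve the origin'' step. You take as candidate $\hat{\bm{\sigma}}^{*}$ the image of $\hat{\bm{\sigma}}$ under the set chains of Theorem \ref{Affine-Toda-Section-2-Theorem-14} --- i.e.\ the \emph{full} masses of the block systems --- and then claim the remote sources $q_l$ contribute only even integers, ``leaving the $\Gamma_{\mathbf{A}}(\bm{\mu})$-component unchanged.'' But the limit $\mathbf{v}$ in \textbf{Step 5} is a single entire solution with \emph{several} singular points, and for such solutions the masses are not the full masses modulo $2\mathbb{Z}$: the correct quantization, which is what the paper invokes from \cite[Lemma 6.3]{Lin-Yang-Zhong-2020}, is
\begin{equation*}
\frac{1}{2\pi}\int_{\mathbb{R}^2}e^{v_i(y)}\mathrm{d}y
=2\sum\limits_{j=i_p-1}^{i-1}\Big(\sum\limits_{l=i_p}^{\mathfrak{f}_p(j)}\overline{\mu}_{l}-\sum\limits_{l=i_p}^{j}\overline{\mu}_{l}\Big)+2\widetilde{N}_{i},
\end{equation*}
with $\mathfrak{f}_p$ an \emph{arbitrary} bijection of $J_p\cup\{i_p-1\}$, not necessarily the longest element that the $J_p$-chain realizes. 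Since the $\overline{\mu}_l$ are generically irrational, different permutations produce masses lying in \emph{different} cosets of $2\mathbb{Z}^{n+1}$: already for a block $J_p=\{i\}$, the Liouville component with extra integer sources can carry mass in $2\overline{\mu}_i+2\mathbb{Z}$ or in $2\mathbb{Z}$, and only the former agrees with your chain candidate. Hence the discrepancy between the true $\bm{\sigma}^{*}$ and your candidate cannot in general be absorbed into $2n_i^{*}$, and the proposal does not close. Relatedly, ``re-running the grouping argument of \textbf{Step 1}--\textbf{Step 3} on the limiting plane'' is not an available move: $\mathbf{v}$ is a fixed solution, not a blow-up sequence, so one cannot rerun the selection procedure; and Proposition \ref{Affine-Toda-Section-2-Proposition-8} constrains elements of $\Gamma_{\mathbf{A}}(\bm{\mu})$, not the local masses of $\mathbf{v}$ near the points $q_l$.

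What the paper does at exactly this point, and what your argument is missing, is the following: extend the block bijections $\mathfrak{f}_p$ to a permutation $f$ of $I_0=I\cup\{0\}$, set $\hat{\sigma}_i^{*}=\hat{\sigma}_i+2\sum_{j=0}^{i-1}\big(\sum_{l=1}^{f(j)}\overline{\mu}_l-\sum_{l=1}^{j}\overline{\mu}_l\big)$, and then prove $\hat{\bm{\sigma}}^{*}\in\Gamma_{\mathbf{A}}(\bm{\mu})$ for \emph{every} such $f$ by decomposing $f$ into simple transpositions $g$ and checking $\bm{\sigma}_{f\circ g}=\mathfrak{R}_{\ell}\bm{\sigma}_{f}$, so that each transposition is matched by one generator of $\mathbf{G}_{\mathbf{A}}$; Theorem \ref{Affine-Toda-Section-2-Theorem-14} alone only covers the longest-element case. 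For the record, two ingredients of your proposal are sound and do match the paper: the reduction replacing $\mu_i^{*}$ by $\hat{\mu}_i^{*}$ (the shift $\sum_{j\in I}k_{ij}n_j$ is an integer and the relevant mass formulas have integer coefficients in the $\mu$'s, so it only perturbs the even remainder), and the treatment of the affine wrap in alternative \textbf{(II)} by conjugating with the $r_2^{+}$-permutation through Lemma \ref{Affine-Toda-Section-2-Lemma-13}, which is exactly the paper's Case 2. But without the permutation-form quantization for the multi-singular entire system and the transposition-to-generator argument, the key inclusion $\hat{\bm{\sigma}}^{*}\in\Gamma_{\mathbf{A}}(\bm{\mu})$ is not established.
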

	
	\begin{proof}
		We consider the following two cases based on the decomposition of the index set $I$.
		
		\noindent\textbf{Case 1.} Alternative \textbf{(I)} happens and (I-1) holds. By the same arguments of \cite[Lemma 6.3]{Lin-Yang-Zhong-2020}, letting $\mathfrak{f}_p$ be a bijective map from $J_p\cup\{i_p-1\}$ to itself, then we get that
		\begin{equation*}
			\frac{1}{2\pi}\int_{\mathbb{R}^2}e^{v_i(y)}\mathrm{d}y=2\sum\limits_{j=i_p-1}^{i-1}
			\Big(\sum\limits_{l=i_p}^{\mathfrak{f}_p(j)}\overline{\mu}_{l}-\sum\limits_{l=i_p}^{j}\overline{\mu}_{l}\Big)+2\widetilde{N}_{i}, \ \text{for some} \ \widetilde{N}_i\in\mathbb{Z}, \ i\in J_p,
		\end{equation*}
		where $\overline{\mu}_{i}=\mu_{i}-\frac{1}{2}\sum_{j\in I}k_{ij}\hat{\sigma}_{j}$ for $i\in I$. We define an extension $f$ of ${\mathfrak{f}_p}$'s on $I_0=I\cup\{0\}$ as follows
		\begin{equation*}
			f(i)=\left\{
			\begin{aligned}
				&\mathfrak{f}_p(i), \  && \text{if} \ i\in\cup_{p=1}^{\vartheta}\big(J_p\cup\{i_p-1\}\big),\\
				&i,                 \  && \text{if} \ i\in I_0\setminus\big(\cup_{p=1}^{\vartheta}\big(J_p\cup\{i_p-1\}\big)\big).
			\end{aligned}
			\right.
		\end{equation*}
		As a consequence, $\bm{\sigma}^{*}$ can be rewritten as
		\begin{equation*}
			\sigma_i^{*}=\hat{\sigma}_i+2\sum\limits_{j=0}^{i-1}
			\Big(\sum\limits_{l=1}^{f(j)}\overline{\mu}_{l}-\sum\limits_{l=1}^{j}\overline{\mu}_{l}\Big)+2n_{i}^{*}=\hat{\sigma}_i^{*}+2n_{i}^{*}, \ i\in I,
		\end{equation*}
		for some $n_{i}^{*}\in\mathbb{Z}$, $i\in I$, where
		\begin{equation*}
			\hat{\sigma}_i^{*}=\hat{\sigma}_i+2\sum\limits_{j=0}^{i-1}
			\Big(\sum\limits_{l=1}^{f(j)}\overline{\mu}_{l}-\sum\limits_{l=1}^{j}\overline{\mu}_{l}\Big), \ \text{for} \ i\in I.
		\end{equation*}
		It remains to show that $\hat{\bm{\sigma}}^{*}\in\Gamma_{\mathbf{A}}(\bm{\mu})$. Let $g$ be the simple permutation which only alternates the places of $\ell-1$ and $\ell$ for some $\ell\in I$, one can apply the same argument of \cite[Theorem 3.6]{Lin-Yang-Zhong-2020} to check that $\bm{\sigma}_{f\circ g}=\mathfrak{R}_{\ell}\bm{\sigma}_{f}$. Using the fact that $f$ can be represented by a composition of simple permutations, we conclude that $\hat{\bm{\sigma}}^{*}\in\Gamma_{\mathbf{A}}(\bm{\mu})$ since $\bm{\sigma}\in\Gamma_{\mathbf{A}}(\bm{\mu})$. This completes the proof of Lemma \ref{Affine-Toda-Section-3-Lemma-1}.
		
		\noindent\textbf{Case 2.} Alternative \textbf{(II)} happens and (II-1) holds. Suppose that $f$ is the $r_2^{+}$-permutation on $I$ satisfying \eqref{Affine-Toda-Section-2-Eq-32} with $r=r_2$. Since $\hat{\bm{\sigma}}\in\Gamma_{\mathbf{A}}(\bm{\mu})$, we deduce by Lemma \ref{Affine-Toda-Section-2-Lemma-13}-(a) that $\hat{\bm{\sigma}}_{f}=\left(\hat{\sigma}_{f,1},\cdots,\hat{\sigma}_{f,n+1}\right)\in\Gamma_{\mathbf{A}}(\bm{\mu})$, where $\hat{\sigma}_{f,i}=\hat{\sigma}_{f(i)}$ for $i\in I$. Let $\overline{J_0}$, $\overline{J_p}$ and $\overline{N}$ be defined as in \eqref{Affine-Toda-Section-2-Eq-35},  it is easy to see that $I=\overline{J_0}\cup \overline{J_1}\cup\cdots\cup\overline{J_{\vartheta}}\cup\overline{N}$. Set
		\begin{equation*}
			{{\sigma}}^{*}_{f,i}=
			\left\{
			\begin{aligned}
				& {\sigma}_{f,i}, \ && \text{if} \ i\in \overline{N},\\
				& {\sigma}_{f,i}+\frac{1}{2\pi}\int_{\mathbb{R}^2} e^{v_{f(i)}}\mathrm{d}y, \ && \text{if} \ i\in f^{-1}(J)=:\overline{J}=\overline{J_0}\cup \overline{J_1}\cup\cdots\cup\overline{J_{\vartheta}}.
			\end{aligned}
			\right.
		\end{equation*}
		Then we can argue as in \textbf{Case 1} to obtain that
		\begin{equation*}
			{\sigma}^{*}_{f,i}=\hat{\sigma}^{*}_{f,i}+2n_{f,i}, \ \text{for some} \ n_{f,i}\in\mathbb{Z}, \ i\in I,
		\end{equation*}
		where $\hat{\bm{\sigma}}^{*}_{f}=(\hat{\sigma}^{*}_{f,1},\cdots,\hat{\sigma}^{*}_{f,n+1})\in\Gamma_{\mathbf{A}}(\bm{\mu})$. It is not difficult to check that $\hat{\bm{\sigma}}^{*}_{f}=(\hat{\bm{\sigma}}^{*})_{f}$. As a result of Lemma \ref{Affine-Toda-Section-2-Lemma-13}-(b) we get that $\hat{\bm{\sigma}}^{*}\in\Gamma_{\mathbf{A}}(\bm{\mu})$. This finishes the proof of Lemma \ref{Affine-Toda-Section-3-Lemma-1}.
	\end{proof}
	
	\begin{proof}[Proof of Theorem \ref{Affine-Toda-Section-1-Theorem-1}]
		Analogously to the proof of \cite[Theorem 1.1]{Cui-Wei-Yang-Zhang-2022}, one can obtain the desired result by utilizing Lemma \ref{Affine-Toda-Section-3-Lemma-1} and the discussion from {\bf Step 1} to {\bf Step 5}.
	\end{proof}
	
	\section{The affine $\mathbf{C}^t$ type Toda system}\label{Affine-Toda-Section-4}
	\setcounter{equation}{0}
	In this section, we focus on the affine $\mathbf{C}^{t}$ type Toda system \eqref{Affine-Toda-system-C_n^1-form}. Using a similar approach, we investigate the bubbling areas and derive the Pohozaev identity for local mass, which is given by:
	\begin{equation}\label{Affine-Toda-Section-4-Eq-1}
		(\sigma_1-\sigma_2)^{2}+(\sigma_2-\sigma_3)^{2}+\cdots+(\sigma_n-\sigma_{n+1})^{2}
		=2\left(\mu_1\sigma_1+2\sum\limits_{i\in I\setminus\{1,n+1\}}\mu_i\sigma_i+\mu_{n+1}\sigma_{n+1}\right).
	\end{equation}
	We shall obtain the group representation for the ${\bm\sigma}$ by the {\em affine Weyl group} $\mathbf{G}_{\mathbf{C}^{t}}$.
	
	Let $J=\{i,i+1,\cdots,i+l\}\subsetneqq I$ for some $i\in I$ and $l\in\mathbb{N}\cup\{0\}$. Throughout this section, we say that $\mathbf{u}=(u_i,\cdots,u_{i+l})$ is the solution to the $\mathbf{A}_{l+1}$ (or $\mathbf{C}_{l+1}$) Toda system if it satisfies the system
	\begin{equation}\label{Affine-Toda-Section-4-Eq-AC}
		\left\{
		\begin{aligned}
			&\Delta u_{s}+\sum\limits_{t\in J}k_{st}^{\prime}e^{u_t}=4\pi\alpha_s\delta_0 \ \text{in} \ \mathbb{R}^2,\\
			&\int_{\mathbb{R}^2}e^{u_s(y)}\mathrm{d}y<+\infty, \ \forall \ s\in J.
		\end{aligned}
		\right.
	\end{equation}
	Here the coefficient matrix $(k_{st}^{\prime})$ in \eqref{Affine-Toda-Section-4-Eq-AC} is chosen as the Cartan matrix of type $\mathbf{A}$ (or $\mathbf{C}$) with rank $l+1$ and $\alpha_s>-1$ for $s\in J$. Particularly, we can also say \eqref{Affine-Toda-Section-4-Eq-AC} is a $\mathbf{C}_{l+1}$-type Toda system ($l\geq 1$) if the coefficient matrix $(k_{st}^{\prime})$ is of the form
	\begin{equation*}
		(k_{st}^{\prime})=
		\begin{pmatrix}
			2      &  -2    &    0   &  \cdots  &  0     &    0    &    0        \\
			-1     &  2     &    -1  &  \cdots  &  0     &    0    &    0        \\
			0      &  -1    &    2   &  \ddots  &  0     &    0    &    0        \\
			\vdots & \vdots & \ddots &  \ddots  & \ddots & \vdots  &  \vdots     \\
			0      &   0    &    0   &  \ddots  &    2   &   -1    &    0         \\
			0      &   0    &    0   &  \cdots  &    -1  &   2     &    -1        \\
			0      &   0    &    0   &  \cdots  &    0   &   -1    &    2         \\
		\end{pmatrix}.
	\end{equation*}
	Hereafter, $(k_{st})_{|J|\times |J|}$ represents a segment of the generalized Cartan matrix of the affine $\mathbf{C}^t$ type unless explicitly mentioned otherwise.
	
	\subsection{Bubbling analysis and Pohozaev identity}
	In this subsection, we perform the selection process and establish the Pohozaev identity \eqref{Affine-Toda-Section-4-Eq-1}.
	
	\begin{proposition}\label{Affine-Toda-Section-4-Proposition-1}
		Let $\mathbf{u}^{k}$ be a sequence of solutions of system \eqref{Affine-Toda-system-C_n^1-form} satisfying \eqref{Affine-Toda-3Conditions}. Then there exists a sequence of finite points $\Sigma_{k}:=\{0,x^{k}_{1},\cdots,x^{k}_{m}\}$ (if $0$ is not a singular point, then $0$ can be deleted from $\Sigma_k$) and a sequence of positive numbers $l^{k}_{1},\cdots,l^{k}_{m}$ such that the following properties hold:\\
		(1) There exists a constant $C$ independent of $k$ such that the Harnack-type inequality holds:
		\begin{equation*}
			\max\limits_{i\in I}\Big\{u^k_i(x)+2\log\mathrm{dist}(x,\Sigma_k)\Big\}\leq C, \ \forall \ x\in B_{1}(0).
		\end{equation*}
		(2) $x^{k}_{j}\rightarrow 0$ and $l^{k}_{j}\rightarrow 0$ as $k\rightarrow +\infty$, $l^{k}_{j}\leq\tau^{k}_j:=\frac{1}{2}\mathrm{dist}(x^{k}_{j},\Sigma_{k}\setminus\{x^{k}_{j}\})$, $j=1,\cdots,m$. Furthermore, $B(x^{k}_{i},l_{i}^{k})\cap B(x^{k}_{j},l_{j}^{k})=\emptyset$ for $1\leq i,j\leq m$, $i\neq j$.\\
		(3) $\max\limits_{i\in I}u^k_{i}(x^{k}_{j})=\max\limits_{i\in I}\max\limits_{B(x^k_{j},l^k_j)}u^k_i(x)\rightarrow+\infty$ as $k\rightarrow+\infty$, $j=1,\cdots,m$. Denote by
		\begin{equation*}
			\varepsilon_{j}^{k}:=\exp\left(-\frac{1}{2}\max\limits_{i\in I}u^{k}_{i}(x^{k}_{j})\right), \ j=1,\cdots,m.
		\end{equation*}
		Then $R^k_j:={l^{k}_{j}}/{\varepsilon_{j}^{k}}\rightarrow+\infty$ as $k\rightarrow+\infty$, $j=1,\cdots,m$.\\
		(4) In each $B(x^{k}_{j},l_{j}^{k})$, set $v^k_{i}(y):=u^k_i(x^k_j+\varepsilon_{j}^{k}y)+2\log\varepsilon_{j}^{k}$ for $i\in I$. Then one of the following alternatives holds:\\
		\noindent\textbf{(I).} The index set $I$ is decomposed into the form $I=J\cup N$ and $J=J_0\cup J_1\cup\cdots\cup J_{\vartheta}$ for some $\vartheta\in\mathbb{N}\cup\{0\}$ satisfying that
		\begin{enumerate}[(\text{I}-1).]
			\item $J_0,J_1,\cdots,J_{\vartheta}$ and $N$ are pairwise disjoint, $J_0\neq\emptyset$, $N\neq\emptyset$, $\{1,2\}\cap N=\emptyset$, $\{n,n+1\}\cap N\neq\emptyset$ and each $J_p$, $p=0,1,\cdots,\vartheta$ consists of the maximal consecutive indices, denoted by
			\begin{equation*}
				J_p=\{i_p,\cdots,i_p+l_p\}, \ \text{for some} \ i_p\in I \ \text{and} \ l_p\in\mathbb{N}\cup\{0\}.
			\end{equation*}
			In particular, $i_0=1$. Naturally, $n\geq 2$ is needed;
			\item $v^k_s(y)\rightarrow-\infty$ in $L^{\infty}_{\mathrm{loc}}(\mathbb{R}^2)$ for $s\in N$;
			\item $v^k_s(y)\rightarrow v_s(y)$ in $C_{\mathrm{loc}}^2(\mathbb{R}^2)$ for $s\in J_p$, $p=1,\cdots,\vartheta$, where $v_s(y)$ satisfies the $\mathbf{A}_{l_p+1}$ Toda system \eqref{Affine-Toda-Section-4-Eq-AC} with $\alpha_{s}=0$ and $(k_{st}^{\prime})=(k_{st}^{\mathbf{c}^t})$, $s,t\in J_{p}$;
			\item $v^k_s(y)\rightarrow v_s(y)$ in $C_{\mathrm{loc}}^2(\mathbb{R}^2)$ for $s\in J_0$, where $v_s(y)$ satisfies the $\mathbf{C}_{l_0+1}$-type Toda system \eqref{Affine-Toda-Section-4-Eq-AC} with $\alpha_{s}=0$ and $(k_{st}^{\prime})=(k_{st}^{\mathbf{c}^t})$, $s,t\in J_{0}$.
		\end{enumerate}
		
		\noindent\textbf{(II).} The index set $I$ is decomposed into the form $I=J\cup N$ and $J=J_0\cup J_1\cup\cdots\cup J_{\vartheta}$ for some $\vartheta\in\mathbb{N}\cup\{0\}$ satisfying that
		\begin{enumerate}[(\text{II}-1).]
			\item $J_0,J_1,\cdots,J_{\vartheta}$ and $N$ are pairwise disjoint, $J_0\neq\emptyset$, $N\neq\emptyset$, $\{1,2\}\cap N\neq\emptyset$, $\{n,n+1\}\cap N=\emptyset$ and each $J_p$, $p=0,1,\cdots,\vartheta$ consists of the maximal consecutive indices, denoted by
			\begin{equation*}
				J_p=\{i_p,\cdots,i_p+l_p\}, \ \text{for some} \ i_p\in I \ \text{and} \ l_p\in\mathbb{N}\cup\{0\}.
			\end{equation*}
			In particular, $i_0+l_0=n+1$. Naturally, $n\geq 2$ is needed;
			\item $v^k_s(y)\rightarrow-\infty$ in $L^{\infty}_{\mathrm{loc}}(\mathbb{R}^2)$ for $s\in N$;
			\item $v^k_s(y)\rightarrow v_s(y)$ in $C_{\mathrm{loc}}^2(\mathbb{R}^2)$ for $s\in J_p$, $p=1,\cdots,\vartheta$, where $v_s(y)$ satisfies the $\mathbf{A}_{l_p+1}$ Toda system \eqref{Affine-Toda-Section-4-Eq-AC} with $\alpha_{s}=0$ and $(k_{st}^{\prime})=(k_{st}^{\mathbf{c}^t})$, $s,t\in J_{p}$;
			\item $v^k_s(y)\rightarrow v_s(y)$ in $C_{\mathrm{loc}}^2(\mathbb{R}^2)$ for $s\in J_0$, where $v_s(y)$ satisfies the $\mathbf{C}_{l_0+1}$ Toda system \eqref{Affine-Toda-Section-4-Eq-AC} with $\alpha_{s}=0$ and $(k_{st}^{\prime})=(k_{st}^{\mathbf{c}^t})$, $s,t\in J_{0}$.
		\end{enumerate}
		
		\noindent\textbf{(III).} The index set $I$ is decomposed into the form $I=J\cup N$ and $J=J_0\cup J_1\cup\cdots\cup J_{\vartheta}\cup J_{\vartheta+1}$ for some $\vartheta\in\mathbb{N}\cup\{0\}$ satisfying that
		\begin{enumerate}[(\text{III}-1).]
			\item $J_0,J_1,\cdots,J_{\vartheta+1}$ and $N$ are pairwise disjoint, $J_0\neq\emptyset$, $J_{\vartheta+1}\neq\emptyset$, $N\neq\emptyset$, $\{1,2\}\cap N=\emptyset$, $\{n,n+1\}\cap N=\emptyset$ and each $J_p$, $p=0,1,\cdots,\vartheta+1$ consists of the maximal consecutive indices, denoted by
			\begin{equation*}
				J_p=\{i_p,\cdots,i_p+l_p\}, \ \text{for some} \ i_p\in I \ \text{and} \ l_p\in\mathbb{N}\cup\{0\}.
			\end{equation*}
			In particular, $i_0=1$, $i_{\vartheta+1}+l_{\vartheta+1}=n+1$. Naturally, $n\geq 4$ is needed;
			\item $v^k_s(y)\rightarrow-\infty$ in $L^{\infty}_{\mathrm{loc}}(\mathbb{R}^2)$ for $s\in N$;
			\item $v^k_s(y)\rightarrow v_s(y)$ in $C_{\mathrm{loc}}^2(\mathbb{R}^2)$ for $s\in J_p$, $p=1,\cdots,\vartheta$, where $v_s(y)$ satisfies the $\mathbf{A}_{l_p+1}$ Toda system \eqref{Affine-Toda-Section-4-Eq-AC} with $\alpha_{s}=0$ and $(k_{st}^{\prime})=(k_{st}^{\mathbf{c}^t})$, $s,t\in J_{p}$;
			\item $v^k_s(y)\rightarrow v_s(y)$ in $C_{\mathrm{loc}}^2(\mathbb{R}^2)$ for $s\in J_0$, where $v_s(y)$ satisfies the $\mathbf{C}_{l_0+1}$-type Toda system \eqref{Affine-Toda-Section-4-Eq-AC} with $\alpha_{s}=0$ and $(k_{st}^{\prime})=(k_{st}^{\mathbf{c}^t})$, $s,t\in J_{0}$;
			\item $v^k_s(y)\rightarrow v_s(y)$ in $C_{\mathrm{loc}}^2(\mathbb{R}^2)$ for $s\in J_{\vartheta+1}$, where $v_s(y)$ satisfies the $\mathbf{C}_{l_{\vartheta+1}+1}$ Toda system \eqref{Affine-Toda-Section-4-Eq-AC} with $\alpha_{s}=0$ and $(k_{st}^{\prime})=(k_{st}^{\mathbf{c}^t})$, $s,t\in J_{\vartheta+1}$.
		\end{enumerate}
		
		\noindent\textbf{(IV).} The index set $I$ is decomposed into the form $I=J\cup N$ and $J=J_1\cup\cdots\cup J_{\vartheta}$ for some $\vartheta\in\mathbb{N}$ satisfying that
		\begin{enumerate}[(\text{IV}-1).]
			\item $J_1,\cdots,J_{\vartheta}$ and $N$ are pairwise disjoint, $N\neq\emptyset$, $\{1,2\}\cap N\neq\emptyset$, $\{n,n+1\}\cap N\neq\emptyset$ and each $J_p$, $p=1,\cdots,\vartheta$ consists of the maximal consecutive indices, denoted by
			\begin{equation*}
				J_p=\{i_p,\cdots,i_p+l_p\}, \ \text{for some} \ i_p\in I \ \text{and} \ l_p\in\mathbb{N}\cup\{0\}.
			\end{equation*}
			Naturally, $n\geq 2$ is needed;
			\item $v^k_s(y)\rightarrow-\infty$ in $L^{\infty}_{\mathrm{loc}}(\mathbb{R}^2)$ for $s\in N$;
			\item $v^k_s(y)\rightarrow v_s(y)$ in $C_{\mathrm{loc}}^2(\mathbb{R}^2)$ for $s\in J_p$, $p=1,\cdots,\vartheta$, where $v_s(y)$ satisfies the $\mathbf{A}_{l_p+1}$ Toda system \eqref{Affine-Toda-Section-4-Eq-AC} with $\alpha_{s}=0$ and $(k_{st}^{\prime})=(k_{st}^{\mathbf{c}^t})$, $s,t\in J_{p}$.
		\end{enumerate}
	\end{proposition}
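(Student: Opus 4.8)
The plan is to follow the \textbf{standard bubble-selection scheme} that underlies Proposition~\ref{Affine-Toda-Section-2-Proposition-2}, adapting only the final classification step so as to reflect the two distinguished end nodes of the $\mathbf{C}^t$ Cartan matrix. Since system \eqref{Affine-Toda-system-C_n^1-form} is required to satisfy the same hypotheses \eqref{Affine-Toda-3Conditions} as the affine $\mathbf{A}$ case, parts (1)--(3) are obtained verbatim: first I would run the iterative selection of local maxima of $\max_{i\in I}u_i^k$, producing the finite set $\Sigma_k=\{0,x_1^k,\cdots,x_m^k\}$ together with the scales $l_j^k$ and the normalizing constants $\varepsilon_j^k$; the Harnack-type inequality, the disjointness of the rescaled disks, and the blow-up rates $R_j^k\to+\infty$ then follow exactly as in \cite{Lin-Yang-Zhong-2020,Cui-Wei-Yang-Zhang-2022}. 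No feature of the Cartan matrix enters these estimates, so the first three parts require no new argument.

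For part (4), I would rescale $v_i^k(y)=u_i^k(x_j^k+\varepsilon_j^k y)+2\log\varepsilon_j^k$ and pass to the limit (in $C^2_{\mathrm{loc}}$ for the surviving components, $v_s^k\to-\infty$ in $L^\infty_{\mathrm{loc}}$ for the others), writing $I=J\cup N$ with $N$ the set of diverging indices and $J$ the survivors. Decomposing $J$ into its maximal consecutive blocks, each block $J_p$ carries a limiting entire Toda system whose coefficient matrix is the corresponding principal submatrix of $(k_{ij}^{\mathbf{c}^t})$ in \eqref{Affine-C_n^1-Cartan-matrix}, and finiteness of $\int_{\mathbb{R}^2}e^{v_s}$ comes from condition (iii) of \eqref{Affine-Toda-3Conditions}. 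The decisive point is that this submatrix is of type $\mathbf{A}$ whenever the block avoids the off-diagonal entries $k_{12}^{\mathbf{c}^t}=-2$ and $k_{n+1,n}^{\mathbf{c}^t}=-2$, and of type $\mathbf{C}$ precisely when the block contains both $1,2$ (long node at the left, $i_0=1$) or both $n,n+1$ (long node at the right, $i_0+l_0=n+1$). Applying the classification of entire solutions \cite[Theorem 1.1]{Lin-Wei-Ye-2012} to each block then yields (I-2)--(I-4) and their analogues in the remaining alternatives.

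The four alternatives (I)--(IV) are forced by the \emph{linear}, rather than cyclic, structure of the $\mathbf{C}^t$ index set: because $k_{1,n+1}^{\mathbf{c}^t}=0$, the two distinguished nodes $1$ and $n+1$ sit at opposite ends and never couple to one another. Thus the only question is whether a genuine block of rank at least two and of type $\mathbf{C}$ forms at each end, that is, whether $\{1,2\}\cap N=\emptyset$ and whether $\{n,n+1\}\cap N=\emptyset$; the four combinations produce exactly (I) (left end only), (II) (right end only), (III) (both ends), and (IV) (neither end), with all interior blocks of type $\mathbf{A}$. Conceptually this mirrors the two cases of Proposition~\ref{Affine-Toda-Section-2-Proposition-2} under the folding $\tilde u_i=\tilde u_{2n+2-i}=u_i$ that realizes $\mathbf{C}^t$ inside $\mathbf{A}_{2n-1}^{(1)}$ (see \cite{Nie-2014,Nie-2016}): the fixed nodes of the involution are precisely $1$ and $n+1$, a surviving block centered at a fixed node folds to a $\mathbf{C}$-block, and a symmetric pair of interior blocks folds to a single $\mathbf{A}$-block. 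The requirement $N\neq\emptyset$ in every case reflects the fact that the full affine $\mathbf{C}^t$ system admits no entire finite-energy solution, so at least one component must vanish in the rescaled limit.

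I expect the \textbf{main obstacle} to be the bookkeeping in step (4): one must verify that an end block is recognized as type $\mathbf{C}$ only when the $-2$ coupling is \emph{active}, i.e. when the adjacent node ($2$ at the left, $n$ at the right) also survives, so that a lone surviving node $\{1\}$ whose neighbor lies in $N$ is classified as an interior $\mathbf{A}_1$ block rather than a spurious $\mathbf{C}$-block. One must likewise check that the rank and root-length constraints ($n\ge 2$ in (I), (II), (IV) and $n\ge 4$ in (III)) are exactly those needed for the decompositions to be nonempty and mutually exclusive. These are precisely the structural facts that separate the $\mathbf{C}^t$ analysis from the affine $\mathbf{A}$ one; once they are in place, the remainder of the argument is identical to the cited references.
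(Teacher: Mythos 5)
Your proposal is correct and takes essentially the same route as the paper, which (exactly as for Proposition \ref{Affine-Toda-Section-2-Proposition-2}) establishes this statement by the standard selection procedure of \cite{Cui-Wei-Yang-Zhang-2022,Lin-Yang-Zhong-2020}: parts (1)--(3) are independent of the Cartan matrix, and your classification of the surviving maximal consecutive blocks by the principal submatrices of \eqref{Affine-C_n^1-Cartan-matrix} --- the four alternatives governed by whether $\{1,2\}\cap N=\emptyset$ and $\{n,n+1\}\cap N=\emptyset$, a lone surviving end node counted as an $\mathbf{A}_1$ block since its $-2$ coupling is inactive, no wrap-around block because $k_{1,n+1}^{\mathbf{c}^t}=0$ makes the diagram a path rather than a cycle, and $N\neq\emptyset$ forced by the linear constraint in \eqref{Affine-Toda-system-C_n^1-form} after rescaling --- is precisely the bookkeeping the paper leaves implicit. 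One small correction: the convergence assertions in part (4) follow from the selection procedure and elliptic estimates alone, so \cite[Theorem 1.1]{Lin-Wei-Ye-2012} (a type-$\mathbf{A}$ classification, which for the $\mathbf{C}$-type blocks would in any case have to be replaced by \cite{Nie-2016}) is not needed to prove the proposition itself and enters only later, when the local masses of the limiting bubbles are quantized.
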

	
	Now we establish the Pohozaev identity for any blow-up sequence of solutions of \eqref{Affine-Toda-system-C_n^1-form}. In Proposition \ref{Affine-Toda-Section-4-Proposition-2}, we denote by $\sigma_{i}^{k}(r,x_0):=\sigma(r,x_0;u^{k}_{i})$ and $\sigma_{i}^{k}(r):=\sigma(r,0;u^{k}_{i})$ for $i\in I$, where $\sigma(r,x_0;u)$ is defined as in \eqref{Affine-Toda-Section-2-Eq-39}.
	
	\begin{proposition}\label{Affine-Toda-Section-4-Proposition-2}
		Let $\mathbf{u}^{k}=(u^{k}_{1},\cdots,u^{k}_{n+1})$ be a sequence of solutions of system \eqref{Affine-Toda-system-C_n^1-form} satisfying \eqref{Affine-Toda-3Conditions}. Assume that $\Sigma_{k}^{\prime}\subseteq\Sigma_{k}$ is a subset of $\Sigma_{k}$ with $0\in\Sigma_{k}^{\prime}\subseteq B(x_{k},r_{k})\subseteq B_{1}(0)$ and there holds
		\begin{equation*}
			\mathrm{dist}(\Sigma_{k}^{\prime},\partial B(x_{k},r_{k}))=o(1)\mathrm{dist}(\Sigma_{k}\setminus\Sigma_{k}^{\prime},\partial B(x_{k},r_{k})).
		\end{equation*}
		If $\mathbf{u}^{k}$ has fast decay on $\partial B(x_{k},r_{k})$, then
		\begin{small}
			\begin{equation}\label{Affine-Toda-Section-4-Eq-2-Pohozaev}
				\sum\limits_{i=1,n+1}\sigma_{i}\Big(\sum\limits_{j\in I}k_{ij}^{\mathbf{c}^t}\sigma_{j}-2\mu_{i}\Big)+2\sum\limits_{i\in I\setminus\{1,n+1\}}\sigma_{i}\Big(\sum\limits_{j\in I}k_{ij}^{\mathbf{c}^t}\sigma_{j}-2\mu_{i}\Big)=2\Big(\sum\limits_{i=1,n+1}\mu_i\sigma_i+2\sum\limits_{i\in I\setminus\{1,n+1\}}\mu_i\sigma_i\Big),
			\end{equation}
		\end{small}
		where $(k_{ij}^{\mathbf{c}^t})$ is defined as in \eqref{Affine-C_n^1-Cartan-matrix}, $\mu_i=1+\alpha_i$ and $\sigma_i=\lim\limits_{k\rightarrow+\infty}\sigma^k_i(r_k,x_k)$ for $i\in I$.
	\end{proposition}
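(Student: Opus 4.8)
The plan is to deduce the identity \eqref{Affine-Toda-Section-4-Eq-2-Pohozaev} directly from the already-established $\mathbf{A}$-type Pohozaev identity \eqref{Affine-Toda-Section-2-Eq-1} of Proposition \ref{Affine-Toda-Section-2-Proposition-5}, by exploiting the folding $\mathbf{C}_{n}^{(1),t}\hookrightarrow\mathbf{A}_{2n-1}^{(1)}$ recorded after \eqref{Affine-Toda-system-C_n^1-form}. Given the blow-up sequence $\mathbf{u}^{k}=(u_1^k,\dots,u_{n+1}^k)$ of \eqref{Affine-Toda-system-C_n^1-form}, I would first build its symmetric extension $\tilde{\mathbf{u}}^{k}=(\tilde u_1^k,\dots,\tilde u_{2n}^k)$ on the index set $\tilde I=\{1,\dots,2n\}$ (read cyclically $\mathrm{mod}\ 2n$) by setting $\tilde u_i^k=u_i^k$ for $1\le i\le n+1$ and $\tilde u_i^k=u_{2n+2-i}^k$ for $n+2\le i\le 2n$, together with $\tilde\alpha_i=\tilde\alpha_{2n+2-i}=\alpha_i$. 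A short check on the three bands of rows of $(k_{ij}^{\mathbf{c}^t})$ then shows that $\tilde{\mathbf{u}}^{k}$ solves the $\mathbf{A}_{2n-1}^{(1)}$ Toda system: at the two fixed nodes $i=1$ and $i=n+1$ the reflection identifies both $\mathbf{A}$-neighbours ($\tilde u_{0}=\tilde u_{2n}=u_2$ at $i=1$, and $\tilde u_{n}=\tilde u_{n+2}=u_n$ at $i=n+1$), so the two $-1$ entries collapse into the single $-2$ entries of $(k_{ij}^{\mathbf{c}^t})$ and reproduce exactly the first and $(n+1)$-th equations of \eqref{Affine-Toda-system-C_n^1-form}, while the interior nodes $2\le i\le n$ are unchanged.

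The second step is to transfer the hypotheses. Since the reflection fixes the origin, conditions \eqref{Affine-Toda-3Conditions} for $\mathbf{u}^{k}$ pass verbatim to $\tilde{\mathbf{u}}^{k}$, the blow-up set of $\tilde{\mathbf{u}}^{k}$ is the symmetric image of that of $\mathbf{u}^k$ (so the sets $\Sigma_k$, $\Sigma_k'$ of Proposition \ref{Affine-Toda-Section-4-Proposition-1} may be taken symmetric and centered appropriately), and fast decay of every $u_i^k$ on $\partial B(0,r_k)$ forces fast decay of every $\tilde u_i^k$ on the same circle. Hence Proposition \ref{Affine-Toda-Section-2-Proposition-5} applies to $\tilde{\mathbf{u}}^{k}$ and yields \eqref{Affine-Toda-Section-2-Eq-1} for the folded masses $\tilde\sigma_i$ and weights $\tilde\mu_i=1+\tilde\alpha_i$, which by construction satisfy $\tilde\sigma_i=\tilde\sigma_{2n+2-i}$, $\tilde\mu_i=\tilde\mu_{2n+2-i}$, and $\tilde\sigma_i=\sigma_i$, $\tilde\mu_i=\mu_i$ for $1\le i\le n+1$.

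The third step is a purely algebraic folding computation. Rewriting \eqref{Affine-Toda-Section-2-Eq-1} in the form $\tfrac12\sum_{i=1}^{2n}(\tilde\sigma_i-\tilde\sigma_{i+1})^2=2\sum_{i=1}^{2n}\tilde\mu_i\tilde\sigma_i$ and inserting the reflection symmetry, the $2n$ squared differences split into two identical halves, each equal to $\sum_{i=1}^{n}(\sigma_i-\sigma_{i+1})^2$, so the left side collapses to $\sum_{i=1}^{n}(\sigma_i-\sigma_{i+1})^2$; on the right side the two fixed nodes contribute $\mu_1\sigma_1+\mu_{n+1}\sigma_{n+1}$ while each reflected pair $\{i,2n+2-i\}$ contributes $2\mu_i\sigma_i$, giving exactly the right-hand side of \eqref{Affine-Toda-Section-4-Eq-1}. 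To pass from \eqref{Affine-Toda-Section-4-Eq-1} to the stated form \eqref{Affine-Toda-Section-4-Eq-2-Pohozaev}, I would use the symmetrizer $d_1=d_{n+1}=1$, $d_i=2$ $(2\le i\le n)$ of the $\mathbf{C}^t$ Cartan matrix: the matrix $(d_i k_{ij}^{\mathbf{c}^t})_{i,j\in I}$ is symmetric, its quadratic form equals $2\sum_{i=1}^n(\sigma_i-\sigma_{i+1})^2$, and the weights $d_i$ supply precisely the prefactors $1,2,\dots,2,1$ in \eqref{Affine-Toda-Section-4-Eq-2-Pohozaev}, so that $\sum_{i\in I}d_i\sigma_i\big(\sum_j k_{ij}^{\mathbf{c}^t}\sigma_j-2\mu_i\big)=2\sum_{i\in I}d_i\mu_i\sigma_i$ is equivalent to \eqref{Affine-Toda-Section-4-Eq-1}.

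The computations in the first and third steps are routine; the main obstacle is the second step, namely verifying rigorously that the symmetric extension is a bona fide blow-up sequence for $\mathbf{A}_{2n-1}^{(1)}$ to which Proposition \ref{Affine-Toda-Section-2-Proposition-5} applies — in particular that the selection procedure of Proposition \ref{Affine-Toda-Section-4-Proposition-1} and the fast-decay radius $r_k$ can be chosen compatibly with the reflection, and that no new blow-up is created on the mirror. Should this transfer prove delicate, the fallback is to imitate the direct proof of Proposition \ref{Affine-Toda-Section-2-Proposition-5}: multiply the first $n$ equations of \eqref{Affine-Toda-system-C_n^1-form} by the inverse of the leading $n\times n$ principal submatrix of $(k_{ij}^{\mathbf{c}^t})$ (only the $i=n$ equation carries a $u_{n+1}$ term, so the decoupled system again has the shape $\sum_j(\cdot)\Delta u_j+e^{u_i}-(\cdot)e^{u_{n+1}}=\cdots$), run the Pohozaev identity on the annulus $B_\theta\setminus B_\varepsilon$, use $\nabla u_i^k(x)=-\frac{x}{|x|^2}\big(\sum_{t\in I}k_{it}^{\mathbf{c}^t}\sigma_t^k(r_k)-2\alpha_i\big)+o(1)/|x|$ on the outer circle, let $k\to\infty$, and simplify. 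In that route the arithmetic of the $\mathbf{C}$/$\mathbf{B}$-type Cartan inverse replaces the clean relations \eqref{Affine-Toda-Section-2-Eq-5} and is the genuinely more involved ingredient, which is precisely why I would prefer the folding argument above.
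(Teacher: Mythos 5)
Your proposal is correct and is essentially the paper's own proof: the paper likewise sets $v_i=v_{2n+2-i}=u_i$, $\beta_i=\beta_{2n+2-i}=\alpha_i$ to embed \eqref{Affine-Toda-system-C_n^1-form} into the affine $\mathbf{A}^{(1)}_{2n-1}$ system \eqref{Affine-Toda-Section-4-Eq-4}, applies Proposition \ref{Affine-Toda-Section-2-Proposition-5} to the folded sequence $\mathbf{v}^k$, and uses the reflection symmetry $\sigma_{\mathbf{v},i}=\sigma_{\mathbf{v},2n+2-i}$ to collapse the $\mathbf{A}$-type identity to \eqref{Affine-Toda-Section-4-Eq-1}, which is equivalent to \eqref{Affine-Toda-Section-4-Eq-2-Pohozaev}. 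Your symmetrizer check with $d=(1,2,\cdots,2,1)$ and your verification of the transfer of the fast-decay hypothesis simply make explicit steps the paper leaves implicit; the fallback direct computation is not needed.
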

	\begin{proof}
		We can transform system \eqref{Affine-Toda-system-C_n^1-form} by carrying out the following operations:
		\begin{equation}\label{Affine-Toda-Section-4-Eq-3}
			v_i=v_{2n+2-i}=u_i, \ \beta_{i}=\beta_{2n+2-i}=\alpha_{i}, \ \text{for} \ i\in I.
		\end{equation}
		This transformation embeds the affine $\mathbf{C}^{(1),t}_{n}$  Toda system \eqref{Affine-Toda-system-C_n^1-form} into an affine $\mathbf{A}^{(1)}_{2n-1}$ Toda system. Precisely, $\mathbf{v}=(v_1,\cdots,v_{2n})$ satisfies the system
		\begin{equation}\label{Affine-Toda-Section-4-Eq-4}
			\begin{cases}\Delta v_i+\sum\limits_{j=1}^{2n}\hat{k}_{ij}e^{v_j}=4\pi\beta_i\delta_0 \ \text{in} \ B_1(0), \ \forall \ i=1,\cdots,2n,\\
				v_1+v_2+\cdots+v_{2n}\equiv0,
			\end{cases}
		\end{equation}
		where $\beta_i>-1$ for $i=1,\cdots,2n$ satisfying that $\sum_{i=1}^{2n}\beta_i=0$ and $(\hat{k}_{ij})_{2n\times 2n}$ stands for the generalized Cartan matrix $\mathbf{A}^{(1)}_{2n-1}$. Without loss of generality, we assume that $x_k=0$ and define $\mathbf{v}^{k}$ as shown in \eqref{Affine-Toda-Section-4-Eq-3} with $\mathbf{u}=\mathbf{u^{k}}$. Denoting by $\bm{\sigma}_{\mathbf{v}}=(\sigma_{\mathbf{v},1},\cdots,\sigma_{\mathbf{v},2n})$ with $\sigma_{\mathbf{v},i}:=\lim\limits_{k\rightarrow+\infty}\sigma(r_k,0;v^{k}_{i})$ for $i=1,\cdots,2n$. Then by Proposition \ref{Affine-Toda-Section-2-Proposition-5}, $\bm{\sigma}_{\mathbf{v}}$ satisfies the Pohozaev identity \eqref{Affine-Toda-Section-2-Eq-6-Pohozaev}, we rewrite it as follows:
		\begin{equation*}
			\begin{aligned}
				&(\sigma_{\mathbf{v},1}-\sigma_{\mathbf{v},2})^{2}+(\sigma_{\mathbf{v},2}-\sigma_{\mathbf{v},3})^{2}+\cdots
				+(\sigma_{\mathbf{v},2n-1}-\sigma_{\mathbf{v},2n})^{2}+(\sigma_{\mathbf{v},2n}-\sigma_{\mathbf{v},1})^{2}\\
				&=4\left((1+\beta_1)\sigma_{\mathbf{v},1}+\cdots+(1+\beta_{2n})\sigma_{\mathbf{v},2n}\right),
			\end{aligned}
		\end{equation*}
		which can be reduced to \eqref{Affine-Toda-Section-4-Eq-1}. Therefore, \eqref{Affine-Toda-Section-4-Eq-2-Pohozaev} holds and it finishes the proof of Proposition \ref{Affine-Toda-Section-4-Proposition-2}.
	\end{proof}
	
	The following property of the set $\Gamma_{\mathbf{C}^t}(\bm{\mu})$ is trivial.
	
	\begin{proposition}\label{Affine-Toda-Section-4-Proposition-3}
		For each element $\bm{\sigma}\in\Gamma_{\mathbf{C}^t}(\bm{\mu})$, $\bm{\sigma}$ satisfies the Pohozaev identity \eqref{Affine-Toda-Section-4-Eq-1}.
	\end{proposition}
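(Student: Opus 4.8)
The plan is to argue by induction on the level of $\bm{\sigma}$, exactly as in the $\mathbf{A}$ type case (Proposition \ref{Affine-Toda-Section-2-Proposition-6}) and in \cite[Proposition 3.1]{Lin-Yang-Zhong-2020}. Since $\Gamma_{\mathbf{C}^t}(\bm{\mu})$ is generated from $\mathbf{0}$ by the reflections $\mathfrak{R}_i$, it suffices to check that (i) the base point $\mathbf{0}$ satisfies \eqref{Affine-Toda-Section-4-Eq-1}, and (ii) each generator $\mathfrak{R}_i$ carries a solution of \eqref{Affine-Toda-Section-4-Eq-1} to another solution. Assertion (i) is immediate, since both sides of \eqref{Affine-Toda-Section-4-Eq-1} vanish at $\bm{\sigma}=\mathbf{0}$, so the whole matter reduces to the invariance of the Pohozaev relation under the generators.

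First I would recast \eqref{Affine-Toda-Section-4-Eq-1} in a symmetrized quadratic form adapted to the $\mathbf{C}^t$ weights. Let $c_1=c_{n+1}=1$ and $c_i=2$ for $2\le i\le n$ be the marks appearing in the compatibility relation $\sum_{i\in I}c_i\alpha_i=0$, and set $D=\mathrm{diag}(c_1,\cdots,c_{n+1})$. A direct inspection of \eqref{Affine-C_n^1-Cartan-matrix} shows that the matrix $B$ with entries $B_{st}=c_s k_{st}^{\mathbf{c}^t}$ is symmetric, and expanding the left-hand side of \eqref{Affine-Toda-Section-4-Eq-1} yields
\begin{equation*}
	\sum_{i=1}^{n}(\sigma_i-\sigma_{i+1})^2=\frac12\sum_{s,t\in I}c_s k_{st}^{\mathbf{c}^t}\sigma_s\sigma_t ,
\end{equation*}
so that \eqref{Affine-Toda-Section-4-Eq-1} is equivalent to $\tfrac12\sum_{s,t}c_sk_{st}^{\mathbf{c}^t}\sigma_s\sigma_t=2\sum_{s}c_s\mu_s\sigma_s$, which is precisely the form \eqref{Affine-Toda-Section-4-Eq-2-Pohozaev}.

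The inductive step is then a one-line application of Vieta's theorem. Fixing all coordinates $\sigma_j$ with $j\neq i$ and regarding \eqref{Affine-Toda-Section-4-Eq-1} as a quadratic equation in the single variable $\sigma_i$, the leading coefficient is $c_i$, and Vieta's formula gives that the sum of its two roots equals $2\mu_i-\sum_{t\in I}k_{it}^{\mathbf{c}^t}\sigma_t+2\sigma_i$. Hence, if $\bm{\sigma}$ satisfies \eqref{Affine-Toda-Section-4-Eq-1} (so that $\sigma_i$ is one root), the second root is exactly $2\mu_i-\sum_{t}k_{it}^{\mathbf{c}^t}\sigma_t+\sigma_i=(\mathfrak{R}_i\bm{\sigma})_i$, while the remaining coordinates are unchanged; therefore $\mathfrak{R}_i\bm{\sigma}$ again satisfies \eqref{Affine-Toda-Section-4-Eq-1}. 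Equivalently, one may check by direct substitution that $F(\mathfrak{R}_i\bm{\sigma})=F(\bm{\sigma})$ for $F(\bm{\sigma}):=\tfrac12\sum_{s,t}c_s k_{st}^{\mathbf{c}^t}\sigma_s\sigma_t-2\sum_{s}c_s\mu_s\sigma_s$, using that $\sum_{t}c_i k_{it}^{\mathbf{c}^t}\sigma_t=(B\bm{\sigma})_i$ and $B_{ii}=2c_i$; the cross terms cancel precisely because $\mathfrak{R}_i$ changes only the $i$-th coordinate by the amount $2\mu_i-\sum_t k_{it}^{\mathbf{c}^t}\sigma_t$.

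Since the statement is essentially formal, I do not anticipate a genuine obstacle; the one point requiring care is that, unlike the $\mathbf{A}$ type, the Pohozaev quadratic form for $\mathbf{C}^t$ is weighted by the nonuniform marks $c_i$, so the symmetrizer $D$ must be inserted before invoking Vieta (equivalently, the generators act as reflections with respect to $B=D(k_{ij}^{\mathbf{c}^t})$ rather than $(k_{ij}^{\mathbf{c}^t})$ itself). As an alternative that sidesteps this bookkeeping, one could transport the result from the $\mathbf{A}_{2n-1}^{(1)}$ system through the folding used in Proposition \ref{Affine-Toda-Section-4-Proposition-2}, under which \eqref{Affine-Toda-Section-4-Eq-1} becomes the symmetric $\mathbf{A}$ type identity \eqref{Affine-Toda-Section-2-Eq-1} and Proposition \ref{Affine-Toda-Section-2-Proposition-6} applies directly.
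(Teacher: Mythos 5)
Your proposal is correct and follows exactly the paper's route: the paper proves this proposition by the same one-line appeal to Vieta's theorem, which your argument fills in (the base point $\bm{0}$ trivially satisfies \eqref{Affine-Toda-Section-4-Eq-1}, and the second root of the quadratic in $\sigma_i$ is precisely $(\mathfrak{R}_i\bm{\sigma})_i$). Your observation that the quadratic form must first be symmetrized by the marks $c_1=c_{n+1}=1$, $c_i=2$ ($2\le i\le n$) — so that \eqref{Affine-Toda-Section-4-Eq-1} coincides with \eqref{Affine-Toda-Section-4-Eq-2-Pohozaev} and the leading coefficient is $c_i$ — is exactly the bookkeeping the paper's "straightforward" verification requires.
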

	\begin{proof}
		One can verify this straightforwardly by using Vieta's Theorem.
	\end{proof}
	
	\subsection{The group representation}
	In this subsection, we use some elements in group $\mathbf{G}_{\mathbf{C}^t}$ to represent the possible local mass. Suppose that $J_0=\{i,i+1,\cdots,i+l\}\subsetneqq I$ consists of consecutive indices for some $i\in I$ and $l\in\mathbb{N}$.
	
	\begin{lemma}\label{Affine-Toda-Section-4-Lemma-6}
		Let $\bm{\sigma}=(\sigma_1,\cdots,\sigma_{n+1})\in\Gamma_{\mathbf{C}^t}(\bm{\mu})$ and denote by $\alpha_{s}^{*}=\alpha_{s}-\frac{1}{2}\sum_{t\in I}k_{st}^{\mathbf{c}^{t}}\sigma_{t}$ and $\mu_{s}^{*}=1+\alpha_{s}^{*}$ for $s\in I$, where $(k_{st}^{\mathbf{c}^{t}})$ is defined as in \eqref{Affine-C_n^1-Cartan-matrix}.
		\begin{enumerate}[(a)]
			\item Assume that $i=1$. Let $\mathbf{v}=(v_1,\cdots,v_{l+1})$ be the solution of the $\mathbf{C}_{l+1}$-type Toda system \eqref{Affine-Toda-Section-4-Eq-AC} with $\alpha_s=\alpha_{s}^{*}$ and $(k_{st}^{\prime})=(k_{st}^{\mathbf{c}^{t}})$, $s,t\in J_0$.
			\item Assume that $i+l=n+1$. Let $\mathbf{v}=(v_i,\cdots,v_{n+1})$ be the solution of the $\mathbf{C}_{l+1}$ Toda system \eqref{Affine-Toda-Section-4-Eq-AC} with $\alpha_s=\alpha_{s}^{*}$ and $(k_{st}^{\prime})=(k_{st}^{\mathbf{c}^{t}})$, $s,t\in J_0$.
		\end{enumerate}
		In any case, we denote by $\sigma_{s}^{*}=\sigma_{s}$ for $s\in I\setminus J_0$ and
		\begin{equation*}
			\sigma_{s}^{*}=\sigma_{s}+\frac{1}{2\pi}\int_{\mathbb{R}^2}e^{v_s(y)}\mathrm{d}y, \ \ \text{for} \ s\in J_0.
		\end{equation*}
		Then $\bm{\sigma}^{*}=(\sigma_1^{*},\cdots,\sigma_{n+1}^{*})\in\Gamma_{\mathbf{C}^t}(\bm{\mu})$.
	\end{lemma}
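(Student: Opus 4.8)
The plan is to reduce Lemma \ref{Affine-Toda-Section-4-Lemma-6} to its already-established $\mathbf{A}$-type counterpart, Lemma \ref{Affine-Toda-Section-2-Lemma-10}, by means of the $\mathbb{Z}_2$-folding \eqref{Affine-Toda-Section-4-Eq-3} that embeds the affine $\mathbf{C}^t$ system into the affine $\mathbf{A}_{2n-1}^{(1)}$ system, exactly as in Proposition \ref{Affine-Toda-Section-4-Proposition-2}. Throughout I would treat case (a), $i=1$; case (b) is identical after reflecting about the fixed vertex $n+1$. Writing $v_s=v_{2n+2-s}$, $\beta_s=\beta_{2n+2-s}=\alpha_s$ as in \eqref{Affine-Toda-Section-4-Eq-3}, the indices $1$ and $n+1$ are the fixed points of the involution $\tau(s)=2n+2-s$ on the cyclic set $\{1,\dots,2n\}$, while $s$ and $2n+2-s$ are paired for $2\le s\le n$. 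A direct check of the reflection formulas shows that the generators of $\mathbf{G}_{\mathbf{C}^t}$ correspond, under the lift $\sigma_s\mapsto\sigma_{\mathbf v,s}=\sigma_{\mathbf v,2n+2-s}$, to $\tau$-invariant elements of $\mathbf{G}_{\mathbf{A}}$: the end generators $\mathfrak{R}_1,\mathfrak{R}_{n+1}$ lift to the single reflections at the fixed vertices, and each interior $\mathfrak{R}_s$ ($2\le s\le n$) lifts to the product $\mathfrak{R}_s\mathfrak{R}_{2n+2-s}$ of the two commuting reflections at a swapped pair (their cyclic distance is $\ge 2$). This defines a homomorphism $\Phi\colon\mathbf{G}_{\mathbf{C}^t}\to\mathbf{G}_{\mathbf{A}}$ onto the $\tau$-fixed subgroup $\mathbf{G}_{\mathbf{A}}^{\tau}$ that intertwines the two generator actions, so $\bm\sigma\in\Gamma_{\mathbf{C}^t}(\bm\mu)$ lifts to a $\tau$-symmetric $\bm\sigma_{\mathbf v}\in\Gamma_{\mathbf{A}}(\bm\beta)$ with $1+\beta_s=\mu_s$.

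Next I would identify the local objects. The consecutive set $J_0=\{1,\dots,l+1\}$ lifts to the cyclically consecutive, $\tau$-symmetric set $\widetilde{J_0}=\{2n-l+1,\dots,2n,1,\dots,l+1\}$ of cardinality $2l+1$ centered at the fixed vertex $1$; since $J_0\subsetneqq I$ we have $l+1\le n$, whence $\widetilde{J_0}\subsetneqq\{1,\dots,2n\}$ and Lemma \ref{Affine-Toda-Section-2-Lemma-10} applies. The folding \eqref{Affine-Toda-Section-4-Eq-4} carries the $\mathbf{C}_{l+1}$-type solution $\mathbf v=(v_1,\dots,v_{l+1})$ with singularities $\alpha_s^{*}$ onto a $\tau$-symmetric $\mathbf{A}$-type solution $\widetilde{\mathbf v}$ on $\widetilde{J_0}$; here one checks that $\alpha_s^{*}$ computed with $(k_{st}^{\mathbf c^t})$ coincides with the effective singularity computed with the lifted Cartan matrix, and that the masses satisfy $\tfrac1{2\pi}\int_{\mathbb R^2}e^{v_s}=\tfrac1{2\pi}\int_{\mathbb R^2}e^{\widetilde v_s}$, equal on $\tau$-paired indices. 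Applying Lemma \ref{Affine-Toda-Section-2-Lemma-10} to $\widetilde{J_0}$ produces a chain element $w:=\mathfrak{R}_{\widetilde{J_0}}\in\mathbf{G}_{\mathbf{A}}$ with $\bm\sigma_{\mathbf v}^{*}:=w\,\bm\sigma_{\mathbf v}\in\Gamma_{\mathbf{A}}(\bm\beta)$, where $(\bm\sigma_{\mathbf v}^{*})_s=\sigma_{\mathbf v,s}+\tfrac1{2\pi}\int_{\mathbb R^2}e^{\widetilde v_s}$ on $\widetilde{J_0}$ and is unchanged off $\widetilde{J_0}$. Since $\widetilde{J_0}\cap\{1,\dots,n+1\}=J_0$, this $\bm\sigma_{\mathbf v}^{*}$ is exactly the lift of the candidate $\bm\sigma^{*}$ of the lemma, and it is $\tau$-symmetric.

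Finally I would descend. The crucial point is that $w=\mathfrak{R}_{\widetilde{J_0}}$ is $\tau$-invariant as an element of $\mathbf{G}_{\mathbf{A}}$, which follows from the explicit word for the $\mathbf{A}$-chain in Definition \ref{Affine-Toda-Section-2-Definition-11} applied to the $\tau$-symmetric centered set $\widetilde{J_0}$, after using that the reflections at $\tau$-paired vertices commute. Hence $w\in\mathbf{G}_{\mathbf{A}}^{\tau}=\Phi(\mathbf{G}_{\mathbf{C}^t})$; writing $w=\Phi(\mathfrak{R}_{J_0}^{\mathbf c^t})$ for the corresponding $\mathbf{C}^t$-chain and using that $\Phi$ intertwines the actions, one gets $\bm\sigma^{*}=\mathfrak{R}_{J_0}^{\mathbf c^t}\bm\sigma$, so that $\bm\sigma^{*}\in\Gamma_{\mathbf{C}^t}(\bm\mu)$ by the very definition of the set.

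I expect the step that carries the real weight to be this descent: verifying that the $\mathbf{A}$-chain $\mathfrak{R}_{\widetilde{J_0}}$ is genuinely $\tau$-invariant and therefore folds to a legitimate $\mathbf{C}^t$-chain, equivalently that $\Phi$ sends $\mathbf{C}^t$-chains to symmetric $\mathbf{A}$-chains while respecting the order-four relations $(\mathfrak{R}_2\mathfrak{R}_1)^4=(\mathfrak{R}_n\mathfrak{R}_{n+1})^4=e$ forced by the double-bond ends in \eqref{Affine-C_n^1-Group-Definition}. An alternative route that sidesteps the explicit word would be to establish a $\mathbf{C}^t$-analogue of Proposition \ref{Affine-Toda-Section-2-Proposition-9} and then verify directly that $\bm\sigma^{*}$ satisfies the Pohozaev identity \eqref{Affine-Toda-Section-4-Eq-1} (obtained by descending, as in Proposition \ref{Affine-Toda-Section-4-Proposition-2}, the $\mathbf{A}$-identity already satisfied by $\bm\sigma_{\mathbf v}^{*}$) and is a nonnegative integral combination of the $\mu_s$.
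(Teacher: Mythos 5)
Your architecture is sound up to its final step, and your first two paragraphs essentially reproduce what the paper actually does in its Step 1: the paper also folds the local $\mathbf{C}_{l+1}$ (or $\mathbf{C}_{l+1}$-type) bubble into an $\mathbf{A}_{2l+1}$ Toda system via $u_s=u_{2l+2-s}=\tilde v_s$, $\beta_s=\beta_{2l+2-s}$, and uses the $\mathbf{A}$-type mass formula to compute $\frac{1}{2\pi}\int_{\mathbb{R}^2}e^{v_s}$ explicitly in terms of the $\mu_t$ and $\sigma_t$ (your check that the effective singularities $\alpha_s^{*}$ match under the lift, and that the masses agree on $\tau$-paired indices, is exactly this computation). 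Where you diverge is in how membership in $\Gamma_{\mathbf{C}^t}(\bm{\mu})$ is certified: the paper never leaves $\mathbf{G}_{\mathbf{C}^t}$ for the group-theoretic part. It declares the explicit words $\mathfrak{R}_{J_0}=(\mathfrak{R}_{l+1}\cdots\mathfrak{R}_{1})^{l+1}$ (case (a)) and $(\mathfrak{R}_{i}\cdots\mathfrak{R}_{n}\mathfrak{R}_{n+1})^{l+1}$ (case (b)) in Definition \ref{Affine-Toda-Section-4-Definition-7}, computes their action on $\bm{\sigma}$ by a direct induction (its Steps 2--3), and matches the result against the mass formula; membership then follows tautologically from the definition of $\Gamma_{\mathbf{C}^t}(\bm{\mu})$. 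You instead fold the whole affine system, lift $\bm{\sigma}$ to a $\tau$-symmetric element of $\Gamma_{\mathbf{A}}(\bm{\beta})$, apply Lemma \ref{Affine-Toda-Section-2-Lemma-10} upstairs to the symmetric set $\widetilde{J_0}$, and descend. The intertwining check at generator level is fine (the row sums of the two Cartan matrices match on symmetric vectors, and the folded relations such as $(\mathfrak{R}_2\mathfrak{R}_{2n}\mathfrak{R}_1)^4=e$ hold upstairs), and a pleasant consistency check is that the folded image of the paper's $\mathbf{C}^t$-word has length $(2l+1)(l+1)$, exactly the length $\binom{2l+2}{2}$ of the $\mathbf{A}$-chain on $\widetilde{J_0}$.

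The genuine gap is precisely the descent, which you yourself flag as carrying the real weight but do not carry out. You need two facts: (i) $\mathfrak{R}_{\widetilde{J_0}}$ is $\tau$-invariant as an element of $\mathbf{G}_{\mathbf{A}}$, and (ii) the $\tau$-fixed subgroup equals $\Phi(\mathbf{G}_{\mathbf{C}^t})$. Fact (i) is most naturally proved by identifying the set chain of Definition \ref{Affine-Toda-Section-2-Definition-11} with the longest element of the finite parabolic of type $\mathbf{A}_{|\widetilde{J_0}|}$ (whereupon uniqueness of the longest element and $\tau$-stability of $\widetilde{J_0}$ give invariance), but the paper nowhere establishes this identification, and verifying it, or manipulating the explicit word modulo braid relations, is work comparable to the paper's inductive Step 2. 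Fact (ii) is Steinberg's fixed-subgroup theorem for diagram automorphisms, a nontrivial external input not available in the paper; without it, $\tau$-invariance of $w$ alone does not place $w$ in the image of $\Phi$. If you sidestep the abstraction by exhibiting an explicit preimage word, you are forced to verify that $\Phi\bigl((\mathfrak{R}_{l+1}\cdots\mathfrak{R}_1)^{l+1}\bigr)=\mathfrak{R}_{\widetilde{J_0}}$ acts correctly, which is exactly the computation the paper performs. Your fallback route is also not free: the $\mathbf{C}^t$-analogue of Proposition \ref{Affine-Toda-Section-2-Proposition-9} is never proved in the paper (Proposition \ref{Affine-Toda-Section-4-Proposition-3} gives only one inclusion), and establishing it would require redoing the nonnegativity arguments of Proposition \ref{Affine-Toda-Section-2-Proposition-8} for the Pohozaev identity \eqref{Affine-Toda-Section-4-Eq-1}. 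Two smaller points: applying Lemma \ref{Affine-Toda-Section-2-Lemma-10} to the wrap-around set $\widetilde{J_0}$ in case (a) needs the rotation device of Lemma \ref{Affine-Toda-Section-2-Lemma-13} and Theorem \ref{Affine-Toda-Section-2-Theorem-14}-(b); and the step from word-level intertwining to group-element descent implicitly assumes the affine action factors through the Coxeter presentations, which should be stated.
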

	
	In order to establish Lemma \ref{Affine-Toda-Section-4-Lemma-6}, it is necessary to search for an element $\mathfrak{R}_{J_0}\in\mathbf{G}_{\mathbf{C}^t}$ that satisfies $\bm{\sigma}^{*}=\mathfrak{R}_{J_0}\bm{\sigma}\in\Gamma_{\mathbf{C}^t}(\bm{\mu})$. To facilitate this, we introduce the corresponding concept of a \textbf{chain of set}.
	
	\begin{definition}\label{Affine-Toda-Section-4-Definition-7}
		Suppose that $J\subsetneqq I$ consists of consecutive indices, denoted by $J=\{j,j+1,\cdots,j+l\}$ for some $j\in I$ and $l\in\mathbb{N}\cup\{0\}$. We define the \textbf{$J$-chain}, denoted by $\mathfrak{R}_{J}$, as follows:
		\begin{enumerate}[(a).]
			\item if $l=0$, $|J|=1$, then $\mathfrak{R}_{J}=\mathfrak{R}_{j}$;
			\item if $l\geq 1$, $|J|\geq 2$, then
			\begin{enumerate}[(\text{b}-1).]
				\item if $j=1$ (naturally, $j+l<n+1$), then $\mathfrak{R}_{J}=(\mathfrak{R}_{j+l}\cdots\mathfrak{R}_{j+1}\mathfrak{R}_{j})^{l+1}$;
				\item if $j+l=n+1$ (naturally, $j>1$), then $\mathfrak{R}_{J}=\left(\mathfrak{R}_{j}\mathfrak{R}_{j+1}\cdots\mathfrak{R}_{j+l}\right)^{l+1}$;
				\item if $1<j<j+l<n+1$, then $\mathfrak{R}_{J}$ is defined as in Definition \ref{Affine-Toda-Section-2-Definition-11}.
			\end{enumerate}
		\end{enumerate}
	\end{definition}
	
	\begin{rmk}[remark of Definition \ref{Affine-Toda-Section-4-Definition-7}]
		Subsequent to each blow-up process, if $\bm{\sigma}$ denotes by the local mass at the previous stage, then the updated local mass $\bm{\sigma}^{*}$ can be represented as $\bm{\sigma}^{*}=\mathfrak{R}_{J}\bm{\sigma}$ for some $J\subsetneqq I$.
	\end{rmk}
	
	\begin{proof}[Proof of Lemma \ref{Affine-Toda-Section-4-Lemma-6}]
		We shall check that $\bm{\sigma}^{*}=\mathfrak{R}_{J_0}\bm{\sigma}\in\Gamma_{\mathbf{C}^t}(\bm{\mu})$, where $\mathfrak{R}_{J_0}$ is the $J_0$-chain defined as in Definition \ref{Affine-Toda-Section-4-Definition-7}. The discussion is split into two parts, each corresponds to the conclusions (a) and (b)  respectively:
		\medskip
		
		\noindent{\em Proof of (a)}. The proof is split into several steps.
		
		\noindent\textbf{Step 1.} We calculate the mass of each $v_s$, $s\in J_0$. Denote by $\tilde{\mathbf{v}}=(\tilde{v}_1,\tilde{v}_2,\cdots,\tilde{v}_{l+1})$, where $\tilde{v}_i=v_{l+2-i}$ and set $\tilde{\alpha}_i^{*}=\alpha_{l+2-i}^{*}$ for $i\in J_0$. Then $\tilde{\mathbf{v}}$ satisfies the $\mathbf{C}_{l+1}$ Toda system \eqref{Affine-Toda-Section-4-Eq-AC} with $\alpha_{i}=\tilde{\alpha}_i^{*}$ and $(k_{ij}^{\prime})=(c_{ij})$, $i,j\in J_0$, where $(c_{ij})_{|J_0|\times|J_0|}$ stands for the $\mathbf{C}$ type Cartan matrix of rank $l+1$. As we know, any $\mathbf{C}_{l+1}$ Toda system is embedded into an $\mathbf{A}_{2l+1}$ Toda system, i.e., letting
		\begin{equation*}
			u_i=u_{2l+2-i}=\tilde{v}_i, \ \beta_{i}=\beta_{2l+2-i}=\tilde{\alpha}_i^{*}, \ i=1,\cdots,l+1,
		\end{equation*}
		then $\mathbf{u}=(u_1,u_2,\cdots,u_{2l+1})$ satisfies the $\mathbf{A}_{2l+1}$ Toda system
		\begin{equation*}
			\Delta u_i+\sum\limits_{j=1}^{2l+1}a_{ij}e^{u_j}=4\pi\beta_i\delta_0  \ \text{in} \ \mathbb{R}^2, \ \
			\int_{\mathbb{R}^2}e^{u_i(y)}\mathrm{d}y<+\infty, \ \ \forall \ i=1,2,\cdots,2l+1,
		\end{equation*}
		where $(a_{ij})_{(2l+1)\times(2l+1)}$ stands for the $\mathbf{A}$ type Cartan matrix of rank $2l+1$. As the \textbf{Step 2} in the proof of Lemma \ref{Affine-Toda-Section-2-Lemma-10}, we extract that for $i\in J_0=\{1,\cdots,l+1\}$,
		\begin{equation}\label{Affine-Toda-Section-4-Eq-17}
			\begin{aligned}
				\sigma_{\tilde{v}_i}&=\sigma_{u_i}=\frac{1}{2\pi}\int_{\mathbb{R}^2}e^{u_i(y)}\mathrm{d}y
				=2\sum\limits_{q=0}^{i-1}
				\Big(\sum\limits_{t=1}^{2l+1-q}(1+\beta_t)-\sum\limits_{t=1}^{q}(1+\beta_t)\Big)\\
				&=2\sum\limits_{q=0}^{i-1}\sum\limits_{t=1}^{l+1}(1+\beta_t)+2\sum\limits_{q=0}^{i-1}\sum\limits_{t=l+2}^{2l+1-q}(1+\beta_t)
				-2\sum\limits_{q=0}^{i-1}\sum\limits_{t=1}^{q}(1+\beta_t)\\
				&=2\sum\limits_{q=0}^{i-1}\sum\limits_{t=1}^{l+1}(1+\tilde{\alpha}_t^{*})
				+2\sum\limits_{q=0}^{i-1}\sum\limits_{t=l+2}^{2l+1-q}(1+\tilde{\alpha}_{2l+2-t}^{*})
				-2\sum\limits_{q=0}^{i-1}\sum\limits_{t=1}^{q}(1+\tilde{\alpha}_t^{*})\\
				&=2\sum\limits_{q=0}^{i-1}\sum\limits_{t=1}^{l+1}(1+{\alpha}_{l+2-t}^{*})
				+2\sum\limits_{q=0}^{i-1}\sum\limits_{t=l+2}^{2l+1-q}(1+{\alpha}_{t-l}^{*})
				-2\sum\limits_{q=0}^{i-1}\sum\limits_{t=1}^{q}(1+{\alpha}_{l+2-t}^{*})\\
				&=:I+II-III.
			\end{aligned}
		\end{equation}
		We calculate $I$, $II$ and $III$ term by term. Direct computation shows that
		\begin{equation*}
			\begin{aligned}
				I&=\sum\limits_{q=0}^{i-1}2\sum\limits_{t=1}^{l+1}(1+{\alpha}_{l+2-t}^{*})
				=2\sum\limits_{q=0}^{i-1}\sum\limits_{t=1}^{l+1}\mu_{l+2-t}
				-\sum\limits_{q=0}^{i-1}\sum\limits_{t=1}^{l+1}\sum\limits_{j\in I}k_{l+2-t,j}^{\mathbf{c}^{t}}\sigma_{j}\\
				&=2\sum\limits_{q=0}^{i-1}\sum\limits_{t=1}^{l+1}\mu_{l+2-t}-i(\sigma_{1}-\sigma_{2}+\sigma_{l+1}-\sigma_{l+2}),
			\end{aligned}
		\end{equation*}
		\begin{equation*}
			\begin{aligned}
				II&=2\sum\limits_{q=0}^{i-1}\sum\limits_{t=l+2}^{2l+1-q}(1+{\alpha}_{t-l}^{*})
				=2\sum\limits_{q=0}^{i-1}\sum\limits_{t=l+2}^{2l+1-q}\mu_{t-l}-\sum\limits_{q=0}^{i-1}\sum\limits_{t=l+2}^{2l+1-q}\sum\limits_{j\in I}k_{t-l,j}^{\mathbf{c}^{t}}\sigma_j\\
				&=2\sum\limits_{q=0}^{i-1}\sum\limits_{t=l+2}^{2l+1-q}\mu_{t-l}-\sum\limits_{q=1}^{i-1}\sum\limits_{t=l+2}^{2l+1-q}\sum\limits_{j\in I}k_{t-l,j}^{\mathbf{c}^{t}}\sigma_j-\sum\limits_{t=l+2}^{2l+1}\sum\limits_{j\in I}k_{t-l,j}^{\mathbf{c}^{t}}\sigma_j\\
				&=2\sum\limits_{q=0}^{i-1}\sum\limits_{t=l+2}^{2l+1-q}\mu_{t-l}-(i-1)(-\sigma_1+\sigma_2)+(\sigma_{l+1}-\sigma_{l+2-i})
				-(-\sigma_1+\sigma_2+\sigma_{l+1}-\sigma_{l+2})
			\end{aligned}
		\end{equation*}
		and
		\begin{equation*}
			\begin{aligned}
				III&=2\sum\limits_{q=0}^{i-1}\sum\limits_{t=1}^{q}(1+{\alpha}_{l+2-t}^{*})=2\sum\limits_{q=0}^{i-1}\sum\limits_{t=1}^{q}\mu_{l+2-t}
				-\sum\limits_{q=0}^{i-1}\sum\limits_{t=1}^{q}\sum\limits_{j\in I}k_{l+2-t,j}^{\mathbf{c}^{t}}\sigma_{j}\\
				&=2\sum\limits_{q=0}^{i-1}\sum\limits_{t=1}^{q}\mu_{l+2-t}-\sum\limits_{q=1}^{i-1}(-\sigma_{l+1-q}+\sigma_{l+2-q}+\sigma_{l+1}-\sigma_{l+2})\\
				&=2\sum\limits_{q=0}^{i-1}\sum\limits_{t=1}^{q}\mu_{l+2-t}-(\sigma_{l+1}-\sigma_{l+2-i})-(i-1)(\sigma_{l+1}-\sigma_{l+2}).
			\end{aligned}
		\end{equation*}
		Substituting the above results of $I$, $II$ and $III$ into \eqref{Affine-Toda-Section-4-Eq-17}, we conclude that
		\begin{equation*}
			\begin{aligned}
				\sigma_{\tilde{v}_i}=2\sum\limits_{q=0}^{i-1}\sum\limits_{t=1}^{l+1}\mu_{l+2-t}+2\sum\limits_{q=0}^{i-1}\sum\limits_{t=l+2}^{2l+1-q}\mu_{t-l}
				-2\sum\limits_{q=0}^{i-1}\sum\limits_{t=1}^{q}\mu_{l+2-t}
				+2\sigma_{l+2}-2\sigma_{l+2-i}.
			\end{aligned}
		\end{equation*}
		Since $\sigma_{v_{l+2-i}}=\sigma_{\tilde{v}_i}$ for $i\in J_0$, we obtain that for $s\in J_0$,
		\begin{equation*}
			\begin{aligned}
				\sigma_{v_{s}}=\frac{1}{2\pi}\int_{\mathbb{R}^2}e^{v_{s}(y)}\mathrm{d}y
				&=2\sum\limits_{q=0}^{l+1-s}\sum\limits_{t=1}^{l+1}\mu_{l+2-t}+2\sum\limits_{q=0}^{l+1-s}\sum\limits_{t=l+2}^{2l+1-q}\mu_{t-l}
				-2\sum\limits_{q=0}^{l+1-s}\sum\limits_{t=1}^{q}\mu_{l+2-t}+2\sigma_{l+2}-2\sigma_{s}.
			\end{aligned}
		\end{equation*}
		Hence it holds that for $s\in J_0$,
		\begin{equation*}
			\sigma_{s}^{*}=\sigma_{s}+\sigma_{v_s}=2(l+2-s)\sum\limits_{t=1}^{l+1}\mu_{t}+2\sum\limits_{q=0}^{l+1-s}\sum\limits_{t=l+2}^{2l+1-q}\mu_{t-l}
			-2\sum\limits_{q=0}^{l+1-s}\sum\limits_{t=1}^{q}\mu_{l+2-t}-\sigma_{s}+2\sigma_{l+2}.
		\end{equation*}
		
		\noindent\textbf{Step 2.} We calculate $\mathfrak{R}_{J_0}\bm{\sigma}=(\mathfrak{R}_{l+1}\mathfrak{R}_{l}\cdots\mathfrak{R}_{1})^{l+1}\bm{\sigma}$. By direct computation we get that
		\begin{equation*}
			\begin{aligned}
				(\mathfrak{R}_{l+1}\mathfrak{R}_{l}\cdots\mathfrak{R}_{1}\bm{\sigma})_{s}=\left\{
				\begin{aligned}
					&2\sum\limits_{t=1}^{s}\mu_{t}-\sigma_1+\sigma_2+\sigma_{s+1}, \ && \text{for} \ 1\leq s\leq l+1,\\
					&\sigma_{s}, \ && \text{for} \ l+2\leq s\leq n+1.
				\end{aligned}
				\right.
			\end{aligned}
		\end{equation*}
		Inductively, one can deduce that for any $2\leq r\leq l+1$,
		\begin{equation*}
			\begin{aligned}
				&\left((\mathfrak{R}_{l+1}\mathfrak{R}_{l}\cdots\mathfrak{R}_{1})^r\bm{\sigma}\right)_{s}=\\
				&\left\{
				\begin{aligned}
					&2\sum\limits_{q=s}^{s+r-1}\sum\limits_{t=1}^{q}\mu_{t}+2\sum\limits_{w=1}^{r-1}\sum\limits_{q=1}^{r-w}
					\Big(\sum\limits_{t=1}^{q+1}\mu_t-\sum\limits_{t=1}^{q}\mu_t\Big)
					-\sigma_1+\sigma_{r+1}+\sigma_{s+r}, \ && \text{for} \ 1\leq s\leq l-(r-2),\\
					&2\sum\limits_{q=s}^{l+1}\sum\limits_{t=1}^{q}\mu_{t}+
					2\sum\limits_{w=1}^{r-i}\sum\limits_{q=1}^{r-w}
					\Big(\sum\limits_{t=1}^{q+1}\mu_t-\sum\limits_{t=1}^{q}\mu_t\Big)
					-\sigma_{i+1}+\sigma_{r+1}+\sigma_{s+r-i}, \ && \text{for} \ s=l+2-r+i, \ 1\leq i\leq r-1, \\
					&\sigma_{s}, \ && \text{for} \ l+2\leq s\leq n+1.
				\end{aligned}
				\right.
			\end{aligned}
		\end{equation*}
		
		\noindent\textbf{Step 3.} Finally, we compare the results in the above two steps. For $r=l+1$ and $s=1$, we get that
		\begin{equation*}
			\begin{aligned}
				\left((\mathfrak{R}_{l+1}\mathfrak{R}_{l}\cdots\mathfrak{R}_{1})^{l+1}\bm{\sigma}\right)_{1}&=
				2\sum\limits_{q=1}^{l+1}\sum\limits_{t=1}^{q}\mu_{t}+2\sum\limits_{w=1}^{l}\sum\limits_{q=1}^{l+1-w}\mu_{q+1}
				-\sigma_1+2\sigma_{l+2}\\
				&=2(l+1)\sum\limits_{t=1}^{l+1}\mu_{t}-2\sum\limits_{q=1}^{l+1}\sum\limits_{t=q+1}^{l+1}\mu_{t}+2\sum\limits_{w=1}^{l}\sum\limits_{q=1}^{l+1-w}\mu_{q+1}
				-\sigma_1+2\sigma_{l+2}\\
				&=2(l+1)\sum\limits_{t=1}^{l+1}\mu_{t}-2\sum\limits_{q=0}^{l}\sum\limits_{t=1}^{q}\mu_{l+2-t}+2\sum\limits_{q=0}^{l}\sum\limits_{t=l+2}^{2l+1-q}\mu_{t-l}
				-\sigma_{1}+2\sigma_{l+2}\\
				&=\sigma_{1}^{*}.
			\end{aligned}
		\end{equation*}
		For $r=l+1$ and $s=i+1$ ($1\leq i\leq l$), we have $l+2-r+i=i+1$. Then it holds that
		\begin{equation*}
			\begin{aligned}
				\left((\mathfrak{R}_{l+1}\mathfrak{R}_{l}\cdots\mathfrak{R}_{1})^{l+1}\bm{\sigma}\right)_{i+1}&=
				2\sum\limits_{q=s}^{l+1}\sum\limits_{t=1}^{q}\mu_{t}+
				2\sum\limits_{w=1}^{l+1-i}\sum\limits_{q=1}^{l+1-w}\mu_{q+1}
				-\sigma_{i+1}+2\sigma_{l+2}\\
				&=\left(2(l+1-i)\sum\limits_{t=1}^{l+1}\mu_{t}-2\sum\limits_{q=0}^{l-i}\sum\limits_{t=1}^{q}\mu_{l+2-t}\right)
				+2\sum\limits_{q=0}^{l-i}\sum\limits_{t=l+2}^{2l+1-q}\mu_{t-l}
				-\sigma_{i+1}+2\sigma_{l+2}\\
				&=\sigma_{i+1}^{*}.
			\end{aligned}
		\end{equation*}
		This implies that $(\mathfrak{R}_{l+1}\mathfrak{R}_{l}\cdots\mathfrak{R}_{1})^{l+1}\bm{\sigma}=\bm{\sigma}^{*}$.
		\medskip
		
		\noindent{\em Proof of (b)}. As the proof of (a), we also split the proof into several steps.
		
		\noindent\textbf{Step 1.} We calculate the mass of each $v_s$, $s\in J_0$. Denote by $\tilde{\mathbf{v}}=(\tilde{v}_i,\tilde{v}_{i+1},\cdots,\tilde{v}_{n+1})$, where $\tilde{v}_s=v_{s+i-1}$ and set $\tilde{\alpha}_{s}^{*}=\alpha_{s+i-1}^{*}$ for $s=1,\cdots,l+1$. Then $\tilde{\mathbf{v}}$ satisfies the $\mathbf{C}_{l+1}$ Toda system \eqref{Affine-Toda-Section-4-Eq-AC} with $\alpha_{s}=\tilde{\alpha}_s^{*}$ and $(k_{st}^{\prime})=(c_{st})$, $s,t=1,\cdots,l+1$, where $(c_{st})_{(l+1)\times(l+1)}$ stands for the $\mathbf{C}$ type Cartan matrix of rank $l+1$. Let $u_s=u_{2l+2-s}=\tilde{v}_s$ and $\beta_{s}=\beta_{2l+2-s}=\tilde{\alpha}_s^{*}$ for $s=1,\cdots,l+1$, then $\mathbf{u}=(u_1,u_2,\cdots,u_{2l+1})$ satisfies the $\mathbf{A}_{2l+1}$ Toda system
		\begin{equation*}
			\Delta u_s+\sum\limits_{t=1}^{2l+1}a_{st}e^{u_t}=4\pi\beta_s\delta_0 \ \text{in} \ \mathbb{R}^2, \ \ \int_{\mathbb{R}^2}e^{u_s(y)}\mathrm{d}y<+\infty, \ \ \forall \ s=1,2,\cdots,2l+1,
		\end{equation*}
		where $(a_{st})_{(2l+1)\times(2l+1)}$ stands for the $\mathbf{A}$ type Cartan matrix of rank $2l+1$. For $s=1,\cdots,l+1$, we have
		\begin{equation}\label{Affine-Toda-Section-4-Eq-18}
			\begin{aligned}
				\sigma_{\tilde{v}_s}&=\sigma_{u_s}=\frac{1}{2\pi}\int_{\mathbb{R}^2}e^{u_s(y)}\mathrm{d}y
				=2\sum\limits_{q=0}^{s-1}
				\Big(\sum\limits_{t=1}^{2l+1-q}(1+\beta_t)-\sum\limits_{t=1}^{q}(1+\beta_t)\Big)\\
				&=2\sum\limits_{q=0}^{s-1}\sum\limits_{t=1}^{l+1}(1+\beta_t)+2\sum\limits_{q=0}^{s-1}\sum\limits_{t=l+2}^{2l+1-q}(1+\beta_t)
				-2\sum\limits_{q=0}^{s-1}\sum\limits_{t=1}^{q}(1+\beta_t)\\
				&=2\sum\limits_{q=0}^{s-1}\sum\limits_{t=1}^{l+1}(1+\tilde{\alpha}_t^{*})
				+2\sum\limits_{q=0}^{s-1}\sum\limits_{t=l+2}^{2l+1-q}(1+\tilde{\alpha}_{2l+2-t}^{*})
				-2\sum\limits_{q=0}^{s-1}\sum\limits_{t=1}^{q}(1+\tilde{\alpha}_t^{*})\\
				&=2\sum\limits_{q=0}^{s-1}\sum\limits_{t=1}^{l+1}(1+{\alpha}_{t+i-1}^{*})
				+2\sum\limits_{q=0}^{s-1}\sum\limits_{t=l+2}^{2l+1-q}(1+{\alpha}_{2l+i+1-t}^{*})
				-2\sum\limits_{q=0}^{s-1}\sum\limits_{t=1}^{q}(1+{\alpha}_{t+i-1}^{*})\\
				&=:I+II-III.
			\end{aligned}
		\end{equation}
		In a similar way as the discussion in the first part, we calculate $I$, $II$ and $III$ term by term directly
		\begin{equation*}
			\left\{
			\begin{aligned}
				&I=2\sum\limits_{q=0}^{s-1}\sum\limits_{t=1}^{l+1}\mu_{t+i-1}-s(-\sigma_{i-1}+\sigma_{i}-\sigma_{n}+\sigma_{n+1}),\\
				&II=2\sum\limits_{q=0}^{s-1}\sum\limits_{t=l+2}^{2l+1-q}\mu_{2l+i+1-t}-s(\sigma_n-\sigma_{n+1})-(-\sigma_{i-1}+\sigma_{s+i-1}),\\
				&III=2\sum\limits_{q=0}^{s-1}\sum\limits_{t=1}^{q}\mu_{t+i-1}-s(-\sigma_{i-1}+\sigma_{i})-(\sigma_{i-1}-\sigma_{s+i-1}).
			\end{aligned}
			\right.
		\end{equation*}
		Substituting the above results of $I$, $II$ and $III$ into \eqref{Affine-Toda-Section-4-Eq-18}, we conclude that
		\begin{equation*}
			\begin{aligned}
				\sigma_{\tilde{v}_s}=
				2\sum\limits_{q=0}^{s-1}\sum\limits_{t=1}^{l+1}\mu_{t+i-1}
				+2\sum\limits_{q=0}^{s-1}\sum\limits_{t=l+2}^{2l+1-q}\mu_{2l+i+1-t}
				-2\sum\limits_{q=0}^{s-1}\sum\limits_{t=1}^{q}\mu_{t+i-1}
				+2\sigma_{i-1}-2\sigma_{s+i-1}.
			\end{aligned}
		\end{equation*}
		Since $\sigma_{v_{s+i-1}}=\sigma_{\tilde{v}_s}$ for $s=1,\cdots,l+1$, we deduce that for $s\in J_0$,
		\begin{equation*}
			\begin{aligned}
				\sigma_{v_{s}}=
				2\sum\limits_{q=0}^{s-i}\sum\limits_{t=1}^{l+1}\mu_{t+i-1}
				+2\sum\limits_{q=0}^{s-i}\sum\limits_{t=l+2}^{2l+1-q}\mu_{2l+i+1-t}
				-2\sum\limits_{q=0}^{s-i}\sum\limits_{t=1}^{q}\mu_{t+i-1}
				+2\sigma_{i-1}-2\sigma_{s}.
			\end{aligned}
		\end{equation*}
		Hence it holds that for $s\in J_0$,
		\begin{equation*}
			\sigma_{s}^{*}=\sigma_{s}+\sigma_{v_s}=2\sum\limits_{q=0}^{s-i}\sum\limits_{t=1}^{l+1}\mu_{t+i-1}
			+2\sum\limits_{q=0}^{s-i}\sum\limits_{t=l+2}^{2l+1-q}\mu_{2l+i+1-t}
			-2\sum\limits_{q=0}^{s-i}\sum\limits_{t=1}^{q}\mu_{t+i-1}-\sigma_{s}+2\sigma_{i-1}.
		\end{equation*}
		
		\noindent\textbf{Step 2.} We calculate $\mathfrak{R}_{J_0}\bm{\sigma}=(\mathfrak{R}_{i}\cdots\mathfrak{R}_{n}\mathfrak{R}_{n+1})^{l+1}\bm{\sigma}$. By direct computation we get that
		\begin{equation*}
			\begin{aligned}
				(\mathfrak{R}_{i}\cdots\mathfrak{R}_{n}\mathfrak{R}_{n+1}\bm{\sigma})_{s}=\left\{
				\begin{aligned}
					&\sigma_{s}, \ && \text{for} \ 1\leq s\leq i-1,\\
					&2\sum\limits_{t=s}^{n+1}\mu_{t}+\sigma_{s-1}+\sigma_n-\sigma_{n+1}, \ && \text{for} \ i\leq s\leq n+1,
				\end{aligned}
				\right.
			\end{aligned}
		\end{equation*}
		and
		\begin{equation*}
			\begin{aligned}
				&\left((\mathfrak{R}_{i}\cdots\mathfrak{R}_{n}\mathfrak{R}_{n+1})^2\bm{\sigma}\right)_{s}\\
				&=\left\{
				\begin{aligned}
					&\sigma_{s}, \ && \text{for} \ 1\leq s\leq i-1,\\
					&2\sum\limits_{t=s}^{n+1}\mu_{t}+2\mu_{n}+\sigma_{s-1}+\sigma_{n-1}-\sigma_{n}, \ && \text{for} \ s=i,\\
					&2\sum\limits_{q=s-1}^{s}\sum\limits_{t=q}^{n+1}\mu_{t}+2\mu_{n}
					+\sigma_{s-2}+\sigma_{n-1}-\sigma_{n+1}, \ && \text{for} \ i+1\leq s\leq n+1.
				\end{aligned}
				\right.
			\end{aligned}
		\end{equation*}
		We decompose $s=i+j$ if $i\leq s\leq n+1$, where $0\leq j\leq l$. Inductively, one can deduce that, for any $3\leq r\leq l+1$,
		
		\noindent (1) if $0\leq s-i\leq r-3$, then
		\begin{equation*}
			\begin{aligned}
				&\left((\mathfrak{R}_{i}\cdots\mathfrak{R}_{n}\mathfrak{R}_{n+1})^r\bm{\sigma}\right)_{s}\\
				&=2\sum\limits_{q=s-j}^{s}\sum\limits_{t=q}^{n+1}\mu_{t}+2\sum\limits_{w=n+2-r}^{n+2-r+j}\sum\limits_{q=w}^{n}\sum\limits_{t=q}^{n+1}\mu_{t}
				-2\sum\limits_{w=n+3-r}^{n+3-r+j}\sum\limits_{q=w}^{n+1}\sum\limits_{t=q}^{n+1}\mu_{t}+\sigma_{s-j-1}+\sigma_{n+1-r}-\sigma_{n+2-r+j};
			\end{aligned}
		\end{equation*}
		\noindent (2) if $j=r-2$, i.e., $s=i+r-2$, then
		\begin{equation*}
			\begin{aligned}
				&\left((\mathfrak{R}_{i}\cdots\mathfrak{R}_{n}\mathfrak{R}_{n+1})^r\bm{\sigma}\right)_{s}\\
				&=2\sum\limits_{q=s-r+2}^{s}\sum\limits_{t=q}^{n+1}\mu_{t}+2\sum\limits_{w=n+2-r}^{n-1}\sum\limits_{q=w}^{n}\sum\limits_{t=q}^{n+1}\mu_{t}
				-2\sum\limits_{w=n+3-r}^{n}\sum\limits_{q=w}^{n+1}\sum\limits_{t=q}^{n+1}\mu_{t}+2\mu_{n}+\sigma_{s-r+1}+\sigma_{n+1-r}-\sigma_{n+2-r+j};
			\end{aligned}
		\end{equation*}
		\noindent (3) if $r-1\leq j\leq l$, then
		\begin{equation*}
			\begin{aligned}
				&\left((\mathfrak{R}_{i}\cdots\mathfrak{R}_{n}\mathfrak{R}_{n+1})^r\bm{\sigma}\right)_{s}\\
				&=2\sum\limits_{q=s-r+1}^{s}\sum\limits_{t=q}^{n+1}\mu_{t}+2\sum\limits_{w=n+2-r}^{n-1}\sum\limits_{q=w}^{n}\sum\limits_{t=q}^{n+1}\mu_{t}
				-2\sum\limits_{w=n+3-r}^{n}\sum\limits_{q=w}^{n+1}\sum\limits_{t=q}^{n+1}\mu_{t}+2\mu_{n}+\sigma_{s-r}+\sigma_{n+1-r}-\sigma_{n+1};
			\end{aligned}
		\end{equation*}
		\noindent (4) if $1\leq s\leq i-1$, then $\left((\mathfrak{R}_{i}\cdots\mathfrak{R}_{n}\mathfrak{R}_{n+1})^r\bm{\sigma}\right)_{s}=\sigma_{s}$.
		\medskip
		
		\noindent\textbf{Step 3.} Finally, we compare the results in the above two steps. For $r=l+1$, we decompose $s=i+j$ if $i\leq s\leq n+1$, where $0\leq j\leq l$. On one hand, by \textbf{Step 1} we have
		\begin{equation*}
			\begin{aligned}
				\sigma_{s}^{*}&=2\sum\limits_{q=0}^{s-i}\sum\limits_{t=i}^{n+1}\mu_{t}
				+2\sum\limits_{q=0}^{s-i}\sum\limits_{t=q+i}^{n}\mu_{t}
				-2\sum\limits_{q=0}^{s-i}\sum\limits_{t=i}^{q+i-1}\mu_{t}-\sigma_{s}+2\sigma_{i-1}\\
				&=4\sum\limits_{q=0}^{s-i}\sum\limits_{t=q+i}^{n+1}\mu_{t}-2(j+1)\mu_{n+1}-\sigma_{s}+2\sigma_{i-1}\\
				&=4\sum\limits_{q=i}^{s}\sum\limits_{t=q}^{n+1}\mu_{t}-2(j+1)\mu_{n+1}-\sigma_{s}+2\sigma_{i-1}.
			\end{aligned}
		\end{equation*}
		On the other hand, from the results in {\bf Step 2} we have:
		
		\noindent (1) if $0\leq j\leq r-3=l-2$,
		\begin{equation*}
			\begin{aligned}
				&\left((\mathfrak{R}_{i}\cdots\mathfrak{R}_{n}\mathfrak{R}_{n+1})^r\bm{\sigma}\right)_{s}\\
				&=2\sum\limits_{q=i}^{s}\sum\limits_{t=q}^{n+1}\mu_{t}+2\sum\limits_{w=i}^{s}\sum\limits_{q=w}^{n}\sum\limits_{t=q}^{n+1}\mu_{t}
				-2\sum\limits_{w=i+1}^{s+1}\sum\limits_{q=w}^{n+1}\sum\limits_{t=q}^{n+1}\mu_{t}-\sigma_{s}+2\sigma_{i-1}\\
				&=2\sum\limits_{q=i}^{s}\sum\limits_{t=q}^{n+1}\mu_{t}+2\left(\sum\limits_{w=i+1}^{s}\sum\limits_{q=w}^{n}\sum\limits_{t=q}^{n+1}\mu_{t}+
				\sum\limits_{q=i}^{n}\sum\limits_{t=q}^{n+1}\mu_{t}\right)
				-2\left(\sum\limits_{w=i+1}^{s}\sum\limits_{q=w}^{n}\sum\limits_{t=q}^{n+1}\mu_{t}+\sum\limits_{q=s+1}^{n+1}\sum\limits_{t=q}^{n+1}\mu_{t}+j\mu_{n+1}\right)\\
				& \ \ \ \ -\sigma_{s}+2\sigma_{i-1}\\
				&=2\sum\limits_{q=i}^{s}\sum\limits_{t=q}^{n+1}\mu_{t}+2\sum\limits_{q=i}^{n}\sum\limits_{t=q}^{n+1}\mu_{t}-2\sum\limits_{q=s+1}^{n+1}\sum\limits_{t=q}^{n+1}\mu_{t}
				-2j\mu_{n+1}-\sigma_{s}+2\sigma_{i-1}\\
				&=4\sum\limits_{q=i}^{s}\sum\limits_{t=q}^{n+1}\mu_{t}-2(j+1)\mu_{n+1}-\sigma_{s}+2\sigma_{i-1};
			\end{aligned}
		\end{equation*}
		
		\noindent (2) if $j=r-2=l-1$, i.e., $s=i+r-2=n$,
		\begin{equation*}
			\begin{aligned}
				\left((\mathfrak{R}_{i}\cdots\mathfrak{R}_{n}\mathfrak{R}_{n+1})^{l+1}\bm{\sigma}\right)_{n}
				&=2\sum\limits_{q=i}^{n}\sum\limits_{t=q}^{n+1}\mu_{t}+2\sum\limits_{w=i}^{n-1}\sum\limits_{q=w}^{n}\sum\limits_{t=q}^{n+1}\mu_{t}
				-2\sum\limits_{w=i+1}^{n}\sum\limits_{q=w}^{n+1}\sum\limits_{t=q}^{n+1}\mu_{t}+2\mu_{n}-\sigma_{n}+2\sigma_{i-1}\\
				&=4\sum\limits_{q=i}^{n}\sum\limits_{t=q}^{n+1}\mu_{t}-2(j+1)\mu_{n+1}-\sigma_{n}+2\sigma_{i-1};
			\end{aligned}
		\end{equation*}
		
		\noindent (3) if $l=r-1\leq j\leq l$, i.e., $s=n+1$,
		\begin{equation*}
			\begin{aligned}
				\left((\mathfrak{R}_{i}\cdots\mathfrak{R}_{n}\mathfrak{R}_{n+1})^{l+1}\bm{\sigma}\right)_{n+1}
				&=2\sum\limits_{q=i}^{n+1}\sum\limits_{t=q}^{n+1}\mu_{t}+2\sum\limits_{w=i}^{n-1}\sum\limits_{q=w}^{n}\sum\limits_{t=q}^{n+1}\mu_{t}
				-2\sum\limits_{w=i+1}^{n}\sum\limits_{q=w}^{n+1}\sum\limits_{t=q}^{n+1}\mu_{t}+2\mu_{n}-\sigma_{n+1}+2\sigma_{i-1}\\
				&=4\sum\limits_{q=i}^{n+1}\sum\limits_{t=q}^{n+1}\mu_{t}-2(j+1)\mu_{n+1}-\sigma_{n+1}+2\sigma_{i-1}.
			\end{aligned}
		\end{equation*}
		Therefore, we get $\left((\mathfrak{R}_{i}\cdots\mathfrak{R}_{n}\mathfrak{R}_{n+1})^{l+1}\bm{\sigma}\right)_{s}=\sigma_{s}^{*}$ for $s\in I$. This finishes the proof of Lemma \ref{Affine-Toda-Section-4-Lemma-6}.
	\end{proof}
	
	As a consequence of Lemma \ref{Affine-Toda-Section-4-Lemma-6}, we derive the following theorem which will be repeatedly used in the blow-up procedure.
	
	\begin{theorem}\label{Affine-Toda-Section-4-Theorem-10}
		Suppose that $\bm{\sigma}=(\sigma_1,\cdots,\sigma_{n+1})\in\Gamma_{\mathbf{C}^t}(\bm{\mu})$. Denote by $\alpha_{i}^{*}=\alpha_{i}-\frac{1}{2}\sum_{t\in I}k_{it}^{\mathbf{c}^t}\sigma_{t}$ and $\mu_{i}^{*}=1+\alpha_{i}^{*}$ for $i\in I$. One of the four possibilities stated in Proposition \ref{Affine-Toda-Section-4-Proposition-1}-(4) is fulfilled by the set of indices $I$:
		\begin{enumerate}[(a)]
			\item Assume that \textbf{(I)} holds and satisfies (I-1). Denote by $\mathbf{v}=(v_1,\cdots,v_{n+1})$, where each $v_s$, $s\in J_0$ satisfies the $\mathbf{C}_{l_0+1}$-type Toda system \eqref{Affine-Toda-Section-4-Eq-AC} with $\alpha_{s}=\alpha_{s}^{*}$ and $(k_{st}^{\prime})=(k_{st}^{\mathbf{c}^t})$, $s,t\in J_{0}$; each $v_s$, $s\in J_p$, $p=1,\cdots,\vartheta$ satisfies the $\mathbf{A}_{l_p+1}$ Toda system \eqref{Affine-Toda-Section-4-Eq-AC} with $\alpha_{s}=\alpha_{s}^{*}$ and $(k_{st}^{\prime})=(k_{st}^{\mathbf{c}^t})$, $s,t\in J_{p}$; the left elements can be treated as $-\infty$.
			
			\item Assume that \textbf{(II)} holds and satisfies (II-1). Denote by $\mathbf{v}=(v_1,\cdots,v_{n+1})$, where each $v_s$, $s\in J_0$ satisfies the $\mathbf{C}_{l_0+1}$ Toda system \eqref{Affine-Toda-Section-4-Eq-AC} with $\alpha_{s}=\alpha_{s}^{*}$ and $(k_{st}^{\prime})=(k_{st}^{\mathbf{c}^t})$, $s,t\in J_{0}$; each $v_s$, $s\in J_p$, $p=1,\cdots,\vartheta$ satisfies the $\mathbf{A}_{l_p+1}$ Toda system \eqref{Affine-Toda-Section-4-Eq-AC} with $\alpha_{s}=\alpha_{s}^{*}$ and $(k_{st}^{\prime})=(k_{st}^{\mathbf{c}^t})$, $s,t\in J_{p}$; the left elements can be treated as $-\infty$.
			
			\item Assume that \textbf{(III)} holds and satisfies (III-1). Denote by $\mathbf{v}=(v_1,\cdots,v_{n+1})$, where each $v_s$, $s\in J_0$ satisfies the $\mathbf{C}_{l_0+1}$-type Toda system \eqref{Affine-Toda-Section-4-Eq-AC} with $\alpha_{s}=\alpha_{s}^{*}$ and $(k_{st}^{\prime})=(k_{st}^{\mathbf{c}^t})$, $s,t\in J_{0}$; each $v_s$, $s\in J_{\vartheta+1}$ satisfies the $\mathbf{C}_{l_{\vartheta+1}+1}$ Toda system \eqref{Affine-Toda-Section-4-Eq-AC} with $\alpha_{s}=\alpha_{s}^{*}$ and $(k_{st}^{\prime})=(k_{st}^{\mathbf{c}^t})$, $s,t\in J_{\vartheta+1}$; each $v_s$, $s\in J_p$, $p=1,\cdots,\vartheta$ satisfies the $\mathbf{A}_{l_p+1}$ Toda system \eqref{Affine-Toda-Section-4-Eq-AC} with $\alpha_{s}=\alpha_{s}^{*}$ and $(k_{st}^{\prime})=(k_{st}^{\mathbf{c}^t})$, $s,t\in J_{p}$; the left elements can be treated as $-\infty$.
			
			\item Assume that \textbf{(IV)} holds and satisfies (IV-1). Denote by $\mathbf{v}=(v_1,\cdots,v_{n+1})$, where each $v_s$, $s\in J_p$, $p=1,\cdots,\vartheta$ satisfies the $\mathbf{A}_{l_p+1}$ Toda system \eqref{Affine-Toda-Section-4-Eq-AC} with $\alpha_{s}=\alpha_{s}^{*}$ and $(k_{st}^{\prime})=(k_{st}^{\mathbf{c}^t})$, $s,t\in J_{p}$; the left elements can be treated as $-\infty$.
		\end{enumerate}
		In any case, we set $\sigma_{s}^{*}=\sigma_{s}$ for $s\in I\setminus J$ and $\sigma_{s}^{*}=\sigma_{s}+\frac{1}{2\pi}\int_{\mathbb{R}^2}e^{v_s(y)}\mathrm{d}y$ for $s\in J$. Then it holds that
		\begin{equation*}
			\begin{aligned}
				&\text{(a):} \ \ \bm{\sigma}^{*}=(\sigma^{*}_1,\cdots,\sigma^{*}_{n+1})
				=\left(\mathfrak{R}_{J_0}\mathfrak{R}_{J_1}\cdots\mathfrak{R}_{J_\vartheta}\right)\bm{\sigma}\in\Gamma_{\mathbf{C}^{t}}(\bm{\mu}),\\
				&\text{(b):} \ \ \bm{\sigma}^{*}=(\sigma^{*}_1,\cdots,\sigma^{*}_{n+1})
				=\left(\mathfrak{R}_{J_0}\mathfrak{R}_{J_1}\cdots\mathfrak{R}_{J_\vartheta}\right)\bm{\sigma}\in\Gamma_{\mathbf{C}^{t}}(\bm{\mu}),\\
				&\text{(c):} \ \ \bm{\sigma}^{*}=(\sigma^{*}_1,\cdots,\sigma^{*}_{n+1})
				=\left(\mathfrak{R}_{J_0}\mathfrak{R}_{J_1}\cdots\mathfrak{R}_{J_\vartheta}\mathfrak{R}_{J_{\vartheta+1}}\right)\bm{\sigma}
				\in\Gamma_{\mathbf{C}^{t}}(\bm{\mu}),\\
				&\text{(d):} \ \ \bm{\sigma}^{*}=(\sigma^{*}_1,\cdots,\sigma^{*}_{n+1})
				=\left(\mathfrak{R}_{J_1}\cdots\mathfrak{R}_{J_\vartheta}\right)\bm{\sigma}\in\Gamma_{\mathbf{C}^{t}}(\bm{\mu}).
			\end{aligned}
		\end{equation*}
	\end{theorem}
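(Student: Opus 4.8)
The plan is to reduce Theorem \ref{Affine-Toda-Section-4-Theorem-10} to the two block-level computations already in hand, namely Lemma \ref{Affine-Toda-Section-4-Lemma-6} for the boundary ($\mathbf{C}$-type) blocks carrying the double bond and Lemma \ref{Affine-Toda-Section-2-Lemma-10} for the interior ($\mathbf{A}$-type) blocks, and then to glue the individual block transformations into the single composite word $\mathfrak{R}_{J_0}\mathfrak{R}_{J_1}\cdots$ in $\mathbf{G}_{\mathbf{C}^t}$. The structural fact that legitimizes this gluing is that the decomposition of $I$ furnished by Proposition \ref{Affine-Toda-Section-4-Proposition-1}-(4) consists of maximal consecutive blocks that are pairwise separated by at least one index of $N$ (two distinct maximal consecutive blocks cannot be adjacent, or they would merge), together with the non-cyclic nature of the $\mathbf{C}^t$ index set. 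Consequently, for every block $J_p$ the two indices $i_p-1$ and $i_p+l_p+1$ that border it, whenever they lie in $I$, belong to $N$, and therefore carry masses that stay fixed throughout the argument.

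First I would treat each block in isolation. For a block $J_p$ lying strictly in the interior, i.e. $1<i_p$ and $i_p+l_p<n+1$, the submatrix $(k^{\mathbf{c}^t}_{st})_{s,t\in J_p}$ and the generator actions $\mathfrak{R}_s$ for $s\in J_p$ coincide verbatim with the $\mathbf{A}$-type data, since the only asymmetric rows of \eqref{Affine-C_n^1-Cartan-matrix} are the first and the last, and the braid and commutation relations among $\mathfrak{R}_s$, $2\leq s\leq n$, in $\mathbf{G}_{\mathbf{C}^t}$ agree with those in $\mathbf{G}_{\mathbf{A}}$; hence the purely algebraic induction of Lemma \ref{Affine-Toda-Section-2-Lemma-10} applies unchanged and identifies the $\mathbf{A}_{l_p+1}$-Toda mass $\frac{1}{2\pi}\int_{\mathbb{R}^2}e^{v_s}\,\mathrm{d}y$ with $(\mathfrak{R}_{J_p}\bm{\sigma})_s=\sigma_s^*$ for $s\in J_p$. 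For the boundary blocks $J_0$ (in (a),(b),(c)) and $J_{\vartheta+1}$ (in (c)), Lemma \ref{Affine-Toda-Section-4-Lemma-6}(a) or (b) directly gives $(\mathfrak{R}_{J_0}\bm{\sigma})_s=\sigma_s^*$. The only blocks escaping both lemmas are endpoint singletons $J_p=\{i\}$ with $i\in\{1,n+1\}$ (which genuinely occur, e.g. $J_p=\{n+1\}$ in (a) when $n\in N$); here $v_i$ solves a single Liouville equation $\Delta v_i+2e^{v_i}=4\pi\alpha_i^{*}\delta_0$, whose total mass $\frac{1}{2\pi}\int_{\mathbb{R}^2}e^{v_i}\,\mathrm{d}y=2\mu_i^{*}=2\mu_i-\sum_{t\in I}k^{\mathbf{c}^t}_{it}\sigma_t$ yields exactly $\sigma_i^{*}=(\mathfrak{R}_i\bm{\sigma})_i$ for the $\mathbf{C}^t$ generator $\mathfrak{R}_i$, a one-line check.

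Next I would compose. Because $\mathfrak{R}_{J_p}$ is a word in $\{\mathfrak{R}_s:s\in J_p\}$ and each generator alters only its own coordinate, $\mathfrak{R}_{J_p}$ changes exactly the coordinates indexed by $J_p$, and the values it produces there depend on $\bm{\sigma}$ only through the entries in $J_p$ and the two bordering entries $\sigma_{i_p-1},\sigma_{i_p+l_p+1}$. Since those borders lie in $N$ and are untouched by every other block's word, applying the words in the order $\mathfrak{R}_{J_0}\mathfrak{R}_{J_1}\cdots$ reproduces $\bm{\sigma}^{*}$ coordinate by coordinate, and the order is immaterial. Finally, as $\bm{\sigma}\in\Gamma_{\mathbf{C}^t}(\bm{\mu})$ and $\Gamma_{\mathbf{C}^t}(\bm{\mu})$ is by definition stable under the action of each generator $\mathfrak{R}_i\in\mathbf{G}_{\mathbf{C}^t}$, the resulting vector lies in $\Gamma_{\mathbf{C}^t}(\bm{\mu})$, giving (a)--(d); case (d) is the instance with no boundary block and case (c) the one with two of them, handled by the symmetric use of Lemma \ref{Affine-Toda-Section-4-Lemma-6}(a) and (b).

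The main obstacle I anticipate is not any single computation but the bookkeeping required to guarantee that the interior and boundary reductions actually mesh: one must verify that in each alternative the endpoint conditions in (I-1)--(IV-1) force every interior block to satisfy $1<i_p$ and $i_p+l_p<n+1$, so that Lemma \ref{Affine-Toda-Section-2-Lemma-10} is genuinely applicable and no interior block secretly inherits a double bond, and one must enumerate the finitely many endpoint-singleton configurations that fall outside both lemmas and dispatch them by the Liouville computation above. Once this block inventory is pinned down, the commuting-composition step together with the closure of $\Gamma_{\mathbf{C}^t}(\bm{\mu})$ finishes the proof routinely.
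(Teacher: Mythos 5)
Your proposal is correct and follows essentially the same route as the paper, whose proof of this theorem is precisely the observation that Lemma \ref{Affine-Toda-Section-2-Lemma-10} handles the interior $\mathbf{A}$-type blocks (legitimately transplanted, since the rows of \eqref{Affine-C_n^1-Cartan-matrix} and the generator actions for $2\leq s\leq n$ coincide with the $\mathbf{A}$-type data) while Lemma \ref{Affine-Toda-Section-4-Lemma-6} handles the boundary $\mathbf{C}$-type blocks, the composite word then acting coordinatewise on disjoint blocks whose borders lie in $N$. Your explicit dispatch of the endpoint singletons (e.g.\ $J_p=\{n+1\}$ with $n\in N$) via the Liouville mass $2\mu_i^{*}=(\mathfrak{R}_i\bm{\sigma})_i-\sigma_i$ is a correct verification of a case the paper's one-line proof leaves implicit.
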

	\begin{proof}
		By Lemma \ref{Affine-Toda-Section-2-Lemma-10} and Lemma \ref{Affine-Toda-Section-4-Lemma-6}, one can directly obtain these conclusions.
	\end{proof}
	
	\subsection{Blow-up analysis for affine $\mathbf{C}^t$ type Toda system}
	In this subsection, we prove Theorem \ref{Affine-Toda-Section-1-Theorem-2} in a similar way as Theorem \ref{Affine-Toda-Section-1-Theorem-1} in Section \ref{Affine-Toda-Section-3}. Thus, we just outline main steps. Prior to that, we would like to present some algebraic computations and review certain findings in \cite{Lin-Yang-Zhong-2020}.
	
	For any integer $l\geq1$, let $\mathbb{S}_{\mathbf{C}_{l+1}}$ be a subgroup of the permutation group for $\{0,1,\cdots,2l+1\}$ such that each $f\in\mathbb{S}_{\mathbf{C}_{l+1}}$ satisfies
	\begin{equation}\label{Affine-Toda-Section-4-Eq-14}
		f(j)+f(2l+1-j)=2l+1, \ 0\leq j\leq 2l+1.
	\end{equation}
	By \cite[Proposition 8.5]{Lin-Yang-Zhong-2020}, all simple permutations satisfying \eqref{Affine-Toda-Section-4-Eq-14} (i.e., in $\mathbb{S}_{\mathbf{C}_{l+1}}$) are given as follows
	\begin{equation}\label{5.simple}
		f_i(j)=
		\left\{
		\begin{aligned}
			&j+1, \ && \text{if} \ j=i,2l-i,\\
			&j-1, \ && \text{if} \ j=i+1,2l+1-i,\\
			&j, \ && \text{if} \ j\neq i,i+1,2l-i,2l+1-i,
		\end{aligned}
		\right.
		\ \ \ \text{for} \ 0\leq i\leq l.
	\end{equation}
	In other words, each $f\in\mathbb{S}_{\mathbf{C}_{l+1}}$ can be expressed as a composition of these simple permutations. Let $J_0=\{i_0,i_0+1,\cdots,i_0+l_0\}\subsetneqq I$ for some $i_0\in I$ and $l_0\in\mathbb{N}$. In the sequel, we consider the following singular Toda system
	\begin{equation}\label{Affine-Toda-Section-4-Eq-15}
		\Delta u_i+\sum\limits_{j\in J_0}k_{ij}^{\prime}e^{u_j}=4\pi\sum\limits_{\ell=1}^{N}\alpha_{\ell,i}\delta_{p_{\ell}} \ \text{in} \ \mathbb{R}^2, \ \int_{\mathbb{R}^2}e^{u_i(y)}\mathrm{d}y <+\infty, \ \ \forall \ i\in J_0,
	\end{equation}
	where $(k_{ij}^{\prime})_{|J_0|\times|J_0|}$ is the  $\mathbf{C}$ type Cartan matrix, $p_1,\cdots,p_{N}$ are distinct points in $\mathbb{R}^2$ and $\alpha_{\ell,i}>-1$, $1\leq\ell\leq N$, $i\in J_0$. In particular, $\alpha_{\ell,i}$ are positive integers except $\alpha_{1,i}$, $i\in J_0$. Let $\mathbf{u}=(u_{i_0},\cdots,u_{i_0+l_0})$ be the solution of \eqref{Affine-Toda-Section-4-Eq-15}, we define the mass of each $u_i$, denoted by $\sigma_{u_i}$, as
	\begin{equation}\label{Affine-Toda-Section-4-Eq-16}
		\sigma_{u_i}=\frac{1}{2\pi}\int_{\mathbb{R}^2}e^{u_i(y)}\mathrm{d}y, \ \ \text{for} \ i\in J_0.
	\end{equation}
	Then, applying the group $\mathbb{S}_{\mathbf{C}_{l_0+1}}$, we are able to characterize the quantized result for some $\mathbf{C}_{l_0+1}$ Toda system as below.
	
	\begin{lemma}$($\cite[Theorem 8.1]{Lin-Yang-Zhong-2020}$)$\label{Affine-Toda-Section-4-Lemma-11}
		Let $\mathbf{u}=(u_{i_0},\cdots,u_{i_0+l_0})$ be the solution of \eqref{Affine-Toda-Section-4-Eq-15} and $\sigma_{u_i}$ be defined as in \eqref{Affine-Toda-Section-4-Eq-16}. If $i_0=1$ and $(k_{ij}^{\prime})_{|J_0|\times|J_0|}$ stands for the Cartan matrix associated with simple Lie algebra $\mathbf{C}_{l_0+1}$, then there exists a permutation map $f\in\mathbb{S}_{\mathbf{C}_{l_0+1}}$ such that
		\begin{equation*}
			\sigma_{u_i}=2\sum\limits_{j=0}^{i-1}\Big(\sum\limits_{r=1}^{f(j)}\bar\alpha_{1,r}-\sum\limits_{r=1}^{j}\bar\alpha_{1,r}\Big)+2\bar{N}_i, \ \text{for some} \ \bar{N}_i\in\mathbb{Z}, \ 1\leq i\leq l_0+1,
		\end{equation*}
		where $\bar\alpha_{1,r}=\bar\alpha_{1,2l_0+2-r}=\alpha_{1,r}$, $1\leq r\leq l_0+1$.
	\end{lemma}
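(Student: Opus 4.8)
The plan is to fold the $\mathbf{C}_{l_0+1}$ system onto an $\mathbf{A}_{2l_0+1}$ system, transport the known $\mathbf{A}$ type quantization, and then cut the resulting permutation down to the symmetric subgroup $\mathbb{S}_{\mathbf{C}_{l_0+1}}$ using the reflection symmetry of the folded solution. Since $i_0=1$, write $\mathbf{u}=(u_1,\cdots,u_{l_0+1})$ for the solution of \eqref{Affine-Toda-Section-4-Eq-15} and set, exactly as in Proposition \ref{Affine-Toda-Section-4-Proposition-2} and Lemma \ref{Affine-Toda-Section-4-Lemma-6},
\begin{equation*}
U_i=U_{2l_0+2-i}=u_i, \quad \beta_{\ell,i}=\beta_{\ell,2l_0+2-i}=\alpha_{\ell,i}, \quad 1\le i\le l_0+1.
\end{equation*}
Because $(k_{ij}^{\prime})$ is the $\mathbf{C}_{l_0+1}$ Cartan matrix, the same computation underlying the embedding in Lemma \ref{Affine-Toda-Section-4-Lemma-6} shows that $\mathbf{U}=(U_1,\cdots,U_{2l_0+1})$ solves the singular $\mathbf{A}_{2l_0+1}$ Toda system with Dirac strengths $\beta_{\ell,i}$ at the same points $p_\ell$ and with finite total mass. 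By construction $\mathbf{U}$ is invariant under the reflection $i\mapsto 2l_0+2-i$, so
\begin{equation*}
\sigma_{U_i}=\sigma_{U_{2l_0+2-i}} \ \text{for all} \ i, \qquad \sigma_{u_i}=\sigma_{U_i} \ \text{for} \ 1\le i\le l_0+1.
\end{equation*}

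Next I would apply the $\mathbf{A}$ type local-mass theorem of \cite{Lin-Yang-Zhong-2020} to $\mathbf{U}$. Here the only singular source carrying a possibly non-integer strength is $p_1$, while the $\alpha_{\ell,i}$ for $\ell\ge 2$ are positive integers and hence affect only the integer part of the mass; likewise the constant shifts in $\mu_r=1+\alpha_{1,r}$ produce contributions lying in $2\mathbb{Z}$ that are absorbed into the integer term. The $\mathbf{A}_{2l_0+1}$ quantization then yields a permutation $\tilde f$ of $\{0,1,\cdots,2l_0+1\}$ and integers $\tilde N_i$ with
\begin{equation*}
\sigma_{U_i}=2\sum_{j=0}^{i-1}\Big(\sum_{r=1}^{\tilde f(j)}\bar\alpha_{1,r}-\sum_{r=1}^{j}\bar\alpha_{1,r}\Big)+2\tilde N_i, \quad 1\le i\le 2l_0+1,
\end{equation*}
where $\bar\alpha_{1,r}=\bar\alpha_{1,2l_0+2-r}=\alpha_{1,r}$ records the reflected strengths of $p_1$. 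The mechanism behind this is the classification of \cite{Lin-Wei-Ye-2012}, which attaches to $\mathbf{U}$ a Fuchsian ODE whose local exponents at the singularity are, up to the permutation $\tilde f$, determined by the masses.

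The main obstacle is the final step: showing that $\tilde f$ can be chosen in $\mathbb{S}_{\mathbf{C}_{l_0+1}}$, i.e., that it satisfies $\tilde f(j)+\tilde f(2l_0+1-j)=2l_0+1$ as in \eqref{Affine-Toda-Section-4-Eq-14}. I would argue from the reflection invariance of the mass vector: since $\bar\alpha_{1,r}$ is symmetric under $r\mapsto 2l_0+2-r$ and $\sigma_{U_i}=\sigma_{U_{2l_0+2-i}}$, the vector $(\sigma_{U_1},\cdots,\sigma_{U_{2l_0+1}})$ is reflection invariant, and this invariance, fed into the characterization of \cite{Lin-Yang-Zhong-2020} of which permutations represent admissible mass vectors, forces the representing permutation to commute with the reflection $j\mapsto 2l_0+1-j$ on $\{0,\cdots,2l_0+1\}$. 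Concretely, one may build $\tilde f$ through the selection/blow-up process and observe that the reflection symmetry of $\mathbf{U}$ makes every elementary transformation occur in symmetric pairs, so that $\tilde f$ is a composition of the simple permutations $f_i$ of \eqref{5.simple}, which generate $\mathbb{S}_{\mathbf{C}_{l_0+1}}$ by \cite[Proposition 8.5]{Lin-Yang-Zhong-2020}. Restricting the displayed formula to $1\le i\le l_0+1$ and using $\sigma_{u_i}=\sigma_{U_i}$, with $f=\tilde f$ and $\bar N_i=\tilde N_i$, completes the proof.
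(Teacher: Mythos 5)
First, a point of comparison: the paper contains no internal proof of this lemma at all --- it is imported verbatim from \cite[Theorem 8.1]{Lin-Yang-Zhong-2020}, so the question is whether your sketch would itself constitute a proof. Your first two steps are sound and mirror the folding machinery the paper uses elsewhere (cf.\ the proof of Proposition \ref{Affine-Toda-Section-4-Proposition-2} and Step 1 of Lemma \ref{Affine-Toda-Section-4-Lemma-6}): setting $U_i=U_{2l_0+2-i}=u_i$ does produce a solution of a singular $\mathbf{A}_{2l_0+1}$ system with reflection-symmetric data, and the multi-singularity $\mathbf{A}$-type quantization (\cite[Lemma 6.3]{Lin-Yang-Zhong-2020}, also invoked in Lemma \ref{Affine-Toda-Section-3-Lemma-1} of this paper) yields some permutation $\tilde f$ of $\{0,\cdots,2l_0+1\}$ representing the masses.

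The genuine gap is your third step, the symmetrization of $\tilde f$, and your first argument for it is false as stated. Because $\bar\alpha_{1,r}=\bar\alpha_{1,2l_0+2-r}$, the map from permutations to mass vectors is far from injective, so reflection invariance of $(\sigma_{U_1},\cdots,\sigma_{U_{2l_0+1}})$ cannot ``force'' anything about a given representative. Concretely, set $S(k)=\sum_{r=1}^{k}\bar\alpha_{1,r}$ and $T=S(2l_0+1)$; the symmetry of $\bar\alpha$ gives $S(2l_0+1-k)=T-S(k)$, and a short computation using $\sum_{m=0}^{2l_0+1}S(\tilde f(m))=\sum_{m=0}^{2l_0+1}S(m)$ together with $\sigma_{U_i}=\sigma_{U_{2l_0+2-i}}$ shows that the reflected permutation $\tilde f'(j)=2l_0+1-\tilde f(2l_0+1-j)$ represents \emph{exactly} the same mass vector, with integers $\tilde N'_i=\tilde N_{2l_0+2-i}$. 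The fixed points of the involution $\tilde f\mapsto\tilde f'$ are precisely the elements of $\mathbb{S}_{\mathbf{C}_{l_0+1}}$, i.e., those satisfying \eqref{Affine-Toda-Section-4-Eq-14}; what your symmetry argument actually proves is only that the set of representing permutations is stable under this involution, and a stable set need not contain a fixed point. So no symmetric representative is produced, and that existence claim is the entire content of the cited theorem.

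Your fallback --- that in the blow-up process ``every elementary transformation occurs in symmetric pairs,'' so that $\tilde f$ is a word in the generators $f_i$ of \eqref{5.simple} --- identifies the correct mechanism but assumes what must be proven: the $\mathbf{A}$-type theorem, used as a black box, says nothing about how its permutation interacts with the reflection. Making this rigorous means redoing the selection/blow-up (or monodromy) analysis for the folded system, verifying at every stage that the index decomposition is reflection-symmetric, and handling the blocks of consecutive indices straddling the middle index $l_0+1$, which do not fold into pairs of $\mathbf{A}$-type pieces but correspond to genuine $\mathbf{C}$-type subsystems contributing the central generator $f_{l_0}$; your sketch does not address these blocks at all. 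In short, steps 1--2 are fine, but step 3 reduces the lemma to the very statement of \cite[Theorem 8.1]{Lin-Yang-Zhong-2020} it was meant to establish.
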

	
	As a consequence of Lemma \ref{Affine-Toda-Section-4-Lemma-11}, we deduce the following lemma for later use.
	
	\begin{lemma}\label{Affine-Toda-Section-4-Lemma-12}
		Let $\mathbf{u}=(u_{i_0},\cdots,u_{i_0+l_0})$ be the solution of \eqref{Affine-Toda-Section-4-Eq-15} and $\sigma_{u_i}$ be defined as in \eqref{Affine-Toda-Section-4-Eq-16}. Suppose that $(k_{ij}^{\prime})=(k_{ij}^{\mathbf{c}^{t}})$, $i,j\in J_0$.
		\begin{enumerate}[(a)]
			\item If $i_0=1$, then there exists a permutation map $f\in\mathbb{S}_{\mathbf{C}_{l_0+1}}$ such that
			\begin{equation*}
				\sigma_{u_{i}}=2\sum\limits_{j=0}^{l_0+1-i}\Big(\sum\limits_{r=1}^{f(j)}\hat\alpha_{1,r,1}-\sum\limits_{r=1}^{j}\alpha_{1,l_0+2-r}\Big)
				+2{N}_{i,1}, \ \ \text{for some} \ {N}_{i,1}\in\mathbb{Z}, \ 1\leq i\leq l_0+1,
			\end{equation*}
			where $\hat\alpha_{1,r,1}=\alpha_{1,l_0+2-r}$ for $1\leq r\leq l_0+1$ and $\hat\alpha_{1,r,1}=\alpha_{1,r-l_0}$ for $l_0+2\leq r\leq 2l_0+1$.
			
			\item If $i_0+l_0=n+1$, then there exists a permutation map $f\in\mathbb{S}_{\mathbf{C}_{l_0+1}}$ such that
			\begin{equation*}
				\sigma_{u_i}=2\sum\limits_{j=0}^{i-i_0}\Big(\sum\limits_{r=1}^{f(j)}\hat\alpha_{1,r,2}-\sum\limits_{r=1}^{j}\alpha_{1,r+i_0-1}\Big)+2{N}_{i,2}, \ \ \text{for some} \ {N}_{i,2}\in\mathbb{Z}, \ i_0\leq i\leq n+1,
			\end{equation*}
			where $\hat\alpha_{1,r,2}=\alpha_{1,r+i_0-1}$ for $1\leq r\leq l_0+1$ and $\hat\alpha_{1,r,2}=\alpha_{1,2l_0+1+i_0-r}$ for $l_0+2\leq r\leq 2l_0+1$.
		\end{enumerate}
	\end{lemma}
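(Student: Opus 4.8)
The plan is to derive both statements directly from Lemma \ref{Affine-Toda-Section-4-Lemma-11} by a suitable reindexing of the block $J_0$, exactly the reflection/shift device already exploited in the proof of Lemma \ref{Affine-Toda-Section-4-Lemma-6}. The key observation is that the restriction $(k_{ij}^{\mathbf{c}^{t}})_{i,j\in J_0}$ differs from the genuine $\mathbf{C}_{l_0+1}$ Cartan matrix appearing in Lemma \ref{Affine-Toda-Section-4-Lemma-11} only in the position of the doubled coupling $-2$: in the affine $\mathbf{c}^t$ matrix \eqref{Affine-C_n^1-Cartan-matrix} the two entries equal to $-2$ sit at $(1,2)$ and $(n+1,n)$, so a block that touches one end of the affine diagram carries exactly one of them. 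A reindexing that moves this long-root coupling to the bottom-right corner converts the block into the standard $\mathbf{C}_{l_0+1}$ Cartan matrix, after which Lemma \ref{Affine-Toda-Section-4-Lemma-11} applies verbatim.

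For part (a), where $i_0=1$, the submatrix has its $-2$ in the first row, i.e. it is the ``$\mathbf{C}_{l_0+1}$-type'' matrix. First I would reverse the order, setting $\tilde u_s=u_{l_0+2-s}$ and $\tilde\alpha_{\ell,s}=\alpha_{\ell,l_0+2-s}$ for $1\le s\le l_0+1$; this reflection sends the block to the standard $\mathbf{C}_{l_0+1}$ Cartan matrix, so $\tilde{\mathbf{u}}$ solves the singular system \eqref{Affine-Toda-Section-4-Eq-15} with $i_0=1$ and the Cartan matrix of $\mathbf{C}_{l_0+1}$. Applying Lemma \ref{Affine-Toda-Section-4-Lemma-11} to $\tilde{\mathbf{u}}$ yields a permutation $f\in\mathbb{S}_{\mathbf{C}_{l_0+1}}$ and the quantization formula for $\sigma_{\tilde u_i}$ in terms of $\bar{\tilde\alpha}_{1,r}=\bar{\tilde\alpha}_{1,2l_0+2-r}=\tilde\alpha_{1,r}$. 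Substituting back through $\sigma_{u_i}=\sigma_{\tilde u_{l_0+2-i}}$ and replacing $i$ by $l_0+2-i$ produces the asserted formula, once one checks the index bookkeeping $\bar{\tilde\alpha}_{1,r}=\alpha_{1,l_0+2-r}=\hat\alpha_{1,r,1}$ for $1\le r\le l_0+1$ and $\bar{\tilde\alpha}_{1,r}=\tilde\alpha_{1,2l_0+2-r}=\alpha_{1,r-l_0}=\hat\alpha_{1,r,1}$ for $l_0+2\le r\le 2l_0+1$; the integer $N_{i,1}=\bar N_{l_0+2-i}$ absorbs the contributions of the integer sources $\alpha_{\ell,i}$, $\ell\ge 2$.

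For part (b), where $i_0+l_0=n+1$ (and necessarily $i_0>1$ since $J_0\subsetneqq I$), the submatrix already has its $-2$ in the last row, so it is the standard $\mathbf{C}_{l_0+1}$ Cartan matrix and only a shift of the index origin is needed. I would set $u'_s=u_{s+i_0-1}$ and $\alpha'_{\ell,s}=\alpha_{\ell,s+i_0-1}$ for $1\le s\le l_0+1$, so that $\mathbf{u}'$ solves \eqref{Affine-Toda-Section-4-Eq-15} with $i_0=1$ and the $\mathbf{C}_{l_0+1}$ Cartan matrix, and then apply Lemma \ref{Affine-Toda-Section-4-Lemma-11}. Substituting $\sigma_{u_i}=\sigma_{u'_{i-i_0+1}}$ and tracking $\bar{\alpha'}_{1,r}=\alpha_{1,r+i_0-1}=\hat\alpha_{1,r,2}$ for $1\le r\le l_0+1$ and $\bar{\alpha'}_{1,r}=\alpha'_{1,2l_0+2-r}=\alpha_{1,2l_0+1+i_0-r}=\hat\alpha_{1,r,2}$ for $l_0+2\le r\le 2l_0+1$ gives the claim, with $N_{i,2}=\bar N_{i-i_0+1}$.

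The only genuine content, and hence the step deserving care, is the two verifications that the reflection (in (a)) and the shift (in (b)) carry the affine $\mathbf{c}^t$ block to the honest $\mathbf{C}_{l_0+1}$ Cartan matrix, together with the matching of the doubled indices $\bar{\tilde\alpha}$ and $\bar{\alpha'}$ against the hatted coefficients $\hat\alpha_{1,r,1}$ and $\hat\alpha_{1,r,2}$. Both are elementary once the defining symmetry \eqref{Affine-Toda-Section-4-Eq-14} of $\mathbb{S}_{\mathbf{C}_{l_0+1}}$ is seen to be invariant under these reindexings, so that the same permutation group governs the reflected and shifted systems; no new analysis is required beyond Lemma \ref{Affine-Toda-Section-4-Lemma-11}.
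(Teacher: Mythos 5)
Your proposal is correct and follows essentially the same route as the paper's own proof: in (a) the reflection $\tilde u_i=u_{l_0+2-i}$ converts the block into the standard $\mathbf{C}_{l_0+1}$ Toda system, in (b) the shift $\tilde u_i=u_{i+i_0-1}$ does the same, and in both cases Lemma \ref{Affine-Toda-Section-4-Lemma-11} is applied and the indices are translated back. Your bookkeeping of the doubled coefficients $\hat\alpha_{1,r,1}$ and $\hat\alpha_{1,r,2}$ matches the paper's computation exactly, so nothing further is needed.
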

	\begin{proof}
		(a) Denote by $\tilde{\mathbf{u}}=(\tilde{u}_1,\cdots,\tilde{u}_{l_0+1})$, where $\tilde{u}_i=u_{l_0+2-i}$ and set $\tilde{\alpha}_{\ell,i}=\alpha_{\ell,l_0+2-i}$ for $1\leq \ell\leq N$ and $i=1,\cdots,l_0+1$. Then $\tilde{\mathbf{u}}$ satisfies the $\mathbf{C}_{l_0+1}$ Toda system \eqref{Affine-Toda-Section-4-Eq-15} with $(k_{ij}^{\prime})=(c_{ij})$ and $\alpha_{\ell,i}=\tilde{\alpha}_{\ell,i}$ for $1\leq \ell\leq N$ and $1\leq i,j\leq l_0+1$, where $(c_{ij})_{(l_0+1)\times(l_0+1)}$ stands for the $\mathbf{C}$ type Cartan matrix of rank $l_0+1$. By Lemma \ref{Affine-Toda-Section-4-Lemma-11}, there exists a permutation map $f\in\mathbb{S}_{\mathbf{C}_{l_0+1}}$ such that
		\begin{equation*}
			\sigma_{\tilde{u}_i}=2\sum\limits_{j=0}^{i-1}\Big(\sum\limits_{r=1}^{f(j)}\beta_{1,r}-\sum\limits_{r=1}^{j}\beta_{1,r}\Big)+2\hat{N}_i, \ \text{for some} \ \hat{N}_i\in\mathbb{Z}, \ 1\leq i\leq l_0+1,
		\end{equation*}
		where $\beta_{\ell,r}=\beta_{\ell,2l_0+2-r}=\tilde\alpha_{\ell,r}$, $1\leq r\leq l_0+1$. Hence we conclude that
		\begin{equation*}
			\sigma_{u_{i}}=2\sum\limits_{j=0}^{l_0+1-i}\Big(\sum\limits_{r=1}^{f(j)}\hat\alpha_{1,r,1}-\sum\limits_{r=1}^{j}\hat\alpha_{1,r,1}\Big)
			+2{N}_{i,1}=2\sum\limits_{j=0}^{l_0+1-i}\Big(\sum\limits_{r=1}^{f(j)}\hat\alpha_{1,r,1}-\sum\limits_{r=1}^{j}\alpha_{1,l_0+2-r}
			\Big)+2{N}_{i,1},
		\end{equation*}
		for some $N_{i,1}\in\mathbb{Z}$, $1\leq i\leq l_0+1$, where $\hat\alpha_{1,r,1}=\alpha_{1,l_0+2-r}$ for $1\leq r\leq l_0+1$ and $\hat\alpha_{1,r,1}=\alpha_{1,r-l_0}$ for $l_0+2\leq r\leq 2l_0+1$.
		
		(b) Denote by $\tilde{\mathbf{u}}=(\tilde{u}_1,\cdots,\tilde{u}_{l_0+1})$, where $\tilde{u}_i=u_{i+i_0-1}$ and set $\tilde{\alpha}_{\ell,i}=\alpha_{\ell,i+i_0-1}$ for $1\leq \ell\leq N$ and $i=1,\cdots,l_0+1$. Then $\tilde{\mathbf{u}}$ satisfies the $\mathbf{C}_{l_0+1}$ Toda system \eqref{Affine-Toda-Section-4-Eq-15} with $(k_{ij}^{\prime})=(c_{ij})$ and $\alpha_{\ell,i}=\tilde{\alpha}_{\ell,i}$ for $1\leq \ell\leq N$ and $1\leq i,j\leq l_0+1$, where $(c_{ij})_{(l_0+1)\times(l_0+1)}$ stands for the $\mathbf{C}$ type Cartan matrix of rank $l_0+1$. By Lemma \ref{Affine-Toda-Section-4-Lemma-11}, there exists a permutation map $f\in\mathbb{S}_{\mathbf{C}_{l_0+1}}$ such that
		\begin{equation*}
			\sigma_{\tilde{u}_i}=2\sum\limits_{j=0}^{i-1}\Big(\sum\limits_{r=1}^{f(j)}\hat\alpha_{1,r,2}-\sum\limits_{r=1}^{j}\hat\alpha_{1,r,2}\Big)+2\tilde{N}_i, \ \ \text{for some} \ \tilde{N}_i\in\mathbb{Z}, \ 1\leq i\leq l_0+1,
		\end{equation*}
		where $\hat\alpha_{1,r,2}=\alpha_{1,r+i_0-1}$ for $1\leq r\leq l_0+1$ and $\hat\alpha_{1,r,2}=\alpha_{1,2l_0+1+i_0-r}$ for $l_0+2\leq r\leq 2l_0+1$. Hence
		\begin{equation*}
			\sigma_{u_i}=2\sum\limits_{j=0}^{i-i_0}\Big(\sum\limits_{r=1}^{f(j)}\hat\alpha_{1,r,2}-\sum\limits_{r=1}^{j}\hat\alpha_{1,r}\Big)+2N_{i,2}
			=2\sum\limits_{j=0}^{i-i_0}\Big(\sum\limits_{r=1}^{f(j)}\hat\alpha_{1,r,2}-\sum\limits_{r=1}^{j}\alpha_{1,r+i_0-1}\Big)+2N_{i,2},
		\end{equation*}
		for some $N_{i,2}\in\mathbb{Z}$, $i_0\leq i\leq n+1$. This completes the proof of Lemma \ref{Affine-Toda-Section-4-Lemma-12}.
	\end{proof}
	
	\begin{lemma}\label{Affine-Toda-Section-4-Lemma-13}
		Let $\bm{\sigma}\in\Gamma_{\mathbf{C}^t}(\bm{\mu})$ and $\overline{\mu}_{i}:=\mu_{i}-\frac{1}{2}\sum_{t\in I}k_{it}^{\mathbf{c}^{t}}\sigma_{t}$ for $i\in I$.
		\begin{enumerate}[(a)]
			\item  Assume that $i_0=1$. For any permutation map $f\in\mathbb{S}_{\mathbf{C}_{l_0+1}}$, we define $\bm{\sigma}_{f}=(\sigma_{f,1},\cdots,\sigma_{f,n+1})$ as
			\begin{equation*}
				\sigma_{f,i}=\left\{
				\begin{aligned}
					&\sigma_{i}+2\sum\limits_{j=0}^{l_0+1-i}\Big(\sum\limits_{r=1}^{f(j)}\hat{\overline{\mu}}_{r}-\sum\limits_{r=1}^{j}\hat{\overline{\mu}}_{r}\Big), \ && \text{for} \ 1\leq i\leq l_0+1,\\
					&\sigma_{i}, \ && \text{for} \ l_0+2\leq i\leq n+1,
				\end{aligned}
				\right.
			\end{equation*}
			where $\hat{\overline{\mu}}_{r}=\overline{\mu}_{l_0+2-r}$ for $1\leq r\leq l_0+1$ and $\hat{\overline{\mu}}_{r}=\overline{\mu}_{r-l_0}$ for $l_0+2\leq r\leq 2l_0+1$. Then $\bm{\sigma}_{f}\in\Gamma_{\mathbf{C}^t}(\bm{\mu})$.
			\item Assume that $i_0+l_0=n+1$. For any permutation map $f\in\mathbb{S}_{\mathbf{C}_{l_0+1}}$, we define $\bm{\sigma}_{f}=(\sigma_{f,1},\cdots,\sigma_{f,n+1})$ as
			\begin{equation*}
				\sigma_{f,i}=\left\{
				\begin{aligned}
					&\sigma_{i}, \ && \text{for} \ 1\leq i\leq i_0-1,\\
					&\sigma_{i}+2\sum\limits_{j=0}^{i-i_0}\Big(\sum\limits_{r=1}^{f(j)}\hat{\overline{\mu}}_{r}-\sum\limits_{r=1}^{j}\hat{\overline{\mu}}_{r}\Big), \ && \text{for} \ i_0\leq i\leq n+1,
				\end{aligned}
				\right.
			\end{equation*}
			where $\hat{\overline{\mu}}_{r}=\overline{\mu}_{r+i_0-1}$ for $1\leq r\leq l_0+1$ and $\hat{\overline{\mu}}_{r}=\overline{\mu}_{2l_0+1+i_0-r}$ for $l_0+2\leq r\leq 2l_0+1$. Then $\bm{\sigma}_{f}\in\Gamma_{\mathbf{C}^t}(\bm{\mu})$.
		\end{enumerate}
	\end{lemma}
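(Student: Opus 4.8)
The plan is to prove (a) and (b) by writing an arbitrary $f\in\mathbb{S}_{\mathbf{C}_{l_0+1}}$ as a word in the simple permutations \eqref{5.simple} and matching each simple permutation with a single generator of $\mathbf{G}_{\mathbf{C}^t}$, exactly in the spirit of \cite[Theorem 3.6, Theorem 8.1]{Lin-Yang-Zhong-2020} and of the proof of Lemma \ref{Affine-Toda-Section-3-Lemma-1}. I would treat (a) in full and deduce (b) by reflecting the index set about its centre, that is, by replacing each generator $\mathfrak{R}_s$ with $\mathfrak{R}_{n+2-s}$ and using the quartic relation $(\mathfrak{R}_n\mathfrak{R}_{n+1})^4=e$ in place of $(\mathfrak{R}_2\mathfrak{R}_1)^4=e$; this is legitimate because the $\mathbf{C}^t$ Cartan matrix \eqref{Affine-C_n^1-Cartan-matrix} is symmetric under $s\mapsto n+2-s$.

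For the base case $f=\mathrm{id}$ one has $f(j)=j$, every inner difference in the definition of $\sigma_{f,i}$ vanishes, and $\bm{\sigma}_f=\bm{\sigma}\in\Gamma_{\mathbf{C}^t}(\bm{\mu})$ by hypothesis. By \cite[Proposition 8.5]{Lin-Yang-Zhong-2020} any $f\in\mathbb{S}_{\mathbf{C}_{l_0+1}}$ factors as $f=f_{(s)}\circ\cdots\circ f_{(1)}$ with each $f_{(a)}$ one of the $f_i$ in \eqref{5.simple}, so I would induct on $s$. In the inductive step I set $f=f_i\circ f'$ with $f'=f_{(s-1)}\circ\cdots\circ f_{(1)}$ and $\bm{\sigma}_{f'}\in\Gamma_{\mathbf{C}^t}(\bm{\mu})$, and the crux is the one-generator identity $\bm{\sigma}_{f_i\circ f'}=\mathfrak{R}_{\varrho(i)}\bm{\sigma}_{f'}$ for an explicit index $\varrho(i)$; once this holds, membership of $\bm{\sigma}_f$ follows immediately from clause (2) in the definition of $\Gamma_{\mathbf{C}^t}(\bm{\mu})$. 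Because the reversal $\hat{\overline{\mu}}_r=\overline{\mu}_{l_0+2-r}$ folds the doubled range $\{1,\dots,2l_0+1\}$ onto $J_0=\{1,\dots,l_0+1\}$, the interior permutations $f_i$ ($0\le i\le l_0-1$), which swap the symmetric pairs $(i,i+1)$ and $(2l_0-i,2l_0+1-i)$, match the interior generators obeying the braid relations, with endpoints $\varrho(0)=l_0+1$ and $\varrho(l_0)=1$; the central permutation $f_{l_0}$, which interchanges only $l_0$ and $l_0+1$, matches the short-root generator $\mathfrak{R}_1$ at the special end of $J_0$. The identity itself I would verify by inserting the closed form of $\sigma_{f',\cdot}$ into the generator action $(\mathfrak{R}_s\bm{\sigma})_s=2\mu_s-\sum_{t\in I}k_{st}^{\mathbf{c}^t}\sigma_t+\sigma_s$ and collapsing the resulting telescoping sums, reusing the arithmetic already carried out in Lemma \ref{Affine-Toda-Section-4-Lemma-6} and Lemma \ref{Affine-Toda-Section-4-Lemma-12}.

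The main obstacle is the bookkeeping at the central reflection $f_{l_0}$: there I must check that the increment forced on the single coordinate $\sigma_{f',1}$ agrees with the action of $\mathfrak{R}_1$, where the off-diagonal entry $k^{\mathbf{c}^t}_{12}=-2$ of \eqref{Affine-C_n^1-Cartan-matrix} enters, rather than with an $\mathbf{A}$-type transposition; this is the precise point at which the argument departs from the purely $\mathbf{A}$-type Lemma \ref{Affine-Toda-Section-2-Lemma-13}, and getting the factor of two right is what distinguishes $\mathbf{C}^t$ from $\mathbf{A}$. A secondary point is that the word for $f$ in the simple permutations is not unique, so I would either phrase the induction on the existence of one such word, or note that $\bm{\sigma}_f$ depends only on $f$; the latter is guaranteed because the $\mathfrak{R}_s$ satisfy the defining relations \eqref{Affine-C_n^1-Group-Definition} of $\mathbf{G}_{\mathbf{C}^t}$, so distinct words representing the same $f$ yield the same element of $\Gamma_{\mathbf{C}^t}(\bm{\mu})$. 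As an independent check one may instead pass through the symmetric embedding $\mathbf{C}^{(1),t}_n\hookrightarrow\mathbf{A}^{(1)}_{2n-1}$ of \eqref{Affine-Toda-Section-4-Eq-3}, under which $\mathbb{S}_{\mathbf{C}_{l_0+1}}$ is realised as the centrally symmetric permutations and the statement reduces to the $\mathbf{A}$-type permutation result applied on the symmetric subspace.
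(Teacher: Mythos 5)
Your overall strategy is exactly the paper's: factor $f\in\mathbb{S}_{\mathbf{C}_{l_0+1}}$ into the simple permutations \eqref{5.simple}, convert each factor into a single generator of $\mathbf{G}_{\mathbf{C}^t}$ with the dictionary $\varrho(i)=l_0+1-i$ in case (a) (so $\varrho(l_0)=1$ is indeed the short end where $k^{\mathbf{c}^t}_{12}=-2$ enters) and the mirrored generators $\mathfrak{R}_{i+i_0}$ in case (b), and induct on word length; the paper records precisely the identities $\bm{\sigma}_{f\circ f_i}=\mathfrak{R}_{l_0+1-i}\bm{\sigma}_{f}$ and $\bm{\sigma}_{f\circ f_i}=\mathfrak{R}_{i+i_0}\bm{\sigma}_{f}$ for $0\leq i\leq l_0$. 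However, your crux identity is stated on the wrong side of the composition and, as written, is false: you peel off the \emph{outermost} letter, $f=f_i\circ f'$, and claim $\bm{\sigma}_{f_i\circ f'}=\mathfrak{R}_{\varrho(i)}\bm{\sigma}_{f'}$. The single-generator move only arises from \emph{pre}composition: replacing $f$ by $f\circ f_i$ alters the summands defining $\sigma_{f,\cdot}$ only at the four fixed positions $j\in\{i,i+1,2l_0-i,2l_0+1-i\}$, and after telescoping only the one coordinate with $l_0+1-\cdot=i$ moves. Postcomposition $f_i\circ f'$ instead alters the summands at the scattered positions $j\in(f')^{-1}(\{i,i+1,2l_0-i,2l_0+1-i\})$, which shifts a whole block of coordinates --- the action of a conjugated, generally non-simple, reflection. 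Concretely, take $n=2$, $l_0=1$, $\bm{\sigma}=\bm{0}$ (so $\overline{\mu}_i=\mu_i$) and $f'=f_1$: then $\bm{\sigma}_{f_1}=(2\mu_1,0,0)$, and your identity would force $\bm{\sigma}_{f_0\circ f_1}=\mathfrak{R}_{2}\bm{\sigma}_{f_1}=(2\mu_1,\,2\mu_1+2\mu_2,\,0)$, whereas the defining formula yields $\bm{\sigma}_{f_0\circ f_1}=(2\mu_1+4\mu_2,\,2\mu_2,\,0)$. The latter equals $\mathfrak{R}_{1}\bm{\sigma}_{f_0}$, in agreement with the paper's precomposition identity; note $f_0\circ f_1\neq f_1\circ f_0$, and it is $\bm{\sigma}_{f_1\circ f_0}$ that equals $\mathfrak{R}_2\bm{\sigma}_{f_1}$.

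The repair costs nothing: run the induction by appending simple permutations on the inner end, i.e., pass from $\bm{\sigma}_{f}$ to $\bm{\sigma}_{f\circ f_i}$, building any $f$ up from the identity this way; membership in $\Gamma_{\mathbf{C}^t}(\bm{\mu})$ then follows at each step from clause (2) of its definition, which is exactly the paper's short proof. Your remaining remarks are sound and consistent with the paper: no well-definedness issue arises because $\bm{\sigma}_f$ is defined from $f$ itself, so a single word suffices; the reflection $s\mapsto n+2-s$ does reduce (b) to (a), since \eqref{Affine-C_n^1-Cartan-matrix} is invariant under simultaneous reversal of rows and columns (with $\bm{\mu}$ reversed accordingly); and the bookkeeping at the central $f_{l_0}$, where $\mathfrak{R}_1$ acts through $k^{\mathbf{c}^t}_{12}=-2$, is indeed the point of departure from the $\mathbf{A}$-type Lemma \ref{Affine-Toda-Section-2-Lemma-13}. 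One caveat on your proposed alternative via the folding $\mathbf{C}^{(1),t}_n\hookrightarrow\mathbf{A}^{(1)}_{2n-1}$ of \eqref{Affine-Toda-Section-4-Eq-3}: applying the $\mathbf{A}$-type permutation result on the symmetric subspace produces an element of the $\mathbf{A}$-type set, and one would still have to show that this folded symmetric element descends to $\Gamma_{\mathbf{C}^t}(\bm{\mu})$, which is not automatic; treat it as a heuristic check rather than a proof.
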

	\begin{proof}
		We prove it similarly to \cite[Lemma 3.6 and Lemma 8.6]{Lin-Yang-Zhong-2020}. Let $f_i$'s be the simple permutation maps defined in \eqref{5.simple}, one can easily check that
		\begin{equation*}
			\begin{aligned}
				&\text{(a).} \ &&\bm{\sigma}_{f\circ f_i}=\mathfrak{R}_{l_0+1-i}\bm{\sigma}_{f}, \ &&\text{for} \ 0\leq i\leq l_0,\\
				&\text{(b).} \ &&\bm{\sigma}_{f\circ f_i}=\mathfrak{R}_{i+i_0}\bm{\sigma}_{f}, \ &&\text{for} \ 0\leq i\leq l_0.
			\end{aligned}
		\end{equation*}
		Together with the fact that $f$ can be viewed as a composition of some simple permutations, we finish the proof of Lemma \ref{Affine-Toda-Section-4-Lemma-13}.
	\end{proof}
	
	Now we sketch out the blow-up procedure for affine $\mathbf{C}^t$ type Toda system. Let $\mathbf{u}^k$ be a sequence of blow-up solutions of \eqref{Affine-Toda-system-C_n^1-form} satisfying \eqref{Affine-Toda-3Conditions}. The notations used in this part are consistent with those in Section \ref{Affine-Toda-Section-3}. We first calculate the local mass of $\mathbf{u}^k$ in the bubbling areas $B(x^k_t,\tau^k_t)$, i.e.,
	\begin{equation}\label{Affine-Toda-Section-4-Eq-9}
		\hat{\bm{\sigma}}(B(\mathbf{x}_t,\bm{\tau}_t))\in\Gamma_{\mathbf{C}^t}(1,\cdots,1),
	\end{equation}
	where $(\mathbf{x}_t,\bm{\tau}_{t})$ stands for the sequence $\{(x^k_t,\tau^k_t)\}$. During the procedure, we start from disk $B(x^k_t,l^k_t)$ and choose some sequence $s_k\in(l^k_t,\tau^k_t)$ satisfying $l^k_t\ll s_k\ll\tau^k_t$ such that
	\begin{enumerate}[(1).]
		\item some components of $\mathbf{u}^k$ have slow decay on $\partial B(x^k_t,s_k)$,
		\item $\hat{\sigma}_i(B(\mathbf{x}_t,\mathbf{s}))=\hat{\sigma}_i(B(\mathbf{x}_t,\mathbf{l}_t))$ for $i\in I$, where $(\mathbf{x}_t,\mathbf{s})$ stands for the sequence $\{(x^k_t,s_k)\}$.
	\end{enumerate}
	We scale $\mathbf{u}^k$ by $v^k_i(y)=u^k_i(x_t^k+s_ky)+2\log{s_k}$ for $i\in I$ and denote by
	\begin{equation*}
		J=\big\{i\in I \mid u^k_i \ \text{has slow decay on} \ \partial B(x_t^k,s_k)\big\}\subsetneqq I.
	\end{equation*}
	Naturally, $v^k_i(y)\rightarrow-\infty$ in $L_{\mathrm{loc}}^{\infty}(\mathbb{R}^2)$ for $i\in I\setminus J$, and $v^k_i(y)\rightarrow v_i(y)$ in $C_{\mathrm{loc}}^2(\mathbb{R}^2)$ for $i\in J$, where $v_i(y)$ satisfies the system
	\begin{equation*}
		\Delta v_i(y)+\sum\limits_{j\in J}k_{ij}^{\mathbf{c}^{t}}e^{v_j(y)}=4\pi\alpha_i^{*}\delta_0 \ \text{in} \ \mathbb{R}^2, \ \int_{\mathbb{R}^2}e^{v_i(y)}\mathrm{d}y<+\infty, \ \ \forall \ i\in J.
	\end{equation*}
	Here, $\alpha_i^{*}=-\frac{1}{2}\sum_{j\in I}k_{ij}^{\mathbf{c}^{t}}\hat{\sigma}_j(B(\mathbf{x}_t,\mathbf{l}_t))>-1$ for $i\in J$. Next we would encounter the same four alternatives \textbf{(I)}, \textbf{(II)}, \textbf{(III)} and \textbf{(IV)} established in Proposition \ref{Affine-Toda-Section-4-Proposition-1}-(4). Applying Theorem \ref{Affine-Toda-Section-4-Theorem-10}, we get the desired conclusion \eqref{Affine-Toda-Section-4-Eq-9}.
	
	Then, we calculate the local mass of groups containing adjacent elements. As an illustration, for the group $S_j^k=\{x_{j,1}^k,\cdots,x_{j,m_j}^k\}\subseteq\Sigma_k$ with some $1\leq m_j\leq m$, letting $\tau_{S^k_j}^k$ be defined as $\tau_{S^k_j}^k=\frac12\mathrm{dist}(x_{j,1}^k,\Sigma_k\setminus S_j^k)$. We deduce that $\hat{\bm{\sigma}}(B(\mathbf{x}_{j,1},\bm{\tau}_{S_j^k}))\in\Gamma_{\mathbf{C}^t}(1,\cdots,1)$ by counting the contribution from each bubbling disk containing $x_{j,l}^k$, $l=1,\cdots,m_j$. In the process, we also encounter the same four alternatives \textbf{(I)}, \textbf{(II)}, \textbf{(III)} and \textbf{(IV)} as in Proposition \ref{Affine-Toda-Section-4-Proposition-1}-(4). Theorem \ref{Affine-Toda-Section-4-Theorem-10} is used for dealing with the corresponding problems.
	
	Furthermore, we collect $\{0\}$ and "closer" groups $S^k_j$, $1\leq j\leq m_0$. The following scenarios would arise: for the local mass $\bm{\sigma}$ in the last step, let $v^k_i(y)=u^k_i(x^k+s_ky)+2\log{s_k}$ for $i\in I$ and denote by $J=\left\{i\in I \mid u^k_i \ \text{has slow decay on} \ \partial B(x^k,s_k)\right\}\subsetneqq I$. Then, $v^k_i(y)\rightarrow-\infty$ for $i\in I\setminus J$ in $L^{\infty}_{\mathrm{loc}}(\mathbb{R}^2)$ and $v^k_i(y)\rightarrow v_i(y)$ in $C_{\mathrm{loc}}^2(\mathbb{R}^2)$ for $i\in J$, where $v_i(y)$ satisfies the system
	\begin{equation*}
		\Delta v_i(y)+\sum\limits_{j\in J}k_{ij}^{\mathbf{c}^t}e^{v_j(y)}=4\pi\alpha_i^{*}\delta_0+4\pi\sum\limits_{t=1}^{N}m_{it}\delta_{p_t} \ \text{in} \ \mathbb{R}^2, \ \int_{\mathbb{R}^2}e^{v_i(y)}\mathrm{d}y<+\infty, \ \ \forall \ i\in J,
	\end{equation*}
	where $0\neq p_{t}\in\mathbb{R}^2$, $m_{it}\in\mathbb{N}$ and $\alpha_i^{*}=\alpha_i-\frac{1}{2}\sum_{j\in I}k_{ij}^{\mathbf{c}^t}\sigma_j>-1$ for $i\in J$. Denote by $\mu^{*}_{i}=1+\alpha_{i}^{*}$ for $i\in I$. Set
	\begin{equation*}
		\sigma^{*}_{j}=
		\left\{
		\begin{array}{ll}
			\sigma_j, \ &\text{if} \ j\in I\setminus J,\\
			\sigma_j+\frac{1}{2\pi}\int_{\mathbb{R}^2} e^{v_j(y)}\mathrm{d}y, \ &\text{if} \ j\in J.
		\end{array}\right.
	\end{equation*}
	To complete the proof of Theorem \ref{Affine-Toda-Section-1-Theorem-2}, we need to demonstrate a similar result for $\bm{\sigma}^*$ as stated in Lemma \ref{Affine-Toda-Section-3-Lemma-1}, namely,
	
	\begin{lemma}\label{Affine-Toda-Section-4-Lemma-14}
		Let the index set $I$ be decomposed into one form of the four alternatives \textbf{(I)}, \textbf{(II)}, \textbf{(III)} and \textbf{(IV)} as in Proposition \ref{Affine-Toda-Section-4-Proposition-1}-(4). Suppose that $\hat{\bm{\sigma}}\in\Gamma_{\mathbf{C}^t}(\bm{\mu})$ satisfying $\sigma_{i}=\hat{\sigma}_i+2n_i$ and $n_i\in\mathbb{Z}$. Then $\sigma^*_i=\hat{\sigma}^*_i+2n^*_i$ with $\hat{\bm{\sigma}}^{*}\in\Gamma_{\mathbf{C}^t}(\bm{\mu})$ and $n^*_i\in\mathbb{Z}$.
	\end{lemma}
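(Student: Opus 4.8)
The plan is to follow the template of Lemma~\ref{Affine-Toda-Section-3-Lemma-1}, replacing the $\mathbf{A}$-type permutation analysis by the $\mathbf{C}$-type one encoded in $\mathbb{S}_{\mathbf{C}_{l+1}}$. I would treat separately the four decompositions \textbf{(I)}--\textbf{(IV)} of Proposition~\ref{Affine-Toda-Section-4-Proposition-1}-(4). Alternative \textbf{(IV)} carries no $\mathbf{C}$-type block: both boundary nodes lie in $N$, so only the $\mathbf{A}_{l_p+1}$ blocks $J_1,\dots,J_\vartheta$ gain mass, and the argument is verbatim that of Case~1 of Lemma~\ref{Affine-Toda-Section-3-Lemma-1}, the only change being that membership is tested in $\Gamma_{\mathbf{C}^t}(\bm{\mu})$ rather than $\Gamma_{\mathbf{A}}(\bm{\mu})$. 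The substance lies in \textbf{(I)}, \textbf{(II)}, \textbf{(III)}, where one ($J_0$) or two ($J_0$ and $J_{\vartheta+1}$) $\mathbf{C}$-type blocks sit against the distinguished ends $1$ and $n+1$. Since $I$ is here a genuine linear chain (there is no cyclic wrap-around as in the $\mathbf{A}$-type alternative \textbf{(II)}), no global $r^{\pm}$-permutation is needed: each block is already consecutive and anchored at a boundary.

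For a $\mathbf{C}$-type block, say $J_0$ with $i_0=1$ in alternative \textbf{(I)}, I would invoke Lemma~\ref{Affine-Toda-Section-4-Lemma-12}-(a) to write the mass gain $\frac{1}{2\pi}\int_{\mathbb{R}^2}e^{v_s}\mathrm{d}y$ of each component $s\in J_0$ of the singular system \eqref{Affine-Toda-Section-4-Eq-15} as a permutation expression in the shifted masses $\overline{\mu}_r$ governed by some $f\in\mathbb{S}_{\mathbf{C}_{l_0+1}}$, up to an even integer; the block ending at $n+1$ in \textbf{(II)} is handled by Lemma~\ref{Affine-Toda-Section-4-Lemma-12}-(b). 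For the interior $\mathbf{A}_{l_p+1}$ blocks I would reuse the Lin--Yang--Zhong singular formula (as in Case~1 of Lemma~\ref{Affine-Toda-Section-3-Lemma-1}), obtaining a permutation $\mathfrak{f}_p$ on $J_p\cup\{i_p-1\}$, again modulo an even integer. Collecting all blocks, the total gain $\sigma^*_s-\sigma_s$ decomposes as an ``integer-free'' part plus an even integer; defining $\hat{\sigma}^*_s$ to be $\hat{\sigma}_s$ plus the integer-free part and absorbing the remainders together with the hypothesis $\sigma_s=\hat{\sigma}_s+2n_s$ yields $\sigma^*_s=\hat{\sigma}^*_s+2n^*_s$ with $n^*_s\in\mathbb{Z}$, as required.

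It then remains to prove $\hat{\bm{\sigma}}^*\in\Gamma_{\mathbf{C}^t}(\bm{\mu})$, which I would do by realizing $\hat{\bm{\sigma}}^*$ as the image of $\hat{\bm{\sigma}}\in\Gamma_{\mathbf{C}^t}(\bm{\mu})$ under a product of generators of $\mathbf{G}_{\mathbf{C}^t}$. For the $\mathbf{C}$-type blocks this is exactly Lemma~\ref{Affine-Toda-Section-4-Lemma-13}: each simple permutation $f_i\in\mathbb{S}_{\mathbf{C}_{l_0+1}}$ satisfies $\hat{\bm{\sigma}}_{f\circ f_i}=\mathfrak{R}_\ell\,\hat{\bm{\sigma}}_f$ for the appropriate $\ell$, and since $f$ is a composition of such $f_i$'s, the map $\hat{\bm{\sigma}}\mapsto\hat{\bm{\sigma}}^*$ restricted to $J_0$ (resp. $J_{\vartheta+1}$) is a composition of generator actions, i.e. the $J_0$-chain $\mathfrak{R}_{J_0}$. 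For the $\mathbf{A}$-type blocks the analogous simple-transposition-to-generator identity $\hat{\bm{\sigma}}_{\mathfrak{f}_p\circ g}=\mathfrak{R}_\ell\,\hat{\bm{\sigma}}_{\mathfrak{f}_p}$ is the one used in Case~1 of Lemma~\ref{Affine-Toda-Section-3-Lemma-1} (following \cite{Lin-Yang-Zhong-2020}). Composing over all blocks gives $\hat{\bm{\sigma}}^*=\big(\prod_p\mathfrak{R}_{J_p}\big)\hat{\bm{\sigma}}\in\Gamma_{\mathbf{C}^t}(\bm{\mu})$, exactly parallel to Theorem~\ref{Affine-Toda-Section-4-Theorem-10}.

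The main obstacle I anticipate is alternative \textbf{(III)}, where two $\mathbf{C}$-type blocks $J_0$ and $J_{\vartheta+1}$ occur simultaneously at the two ends. One must check that the two folded permutations produced by Lemma~\ref{Affine-Toda-Section-4-Lemma-12}-(a) and -(b) combine consistently; since $J_0$ and $J_{\vartheta+1}$ occupy disjoint index ranges, their generator products involve disjoint and hence commuting families of reflections (by the commutation relations of $\mathbf{G}_{\mathbf{C}^t}$ for $1<|i-j|\le n$), so the two actions can be applied in either order without interference. The delicate bookkeeping is to verify that the even-integer remainders from the two ends and from the interior $\mathbf{A}$-blocks recombine into a single $n^*_s\in\mathbb{Z}$, and that the distinguished normalizations at nodes $1,2$ and $n,n+1$, reflected in the non-symmetric first and last rows of the Cartan matrix $(k_{ij}^{\mathbf{c}^t})$, are respected by Lemma~\ref{Affine-Toda-Section-4-Lemma-13}. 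Once this is in place, the argument concludes exactly as in Lemma~\ref{Affine-Toda-Section-3-Lemma-1}.
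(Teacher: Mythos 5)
Your proposal is correct and takes essentially the same route as the paper, whose proof of this lemma is precisely the combination you describe: apply Lemma \ref{Affine-Toda-Section-4-Lemma-12} (folded permutations in $\mathbb{S}_{\mathbf{C}_{l_0+1}}$ for the end blocks) and Lemma \ref{Affine-Toda-Section-4-Lemma-13} (simple permutations realized as generators of $\mathbf{G}_{\mathbf{C}^t}$), then repeat the argument of Lemma \ref{Affine-Toda-Section-3-Lemma-1} for the interior $\mathbf{A}$-type blocks. Your supplementary observations---that the linear chain structure makes the $r^{\pm}$-permutation step of the $\mathbf{A}^{(1)}$ case unnecessary, and that in alternative \textbf{(III)} the two end blocks act through disjoint, commuting families of generators---are consistent with how Theorem \ref{Affine-Toda-Section-4-Theorem-10} composes the set chains $\mathfrak{R}_{J_0}\mathfrak{R}_{J_1}\cdots\mathfrak{R}_{J_{\vartheta+1}}$.
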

	\begin{proof}
		Applying Lemma \ref{Affine-Toda-Section-4-Lemma-12} and Lemma \ref{Affine-Toda-Section-4-Lemma-13}, one can prove it via a similar argument as in Lemma \ref{Affine-Toda-Section-3-Lemma-1}.
	\end{proof}
	
	Finally, we give the proof of Theorem \ref{Affine-Toda-Section-1-Theorem-2}.
	\begin{proof}[Proof of Theorem \ref{Affine-Toda-Section-1-Theorem-2}]
		Similar to the proof of \cite[Theorem 1.1]{Cui-Wei-Yang-Zhang-2022} and Theorem \ref{Affine-Toda-Section-1-Theorem-1}, one can get the desired conclusion by Lemma \ref{Affine-Toda-Section-4-Lemma-14}.
	\end{proof}

\end{document}